\documentclass[preprint,12pt,authoryear]{elsarticle}

\usepackage[letterpaper,margin=1in]{geometry}
\usepackage{amsmath,amssymb,amsthm,mathtools}
\usepackage{mathrsfs}
\biboptions{authoryear,round}
\usepackage{url}
\usepackage{adjustbox}
\usepackage{array,booktabs,longtable}
\usepackage[colorlinks=true,linkcolor=blue,citecolor=blue,urlcolor=blue]{hyperref}
\hypersetup{
  pdftitle={Monoidal Alphabets for Generalized Harmonic Sums},
  pdfauthor={Jayanta Phadikar}
}
\numberwithin{equation}{section}

\journal{Journal of Symbolic Computation}

\newtheorem{theorem}{Theorem}[section]
\newtheorem{lemma}[theorem]{Lemma}
\newtheorem{proposition}[theorem]{Proposition}
\newtheorem{corollary}[theorem]{Corollary}

\theoremstyle{definition}
\newtheorem{definition}[theorem]{Definition}
\newtheorem{example}[theorem]{Example}
\theoremstyle{remark}
\newtheorem{remark}[theorem]{Remark}
\theoremstyle{plain}

\DeclareMathOperator{\Li}{Li}
\DeclareMathOperator{\ord}{ord}
\DeclareMathOperator{\lcm}{lcm}
\DeclareMathOperator{\trunc}{trunc}

\newcommand{\idpoly}{\mathrm X}
\newcommand{\Hyp}{\operatorname{Hyp}}
\begin{document}

\begin{frontmatter}

\title{\texorpdfstring{\vspace*{1.0cm}}{}Monoidal Alphabets for Generalized Harmonic Sums}

\author[aff1]{Jayanta Phadikar\texorpdfstring{\corref{cor1}}{}}
\ead{jayantap@wolfram.com}

\address[aff1]{Wolfram Research}

\cortext[cor1]{Corresponding author.}

\begin{abstract}
	We develop a general finite-alphabet framework for Euler-type sums based on the notion of a
	\emph{monoidal alphabet}. An alphabet of summand letters is called monoidal when it is closed
	under pointwise multiplication, thereby inducing the usual stuffle, or quasi-shuffle, algebra on
	the associated nested sums. This viewpoint places classical multiple harmonic number/sum objects, colored
	harmonic sums, and several generalized Euler sums under a common structural mechanism. We
	focus on three fundamental families of monoidal alphabets: the ordinary power alphabet generated
	by \(n\), the affine alphabet generated by linear factors \(an+b\), and the polynomial-base alphabet
	generated by polynomial factors \(P(n)\). The resulting classes of multiple harmonic numbers, multiple
	affine harmonic numbers, and multiple polynomial-base harmonic numbers provide systematic containers for a
	wide range of finite and infinite Euler-type sums. We prove closure and lifting results showing
	that nested sums whose summands are built from these alphabets, possibly multiplied by
	harmonic-sum factors, reduce to the corresponding finite harmonic-sum objects. As consequences,
	the framework recovers many known Euler-sum identities and produces many new identities in a
	uniform way. While reduction to simpler functions remains a separate and often
	difficult problem, the monoidal-alphabet perspective provides a unified algebraic language for
	organizing, transforming, and extending harmonic-sum identities.
\end{abstract}

\begin{keyword}
Euler sums \sep monoidal alphabets \sep harmonic sums \sep multiple harmonic 
numbers \sep multiple polylogarithms \sep symbolic summation \sep quasi-shuffle algebra
\end{keyword}

\end{frontmatter}

\clearpage

\renewcommand{\contentsname}{Contents}
{\hypersetup{linkcolor=blue}\setcounter{tocdepth}{2}\tableofcontents}
\clearpage

\section*{Notation and conventions}
\addcontentsline{toc}{section}{Notation and conventions}

Throughout, $\mathbb N=\{1,2,\ldots\}$, $\mathbb N_0=\mathbb N\cup\{0\}$, and all vector spaces are over $\mathbb C$.  Complex powers are taken after a branch has been fixed; for positive integers we use $n^q=\exp(q\log n)$ with the real logarithm.  The zeta and polylogarithmic symbols introduced below are used in an extended sense: their weight parameters, colors, and affine or polynomial data may be complex.  Thus they should be regarded as functions of complex variables, unlike the classical multiple-zeta-value and multiple-polylogarithm notation where the weight indices are usually positive integers.

We freely identify a letter with the corresponding one-letter word.  Thus, for example, $\mathcal H_a(N)$ means $\mathcal H_{(a)}(N)$, and $\mathcal H_{a,\alpha}(N)$ means $\mathcal H_{(a,\alpha)}(N)$; the same convention is used for $\mathcal G$ and $\mathcal P$.  When the upper argument $x$ of $H_x^{(r)}(s)$ is not a non-negative integer, $H_x^{(r)}(s)$ denotes the Lerch continuation $\Li_r(s)-s^{x+1}\Phi(s,r,x+1)$, with branches fixed where needed; in particular, $H_x=H_x^{(1)}(1)=\psi(x+1)+\gamma$.

{\small
\renewcommand{\arraystretch}{1.25}
\setlength{\tabcolsep}{5pt}
\begin{longtable}{@{}>{\raggedright\arraybackslash}p{0.26\textwidth}>{\raggedright\arraybackslash}p{0.68\textwidth}@{}}
\toprule
\textbf{Notation} & \textbf{Meaning} \\
\midrule
\endfirsthead
\toprule
\textbf{Notation} & \textbf{Meaning} \\
\midrule
\endhead
$n,N,k$ & Summation indices and finite upper limits; $k$ is used especially in finite identities that later pass to $k\to\infty$. \\

$(r,s)$ & A colored harmonic letter, with power $r\in\mathbb C$ and color $s\in\mathbb C$. \\

$\alpha=(a_1,\ldots,a_d)$, $\emptyset$ & A word in colored letters; $d$ is its depth and $\emptyset$ is the empty word. \\

$\displaystyle \mathcal H_\alpha(N)$ & Multiple harmonic number.  If $\alpha=((r_1,s_1),\ldots,(r_d,s_d))$, then
$\displaystyle \mathcal H_\alpha(N)=\sum_{N\ge n_1>\cdots>n_d\ge1}\prod_{j=1}^d s_j^{n_j}n_j^{-r_j}$, with $\mathcal H_\emptyset(N)=1$. \\

$\displaystyle (r,s)\circ(r',s')$ & Merged letter: $\displaystyle (r,s)\circ(r',s')=(r+r',ss')$. \\

$\mathscr H_N$ & The finite span of all $\mathcal H_\alpha(N)$ with upper limit $N$. \\

$\mathscr E_n$ & The summand space spanned by terms $z^n n^q\mathcal H_\alpha(n)$, where $z,q\in\mathbb C$. \\

$H_n^{(r)}(s)$, $H_n^{\mathbf r}(\mathbf s)$, $H_n^{\star,\mathbf r}(\mathbf s)$ & Colored depth-one, strict multiple, and star multiple harmonic numbers.  These are identified with $\mathcal H$-sums, after expanding star sums into strict sums by merging equal indices. \\

$\omega_m$ & The primitive $m$th root of unity $e^{2\pi i/m}$, used in residue-class and scaled-upper-limit filters. \\

$M$ & A level or modulus in residue-class constructions, especially in $R_{M,a;k}^{(r)}(s)$ and $\omega_M$; this is distinct from the finite upper limit $N$. \\

$\theta_j$ & A rational shifted-denominator parameter, usually $\theta_j=b_j/a_j$, used only in cancellation arguments. \\

$\mathcal L$ & The limiting functional $\mathcal L(X)=\lim_{N\to\infty}X(N)$, when the limit exists. \\

$\mathcal G_\Gamma(N)$, $\mathscr G_N$ & Multiple affine harmonic numbers and their span.  An affine letter $L=(\boldsymbol\rho,\sigma,\mathbf A)$ has value $L(n)=\sigma^n\prod_\nu(a_\nu n+b_\nu)^{-\rho_\nu}$. \\

$\mathcal P_\Omega(N)$, $\mathscr P_N$ & Multiple polynomial-base harmonic numbers and their span.  A polynomial letter $L=(\boldsymbol\rho,\sigma,\mathbf P)$ has value $L(n)=\sigma^n\prod_\nu P_\nu(n)^{-\rho_\nu}$. \\

$\Li_{\mathbf r}(\mathbf s)$, $\zeta(\mathbf r)$ & Ordinary colored multiple polylogarithmic and zeta values.  The value $\Li_{\mathbf r}(\mathbf s)$ is the convergent limit associated with $\mathcal H_{((r_1,s_1),\ldots,(r_d,s_d))}(N)$; the zeta specialization is $\zeta(\mathbf r)=\Li_{\mathbf r}(1,\ldots,1)$. \\

$\Li^{\mathrm{aff}}_\Gamma$, $\zeta^{\mathrm{aff}}_\Gamma$ & Affine multiple polylogarithmic and affine zeta values associated with a word $\Gamma$ in affine letters.  The zeta specialization corresponds to trivial colors $\sigma=1$ in all letters. \\

$\Li^{\mathrm{pb}}_\Omega$, $\zeta^{\mathrm{pb}}_\Omega$ & Polynomial-base multiple polylogarithmic and polynomial-base zeta values associated with a word $\Omega$ in polynomial letters.  The zeta specialization corresponds to trivial colors $\sigma=1$ in all letters. \\
\bottomrule
\end{longtable}
}
\noindent Auxiliary symbols such as $A$, $B$, and $A_{\mathrm{prod}}$ are local letters used in examples and tables; they should not be confused with the global monoidal alphabet $\mathcal A$.

\section{Introduction}

\subsection{Origin of Euler sums}

Euler sums belong to a long line of questions that began with Euler's attempt
to understand infinite series not as isolated numerical curiosities, but as
objects with hidden structure.  In his work on zeta-type series, and in his
correspondence with Goldbach, Euler was led to sums in which harmonic numbers
occur inside another infinite series.  In modern notation, the basic linear
examples have the form
\[
  S_{p,q}
  =
  \sum_{n=1}^{\infty}\frac{H_n^{(p)}}{n^q},
  \qquad q>1.
\]
At first sight these look like modest variations of the zeta function: one
takes the partial sums
\[
  H_n^{(p)}=\sum_{k=1}^{n}\frac{1}{k^p}
\]
and sums them again against $n^{-q}$.  The surprise, already visible in
Euler's calculations, is that many such expressions collapse to combinations
of zeta values.  Thus a finite, elementary-looking operation of forming
harmonic numbers and summing them once more opens a door to a rich algebra
of special constants.  Nonlinear Euler sums, obtained by replacing a single
harmonic number by a product of harmonic numbers with the same upper limit,
make this structure even more apparent
\citep{Euler1917,Berndt1985,BorweinBorweinGirgensohn1995,FlajoletSalvy1998}.
The modern language of weight, depth, and degree reflects the fact that these
sums are not merely examples, but members of large and highly organized
families.

\subsection{Euler sums in number theory and quantum field theory}

Euler sums are important because they sit at a meeting point of three
closely related structures: finite harmonic sums, multiple zeta values, and
the special constants arising from perturbative quantum field theory.  In
number theory, the classical Euler sums
\[
  S_{p_1,\ldots,p_k;q}
  =
  \sum_{n=1}^{\infty}
  \frac{H_n^{(p_1)}\cdots H_n^{(p_k)}}{n^q}
\]
may be viewed as one-dimensional projections of the multiple-zeta algebra.
The product
\[
  H_n^{(p_1)}\cdots H_n^{(p_k)}
\]
is governed by the same stuffle, or quasi-shuffle, mechanism that governs
finite multiple harmonic numbers.  Hence, after expanding the product into
nested finite sums and letting the outer index tend to infinity, one is led
naturally to multiple zeta values and their alternating or colored variants.
This explains why many Euler sums collapse to combinations of zeta values,
and why the general problem of evaluating Euler sums is inseparable from the
structure theory of multiple zeta values
\citep{BorweinBorweinGirgensohn1995,BorweinBradleyBroadhurst1997,FlajoletSalvy1998,Xu2017,XuWang2020}.

The same objects appear, from a different direction, in Feynman integral
calculations.  In perturbative quantum field theory, loop integrals are often
converted into Mellin moments, hypergeometric sums, or parameter integrals.
After expansion in the dimensional regularization parameter, gamma functions
and hypergeometric terms produce harmonic sums and nested sums.  Vermaseren's
work on harmonic sums, Mellin transforms, and integrals gave an early
algorithmic framework for symbolic sums over harmonic series, binomial
coefficients, and denominators arising in Feynman diagram calculations
\citep{Vermaseren1999}.  Bl\"umlein and Kurth showed that finite harmonic
sums form a natural basis for Mellin transforms appearing in two-loop
massless QED and QCD calculations \citep{BluemleinKurth1999}.  Later
symbolic summation and difference-field methods, together with the packages
\textsc{Sigma}, \textsc{EvaluateMultiSums}, \textsc{SumProduction}, and
\textsc{HarmonicSums}, made this connection systematic for large classes of
multi-sums arising from loop integrals
\citep{Schneider2014,Ablinger2014,AblingerBluemleinRaabSchneider2014,Bekavac2006}.

There is also a more direct bridge through digamma and polygamma functions.
Since
\[
  \psi(n+1)=H_n-\gamma,
  \qquad
  \psi^{(m)}(n+1)
  =
  (-1)^{m+1}m!\bigl(\zeta(m+1)-H_n^{(m+1)}\bigr),
\]
series involving $\psi$ and $\psi^{(m)}$ at integer or rationally shifted
arguments naturally produce Euler sums and their affine or residue-class
variants.  Coffey studied one-dimensional digamma and polygamma series of
the forms
\[
  \sum_{n\ge1}\frac{(\pm1)^n\psi(n+p/q)}{n^r},
  \qquad
  \sum_{n\ge1}\frac{(\pm1)^n\psi^{(m)}(n+p/q)}{n^r},
\]
developing integral representations and explicit examples, with motivation
from the evaluation of Feynman amplitudes \citep{Coffey2005}.  Related work
of Ogreid and Osland evaluated one-, two-, and three-dimensional series
appearing in Feynman diagram calculations, many of them Euler-series-like,
in terms of constants such as $\zeta(2)$, $\zeta(3)$, Catalan's constant, and
Clausen values \citep{OgreidOsland2002}.  Thus the passage from harmonic
numbers to shifted harmonic numbers, polygamma values, cyclotomic sums, and
binomially weighted sums arises naturally in analytic Feynman integral calculations.

\subsection{Recent extensions and motivation}

The modern theory of Euler sums has developed in several parallel directions,
all of which point toward the usefulness of working with finite summand
alphabets.  Classical Euler sums involving \(H_n^{(r)}\) are governed by
finite harmonic sums and their quasi-shuffle product; alternating and colored
Euler sums enlarge the same mechanism by allowing letters of the form
\(s^n n^{-r}\), where \(s\) is typically a sign or a root of unity.  These
extensions occur naturally in the study of alternating multiple zeta values,
colored multiple zeta values, and multiple polylogarithms at roots of unity
\citep{BorweinBorweinGirgensohn1995,BorweinBradleyBroadhurst1997,
XuWang2020}.  Au's polylogarithmic-integral method, implemented in the accompanying
\texttt{MultipleZetaValues} package, and his later WZ-pair approach provide
systematic evaluations for large classes of ordinary, colored, and affine
Euler-type sums \citep{Au2020PolylogIntegral,AuWZ2025}.

Closely related families arise from congruence restrictions on the summation
indices.  Odd harmonic numbers, Hoffman multiple \(t\)-values,
Kaneko--Tsumura multiple \(T\)-values, finite mixed values, and level-\(N\)
multiple zeta values are all obtained by imposing parity or residue-class
conditions on the summation indices.  Such restrictions may be expressed by
finite Fourier filters, and hence reduce to colored finite harmonic numbers.
Hoffman's multiple \(t\)-values and the Kaneko--Tsumura level-two values are
basic examples of this phenomenon \citep{Hoffman2019,KanekoTsumura2020}.
Dirichlet-type Euler sums involving odd harmonic numbers and their
alternating variants were studied by Xu and Wang, who obtained explicit
formulas and parity theorems by residue computations \citep{XuWang2022}.
Further parity-restricted and mixed constructions have also been studied in
connection with variants of multiple zeta values of level two
\citep{XuZhao2022,Zhao2024FiniteMMV}.

Shifted and rationally shifted harmonic quantities provide another important
source of examples.  Digamma and polygamma values satisfy
\[
  \psi(n+1)=H_n-\gamma,\qquad
  \psi^{(m)}(n+1)
  =
  (-1)^{m+1}m!\bigl(\zeta(m+1)-H_n^{(m+1)}\bigr),
\]
and their rational shifts naturally lead to denominators of the form
\((an+b)^r\).  Coffey studied one-dimensional digamma and polygamma series
related to Feynman-diagram evaluations, including sums with rationally shifted
arguments \citep{Coffey2005}.  Recent work of Olaikhan evaluates Euler sums
involving harmonic numbers with rational arguments, such as
\[
  \sum_{k=1}^{\infty}\frac{H_{k/n}^{(p)}}{k^q},
  \qquad
  \sum_{k=1}^{\infty}\frac{(-1)^kH_{k/(2n)}^{(p)}}{k^q},
\]
expressing the relevant odd-weight cases in terms of Riemann and Hurwitz
zeta values \citep{Olaikhan2026}.  Cyclotomic and Hurwitz-type cyclotomic
Euler sums have also been treated by contour integration and residue
calculus, producing explicit formulas and parity results
\citep{RuiXu2025,Rui2026}.  The recent work of Xu on cyclotomic multiple
Hurwitz zeta values gives a multiple-parameter Hurwitz setting that fits
naturally into the same shifted-index viewpoint
\citep{Xu2026CyclotomicMultipleHurwitz}.

Multiple-argument and polynomial-denominator sums give further evidence that
the natural object is not a single special function but the alphabet of its
summand.  Sofo's multiple-argument Euler sums involve harmonic numbers such
as \(H_{qn}^{(r)}\), and hence require systematic control of scaled
truncation indices \citep{Sofo2025}.  Polynomial zeta functions,
one-dimensional Epstein--Hurwitz zeta functions, and Mathieu-type series
supply related denominator structures involving polynomial expressions in
the summation index
\citep{EieChen1999,Dabrowski2000,Elizalde1994,
ElizaldeEtAl1994,Mathieu1890,PoganySrivastavaTomovski2006,
PoganyTomovski2006}.

\subsection{The present alphabetic convolution framework}
\label{subsec:present-convolution-framework}

This paper introduces a general alphabetic and convolution framework for
harmonic sums.  The central idea is to extract from a summand a multiplicative
alphabet of one-variable letters and to regard finite nested sums as word sums
over that alphabet.  When the alphabet is closed under pointwise multiplication,
the corresponding word sums are stable under the stuffle, or quasi-shuffle,
product: coincident summation indices are resolved by multiplying the
corresponding letters.  This gives a finite algebraic mechanism for converting
products, convolutions, shifted sums, scaled sums, and nested summation domains
into linear combinations of structured finite word sums.

The first alphabet considered here is the colored harmonic alphabet, with
letters
\[
  n\longmapsto s^n n^{-r},
  \qquad r,s\in\mathbb C.
\]
It contains ordinary harmonic sums, alternating harmonic sums, colored harmonic
sums, finite multiple harmonic numbers, and the finite forms underlying multiple
zeta values and multiple polylogarithms.  The second is the affine alphabet,
with letters
\[
  n\longmapsto s^n\prod_j(a_jn+b_j)^{-\rho_j},
\]
which accommodates shifted denominators, rational shifts, residue-class
filters, level constructions, truncated Hurwitz-type sums, and truncated
Lerch-type sums.  The third is the polynomial-base alphabet,
\[
  n\longmapsto s^n\prod_j P_j(n)^{-\rho_j},
\]
which provides a natural setting for polynomial-denominator sums such as finite
Mathieu-type sums, one-dimensional Epstein--Hurwitz-type truncations,
polynomial zeta functions, and polynomial-base polylogarithmic sums.

The convolution step follows a simple pattern.  Products of inner
harmonic-type factors are expanded by stuffle relations.  The remaining outer
factor is then absorbed as an additional letter in the appropriate alphabet.
Finite summation is handled by decomposing the index domain into strict
inequalities, diagonal contributions, boundary terms, shifts, scalings, and
affine or polynomial changes of the summand letters.  In this way, sums such as
\[
  \sum_{n=1}^{N} z^n n^q
  \prod_i \mathcal H_{\alpha_i}(n)
\]
and their affine or polynomial analogues are converted into finite linear
combinations of word sums in the corresponding alphabet.  This is parallel in
spirit to the convolution approach to special-function integrals developed by
Adamchik and Marichev \citep{AdamchikMarichev1990}: one embeds a calculation in
a stable class of special functions, performs the convolution inside that
class, and then applies further reductions to obtain simpler forms.

The framework unifies several previously separate families of identities.  It
places ordinary, colored, alternating, affine, residue-class, level, shifted,
scaled, nested, and polynomial-denominator Euler-type sums inside one
multiplicative alphabetic formalism.  It also separates two tasks that often
occur together in closed-form evaluations: first converting a sum into its
natural finite word-sum alphabet, and then reducing the resulting word sums to
smaller bases or limiting constants when such reductions are available.  The
closure theorems therefore serve both as structural results and as mechanisms
for explicit evaluation implementable in a computer algebra system.  Examples of explicit evaluations for
each of the main classes are provided in the supplementary material.

The paper is organized accordingly.  Section~2 formalizes monoidal alphabets
and introduces the colored, affine, and polynomial-base harmonic-number spaces.
Section~3 proves the basic finite convolution theorem in the colored alphabet.
Sections~4 and~5 develop the affine-letter and polynomial-letter extensions.
Section~6 treats scaled index sums, and Section~7 lifts the method to nested
summation domains.  Section~8 passes to infinite sums and limiting constants.
Section~9 discusses normal forms and reductions, Section~10 records limitations
and future directions, and Section~11 concludes the paper.  Appendix~A lists further monoidal alphabets that fit the same mechanism.
Appendix~B records convergence criteria for the multiple-polylogarithmic variants used in the paper.
Appendix~C gives additional infinite-sum reductions for special limiting cases not included in the main finite closure theory.

\section{Monoidal harmonic-number alphabets}

This section introduces the alphabetic framework used throughout the paper.  We first
formulate the general notion of a monoidal alphabet of one-variable summand letters and
the associated universal finite nested sums.  We then specialize this general construction to the three basic harmonic-number
alphabets used in the sequel: the basic colored harmonic-number alphabet, the affine harmonic-number alphabet, and the polynomial-base harmonic-number alphabet.
For each alphabet we record the principal classes of summand factors that can be
represented in it.  These reduction principles supply the input for the finite convolution theorems and for the examples and consequences developed in the later sections.

\subsection{Monoidal alphabets and universal nested sums}
\label{subsec:monoidal-alphabets-universal-nested-sums}

The organizing principle of this paper is that many Euler-type sums are
governed by the multiplicative alphabet from which its one-variable summand letters are drawn. The required algebraic structure is very small: the alphabet must be closed under pointwise multiplication, because this is exactly what is needed when two summation indices collide in a stuffle product.

\begin{definition}[Monoidal alphabet and monoidal sequence]
Let $R$ be a commutative ring, field, or algebra, and let
$\mathcal A$ be a family of sequences $a:\mathbb N\to R$.  We call
$\mathcal A$ a \emph{monoidal alphabet} if
\[
1\in\mathcal A,
\qquad
 a,b\in\mathcal A\Longrightarrow ab\in\mathcal A,
\qquad
 (ab)(n):=a(n)b(n).
\]
The elements of $\mathcal A$ are called \emph{monoidal sequences}, or
\emph{letters}.  Given letters $a_1,\ldots,a_d\in\mathcal A$, define the
associated universal finite nested sum by
\begin{equation}
\label{eq:monoidal-universal-nested-sum}
\mathfrak H_{\mathcal A}(N;a_1,\ldots,a_d)
:=
\sum_{N\ge n_1>\cdots>n_d\ge1}
 a_1(n_1)a_2(n_2)\cdots a_d(n_d),
\qquad
\mathfrak H_{\mathcal A}(N;\emptyset)=1.
\end{equation}
\end{definition}

For words $u,v$ in the alphabet $\mathcal A$, the usual decomposition of the
two ordered index sets gives
\begin{equation}
\label{eq:monoidal-stuffle-product}
\mathfrak H_{\mathcal A}(N;u)\mathfrak H_{\mathcal A}(N;v)
=
\sum_{w\in u*v}\mathfrak H_{\mathcal A}(N;w),
\end{equation}
where $u*v$ denotes the quasi-shuffle, or stuffle, of words, and a collision of
letters $a$ and $b$ is replaced by their product $ab\in\mathcal A$.  Thus every
product-closed alphabet of one-variable sequences gives a stable nested-sum
space.

The first and smallest choice is the ordinary harmonic alphabet
\[
\phi_r(n)=n^{-r},
\qquad r\in\mathbb C.
\]
It gives the usual finite multiple harmonic numbers.  Adjoining an exponential
color gives the colored harmonic alphabet
\begin{equation}
\label{eq:monoidal-colored-letter}
\phi_{r,s}(n)=s^n n^{-r},
\qquad r,s\in\mathbb C,
\qquad
\phi_{r,s}\phi_{r',s'}=\phi_{r+r',ss'}.
\end{equation}
Then $\mathfrak H_{\mathcal A}$ is exactly the colored multiple harmonic number
$\mathcal H_\alpha(N)$ used below.  The specialization $s=1$ gives the
uncolored case, while convergent limits give the corresponding multiple zeta
values and multiple polylogarithms.  Star versions are obtained either by using
weak inequalities or, equivalently, by converting weak inequalities to strict
ones through the same stuffle refinements.

The second choice is the affine alphabet.  Here a letter may carry a finite
product of affine powers,
\begin{equation}
\label{eq:monoidal-affine-letter}
\phi(n)=s^n\prod_{j=1}^{m}(a_jn+b_j)^{-r_j},
\qquad s,a_j,b_j,r_j\in\mathbb C,
\end{equation}
with branches fixed and with no zero denominator on the relevant summation
range.  The product of two such letters again has the same form: colors
multiply and the exponent lists are concatenated, with equal affine bases
combined by adding exponents.  The resulting nested sums are the multiple affine harmonic numbers $\mathcal G_\Gamma(N)$ developed in
Section~\ref{sec:affine-letter-extensions}.  They contain the ordinary colored
sums by taking the single affine base $n$.

The third choice is the polynomial-base alphabet.  Its elementary letters have
the form
\begin{equation}
\label{eq:monoidal-polynomial-letter}
\phi(n)=s^n\prod_{j=1}^{m}P_j(n)^{-r_j},
\qquad s,r_j\in\mathbb C,\quad P_j\in\mathbb C[x].
\end{equation}
Again the product law is closed by multiplying colors and adding or
concatenating exponent data.  The corresponding nested sums are the multiple polynomial-base harmonic numbers $\mathcal P_\Omega(N)$ developed in
Section~\ref{sec:polynomial-letter-extensions}; the affine alphabet is the
special case in which all $P_j$ are linear.

Thus the three main alphabets of this paper are the three simplest
product-closed families obtained by allowing, respectively, powers of $n$,
powers of affine forms, and powers of polynomial bases.  More generally, fixed
scalar bases $f_1(n),\ldots,f_m(n)$ may be adjoined by using letters of the
schematic form
\begin{equation}
\label{eq:monoidal-general-letter}
\phi(n)=s^n\prod_{j=1}^{m}f_j(n)^{q_j},
\qquad s,q_j\in\mathbb C,
\end{equation}
with branches fixed whenever needed.  Their collision rule merely adds exponent
vectors and multiplies colors.  Hence products of inner harmonic-type sums can
first be expanded by the stuffle rule, after which the remaining outer factors
become additional monoidal letters.  The monoidal construction supplies the
natural closed word space in which such Euler-type sums live; reduction to
smaller alphabets, special constants, or classical functions is a separate
problem.  \hyperref[app:further-monoidal-alphabets]{Appendix A} records further
examples that can be handled in the same manner.

\subsection{Basic colored harmonic-number alphabet}
\label{subsec:basic-colored-harmonic-sum-alphabet}
Let a \emph{letter} be a pair $(r,s)\in\mathbb C^2$, and let a \emph{word} be
a finite sequence of letters.  For a word
\[
  \alpha=((r_1,s_1),\ldots,(r_d,s_d)),
\]
write
\begin{equation}
\label{eq:standalone-Halpha}
\mathcal H_{\alpha}(N)
=
\sum_{N\ge n_1>\cdots>n_d\ge 1}
\prod_{j=1}^d s_j^{n_j}n_j^{-r_j},
\qquad
\mathcal H_{\emptyset}(N)=1.
\end{equation}
For two letters we use the merge operation
\begin{equation}
\label{eq:standalone-letter-merge}
(r,s)\circ(r',s')=(r+r',ss').
\end{equation}
For each positive integer $n$, define the summand space
\begin{equation}
\label{eq:standalone-En-space}
\mathscr E_n=
\operatorname{span}_{\mathbb C}
\left\{z^n n^q\mathcal H_{\alpha}(n):
z,q\in\mathbb C,\ \alpha\text{ a word}\right\}.
\end{equation}
Similarly, for the upper limit $N$, let
\begin{equation}
\label{eq:standalone-HN-space}
\mathscr H_N=
\operatorname{span}_{\mathbb C}
\left\{\mathcal H_{\alpha}(N):\alpha\text{ a word}\right\}.
\end{equation}

\begin{definition}[Harmonic-sum reducibility]
A sequence $f(n)$ is called \emph{harmonic-sum reducible} if
$f(n)\in\mathscr E_n$, that is, if it has a finite representation
\begin{equation}
\label{eq:standalone-reducible-form}
f(n)=
\sum_{\nu=1}^{L}
c_\nu z_\nu^n n^{q_\nu}\mathcal H_{\alpha_\nu}(n),
\qquad
c_\nu,z_\nu,q_\nu\in\mathbb C.
\end{equation}
\end{definition}

\begin{remark}[Use of the closure theorem]
The closure assertion used throughout this section is Theorem~\ref{thm:finite-euler-sum}: products of elements of $\mathscr E_n$ remain in $\mathscr E_n$,
and finite summation maps $\mathscr E_n$ into $\mathscr H_N$.  Thus the purpose
of the present section is not to reprove closure, but to record natural
families of factors which belong to $\mathscr E_n$.
\end{remark}

\begin{proposition}[Depth-one harmonic and alternating harmonic numbers]
\label{prop:standalone-depth-one-harmonic}
The ordinary, generalized, colored, and alternating harmonic numbers are
harmonic-sum reducible.  More precisely,
\begin{align}
H_n^{(r)}(s)&:=\sum_{k=1}^{n}\frac{s^k}{k^r}=\mathcal H_{(r,s)}(n),
\label{eq:standalone-colored-harmonic}\\
A_n^{(r)}(s)&:=\sum_{k=1}^{n}\frac{(-1)^{k-1}s^k}{k^r}
=-\mathcal H_{(r,-s)}(n).
\label{eq:standalone-colored-alternating}
\end{align}
In particular $H_n^{(r)}=H_n^{(r)}(1)$ and the classical alternating harmonic
number is $A_n^{(r)}=A_n^{(r)}(1)$.
\end{proposition}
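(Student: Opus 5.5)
The plan is to verify both identities directly from the definition \eqref{eq:standalone-Halpha} in the depth-one case. Then observe that depth-one words are admissible in the spanning set of $\mathscr E_n$, taking $z=1$ and $q=0$.

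For \eqref{eq:standalone-colored-harmonic}, I specialize \eqref{eq:standalone-Halpha} to the one-letter word $\alpha=((r,s))$ with $d=1$. The nested domain $N\ge n_1\ge1$ is then just the ordinary range $1\le k\le n$, and the summand is $s^{k}k^{-r}$. This gives $\mathcal H_{(r,s)}(n)=\sum_{k=1}^n s^k k^{-r}=H_n^{(r)}(s)$ at once. Powers such as $k^{-r}$ for complex $r$ are taken with the fixed real-logarithm branch from the conventions, so no branch issue arises on positive integers.

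For \eqref{eq:standalone-colored-alternating}, I rewrite the summand as
\[
(-1)^{k-1}s^k=-(-1)^k s^k=-(-s)^k .
\]
This holds for integer $k$ and any complex $s$, with no branch choice needed because the exponent is an integer. Summing, $A_n^{(r)}(s)=-\sum_{k=1}^n(-s)^k k^{-r}=-\mathcal H_{(r,-s)}(n)$ by the first identity with color $-s$.

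Reducibility then follows. Each right-hand side is $c\,z^n n^{q}\mathcal H_\alpha(n)$ with $c=\pm1$, $z=1$, $q=0$, so it is a one-term instance of \eqref{eq:standalone-reducible-form} and lies in $\mathscr E_n$. The specializations $s=1$ recover $H_n^{(r)}$ and $A_n^{(r)}$ by definition.

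There is no real obstacle here. The only point to check is the sign manipulation $(-1)^{k-1}s^k=-(-s)^k$, which is valid because $k\in\mathbb N$.
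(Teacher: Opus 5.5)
Your proposal is correct and takes the same route as the paper, whose proof just states that both identities are immediate from the depth-one case of the definition of $\mathcal H_\alpha(n)$. Your explicit sign step $(-1)^{k-1}s^k=-(-s)^k$ and the choice $z=1$, $q=0$ to land in $\mathscr E_n$ simply write out that one-line argument in full.
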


\begin{proof}
The identities are immediate from the definition of $\mathcal H_{\alpha}(n)$
for words of depth one.
\end{proof}

\begin{example}
The following are basis elements of $\mathscr E_n$:
\[
H_n=\mathcal H_{(1,1)}(n),\qquad
H_n^{(2)}\!\left(\frac12\right)=\mathcal H_{(2,1/2)}(n),
\qquad
A_n^{(3)}=-\mathcal H_{(3,-1)}(n).
\]
Therefore, for arbitrary $z,q\in\mathbb C$,
\[
z^n n^q H_n^{(2)}\!\left(\frac12\right)A_n^{(3)}\in\mathscr E_n.
\]
\end{example}

\begin{proposition}[Strict and star multiple harmonic numbers]
\label{prop:standalone-multiple-star}
For vectors $\mathbf r=(r_1,\ldots,r_d)$ and
$\mathbf s=(s_1,\ldots,s_d)$, the colored multiple harmonic number
\begin{equation}
\label{eq:standalone-colored-harmonic-sum}
H_n^{\mathbf r}(\mathbf s)
:=
\sum_{n\ge n_1>\cdots>n_d\ge1}
\prod_{j=1}^{d}\frac{s_j^{n_j}}{n_j^{r_j}}
=
\mathcal H_{((r_1,s_1),\ldots,(r_d,s_d))}(n)
\end{equation}
is harmonic-sum reducible.  The star variant
\begin{equation}
\label{eq:standalone-star-harmonic-sum}
H_n^{\star,\mathbf r}(\mathbf s)
:=
\sum_{n\ge n_1\ge\cdots\ge n_d\ge1}
\prod_{j=1}^{d}\frac{s_j^{n_j}}{n_j^{r_j}}
\end{equation}
is a finite linear combination of strict sums $\mathcal H_{\beta}(n)$, and is
therefore harmonic-sum reducible.
\end{proposition}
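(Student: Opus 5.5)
The strict part is immediate from the definition. Taking the word $\alpha=((r_1,s_1),\ldots,(r_d,s_d))$ in \eqref{eq:standalone-Halpha} reproduces \eqref{eq:standalone-colored-harmonic-sum} term by term. Hence $H_n^{\mathbf r}(\mathbf s)=\mathcal H_\alpha(n)\in\mathscr E_n$, with $z=1$ and $q=0$ in \eqref{eq:standalone-reducible-form}.

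For the star variant I would partition the weak chain $n\ge n_1\ge\cdots\ge n_d\ge1$ according to which consecutive inequalities are equalities. Each of the $d-1$ relations $n_j\ge n_{j+1}$ is either strict or an equality. So the index domain is a disjoint union of $2^{d-1}$ pieces, indexed by subsets $E\subseteq\{1,\ldots,d-1\}$ recording the positions where $n_j=n_{j+1}$. Equivalently, the pieces are indexed by compositions of $d$ into consecutive blocks $B_1,\ldots,B_m$.

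On the piece attached to a block decomposition, all indices in a block $B_i$ coincide with a common value $m_i$, and the block values satisfy $n\ge m_1>\cdots>m_m\ge1$. The product of the letters in block $B_i$ at the common index is
\[
\prod_{j\in B_i}\frac{s_j^{m_i}}{m_i^{r_j}}=\frac{\bigl(\prod_{j\in B_i}s_j\bigr)^{m_i}}{m_i^{\sum_{j\in B_i}r_j}},
\]
which is the merged letter $\circ_{j\in B_i}(r_j,s_j)$ from \eqref{eq:standalone-letter-merge}. Write $\beta_E$ for the word of these merged letters. Then the piece contributes exactly $\mathcal H_{\beta_E}(n)$, and summing over the pieces gives
\[
H_n^{\star,\mathbf r}(\mathbf s)=\sum_{E\subseteq\{1,\ldots,d-1\}}\mathcal H_{\beta_E}(n),
\]
a finite sum of strict sums, each lying in $\mathscr E_n$.

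The only point needing care is that the pieces really partition the domain, so that no term is counted twice or omitted. This holds because each weak chain determines its equality pattern $E$ uniquely. With that, harmonic-sum reducibility follows from closure of $\mathscr E_n$ under finite linear combinations. An induction on $d$, peeling off $n_d$, would give the same result, but the direct partition is cleaner.
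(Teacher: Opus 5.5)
Your proposal is correct and follows the paper's own argument: the strict case is the definition, and the star sum is split by the equality pattern among adjacent indices, with each block of equal indices merged via $(r,s)\circ(r',s')=(r+r',ss')$, which yields a sum over the $2^{d-1}$ coarsenings of the word. You spell out the partition and the merge computation in more detail than the paper does; one small notational slip is that $m$ denotes both the number of blocks and, as $m_i$, the block values.
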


\begin{proof}
The strict case is the definition.  For the star sum, decompose the weakly
ordered region according to the equality pattern among adjacent indices.
Whenever two adjacent indices are equal, the corresponding letters merge by
\eqref{eq:standalone-letter-merge}.  The result is a finite sum over
coarsenings of the word $((r_1,s_1),\ldots,(r_d,s_d))$.
\end{proof}

\begin{example}
At depth two the star-to-strict conversion is especially transparent:
\[
H_n^{\star,(r_1,r_2)}(s_1,s_2)
=
\mathcal H_{(r_1,s_1),(r_2,s_2)}(n)
+
\mathcal H_{(r_1+r_2,s_1s_2)}(n).
\]
For example,
\[
H_n^{\star,(1,2)}(1,-1)
=
\mathcal H_{(1,1),(2,-1)}(n)+\mathcal H_{(3,-1)}(n).
\]
\end{example}

\begin{proposition}[Rational and floored harmonic upper limits]
\label{prop:standalone-rational-harmonic}
Let $p\in\mathbb Z_{>0}$ and $\omega_p=e^{2\pi i/p}$.  Choose a branch
$\rho=s^{1/p}$.  Then
\begin{align}
\label{eq:standalone-floor-H-reduction}
H_{\lfloor n/p\rfloor}^{(r)}(s)
&=
p^{r-1}\sum_{j=0}^{p-1}
\mathcal H_{(r,\rho\omega_p^j)}(n),\\
\label{eq:standalone-rational-H-reduction}
H_{n/p}^{(r)}(s)
&=
\Li_r(s)+p^{r-1}\sum_{j=0}^{p-1}\omega_p^{-jn}
\left(\mathcal H_{(r,\rho\omega_p^j)}(n)
-\Li_r(\rho\omega_p^j)\right).
\end{align}
Consequently $H_{n/p}^{(r)}(s)$ and
$H_{\lfloor n/p\rfloor}^{(r)}(s)$ are harmonic-sum reducible.  The same is
true for the alternating analogues $A_{n/p}^{(r)}(s)$ and
$A_{\lfloor n/p\rfloor}^{(r)}(s)$, since $A_x^{(r)}(s)=-H_x^{(r)}(-s)$.
\end{proposition}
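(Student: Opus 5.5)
The plan is to prove both identities with the finite Fourier filter
\[
\frac1p\sum_{j=0}^{p-1}\omega_p^{jm}=\mathbf 1_{p\mid m},
\]
rewriting a sum over a sparse arithmetic progression as a combination of full sums with twisted colors.

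For \eqref{eq:standalone-floor-H-reduction}, I will reindex $H_{\lfloor n/p\rfloor}^{(r)}(s)=\sum_{k\le \lfloor n/p\rfloor}s^kk^{-r}$ by $m=pk$. Then $m$ runs over the multiples of $p$ in $\{1,\dots,n\}$, and the summand becomes
\[
s^{m/p}\,(m/p)^{-r}=\rho^{m}p^{r}m^{-r}.
\]
Two branch checks are needed here.
\begin{itemize}
\item $\rho^{pk}=(s^{1/p})^{pk}=s^k$ holds for any fixed branch $\rho$ with $\rho^p=s$, because $k$ is an integer.
\item $(m/p)^{-r}=p^rm^{-r}$ holds because $m/p>0$ and we use the real logarithm.
\end{itemize}
Inserting the filter and interchanging the two finite sums gives
\[
p^{r-1}\sum_{j}\sum_{m\le n}(\rho\omega_p^j)^mm^{-r}=p^{r-1}\sum_j\mathcal H_{(r,\rho\omega_p^j)}(n).
\]
Each term is a depth-one word, so the floor case is harmonic-sum reducible.

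For \eqref{eq:standalone-rational-H-reduction}, I will start from the Lerch convention in the Notation section with $x=n/p$:
\[
H_{n/p}^{(r)}(s)=\Li_r(s)-\sum_{k\ge1}s^{n/p+k}(n/p+k)^{-r}.
\]
I interpret $s^{n/p+k}$ as $\rho^{n+pk}$, which is the branch choice fixed by the statement. Setting $m=n+pk$, the tail becomes $p^r\sum_{m>n,\ m\equiv n\ (p)}\rho^mm^{-r}$. The congruence is detected by $\frac1p\sum_j\omega_p^{j(m-n)}$. Pulling out $\omega_p^{-jn}$ gives
\[
p^{r-1}\sum_j\omega_p^{-jn}\sum_{m>n}(\rho\omega_p^j)^mm^{-r}.
\]
Writing each inner tail as $\Li_r(\rho\omega_p^j)-\mathcal H_{(r,\rho\omega_p^j)}(n)$ yields the displayed formula.

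Membership in $\mathscr E_n$ then follows directly. The factor $\omega_p^{-jn}$ has the form $z^n$, the constants $\Li_r(s)$ and $-p^{r-1}\omega_p^{-jn}\Li_r(\rho\omega_p^j)$ are multiples of $z^nn^0\mathcal H_\emptyset(n)$, and the remaining terms are depth-one words. The alternating analogues follow at once from $A_x^{(r)}(s)=-H_x^{(r)}(-s)$, applied with $-s$ in place of $s$.

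The main obstacle is convergence in the rational case. The interchange of the infinite tail with the finite filter is justified when every series $\sum_m(\rho\omega_p^j)^mm^{-r}$ converges. That holds for $|s|<1$, or for $|s|=1$ with $\operatorname{Re}r>1$. On the boundary, for instance $s=1$ and $r=1$, the term $j=0$ has $\Li_1(1)$ divergent, although the combination is finite. My first attempt will be to prove the identity in the convergent region and then extend it by analytic continuation in $(r,s)$. Both sides are meromorphic or analytic there: the Lerch function on the left, and the polylogarithms on the right, whose divergent constants cancel because $\sum_j\omega_p^{-jn}$ weights them consistently. Alternatively, I would truncate at $m\le M$, where everything is finite, and pass to a regularized limit $M\to\infty$.
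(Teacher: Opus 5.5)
Your argument is correct and is essentially the paper's proof. In the floor case you reindex by $m=pk$ and insert the filter $\frac1p\sum_{j}\omega_p^{jm}=\mathbf 1_{p\mid m}$; in the rational case you apply the same filter to the Lerch tail $s^{x+1}\Phi(s,r,x+1)$; and you substitute $s\mapsto -s$ for the alternating analogues. Your branch bookkeeping (reading $s^{n/p}$ as $\rho^n$) and your convergence discussion go beyond what the paper records.

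One caution concerns the boundary case $s=1$, $\rho=1$, $r=1$. Only the $j=0$ constant is singular, and it does not cancel formally against $\Li_r(s)$. It leaves the finite remainder $\lim_{r\to1}(1-p^{r-1})\zeta(r)=-\log p$, or equivalently $H_K-H_{n+pK}\to-\log p$ under matched truncation. So the continuation route you propose is genuinely needed, and your stated reason (that the weights $\omega_p^{-jn}$ make the divergent constants cancel) should be replaced by this computation. This limit is exactly the source of the $n$-independent $\log 2$ in the paper's example for $h^{[1]}_{n/2}(2;1)$.
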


\begin{proof}
For \eqref{eq:standalone-floor-H-reduction}, expand the right-hand side and
use
\[
\sum_{j=0}^{p-1}(\omega_p^m)^j=
\begin{cases}p,&p\mid m,\\0,&p\nmid m.
\end{cases}
\]
Only indices $m=pk$ remain, giving $\sum_{k\le\lfloor n/p\rfloor}s^k/k^r$.
For the rational upper argument, use the Lerch continuation
\[
H_x^{(r)}(s)=\Li_r(s)-s^{x+1}\Phi(s,r,x+1)
\]
and apply the same root-of-unity distribution to the tail.  The alternating
form follows by the substitution $s\mapsto -s$ and multiplication by $-1$.
\end{proof}

\begin{example}
For $p=2$, $r=2$, and $s=1/3$, one obtains
\begin{align*}
H_{n/2}^{(2)}\!\left(\frac13\right)
&=
\Li_2\!\left(\frac13\right)
-2(-1)^n\Li_2\!\left(-\frac1{\sqrt3}\right)
-2\Li_2\!\left(\frac1{\sqrt3}\right) \\
&\quad
+2(-1)^n\mathcal H_{(2,-1/\sqrt3)}(n)
+2\mathcal H_{(2,1/\sqrt3)}(n),\\[1mm]
H_{\lfloor n/2\rfloor}^{(2)}\!\left(\frac13\right)
&=
2\mathcal H_{(2,-1/\sqrt3)}(n)
+2\mathcal H_{(2,1/\sqrt3)}(n).
\end{align*}
Similarly,
\[
A_{\lfloor n/2\rfloor}^{(1)}
=-H_{\lfloor n/2\rfloor}^{(1)}(-1)
=-\mathcal H_{(1,i)}(n)-\mathcal H_{(1,-i)}(n),
\]
where $i^2=-1$.
\end{example}

\begin{remark}[Rationally scaled harmonic arguments]
\label{rem:rationally-scaled-harmonic-arguments}
The reduction in Proposition~\ref{prop:standalone-rational-harmonic} remains valid
after replacing $n$ by $qn$.  Hence factors such as $H_{qn/p}^{(r)}(s)$ and
$A_{qn/p}^{(r)}(s)$ reduce to finite sums involving polylogarithmic constants
and colored harmonic numbers with scaled upper limit $qn$.  We use this observation
in Section~\ref{sec:scaled-upper-limit-lifting}.

For example, let $\omega=e^{2\pi i/3}$ and choose $\rho=s^{1/3}$.  Then
\[
H_{2n/3}^{(r)}(s)
=
\Li_r(s)
+3^{r-1}\sum_{\varepsilon^3=1}\varepsilon^n
\left(
\mathcal H_{(r,\varepsilon\rho)}(2n)-\Li_r(\varepsilon\rho)
\right).
\]
For the alternating analogue, putting $\eta=(-s)^{1/3}$ gives
\[
A_{2n/3}^{(r)}(s)
=
-\Li_r(-s)
-3^{r-1}\sum_{\varepsilon^3=1}\varepsilon^n
\left(
\mathcal H_{(r,\varepsilon\eta)}(2n)-\Li_r(\varepsilon\eta)
\right).
\]
Thus both examples reduce to the same colored harmonic-number alphabet, with scaled
upper limit $2n$.
\end{remark}

\begin{remark}[Residue-class harmonic sums and finite level-\(M\) values]
\label{rem:residue-class-level-M-values}
Odd harmonic numbers are the level-two residue-class case.  For
\[
 O_k^{(r)}(s):=\sum_{j=1}^{k}\frac{s^{2j-1}}{(j-\frac12)^r},
\]
we have, since \(\mathbf 1_{m\,\mathrm{odd}}=(1-(-1)^m)/2\),
\[
\begin{aligned}
 O_k^{(r)}(s)
 &=2^r\sum_{\substack{1\le m\le 2k\\ m\,\mathrm{odd}}}\frac{s^m}{m^r}
  =2^{r-1}\sum_{m=1}^{2k}\frac{s^m-(-s)^m}{m^r}  \\
 &=2^{r-1}\left(
    \mathcal H_{(r,s)}(2k)-\mathcal H_{(r,-s)}(2k)
   \right).
\end{aligned}
\]
Thus \(O_k^{(r)}(s)\in\mathscr H_{2k}\).  The same filter applied at each
index gives, for finite colored Hoffman multiple \(t\)-values \citep{Hoffman2019},
\[
 t_k(\mathbf r;\mathbf s):=
 \sum_{k\ge j_1>\cdots>j_d\ge1}
 \prod_{\ell=1}^{d}\frac{s_\ell^{2j_\ell-1}}{(2j_\ell-1)^{r_\ell}},
\]
that
\[
 t_k(\mathbf r;\mathbf s)
 =2^{-d}\sum_{\sigma_1,\ldots,\sigma_d=\pm1}
 \left(\prod_{\ell=1}^{d}\sigma_\ell\right)
 \mathcal H_{(r_1,\sigma_1s_1),\ldots,(r_d,\sigma_ds_d)}(2k).
\]
The analogous parity-filter argument, with the alternating parity pattern,
applies to finite Kaneko--Tsumura multiple \(T\)-values \citep{KanekoTsumura2020}.
More generally, the multiple mixed values of Xu and Zhao allow arbitrary even/odd
patterns and include both Hoffman multiple \(t\)-values and Kaneko--Tsumura
multiple \(T\)-values as special cases \citep{XuZhao2022}; their finite analogues
are covered by the same parity filters \citep{Zhao2024FiniteMMV}.

For \(M\ge2\), \(1\le a\le M\), and \(\omega_M=e^{2\pi i/M}\), define the
level-\(M\) residue-class harmonic number by
\[
 R_{M,a;k}^{(r)}(s):=
 M^r\sum_{\substack{1\le m\le Mk\\ m\equiv a\pmod M}}
 \frac{s^m}{m^r}.
\]
The finite Fourier filter
\[
 \mathbf 1_{m\equiv a\pmod M}=
 \frac1M\sum_{\nu=0}^{M-1}\omega_M^{\nu(m-a)}
\]
gives
\[
 R_{M,a;k}^{(r)}(s)=
 M^{r-1}\sum_{\nu=0}^{M-1}\omega_M^{-a\nu}
 \mathcal H_{(r,s\omega_M^\nu)}(Mk).
\]
Applying this filter independently to each index gives the corresponding finite
multiple zeta values of level \(M\), as studied for example in
\citep{YuanZhao2016}.  Hence residue-class harmonic sums and finite multiple
zeta values of level \(M\) are obtained by the same finite Fourier filter and
lie in \(\mathscr H_{Mk}\).
\end{remark}

\begin{proposition}[Integral-upper hyperharmonic numbers]
\label{prop:standalone-integral-hyperharmonic}
Fix $m\in\mathbb Z_{\ge0}$.  Define
\[
h_n^{[0]}(r;s)=H_n^{(r)}(s),
\qquad
h_n^{[m]}(r;s)=\sum_{k=1}^{n}h_k^{[m-1]}(r;s)\quad(m\ge1).
\]
Then $h_n^{[m]}(r;s)$ is harmonic-sum reducible.  More generally, if
$\alpha$ is a word and
\[
\Hyp_{\alpha}^{[0]}(n)=\mathcal H_{\alpha}(n),
\qquad
\Hyp_{\alpha}^{[m]}(n)=\sum_{k=1}^{n}\Hyp_{\alpha}^{[m-1]}(k),
\]
then every fixed-order multiple hyperharmonic number
$\Hyp_{\alpha}^{[m]}(n)$ is harmonic-sum reducible.  The same holds
for the star version obtained by replacing the initial term by
$H_n^{\star,\mathbf r}(\mathbf s)$.
\end{proposition}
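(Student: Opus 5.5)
I would prove the statement by induction on $m$, and the cleanest route is an explicit closed form obtained by swapping the order of summation. Unwinding the recursion gives, for $m\ge1$,
\[
\Hyp_{\alpha}^{[m]}(n)=\sum_{k=1}^{n}\binom{n-k+m-1}{m-1}\mathcal H_{\alpha}(k),
\]
which follows from the standard identity $\sum_{k\le j\le n}\binom{j-k+m-2}{m-2}=\binom{n-k+m-1}{m-1}$ (the hockey-stick rule).

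A second swap handles the word itself. Write $\alpha=(a_1,\alpha')$ with $a_1=(r_1,s_1)$. Then $\mathcal H_\alpha(k)=\sum_{n_1\le k}s_1^{n_1}n_1^{-r_1}\mathcal H_{\alpha'}(n_1-1)$, and summing the binomial weight over $n_1\le k\le n$ gives
\[
\Hyp_\alpha^{[m]}(n)=\sum_{n_1=1}^{n}\binom{n-n_1+m}{m}s_1^{n_1}n_1^{-r_1}\mathcal H_{\alpha'}(n_1-1).
\]

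Next I expand the binomial coefficient as a polynomial in $n_1$ with coefficients polynomial in $n$:
\[
\binom{n-n_1+m}{m}=\sum_{i=0}^{m}c_i(n)\,n_1^{i}.
\]
The $i$-th term then contributes $c_i(n)\,\mathcal H_{((r_1-i,s_1),\alpha')}(n)$. This uses the fact that the merge $(r,s)\circ(-i,1)=(r-i,s)$ keeps us in the alphabet; this is exactly where the monoidal structure with complex powers, including negative ones, enters. Finally $c_i(n)$ is a polynomial in $n$, hence a finite combination of terms $1^n n^q$. So each summand lies in $\mathscr E_n$, and therefore $\Hyp_\alpha^{[m]}(n)\in\mathscr E_n$. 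For the empty word, $\Hyp_\emptyset^{[m]}(n)$ is itself a polynomial in $n$, which is already of the required form. The case $h_n^{[m]}(r;s)$ is the depth-one word $\alpha=((r,s))$.

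As a cross-check, the same conclusion follows inductively from Theorem~\ref{thm:finite-euler-sum}. Summing an element of $\mathscr E_k$ over $k\le n$ lands in $\mathscr H_n\subset\mathscr E_n$, and one only needs the multiplier $k^q$ to be a polynomial (here $q=0$), which the closed form keeps track of. However, that theorem is stated for general $z^kk^q$, and its output for the polynomial multipliers produced by the induction is precisely what the explicit computation supplies. I therefore prefer the explicit formula, which avoids any dependence on details of the later theorem.

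For the star version, Proposition~\ref{prop:standalone-multiple-star} writes $H_n^{\star,\mathbf r}(\mathbf s)$ as a finite linear combination of strict sums $\mathcal H_\beta(n)$. Since $\Hyp^{[m]}$ is linear in its initial sequence, the star case reduces to the strict case.

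The only genuinely delicate point is the bookkeeping in the summation swap. One must check the boundary ranges $n_1\le k\le n$ and the correct binomial shift from $m-1$ to $m$. I would verify these on $m=1$, where the result should read $\sum_{n_1}(n-n_1+1)\,s_1^{n_1}n_1^{-r_1}\mathcal H_{\alpha'}(n_1-1)$.
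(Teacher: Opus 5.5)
Your proof is correct, but it takes a different route from the paper. The paper argues in one line: $\mathcal H_\alpha(k)\in\mathscr E_k$, Lemma~\ref{lem:elementary-summation} sends $\sum_{k\le n}$ of any element of $\mathscr E_k$ into $\mathscr H_n\subset\mathscr E_n$, and one iterates $m$ times, converting star data to strict sums first. This iteration never generates polynomial multipliers. Each step outputs a pure linear combination of the $\mathcal H_\beta(n)$, i.e.\ $z=1$, $q=0$. So the concern in your cross-check paragraph is unfounded, and invoking Theorem~\ref{thm:finite-euler-sum} here is not circular.

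The two routes trade off differently:
\begin{itemize}
\item The paper's iteration is shorter and applies verbatim to any initial sequence in $\mathscr E_n$. It also lands directly in $\mathscr H_n$. The cost is that the depth grows to as much as $|\alpha|+m$, since each step prepends a letter $(0,1)$ or merges it into the first letter.
\item Your double interchange of summation gives an explicit, depth-preserving closed form, $\Hyp_\alpha^{[m]}(n)=\sum_{i=0}^{m}c_i(n)\,\mathcal H_{((r_1-i,s_1),\alpha')}(n)$, in which only the first letter is shifted. Writing $\binom{n-n_1+m}{m}=\sum_j c_{m,j}(n-n_1)^j$ and expanding shows that in depth one this is exactly the integer-argument case of Mez\H{o}'s formula \eqref{eq:standalone-analytic-hyperharmonic}. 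Your computation is therefore its natural extension to arbitrary words.
\end{itemize}

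For your planned $m=1$ check, you get $(n+1)\mathcal H_{(r,s)}(n)-\mathcal H_{(r-1,s)}(n)$. This agrees with the lemma's output $\mathcal H_{(0,1),(r,s)}(n)+\mathcal H_{(r,s)}(n)$. Be aware that the example displayed after the proposition, which uses the letters $(-1,1)$ and $(r-1,s)$, actually computes $\sum_{k\le n}k\,H_k^{(r)}(s)$. It already fails at $n=2$, $r=s=1$, although its final closed form $(n+1)H_n-n$ is correct.
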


\begin{proof}
This is repeated finite summation applied to elements already in
$\mathscr E_n$.  Star initial data are first converted to strict sums by
Proposition~\ref{prop:standalone-multiple-star}.
\end{proof}

\begin{example}
For $m=1$,
\[
h_n^{[1]}(r;s)=\sum_{k=1}^n H_k^{(r)}(s)
=\mathcal H_{(-1,1),(r,s)}(n)+\mathcal H_{(r-1,s)}(n),
\]
because summing $H_k^{(r)}(s)$ splits the region $k>j$ and $k=j$.  In
particular,
\[
\sum_{k=1}^n H_k
=\mathcal H_{(-1,1),(1,1)}(n)+\mathcal H_{(0,1)}(n)
=(n+1)H_n-n.
\]
\end{example}

\begin{proposition}[Analytically continued hyperharmonic upper limits]
\label{prop:standalone-analytic-hyperharmonic-prop}
Let $h_x^{[m]}(r;s)$ denote the analytic continuation, in the upper argument
$x$, of the $m$-fold hyperharmonic number built from $H_x^{(r)}(s)$, normalized
by $h_x^{[0]}(r;s)=H_x^{(r)}(s)$.  Using the analytic continuation of
\citet{Mezo2009}, if
\[
\binom{y+m}{m}=\sum_{j=0}^{m}c_{m,j}y^j,
\]
then
\begin{equation}
\label{eq:standalone-analytic-hyperharmonic}
h_x^{[m]}(r;s)
=
\sum_{j=0}^{m}c_{m,j}
\sum_{a=0}^{j}\binom{j}{a}(-1)^a x^{j-a}H_x^{(r-a)}(s).
\end{equation}
Consequently, for $p\in\mathbb Z_{>0}$,
\[
h_{n/p}^{[m]}(r;s),
\qquad
h_{\lfloor n/p\rfloor}^{[m]}(r;s)
\]
are harmonic-sum reducible.
\end{proposition}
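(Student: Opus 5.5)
The proof has two parts. First, justify the closed formula \eqref{eq:standalone-analytic-hyperharmonic} for integer $x=n$. Second, treat it as the definition of the analytic continuation, as in \citet{Mezo2009}, and substitute $x=n/p$ or $x=\lfloor n/p\rfloor$.

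For the first part, I would prove by induction on $m$ that
\[
h_n^{[m]}(r;s)=\sum_{k=1}^{n}\binom{n-k+m}{m}\frac{s^k}{k^r}.
\]
The inductive step is the hockey-stick identity $\sum_{j=k}^{n}\binom{j-k+m-1}{m-1}=\binom{n-k+m}{m}$, obtained after interchanging the order of summation. Next I write $\binom{y+m}{m}=\sum_j c_{m,j}y^j$ with $y=n-k$. Expanding $(n-k)^j=\sum_a\binom{j}{a}(-1)^a n^{j-a}k^a$ and absorbing $k^a$ into the letter gives $k^a s^k k^{-r}=s^k k^{-(r-a)}$, hence $\sum_k s^k k^{-(r-a)}=H_n^{(r-a)}(s)$. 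This yields \eqref{eq:standalone-analytic-hyperharmonic} at integer points. The right-hand side is a finite combination of $x^{j-a}H_x^{(r-a)}(s)$, and each $H_x^{(r-a)}(s)$ already has its Lerch continuation fixed in the conventions. The right-hand side is therefore the continuation in $x$, and matching \citet{Mezo2009}'s normalization $h_x^{[0]}=H_x^{(r)}(s)$ is automatic.

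For reducibility, substitute $x=n/p$. Each term becomes $(n/p)^{j-a}H_{n/p}^{(r-a)}(s)$. Proposition~\ref{prop:standalone-rational-harmonic}, applied with $r$ replaced by $r-a$, expresses $H_{n/p}^{(r-a)}(s)$ as a constant plus terms $\omega_p^{-jn}\mathcal H_{(r-a,\rho\omega_p^j)}(n)$. Each such term has the form $z^n n^0\mathcal H_\alpha(n)$, and the constant $\Li_{r-a}(s)$ is the case $\mathcal H_\emptyset$. Multiplying by $p^{a-j}n^{j-a}$ gives terms $z^nn^q\mathcal H_\alpha(n)$, which lie in $\mathscr E_n$, and a finite linear combination of these stays in $\mathscr E_n$.

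The floored case $x=\lfloor n/p\rfloor$ is the main obstacle, because the polynomial factor $\lfloor n/p\rfloor^{j-a}$ is not a priori of the form $z^nn^q$. I would handle it through $\lfloor n/p\rfloor=(n-\bar n)/p$, where $\bar n$ is the residue of $n$ mod $p$. The residue has the finite Fourier expansion $\bar n=\sum_{\ell=0}^{p-1}c_\ell\,\omega_p^{\ell n}$, which follows from $\mathbf 1_{n\equiv b}=\frac1p\sum_\ell\omega_p^{\ell(n-b)}$. Expanding the power binomially then makes $\lfloor n/p\rfloor^{e}$ a finite combination of $z^nn^q$ with $z$ a root of unity. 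Multiplying by the expansion of $H_{\lfloor n/p\rfloor}^{(r-a)}(s)$ from \eqref{eq:standalone-floor-H-reduction} gives products of elements of $\mathscr E_n$, which remain in $\mathscr E_n$ by Theorem~\ref{thm:finite-euler-sum}. One caveat is that $a$ ranges up to $m$, so $r-a$ may be a nonpositive integer. Proposition~\ref{prop:standalone-rational-harmonic} is stated for arbitrary complex $r$, but when $r-a\le 0$ and $|s|\ge1$ the constant $\Li_{r-a}(\rho\omega_p^j)$ should be read as its continuation, with branches fixed as in the conventions.
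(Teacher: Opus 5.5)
Your proposal is correct and follows the paper's route. Formula \eqref{eq:standalone-analytic-hyperharmonic} rewrites $h_x^{[m]}(r;s)$ as polynomial multiples of $H_x^{(r-a)}(s)$, and Proposition~\ref{prop:standalone-rational-harmonic}, applied with $r$ replaced by $r-a$, then handles both $x=n/p$ and $x=\lfloor n/p\rfloor$. You go further than the paper in two places: you derive the formula at integer points via the hockey-stick identity, where the paper simply takes it from Mez\H{o}, and you show explicitly, through $\lfloor n/p\rfloor=(n-\bar n)/p$ and a finite Fourier expansion, that the prefactor $\lfloor n/p\rfloor^{j-a}$ lies in $\mathscr E_n$, which the paper leaves implicit (cf.\ Proposition~\ref{prop:standalone-integral-part}).
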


\begin{proof}
Formula \eqref{eq:standalone-analytic-hyperharmonic} reduces the non-integral
upper argument to a finite polynomial combination of harmonic numbers with the
same upper argument.  Proposition~\ref{prop:standalone-rational-harmonic} then
applies to $n/p$ and $\lfloor n/p\rfloor$.
\end{proof}

\begin{example}
For the first repeated hyperharmonic continuation one obtains
\begin{align*}
h_{n/2}^{[1]}(2;1)
&=-\frac{\pi^2}{6}+\frac{(-1)^n\pi^2}{6}
  -\frac{n\pi^2}{12}+\frac{(-1)^n n\pi^2}{12}
  +(1-(-1)^n)\log 2\\
&\quad-(-1)^n\mathcal H_{(1,-1)}(n)-\mathcal H_{(1,1)}(n)
 +(n+2)(-1)^n\mathcal H_{(2,-1)}(n)
 +(n+2)\mathcal H_{(2,1)}(n),\\[1mm]
h_{\lfloor n/2\rfloor}^{[1]}(2;1)
&=-\mathcal H_{(1,-1)}(n)-\mathcal H_{(1,1)}(n)
 +\left(n+\frac{3+(-1)^n}{2}\right)
 \left(\mathcal H_{(2,-1)}(n)+\mathcal H_{(2,1)}(n)\right).
\end{align*}
Also, from \eqref{eq:standalone-analytic-hyperharmonic},
\[
h_x^{[1]}(r;s)=(x+1)H_x^{(r)}(s)-H_x^{(r-1)}(s),
\]
which displays the reduction to ordinary harmonic upper-limit reductions.
\end{example}

\begin{remark}[Rationally scaled hyperharmonic arguments]
\label{rem:rationally-scaled-hyperharmonic-arguments}
Combining the preceding analytic-continuation formula with
Remark~\ref{rem:rationally-scaled-harmonic-arguments} gives the corresponding
scaled hyperharmonic reductions.  For instance, with
$\omega=e^{2\pi i/3}$ and $\rho=s^{1/3}$,
\begin{align*}
h_{2n/3}^{[1]}(r;s)
&=
\left(\frac{2n}{3}+1\right)\Li_r(s)-\Li_{r-1}(s) \\
&\quad
+3^{r-1}\left(\frac{2n}{3}+1\right)
\sum_{\varepsilon^3=1}\varepsilon^n
\left(
\mathcal H_{(r,\varepsilon\rho)}(2n)-\Li_r(\varepsilon\rho)
\right) \\
&\quad
-3^{r-2}
\sum_{\varepsilon^3=1}\varepsilon^n
\left(
\mathcal H_{(r-1,\varepsilon\rho)}(2n)-\Li_{r-1}(\varepsilon\rho)
\right).
\end{align*}
Hence this hyperharmonic example also lies in the same closure class after the
preliminary reduction.  This case will be used in
Section~\ref{sec:scaled-upper-limit-lifting} together with the harmonic and
alternating cases.
\end{remark}

\begin{proposition}[Polygamma tails]
\label{prop:standalone-polygamma}
For $m\ge1$, polygamma values at integer-shifted arguments are
harmonic-sum reducible:
\[
\psi(n+1)+\gamma=H_n,
\qquad
\psi^{(m)}(n+1)=(-1)^{m+1}m!\bigl(\zeta(m+1)-H_n^{(m+1)}\bigr).
\]
\end{proposition}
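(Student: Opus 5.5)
The plan is to start from the standard series representations of the digamma and polygamma functions and truncate them at the integer shift. For the digamma function, I would use
\[
\psi(x+1)=-\gamma+\sum_{k=1}^{\infty}\Bigl(\frac1k-\frac1{k+x}\Bigr),
\]
valid for $x\notin\{-1,-2,\ldots\}$. At $x=n\in\mathbb N_0$ the series telescopes. The partial sum up to $K$ equals $H_K-(H_{n+K}-H_n)$, and $H_{n+K}-H_K\to0$ as $K\to\infty$ because it is a sum of $n$ terms each at most $1/(K+1)$. This gives $\psi(n+1)+\gamma=H_n=\mathcal H_{(1,1)}(n)$. Alternatively, I would iterate the recurrence $\psi(x+1)=\psi(x)+1/x$ from $\psi(1)=-\gamma$.

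For $m\ge1$ I would differentiate termwise, which is justified by locally uniform convergence on compact sets avoiding the poles. This gives
\[
\psi^{(m)}(x+1)=(-1)^{m+1}m!\sum_{k=1}^{\infty}\frac{1}{(k+x)^{m+1}}=(-1)^{m+1}m!\,\zeta(m+1,x+1).
\]
At $x=n$ I would reindex $j=k+n$ and split off the first $n$ terms of $\zeta(m+1)$. This gives $\sum_{j>n}j^{-m-1}=\zeta(m+1)-H_n^{(m+1)}$. Absolute convergence for $m+1\ge2$ makes the splitting legitimate.

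Finally, I would read off reducibility. $H_n^{(m+1)}=\mathcal H_{(m+1,1)}(n)$ by Proposition~\ref{prop:standalone-depth-one-harmonic}. The constant $\zeta(m+1)$ equals $\zeta(m+1)\,1^n n^0\mathcal H_\emptyset(n)$, so it lies in $\mathscr E_n$. Since $\mathscr E_n$ is a $\mathbb C$-span, both right-hand sides belong to $\mathscr E_n$.

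I do not expect a real obstacle. The only point needing care is the interchange of differentiation and summation for $m\ge1$, which the uniform convergence handles, together with the limit argument in the telescoping step for $\psi$.
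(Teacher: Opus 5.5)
Your proposal is correct and matches the paper's argument, which simply observes that the two displayed identities write $\psi(n+1)$ and $\psi^{(m)}(n+1)$ as constants (multiples of $\mathcal H_\emptyset(n)$) plus the depth-one sums $\mathcal H_{(1,1)}(n)$ and $\mathcal H_{(m+1,1)}(n)$, and hence as elements of $\mathscr E_n$. The only difference is that you also derive the two classical identities from the series for $\psi$ and termwise differentiation, which the paper takes as standard; your derivation is sound.
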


\begin{proof}
The displayed identities express the polygamma values as constants plus
ordinary harmonic numbers.
\end{proof}

\begin{example}
For instance,
\[
\psi'(n+1)=\zeta(2)-H_n^{(2)},
\qquad
\psi''(n+1)=-2\bigl(\zeta(3)-H_n^{(3)}\bigr).
\]
Thus $z^n n^q\psi'(n+1)H_n$ lies in $\mathscr E_n$.
\end{example}

\begin{proposition}[Stirling and fixed-degree combinatorial factors]
\label{prop:standalone-stirling-combinatorial}
The normalized Stirling numbers
\[
\frac{s(n+1,k)}{n!}
=(-1)^{n+1-k}\mathcal H_{\underbrace{(1,1),\ldots,(1,1)}_{k-1}}(n),
\]
where $s(n,k)$ denotes the signed Stirling number of the first kind, are
harmonic-sum reducible for fixed $k$.  The Stirling numbers of the second kind
are also reducible for fixed $k$, since
\[
S(n,k)=\frac1{k!}\sum_{j=0}^{k}(-1)^{k-j}\binom{k}{j}j^n.
\]
Moreover, fixed-degree polynomial, binomial, and factorial-power factors such
as $P(n)$, $\binom{an+b}{k}$, $(an+b)^{\underline{k}}$, and
$(an+b)^{\overline{k}}$ are harmonic-sum reducible when $k$ is fixed.
\end{proposition}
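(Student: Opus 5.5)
The plan is to treat the three kinds of factors in Proposition~\ref{prop:standalone-stirling-combinatorial} separately. In each case we exhibit an explicit finite representation of the form \eqref{eq:standalone-reducible-form}, using only depth-one data, powers of $n$, and colors.

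\emph{Stirling numbers of the first kind.} Start from the generating polynomial
\[
x(x+1)\cdots(x+n)=\sum_{k}|s(n+1,k)|\,x^k .
\]
Dividing by $n!$ gives
\[
x\prod_{j=1}^{n}\bigl(1+x/j\bigr).
\]
The coefficient of $x^k$ in this product is the elementary symmetric function $e_{k-1}(1,1/2,\ldots,1/n)$. By definition \eqref{eq:standalone-Halpha}, this equals $\mathcal H_{(1,1)^{k-1}}(n)$, the strict sum over $n\ge n_1>\cdots>n_{k-1}\ge1$ of $\prod 1/n_j$. The signed version carries the factor $(-1)^{n+1-k}$. This factor is $z^n$ with $z=-1$ times the constant $(-1)^{1-k}$, so the term has the form $c\,z^nn^0\mathcal H_\alpha(n)\in\mathscr E_n$. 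The only thing to check here is the sign convention, and it is routine.

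\emph{Stirling numbers of the second kind.} For fixed $k$, the displayed explicit formula writes $S(n,k)$ as a finite linear combination of $j^n$ for $0\le j\le k$. Each $j^n$ is $z^nn^0\mathcal H_\emptyset(n)$ with $z=j$. The term $j=0$ gives $0^n=0$ for $n\ge1$, so it may be dropped.

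\emph{Polynomial, binomial and factorial-power factors.} A fixed polynomial $P(n)=\sum c_q n^q$ is a finite combination of $n^q\mathcal H_\emptyset(n)$. For fixed $k$, the factors $(an+b)^{\underline k}=\prod_{i=0}^{k-1}(an+b-i)$, $(an+b)^{\overline k}=\prod_{i=0}^{k-1}(an+b+i)$ and $\binom{an+b}{k}=(an+b)^{\underline k}/k!$ are all polynomials in $n$ of degree $k$ with constant coefficients, so they reduce to the previous case.

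The main obstacle is only one of interpretation. The statement of Proposition~\ref{prop:standalone-stirling-combinatorial} implicitly requires $k$ to be fixed, because a $k$ varying with $n$ would give unbounded depth or an unbounded number of terms. We therefore state explicitly that each representation has finitely many terms, with length bounded in terms of $k$ alone. We also note that $\mathscr E_n$ is closed under multiplication by Theorem~\ref{thm:finite-euler-sum}, so products of such factors with other elements of $\mathscr E_n$ remain reducible.
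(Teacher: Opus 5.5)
Your proof is correct and follows essentially the same route as the paper. The rising-factorial generating function identifies $s(n+1,k)/n!$ with $(-1)^{n+1-k}e_{k-1}(1,\tfrac12,\ldots,\tfrac1n)=(-1)^{n+1-k}\mathcal H_{(1,1)^{k-1}}(n)$, the explicit formula writes $S(n,k)$ as a finite exponential sum, and the binomial and factorial-power factors are polynomials in $n$; you simply spell out the elementary-symmetric-function step that the paper cites as standard.
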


\begin{proof}
The first formula is the standard relation between Stirling numbers of the
first kind and elementary symmetric functions in $1,1/2,\ldots,1/n$; the
second is the finite exponential formula for $S(n,k)$.  Fixed-degree binomial
and factorial-power factors are polynomials in $n$.
\end{proof}

\begin{example}
At small fixed degrees,
\[
\frac{s(n+1,2)}{n!}=(-1)^{n-1}H_n,
\qquad
S(n,2)=2^{n-1}-1,
\qquad
\binom{n+3}{2}=\frac{n^2+5n+6}{2}.
\]
Hence all three can be multiplied into a summand of the form
$z^n n^q\mathcal H_{\alpha}(n)$ without leaving $\mathscr E_n$.
\end{example}

\begin{proposition}[Zeta and Lerch tails]
\label{prop:standalone-zeta-lerch}
The Hurwitz zeta and Lerch tails
\[
\zeta(s,n+1)=\zeta(s)-H_n^{(s)}\quad(s\ne1),
\]
\[
\Phi(z,s,n+1)=z^{-n-1}\bigl(\Li_s(z)-H_n^{(s)}(z)\bigr)
\quad(z\ne0,1)
\]
are harmonic-sum reducible.
\end{proposition}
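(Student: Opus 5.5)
The plan is to verify the two displayed tail identities directly from the defining series. Each right-hand side is then a constant times an element of $\mathscr E_n$, so harmonic-sum reducibility follows from the definition.

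For the Hurwitz tail with $\Re s>1$, I will write
\[
\zeta(s,n+1)=\sum_{k\ge0}(k+n+1)^{-s}=\sum_{m\ge n+1}m^{-s}=\zeta(s)-\sum_{m=1}^{n}m^{-s}.
\]
The last sum is $H_n^{(s)}=\mathcal H_{(s,1)}(n)$ by Proposition~\ref{prop:standalone-depth-one-harmonic}. For general $s\ne1$, both sides are meromorphic in $s$ with the only pole at $s=1$; the finite sum is entire. Hence the identity extends by analytic continuation. This gives $\zeta(s,n+1)=\zeta(s)\cdot 1^n n^0\mathcal H_\emptyset(n)-\mathcal H_{(s,1)}(n)\in\mathscr E_n$.

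For the Lerch tail, I start in the region $|z|<1$ and reindex:
\[
\Phi(z,s,n+1)=\sum_{k\ge0}\frac{z^k}{(k+n+1)^s}=z^{-n-1}\sum_{m\ge n+1}\frac{z^m}{m^s}=z^{-n-1}\Bigl(\Li_s(z)-H_n^{(s)}(z)\Bigr).
\]
Here $H_n^{(s)}(z)=\mathcal H_{(s,z)}(n)$. Expanding, the right-hand side equals
\[
z^{-1}\Li_s(z)\,(z^{-1})^n n^0\mathcal H_\emptyset(n)-z^{-1}(z^{-1})^n n^0\mathcal H_{(s,z)}(n),
\]
and both terms are spanning elements of $\mathscr E_n$ with color $z^{-1}$ and $q=0$. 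Reducibility therefore holds for each fixed $n$.

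The only real obstacle is giving meaning to the identity outside the disc of convergence, i.e.\ for $z\ne0,1$ with $|z|\ge1$. This must be done consistently with the convention in the notation section, where $H_x^{(r)}(s)$ is defined via $\Li_r(s)-s^{x+1}\Phi(s,r,x+1)$. I would handle it by analytic continuation in $z$ along paths avoiding the branch cut of $\Li_s$ and $\Phi$. Both sides continue analytically in $z$, the finite sum $H_n^{(s)}(z)$ is a polynomial in $z$, and $z^{-n-1}$ is analytic away from $0$, so the identity persists on the common domain with matched branches. At $z=1$ the statement reduces to the Hurwitz case already treated.
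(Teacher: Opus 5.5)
Your proposal is correct and follows the paper's argument: the paper takes the two displayed tail identities as given and observes that their right-hand sides are constants times $1^n$ or $(z^{-1})^n$ times either $\mathcal H_\emptyset(n)$ or a depth-one colored harmonic number. That is exactly your decomposition $z^{-1}\Li_s(z)\,(z^{-1})^n\mathcal H_\emptyset(n)-z^{-1}(z^{-1})^n\mathcal H_{(s,z)}(n)$. Your reindexing and analytic-continuation steps supply the standard justification of the identities, which the paper leaves implicit. One wording fix: reducibility is a property of the whole sequence in $n$, so the point is that your coefficients, color and words are independent of $n$, not that reducibility holds ``for each fixed $n$''.
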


\begin{proof}
Both displayed identities write the tails as constants, exponential factors,
and colored harmonic numbers.
\end{proof}

\begin{example}
For example,
\[
\zeta(2,n+1)=\zeta(2)-H_n^{(2)},
\qquad
\Phi\!\left(\frac12,2,n+1\right)
=2^{n+1}\left(\Li_2\!\left(\frac12\right)
-H_n^{(2)}\!\left(\frac12\right)\right).
\]
\end{example}

\begin{proposition}[Constant-coefficient recurrence sequences]
\label{prop:standalone-recurrences}
Any sequence satisfying a homogeneous linear recurrence with constant
coefficients is harmonic-sum reducible.  Indeed, such a sequence has the
exponential-polynomial form
\[
a(n)=\sum_{\lambda}P_\lambda(n)\lambda^n.
\]
This includes Fibonacci, Lucas, Pell, Jacobsthal, Padovan, Perrin,
Narayana's cows, $m$-bonacci sequences, fixed-parameter Chebyshev sequences
$T_n(x)$ and $U_n(x)$, and companion sequences governed by fixed
constant-coefficient recurrences.
\end{proposition}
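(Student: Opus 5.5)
We use the classical structure theorem for constant-coefficient linear recurrences and then check that each resulting term lies in $\mathscr E_n$.

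Let $a(n)$ satisfy $a(n+d)=c_{d-1}a(n+d-1)+\cdots+c_0a(n)$ with $c_0\neq0$; if $c_0=0$, pass to the shortest recurrence, so that only finitely many initial terms deviate. Factor the characteristic polynomial over $\mathbb C$ as
\[
\chi(x)=x^d-c_{d-1}x^{d-1}-\cdots-c_0=\prod_{\lambda}(x-\lambda)^{m_\lambda}.
\]
The standard generating-function argument shows that $A(x)=\sum_{n\ge0}a(n)x^n$ equals $Q(x)/\tilde\chi(x)$. Here $\tilde\chi(x)=x^d\chi(1/x)$ is the reciprocal polynomial and $\deg Q<d$. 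A partial-fraction decomposition in powers of $(1-\lambda x)^{-j}$, $1\le j\le m_\lambda$, followed by the binomial expansion
\[
(1-\lambda x)^{-j}=\sum_{n\ge0}\binom{n+j-1}{j-1}\lambda^nx^n,
\]
gives $a(n)=\sum_\lambda P_\lambda(n)\lambda^n$ with $\deg P_\lambda<m_\lambda$.

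Next we place each term in $\mathscr E_n$. Writing $P_\lambda(n)=\sum_k p_{\lambda,k}n^k$, each monomial $\lambda^n n^k$ is exactly a spanning element $z^nn^q\mathcal H_\emptyset(n)$ of \eqref{eq:standalone-En-space}, with $z=\lambda$, $q=k$ and $\mathcal H_\emptyset=1$. The sum over $\lambda$ and $k$ is finite, so $a(n)$ has a representation of the form \eqref{eq:standalone-reducible-form}. This also agrees with Proposition~\ref{prop:standalone-stirling-combinatorial}, where polynomial factors were already shown to be reducible. If finitely many initial terms deviate because $c_0=0$, the identity holds for $n$ beyond a finite index. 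We would state the proposition for the exponential-polynomial form and remark that the finite correction is a finitely supported sequence, handled separately as a boundary term in the summation theorems.

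Finally we verify the listed examples by exhibiting their characteristic polynomials. Fibonacci and Lucas come from $x^2-x-1$, Pell from $x^2-2x-1$, Jacobsthal from $x^2-x-2$, Padovan and Perrin from $x^3-x-1$, Narayana's cows from $x^3-x^2-1$, and $m$-bonacci from $x^m-\cdots-1$. For the Chebyshev sequences, $T_n(x)$ and $U_n(x)$ satisfy $y_{n+1}=2xy_n-y_{n-1}$ for fixed $x$, with roots $x\pm\sqrt{x^2-1}$. These roots coincide when $x=\pm1$, which produces the factor $n$; this case is covered by the multiplicity part of the structure theorem.

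The main obstacle is the repeated-root case together with the degenerate case $c_0=0$, because the naive Binet form fails there. The partial-fraction argument above handles multiplicities uniformly, and the degenerate case reduces to a finite exceptional set as described.
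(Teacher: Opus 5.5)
Your proposal is correct and follows the paper's own argument. You obtain the exponential-polynomial form $a(n)=\sum_\lambda P_\lambda(n)\lambda^n$ and note that each monomial $n^k\lambda^n$ is a spanning element $z^nn^q\mathcal H_\emptyset(n)$ of $\mathscr E_n$ with the empty word; your partial-fraction derivation supplies the standard details the paper takes for granted. Your caveat about $c_0=0$ is a genuine refinement rather than a gap: the paper's one-line proof tacitly assumes the nondegenerate case, and on $n\ge1$ the sequence with $a(1)=1$ and $a(n)=0$ for $n\ge2$ satisfies $a(n+1)=0\cdot a(n)$ but is not of exponential-polynomial form.
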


\begin{proof}
The general solution of a fixed homogeneous constant-coefficient recurrence is
a finite sum of polynomial multiples of exponentials.  Each term belongs to
$\mathscr E_n$ with the empty word.
\end{proof}

\begin{example}
Binet's formula gives
\[
F_n=\frac{\phi^n-\widehat\phi^{\,n}}{\sqrt5},
\qquad
\phi=\frac{1+\sqrt5}{2},\quad \widehat\phi=\frac{1-\sqrt5}{2}.
\]
The Padovan sequence satisfies $P_n=P_{n-2}+P_{n-3}$, and Narayana's cows
satisfy $C_n=C_{n-1}+C_{n-3}$; hence both are finite sums of
exponential-polynomial terms determined by their characteristic polynomials.
\end{example}

\begin{proposition}[Fixed-periodic functions]
\label{prop:standalone-periodic}
Every fixed-modulus periodic function is harmonic-sum reducible.  If $M\ge1$
and $\omega_M=e^{2\pi i/M}$, then
\[
\mathbf1_{n\equiv a\,({\rm mod}\,M)}
=\frac1M\sum_{j=0}^{M-1}\omega_M^{j(n-a)}.
\]
Therefore every function depending only on $n\bmod M$ is a finite linear
combination of exponentials.
\end{proposition}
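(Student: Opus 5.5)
The plan is to verify the displayed Fourier filter and then expand an arbitrary $M$-periodic function in the resulting exponentials. First, for fixed $M\ge1$ and any integer $m$, the geometric sum satisfies
\[
\sum_{j=0}^{M-1}\omega_M^{jm}=
\begin{cases}M,&M\mid m,\\0,&M\nmid m.\end{cases}
\]
This is the same identity already used in the proof of Proposition~\ref{prop:standalone-rational-harmonic}. If $M\mid m$, every term equals $1$. Otherwise $\omega_M^m\ne1$, and the finite geometric series evaluates to $(\omega_M^{mM}-1)/(\omega_M^m-1)=0$. Taking $m=n-a$ gives the displayed formula for $\mathbf 1_{n\equiv a\,(\mathrm{mod}\,M)}$.

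Second, let $f:\mathbb N\to\mathbb C$ depend only on $n\bmod M$. Since the residue classes partition $\mathbb N$, we can write
\[
f(n)=\sum_{a=1}^{M}f(a)\,\mathbf 1_{n\equiv a\,(\mathrm{mod}\,M)}
=\sum_{j=0}^{M-1}\Bigl(\frac1M\sum_{a=1}^{M}f(a)\,\omega_M^{-ja}\Bigr)\bigl(\omega_M^{j}\bigr)^n .
\]
This is a finite linear combination of the terms $z^n n^q\mathcal H_\emptyset(n)$ with $z=\omega_M^j$, $q=0$, and the empty word. Each such term lies in $\mathscr E_n$ by definition \eqref{eq:standalone-En-space}, so $f$ is harmonic-sum reducible in the sense of \eqref{eq:standalone-reducible-form}. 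Alternatively, one can observe that $f$ satisfies the constant-coefficient recurrence $f(n+M)=f(n)$ and invoke Proposition~\ref{prop:standalone-recurrences}. I prefer the explicit filter, because it gives the coefficients directly.

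There is no real obstacle here. The only points needing care are the convention $n\in\mathbb N$, with residues taken as $a=1,\dots,M$, and the case $M=1$, where $f$ is constant and the formula is trivial.
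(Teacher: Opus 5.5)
Your proof is correct and follows the paper's argument: you express each residue-class indicator by the finite Fourier filter and write the periodic function as a finite linear combination of such indicators, hence of exponentials $(\omega_M^j)^n$, which lie in $\mathscr E_n$ with $q=0$ and the empty word. You go further than the paper by actually verifying the root-of-unity orthogonality relation and writing out the Fourier coefficients explicitly; the paper leaves both implicit.
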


\begin{proof}
The displayed finite Fourier expansion expresses the residue-class indicator
as a finite exponential sum.  Any fixed-periodic function is a finite linear
combination of such indicators.
\end{proof}

\begin{example}
For parity,
\[
\mathbf1_{2\mid n}=\frac{1+(-1)^n}{2},
\qquad
\mathbf1_{2\nmid n}=\frac{1-(-1)^n}{2}.
\]
For modulus $3$,
\[
\mathbf1_{n\equiv1\,({\rm mod}\,3)}
=\frac13\left(1+\omega_3^{n-1}+\omega_3^{2(n-1)}\right).
\]
\end{example}

\begin{proposition}[Fixed-modulus arithmetic factors]
\label{prop:standalone-fixed-modulus-arithmetic}
Fixed-modulus residue, quotient, character, order, divisibility, gcd, and lcm
functions are harmonic-sum reducible under the usual fixed-modulus hypotheses.
For example, if $r_M(t)$ is the least non-negative residue of $t$ modulo $M$,
then $r_M(an+b)$ is periodic in $n$, and
\[
q_M(an+b)=\frac{an+b-r_M(an+b)}{M}
\]
is affine plus periodic.  Similarly, Dirichlet characters, Jacobi and
Kronecker symbols with fixed modulus, $\ord_M(an+b)$ on the coprime residue
classes, $r_M(u^{an+b})$ when the modulus is fixed, $\gcd(n,M)$, and
$\lcm(n,M)=nM/\gcd(n,M)$ are affine, polynomial, or periodic combinations.
\end{proposition}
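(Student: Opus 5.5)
The plan is to reduce every listed factor to a finite combination of three kinds of terms, each already known to lie in $\mathscr E_n$: polynomials in $n$, fixed-modulus periodic functions, and exponentials $u^{an+b}$. Polynomials are $n^q$ terms with the empty word, as in Proposition~\ref{prop:standalone-stirling-combinatorial}. Periodic functions are handled by Proposition~\ref{prop:standalone-periodic}. Exponentials are $z^n$ terms with a constant coefficient. Since $\mathscr E_n$ is a vector space closed under products (Theorem~\ref{thm:finite-euler-sum}), any finite combination of these is then harmonic-sum reducible.

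The key steps, in order, are as follows.

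\begin{enumerate}
\item \emph{Residues.} For integers $a,b$ and fixed $M$, the map $n\mapsto r_M(an+b)$ depends only on $n\bmod M$, because $a(n+M)+b\equiv an+b$. It is therefore periodic of period $M$. Explicitly,
\[
r_M(an+b)=\sum_{c=0}^{M-1} r_M(ac+b)\,\mathbf 1_{n\equiv c\,(\mathrm{mod}\,M)},
\]
so Proposition~\ref{prop:standalone-periodic} applies.
\item \emph{Quotients.} The quotient $q_M(an+b)=(an+b-r_M(an+b))/M$ is an affine polynomial minus a periodic function, hence in $\mathscr E_n$.
\item \emph{Characters and symbols.} A Dirichlet character $\chi$ mod $M$ is $M$-periodic, so $\chi(an+b)$ is periodic in $n$. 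Jacobi and Kronecker symbols $(\,\cdot\,/M)$ with fixed lower argument are likewise periodic in the upper argument. For the Kronecker symbol I would use its period, which is $M$ or $4M$ as appropriate; this is still a fixed modulus.
\item \emph{Gcd and lcm.} Since $\gcd(n,M)=\gcd(n\bmod M,M)$, the gcd is $M$-periodic. Then $\lcm(n,M)=nM\cdot(1/\gcd(n,M))$, where $1/\gcd(n,M)$ is periodic and nonvanishing. So the lcm is a polynomial times a periodic function, and this product stays in $\mathscr E_n$ by the product closure.
\item \emph{Divisibility.} Indicators such as $\mathbf 1_{M\mid an+b}$ are residue-class indicators, hence periodic.
\end{enumerate}

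The slightly less trivial items are $\ord_M(an+b)$ and $r_M(u^{an+b})$.

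For $\ord_M(an+b)$: the multiplicative order of $t$ modulo $M$ depends only on $t\bmod M$. On the set of $n$ with $\gcd(an+b,M)=1$, which is a union of residue classes mod $M$, the order is therefore an $M$-periodic function. I would extend it by $0$, or by any fixed convention, on the remaining classes; this is the meaning of ``on the coprime residue classes'' in the statement.

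For $r_M(u^{an+b})$ with fixed integer $u$: the sequence $u^t\bmod M$ is eventually periodic in $t$. Its preperiod is bounded by the largest exponent of a prime of $M$ dividing $u$, and its eventual period divides $\lambda(M')$ for the part $M'$ of $M$ coprime to $u$. Hence $n\mapsto r_M(u^{an+b})$ is eventually periodic.

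I expect this eventual periodicity to be the main obstacle, since the statement speaks of periodic functions. I would handle it by noting that only finitely many initial terms deviate from a periodic pattern. Each deviation can be written as a finitely supported correction $\sum_{k<n_0} c_k\mathbf 1_{n=k}$. Such a correction does not lie in $\mathscr E_n$ as written; under the ``usual fixed-modulus hypotheses'' I would therefore either assume $\gcd(u,M)=1$, which gives pure periodicity, or restrict to $n\ge n_0$. Restricting is harmless for the finite summation theorem, because the finitely many boundary terms are constants.
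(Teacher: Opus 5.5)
Your proposal is correct and follows the same route as the paper. The paper's proof observes that, with the modulus fixed, each listed function is periodic apart from explicit affine or polynomial factors, and then applies the finite Fourier expansion of Proposition~\ref{prop:standalone-periodic} together with the product closure of $\mathscr E_n$. You are more careful than the paper on one point: $r_M(u^{an+b})$ is only eventually periodic when $\gcd(u,M)>1$ (for instance $r_4(2^n)=2\cdot\mathbf 1_{n=1}$), which the paper's ``determined by finitely many residue classes'' passes over, and your repair of assuming $\gcd(u,M)=1$ or working on a tail $n\ge n_0$ is the right one; your side claim that a finitely supported correction cannot lie in $\mathscr E_n$ is asserted without proof, but nothing in your argument depends on it.
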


\begin{proof}
With the modulus fixed, these functions are determined by finitely many
residue classes, except for the explicitly affine or polynomial factors shown
above.  Proposition~\ref{prop:standalone-periodic} applies to the periodic
parts.
\end{proof}

\begin{example}
For $M=2$,
\[
r_2(n)=\frac{1-(-1)^n}{2},
\qquad
\left\lfloor\frac n2\right\rfloor=\frac n2-\frac{1-(-1)^n}{4}.
\]
For a fixed Dirichlet character $\chi$ modulo $M$,
\[
\chi(n)=\sum_{a=0}^{M-1}\chi(a)\mathbf1_{n\equiv a\,({\rm mod}\,M)},
\]
so $\chi(n)$ is a finite Fourier sum.
\end{example}

\begin{proposition}[Rational affine integral-part functions]
\label{prop:standalone-integral-part}
Integral and nearest-integer functions of rational affine arguments are
harmonic-sum reducible.  Let $\ell(n)=an+b\in\mathbb Q n+\mathbb Q$, and choose
$M$ so that $M\ell(n)=An+B$ with $A,B\in\mathbb Z$.  Then
\[
\lfloor\ell(n)\rfloor=\frac{An+B-r_M(An+B)}{M},
\]
so floors are affine plus periodic.  Ceilings, fractional parts, truncations
on fixed-sign ranges, and nearest-integer functions with a fixed tie rule are
obtained from the same residue-class correction.
\end{proposition}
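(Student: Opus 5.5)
The proof will reduce every listed function to the form ``affine in $n$ plus a fixed-modulus periodic function of $n$.'' Such a function lies in $\mathscr E_n$: affine terms are $z^n n^q$ with $z=1$, $q\in\{0,1\}$ and the empty word, and periodic terms are handled by Proposition~\ref{prop:standalone-periodic}.

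\emph{Floor.} First I will fix a positive integer $M$ clearing the denominators of $a$ and $b$, so that $M\ell(n)=An+B$ with $A,B\in\mathbb Z$. For any integer $t$, division with remainder gives $t=M\lfloor t/M\rfloor+r_M(t)$, and hence $\lfloor t/M\rfloor=(t-r_M(t))/M$. Taking $t=An+B$ and using $\ell(n)=t/M$ yields the displayed formula for $\lfloor\ell(n)\rfloor$. The correction $r_M(An+B)$ depends only on $n \bmod M$, because $A(n+M)+B\equiv An+B \pmod M$. So it is $M$-periodic, and Propositions~\ref{prop:standalone-periodic} and \ref{prop:standalone-fixed-modulus-arithmetic} write it as a finite sum $\sum_{j}c_j\omega_M^{jn}$. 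Therefore $\lfloor\ell(n)\rfloor=\frac{A}{M}n+\frac{B}{M}-\frac1M\sum_j c_j\omega_M^{jn}$, which lies in $\mathscr E_n$.

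\emph{Other functions.} Each of the remaining functions will be expressed through floors of rational affine arguments plus periodic corrections.
\begin{itemize}
\item Ceiling: $\lceil\ell(n)\rceil=-\lfloor-\ell(n)\rfloor$, and $-\ell$ is again rational affine.
\item Fractional part: $\{\ell(n)\}=\ell(n)-\lfloor\ell(n)\rfloor=r_M(An+B)/M$, which is purely periodic.
\item Truncation: on a range of $n$ where $\ell(n)$ has fixed sign, $\trunc\ell(n)$ equals $\lfloor\ell(n)\rfloor$ if $\ell\ge0$ and $\lceil\ell(n)\rceil$ if $\ell\le0$. This needs the fixed-sign hypothesis, which holds for all large $n$ when $a\ne0$. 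A finite initial segment can then be split off as finitely many constant corrections, or treated as a separate finite sum.
\item Nearest integer with round-half-up: $\lfloor\ell(n)+\tfrac12\rfloor$, a floor of the rational affine function $\ell+\tfrac12$. Other fixed tie rules, such as round-half-down or round-half-to-even, differ from this only on the residue classes where $2M\ell(n)\equiv M \pmod{2M}$. They therefore differ by a periodic function (an indicator, times $\pm1$, possibly times a parity pattern of period $2M$), which Proposition~\ref{prop:standalone-periodic} handles.
\end{itemize}

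The computations are routine. The only step needing care is the tie-rule and truncation bookkeeping: one must check that the correction is periodic in $n$ with a fixed period (at most $2M$, or $4M$ for ties-to-even, since the parity of $\lfloor\ell(n)\rfloor$ is itself periodic mod $2M$). I will verify this by noting that the tie condition and the value of $\lfloor\ell(n)\rfloor \bmod 2$ both depend only on $An+B$ modulo a fixed multiple of $M$.
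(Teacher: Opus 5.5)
Your proposal is correct and is essentially the paper's argument. The paper also writes the floor as affine minus the $M$-periodic correction $r_M(An+B)/M$, and then states that the other integral-part functions differ from a floor by affine or periodic corrections; you supply the details it leaves implicit: ceiling via $-\lfloor-\ell\rfloor$, rounding via $\lfloor\ell+\tfrac12\rfloor$, and ties on the classes $2(An+B)\equiv M \pmod{2M}$.

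There are two small caveats. First, tie rules that depend on the sign of $\ell(n)$ (round half away from zero, or toward zero) need the same fixed-sign restriction you imposed on truncation. Second, splitting off a finite initial segment produces finitely supported corrections, not constant ones. These are harmless inside a sum $\sum_{n\le N}$, but they are not part of the $\mathscr E_n$ representation itself.
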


\begin{proof}
The formula expresses the floor as an affine function of $n$ minus a fixed
periodic residue correction.  The remaining integral-part functions differ
from the floor by affine, periodic, or bounded fixed-periodic corrections.
\end{proof}

\begin{example}
For instance,
\[
\left\lfloor\frac{3n+1}{4}\right\rfloor
=\frac{3n+1-r_4(3n+1)}{4},
\qquad
\left\{\frac{3n+1}{4}\right\}=\frac{r_4(3n+1)}{4}.
\]
Both are affine-periodic expressions and therefore lie in $\mathscr E_n$.
\end{example}

\begin{proposition}[Prime-power arithmetic functions]
\label{prop:standalone-prime-power}
For a fixed prime $p$, the functions
\[
\varphi(p^n),\quad \lambda(p^n),\quad \sigma_k(p^n),\quad
\Lambda(p^n),\quad \lambda_{\mathrm L}(p^n),\quad
\Omega(p^n),\quad \omega_{\mathrm{arith}}(p^n),\quad v_p(cp^{\ell(n)})
\]
Here \(\omega_{\mathrm{arith}}\) denotes the arithmetic function counting distinct
prime divisors; it is unrelated to the root-of-unity notation \(\omega_m\).
These functions are polynomial, exponential, or affine in $n$ under the usual fixed-parameter
assumptions.  Moreover, Ramanujan's tau function on prime powers satisfies
\[
\tau(p^{n+1})=\tau(p)\tau(p^n)-p^{11}\tau(p^{n-1}),
\]
and hence is recurrence-defined.  Therefore all these prime-power factors are
harmonic-sum reducible.
\end{proposition}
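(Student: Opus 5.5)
The plan is a case-by-case verification that each listed function, evaluated at $p^n$, is an explicit finite combination of terms $c\,z^n n^q$, i.e.\ elements of $\mathscr E_n$ with the empty word. We would only use the elementary closure facts already recorded: finite linear combinations of such terms stay in $\mathscr E_n$; polynomials in $n$ are covered by Proposition~\ref{prop:standalone-stirling-combinatorial}; sequences satisfying constant-coefficient recurrences are covered by Proposition~\ref{prop:standalone-recurrences}. Throughout, $n$ ranges over $\mathbb N$, so $p^n>1$.

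First we would handle the explicit formulas.
\begin{itemize}
\item Euler's function: $\varphi(p^n)=p^n-p^{n-1}=(1-p^{-1})\,p^n$, a single exponential.
\item Divisor sums: for $k\neq0$, $\sigma_k(p^n)=\sum_{j=0}^n p^{jk}=(p^{k(n+1)}-1)/(p^k-1)$, a combination of $(p^k)^n$ and $1$; for $k=0$, $\sigma_0(p^n)=n+1$.
\item Additive prime counts: $\Lambda(p^n)=\log p$, $\omega_{\mathrm{arith}}(p^n)=1$ and $\Omega(p^n)=n$ are constant or linear.
\item Liouville: $\lambda_{\mathrm L}(p^n)=(-1)^n$ is an exponential with $z=-1$.
\item Valuation: for $\ell(n)=an+b$ taking non-negative integer values and $c$ a fixed integer, $v_p(cp^{\ell(n)})=v_p(c)+an+b$ is affine.
\end{itemize}
Each of these is already in the required form.

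Next comes the Carmichael function, which is the only case needing a small split. For odd $p$, $\lambda(p^n)=\varphi(p^n)$ is exponential as above. For $p=2$, we have $\lambda(2)=1$, $\lambda(4)=2$, and $\lambda(2^n)=2^{n-2}$ for $n\ge3$. We would treat the finitely many initial values as corrections supported on $n\le2$. We expect this to be the main (if mild) obstacle, since a finitely supported sequence such as $\mathbf 1_{n=1}$ is not literally an exponential polynomial on all of $\mathbb N$.

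Two remedies are available, and we would use the first.
\begin{itemize}
\item Interpret the ``usual fixed-parameter assumptions'' as restricting to $n\ge3$ (or to an eventual range). Shifting the summation start costs only finitely many explicit boundary terms, which are constants and harmless in every subsequent finite-summation theorem.
\item Alternatively, note that the paper's convention allows $q\in\mathbb C$ with $n^q$ real-analytic. Even so, an indicator of $\{1,2\}$ is not an exponential polynomial, so the boundary-term route is the honest one.
\end{itemize}
The same remark applies if $b<0$ forces small-$n$ exceptions in $v_p(cp^{\ell(n)})$.

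Finally, for Ramanujan's $\tau$, the Hecke relation stated in the proposition is a second-order homogeneous linear recurrence in $n$ with constant coefficients $\tau(p)$ and $-p^{11}$. Proposition~\ref{prop:standalone-recurrences} therefore applies directly. Concretely, $\tau(p^n)=(\beta^{n+1}-\bar\beta^{n+1})/(\beta-\bar\beta)$ with $\beta+\bar\beta=\tau(p)$ and $\beta\bar\beta=p^{11}$. By Deligne, $\beta\neq\bar\beta$, though even a repeated root would only produce a factor $n\beta^n$, which is still in $\mathscr E_n$.

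Collecting all cases, every listed factor lies in $\mathscr E_n$, possibly up to finitely many explicit boundary values, and hence is harmonic-sum reducible.
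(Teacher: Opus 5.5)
Your proposal is correct and is the paper's own argument made explicit. The paper's proof just says that the standard prime-power formulas are polynomial, affine, or exponential in $n$, and it applies Proposition~\ref{prop:standalone-recurrences} to $\tau(p^n)$ through the Hecke recurrence; that is exactly your case-by-case check. Your handling of the Carmichael function at $p=2$ refines something the paper's one-line proof skips: $\lambda(2)=1$ and $\lambda(4)=2$ do not fit $2^{n-2}$, so that case does need $p$ odd, the range $n\ge 3$, or finitely many boundary corrections, as you say. Two small touch-ups: $v_p(cp^{\ell(n)})$ needs $c\neq 0$, and $\beta\neq\bar\beta$ follows from $4p^{11}$ not being a perfect square, since Deligne's bound alone only gives $|\tau(p)|\le 2p^{11/2}$.
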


\begin{proof}
The standard prime-power formulas for these arithmetic functions are
polynomial, affine, or exponential in $n$.  The tau values on prime powers
satisfy the displayed constant-coefficient recurrence, so Proposition
\ref{prop:standalone-recurrences} applies.
\end{proof}

\begin{example}
For fixed $p$,
\[
\varphi(p^n)=p^n-p^{n-1},
\qquad
\sigma_k(p^n)=\frac{p^{k(n+1)}-1}{p^k-1},
\qquad
\Omega(p^n)=n.
\]
Thus $\varphi(p^n)H_n$ and $\sigma_k(p^n)H_n^{(r)}(s)$ are admissible
summand factors.
\end{example}

\begin{proposition}[Elementary exponential, trigonometric, and hyperbolic factors]
\label{prop:standalone-elementary-exponential}
For rational or complex affine $\ell(n)=an+b$, the functions
$e^{\ell(n)}$, $\cos(\ell(n))$, $\sin(\ell(n))$, $\cosh(\ell(n))$, and
$\sinh(\ell(n))$ are harmonic-sum reducible.  For example,
\[
\cos(\ell(n))=
\frac{e^{ib}(e^{ia})^n+e^{-ib}(e^{-ia})^n}{2},
\qquad
\sin(\ell(n))=
\frac{e^{ib}(e^{ia})^n-e^{-ib}(e^{-ia})^n}{2i}.
\]
\end{proposition}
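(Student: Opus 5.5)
The plan is to write each of the five functions explicitly as a finite linear combination of pure exponentials $c\,z^n$ with constants $c,z\in\mathbb C$. Each such term is an element of $\mathscr E_n$ with $q=0$ and the empty word, since $\mathcal H_\emptyset(n)=1$. Because $\mathscr E_n$ is a $\mathbb C$-linear span, this is all that harmonic-sum reducibility in the sense of \eqref{eq:standalone-reducible-form} requires. The argument has the same shape as the proof of Proposition~\ref{prop:standalone-recurrences}, but it is simpler: no polynomial factors $P_\lambda(n)$ appear.

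The key steps are as follows. First, for $\ell(n)=an+b$ with $a,b\in\mathbb C$, I write
\[
e^{\ell(n)}=e^{b}\,(e^{a})^n .
\]
This is valid for integer $n$ since $e^{an}=(e^a)^n$ for $n\in\mathbb N$, so no branch choice is involved. This gives the term $c\,z^n$ with $c=e^b$ and $z=e^a$. Second, I apply Euler's formulas
\[
\cos w=\tfrac12(e^{iw}+e^{-iw}),\qquad \sin w=\tfrac1{2i}(e^{iw}-e^{-iw}),
\]
together with
\[
\cosh w=\tfrac12(e^{w}+e^{-w}),\qquad \sinh w=\tfrac12(e^{w}-e^{-w}),
\]
at $w=\ell(n)$. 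Third, I apply the first step to each exponential $e^{\pm i\ell(n)}$ and $e^{\pm\ell(n)}$. This yields the displayed formulas for $\cos$ and $\sin$, and the analogous ones
\[
\cosh(\ell(n))=\tfrac12\bigl(e^{b}(e^{a})^n+e^{-b}(e^{-a})^n\bigr),\qquad
\sinh(\ell(n))=\tfrac12\bigl(e^{b}(e^{a})^n-e^{-b}(e^{-a})^n\bigr).
\]
Each right-hand side is a sum of two terms of the form $c\,z^n n^0\mathcal H_\emptyset(n)$, so each function lies in $\mathscr E_n$.

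There is no real obstacle. The only point needing care is the identity $e^{an}=(e^a)^n$, which holds because $n$ is a positive integer; hence the complex-power branch conventions fixed in the Notation section play no role. As a remark, combined with Theorem~\ref{thm:finite-euler-sum} this shows that products of these factors with any element of $\mathscr E_n$ stay in $\mathscr E_n$, since the colors simply multiply.
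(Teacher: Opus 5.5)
Your proposal is correct and follows the same approach as the paper: use Euler's formulas to write each function as a finite combination of exponentials $c\,z^n$, and note that each such term lies in $\mathscr E_n$ with $q=0$ and the empty word. Your explicit remark that $e^{an}=(e^a)^n$ holds for integer $n$, so no branch issue arises, is a small extra point the paper leaves implicit.
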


\begin{proof}
Euler's formulas express trigonometric and hyperbolic functions with affine
arguments as finite linear combinations of exponentials in $n$.
\end{proof}

\begin{example}
For example,
\[
\cos\left(\frac{\pi n}{3}\right)
=\frac12\left(e^{i\pi/3}\right)^n+
 \frac12\left(e^{-i\pi/3}\right)^n,
\]
so
\[
z^n n^q\cos\left(\frac{\pi n}{3}\right)H_n^{(r)}(s)
\]
is a sum of two basis-type elements of $\mathscr E_n$.
\end{example}

The following table summarizes the principal families in compact form.  In
the table, $k,m$ are fixed non-negative integers, $p\in\mathbb Z_{>0}$ when
it occurs as a denominator, $p$ is prime in the prime-power row, $M$ is a
fixed modulus, and $\ell(n)=an+b$ is rational affine unless otherwise stated.

\begin{center}
\scriptsize
\setlength{\tabcolsep}{3pt}
\renewcommand{\arraystretch}{1.05}
\begin{adjustbox}{max width=\textwidth}
\begin{tabular}{@{}>{\raggedright\arraybackslash}p{0.25\textwidth}>{\raggedright\arraybackslash}p{0.48\textwidth}>{\raggedright\arraybackslash}p{0.21\textwidth}@{}}
\hline
\textbf{Family} & \textbf{Typical objects} & \textbf{Reason}\\
\hline
Harmonic and colored harmonic numbers
& $H_n^{(r)}$, $H_n^{(r)}(s)$, $A_n^{(r)}(s)$
& depth-one $\mathcal H$ sums\\
Rational and floored harmonic upper limits
& $H_{n/p}^{(r)}(s)$, $A_{n/p}^{(r)}(s)$, $H_{\lfloor n/p\rfloor}^{(r)}(s)$, $A_{\lfloor n/p\rfloor}^{(r)}(s)$
& root-of-unity distribution/filter\\
Odd and parity-restricted finite sums
& $O_k^{(r)}(s)$, finite Hoffman $t_k(\mathbf r;\mathbf s)$, finite Kaneko--Tsumura $T$-values, finite mixed values
& parity filters at upper limit $2k$\\
Residue-class level-$M$ finite sums
& $R_{M,a;k}^{(r)}(s)$, finite level-$M$ residue-class multiple zeta values
& finite Fourier filters at upper limit $Mk$\\
Multiple harmonic and star sums
& $H_n^{\mathbf r}(\mathbf s)$, $H_n^{\star,\mathbf r}(\mathbf s)$
& strict sums; star sums split by equalities\\
Integral-upper hyperharmonic families
& $h_n^{[m]}(r;s)$, $\Hyp_{\alpha}^{[m]}(n)$, $\Hyp_{\mathbf r,\mathbf s}^{\star,[m]}(n)$
& repeated finite summation\\
Rational and floored hyperharmonic upper limits
& $h_{n/p}^{[m]}(r;s)$, $h_{\lfloor n/p\rfloor}^{[m]}(r;s)$
& Mez\H{o} analytic continuation and harmonic upper-limit reductions\\
Polygamma values
& $\psi(n+1)$, $\psi^{(m)}(n+1)$
& harmonic-number tails\\
Stirling and fixed-degree combinatorial factors
& $s(n+1,k)/n!$, $|s(n+1,k)|/n!$, $S(n,k)$, $\binom{\ell(n)}{k}$
& multiple harmonic numbers, finite exponential sums, or polynomials\\
Zeta and Lerch tails
& $\zeta(s,n+1)$ $(s\ne1)$, $\Phi(z,s,n+1)$
& finite harmonic or colored-harmonic tails\\
Constant-coefficient recurrence sequences
& Fibonacci/Lucas, Pell/Pell--Lucas, Jacobsthal/Jacobsthal--Lucas, $m$-bonacci, Padovan/Perrin, Narayana's cows, $T_n(x)$, $U_n(x)$
& exponential-polynomial form\\
Fixed-periodic functions
& $\mathbf1_{n\equiv a\,({\rm mod}\,M)}$ and any fixed-periodic factor
& finite Fourier expansion\\
Fixed-modulus arithmetic factors
& $r_M(\ell(n))$, $q_M(\ell(n))$, $\chi(\ell(n))$, $\left(\frac{\ell(n)}{M}\right)$, $\ord_M(\ell(n))$, $\gcd(n,M)$, $\lcm(n,M)$
& affine, polynomial, or periodic form\\
Integral and nearest-integer functions
& $\lfloor\ell(n)\rfloor$, $\lceil\ell(n)\rceil$, $\{\ell(n)\}$, $\trunc(\ell(n))$, $N_\tau(\ell(n))$
& affine plus periodic correction\\
Prime-power arithmetic functions
& $\varphi(p^{\ell(n)})$, $\lambda(p^{\ell(n)})$, $\sigma_k(p^{\ell(n)})$, $\Lambda(p^{\ell(n)})$, $\tau(p^n)$
& polynomial, exponential, periodic, or recurrence form\\
Elementary exponential functions
& $z^n$, $\cos(an+b)$, $\sin(an+b)$, $\cosh(an+b)$, $\sinh(an+b)$
& finite exponential combinations\\
\hline
\end{tabular}
\end{adjustbox}
\end{center}

\subsection{Multiple affine harmonic number alphabet}
\label{subsec:affine-harmonic-sum-alphabet}

We next introduce the affine analogue of the basic colored harmonic-number
alphabet.  The purpose is to replace scaled or shifted upper arguments by
finite sums whose truncation index is again $n$, at the cost of allowing affine
letters of the form $(an+b)^{-r}$.  Throughout this subsection all displayed
reductions are to affine sums with upper limit $n$.

An \emph{affine letter} is a triple
\[
  L=(\boldsymbol\rho,\sigma,\mathbf A),
  \qquad
  \boldsymbol\rho=(\rho_1,\ldots,\rho_t),
  \qquad
  \mathbf A=((a_1,b_1),\ldots,(a_t,b_t)),
\]
where $\sigma\in\mathbb C$, $\rho_\nu\in\mathbb C$, and each affine form
$a_\nu n+b_\nu$ is nonzero on the relevant summation range.  Its value at a
positive integer $n$ is
\begin{equation}
\label{eq:affine-letter-value-final}
  L(n)=\sigma^n\prod_{\nu=1}^{t}(a_\nu n+b_\nu)^{-\rho_\nu}.
\end{equation}
If $L$ and $M$ are affine letters, we write $L\circ M$ for the affine letter
whose value is $L(n)M(n)$; explicitly this multiplies the colors and
concatenates the affine factors.

For a word $\Gamma=(L_1,\ldots,L_d)$ in affine letters, define
\begin{equation}
\label{eq:affine-Gamma-definition-final}
  \mathcal G_{\Gamma}(N)
  :=
  \sum_{N\ge n_1>\cdots>n_d\ge1}
  \prod_{j=1}^{d}L_j(n_j),
  \qquad
  \mathcal G_{\emptyset}(N)=1.
\end{equation}
The corresponding finite affine value space is
\begin{equation}
\label{eq:affine-value-space-final}
  \mathscr G_N
  :=
  \operatorname{span}_{\mathbb C}
  \{\mathcal G_{\Gamma}(N):\Gamma\text{ a word in affine letters}\}.
\end{equation}
The basic colored alphabet is contained in this one, since the basic letter
$(r,s)$ is the affine letter $((r),s,((1,0)))$.  For readability, in examples we
write
\begin{equation}
\label{eq:affine-L-shorthand-final}
  L_{a,b}^{r}(\sigma):=((r),\sigma,((a,b))),
  \qquad
  L_{a,b}^{\mathbf r}(\sigma):=(\mathbf r,\sigma,((a,b))_{r\in\mathbf r})
\end{equation}
when no confusion can arise.

For summand-level reductions define
\begin{equation}
\label{eq:affine-summand-space-final}
  \mathscr E^{\rm aff}_n
  :=
  \operatorname{span}_{\mathbb C}
  \{L(n)G(n):L\text{ is an affine letter or }1,
  \;G(n)\in\mathscr G_n\}.
\end{equation}

\begin{definition}[Affine-harmonic-sum reducibility]
\label{def:affine-harmonic-sum-reducibility-final}
A sequence $f(n)$ is called \emph{affine-harmonic-sum reducible} if
$f(n)\in\mathscr E^{\rm aff}_n$.  Equivalently, it has a finite representation
\begin{equation}
\label{eq:affine-reducible-form-final}
  f(n)=\sum_{\nu=1}^{M} c_\nu\sigma_\nu^n
  \prod_{\mu=1}^{m_\nu}(a_{\nu\mu}n+b_{\nu\mu})^{-q_{\nu\mu}}
  \mathcal G_{\Gamma_\nu}(n),
\end{equation}
where $c_\nu,\sigma_\nu,q_{\nu\mu}\in\mathbb C$, the affine forms are nonzero on
positive integers in the relevant range, and $\Gamma_\nu$ is a word in affine
letters.  Thus the affine analogue of the basic factor $z^n n^q\mathcal
H_\alpha(n)$ is a finite product of affine powers times an affine word sum.
\end{definition}

\begin{proposition}[Affine upper arguments in harmonic, alternating harmonic, and hyperharmonic numbers]
\label{prop:scaled-harmonic-alternating-hyperharmonic-affine-final}
Let $p_1,p_2\in\mathbb Q$ with $p_1>0$, and put
\[
  x(n)=p_1n+p_2.
\]
Interpret non-integral upper arguments by the Lerch continuation
\[
  H_x^{(r)}(s)=\Li_r(s)-s^{x+1}\Phi(s,r,x+1),
\]
with branches fixed where needed.  Then $H_{x(n)}^{(r)}(s)$ is affine-harmonic-sum reducible.  Consequently the alternating analogue
\[
  A_{x(n)}^{(r)}(s):=-H_{x(n)}^{(r)}(-s)
\]
is affine-harmonic-sum reducible.  Moreover, for each fixed
$m\in\mathbb Z_{\ge0}$, the analytically continued hyperharmonic number
$h_{x(n)}^{[m]}(r;s)$ is affine-harmonic-sum reducible.
\end{proposition}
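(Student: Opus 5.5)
The plan is to reduce everything to the Lerch tail and then rewrite that tail as a constant minus a finite affine word sum with upper limit $n$. Write $p_1=q/p$ with $p,q\in\mathbb Z_{>0}$ coprime. Let $c\in\mathbb Q$ be such that $x(n)=\tfrac{q}{p}n+p_2$. By the Lerch continuation,
\[
H_{x(n)}^{(r)}(s)=\Li_r(s)-\sum_{k\ge0}\frac{s^{x(n)+1+k}}{(x(n)+1+k)^r}.
\]
The tail runs over the arithmetic progression $\{x(n)+1+k\}$ of step $1$. The key idea is to compare this tail with the tail starting at the fixed point $x(0)+1=p_2+1$. The difference between the two is the finite block of terms $t=p_2+1+j$ with $0\le j< x(n)-p_2=\tfrac{q}{p}n$, and this block is what we want to express as an affine sum with upper limit $n$.

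When $x(n)-x(0)$ is an integer for all $n$ (the case $p=1$), the identity
\[
H_{x(n)}^{(r)}(s)=H_{x(0)}^{(r)}(s)+\sum_{j=1}^{qn}\frac{s^{p_2+j}}{(j+p_2)^r}
\]
holds. Here $H_{x(0)}^{(r)}(s)$ is a constant. To bring the sum to upper limit $n$, I will split $j$ by residue class modulo $q$, writing $j=qm-i$ with $1\le m\le n$ and $0\le i<q$. This gives
\[
\sum_{i=0}^{q-1}s^{p_2-i}\sum_{m=1}^{n}\frac{(s^q)^m}{(qm+p_2-i)^r}
=\sum_{i}s^{p_2-i}\,\mathcal G_{L^{r}_{q,p_2-i}(s^q)}(n),
\]
so the result lies in $\mathscr E_n^{\rm aff}$, with the constant counted as the empty word. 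The affine forms $qm+p_2-i$ are nonzero because they lie in the region $t>x(0)$ of the Lerch continuation; any exceptional zero occurs at finitely many small $m$ and is absorbed into the constant.

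For general $p$, the quantity $x(n)$ depends on $n\bmod p$ in a way that is not an integer shift. I would first split $n=pm+\ell$ with $0\le\ell<p$ fixed; then $x(pm+\ell)=qm+x(\ell)$ is an integer shift in $m$, and the previous argument applies. To recombine the classes into a single expression in $n$ with upper limit $n$, I would use the periodic indicator $\mathbf 1_{n\equiv\ell\pmod p}$ from Proposition~\ref{prop:standalone-periodic}, which is a sum of colors $\omega_p^{jn}$. I would also write $\sum_{m\le(n-\ell)/p}$ as a sum of $\mathcal G(n)$-type terms via the root-of-unity filter of Proposition~\ref{prop:standalone-rational-harmonic}, i.e.\ $\sum_{u\le n,\,u\equiv\ell}$ with affine letter $(qu/p+\cdots)^{-r}$ and color $\omega_p^{j u}$. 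After this step each term is an affine letter in $u$ times a color, summed up to $u=n$, multiplied by an exponential $\omega_p^{-jn}$ outside the sum. I expect this bookkeeping, keeping the upper limit exactly $n$ rather than $n/p$ and handling fractional powers $s^{x(n)}$ via a fixed branch as a color $(s^{q/p})^n$, to be the main obstacle. I would first check it against the example of Proposition~\ref{prop:standalone-rational-harmonic}.

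The alternating case then follows immediately by replacing $s$ with $-s$. For the hyperharmonic case I will apply Mez\H{o}'s formula \eqref{eq:standalone-analytic-hyperharmonic} with $x=x(n)$. This expresses $h_{x(n)}^{[m]}(r;s)$ as a finite combination of $x(n)^{j-a}H_{x(n)}^{(r-a)}(s)$, in which $x(n)^{j-a}$ is a polynomial in $n$. A power $n^e$ equals the affine letter $L_{1,0}^{-e}(1)$, and multiplying an element of $\mathscr E_n^{\rm aff}$ by an affine letter keeps it in $\mathscr E_n^{\rm aff}$, since letters compose by $\circ$. Linearity then concludes the proof.
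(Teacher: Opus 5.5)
Your route differs from the paper's, and it works up to one point that needs repair. The paper never telescopes. It applies the root-of-unity distribution to the Lerch tail, rewriting $H^{(r)}_{(An+B)/Q}(s)$ as $\Li_r(s)$ plus $Q^{r-1}\sum_j\omega_Q^{-j(An+B)}\bigl(H^{(r)}_{An+B}(\rho\omega_Q^j)-\Li_r(\rho\omega_Q^j)\bigr)$. It then splits the \emph{inner} index of $H^{(r)}_{An+B}$ as $k=Am+\ell-A$ with $1\le m\le n$, adding the finite endpoint correction $E_B(n;\xi,r)$.

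You instead telescope with the unit-shift rule $H^{(r)}_{x+1}(s)=H^{(r)}_x(s)+s^{x+1}(x+1)^{-r}$ from a base point. For integral slope this is more economical than the paper's route. You get one Lerch constant and $q$ affine letters with rational shifts, with no $Q$-fold color multiplication and no polylogarithmic constants at roots of $s$; for example, $H^{(r)}_{n+1/2}(s)$ becomes a single affine sum. For slope $q/p$ with $p>1$ you pay instead. The outer index must be split modulo $p$ and recombined with periodic indicators. Inner sums of length about $n/p$ must be lifted back to length $n$ by a filter, which produces letters $(qu/p+p_2-i)^{-r}$. The paper's ordering avoids this lifting entirely, and all its affine forms $Am+\ell-A\ge 1$ are automatically nonzero.

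That last point is where your argument, as written, fails. The claim that exceptional zeros are ``absorbed into the constant'' is false. $\mathcal G_\Gamma(n)$ sums over every index from $1$, and after the filter every $u\le n$ enters, not only the members of the residue class. Two examples show the failure:
\begin{itemize}
\item For $x(n)=n-1$, your base value $H^{(r)}_{-1}(s)$ is undefined, since $\Phi(s,r,0)$ contains $0^{-r}$, and the $j=1$ term is also $0^{-r}$.
\item For $x(n)=(n-1)/2$, both residue classes produce the letter $(u/2-1/2)^{-r}$, which vanishes at $u=1$.
\end{itemize}

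The repair is to reindex the inner variable (equivalently, rebase far enough up) so that every affine form is positive on all positive integers. The cost is finitely many boundary letters evaluated at the outer index $n$, plus constants; this is exactly the role of $E_B(n;\xi,r)$ in the paper. For instance, take odd $n=2m+1$ and $x(n)=(n-1)/2$, and put $\eta^2=s$. Via $u=2m'-1$, write $\sum_{m'\le m}s^{m'}m'^{-r}$ as the odd-$u$ part of $\sum_{u\le n}\eta^{u+1}\bigl((u+1)/2\bigr)^{-r}$, minus the boundary letter $\eta^{n+1}\bigl((n+1)/2\bigr)^{-r}$. With this fix the harmonic part goes through. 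Your alternating and hyperharmonic steps coincide with the paper's.
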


\begin{proof}
Write $p_1=A/Q$ and $p_2=B/Q$, where $A,Q\in\mathbb Z_{>0}$ and
$B\in\mathbb Z$.  Let $\omega_Q=e^{2\pi i/Q}$ and choose a branch
$\rho=s^{1/Q}$.  The root-of-unity distribution for the Lerch tail gives
\begin{equation}
\label{eq:rational-affine-upper-harmonic-reduction-final}
  H_{(An+B)/Q}^{(r)}(s)
  =
  \Li_r(s)+Q^{r-1}\sum_{j=0}^{Q-1}\omega_Q^{-j(An+B)}
  \left(
    H_{An+B}^{(r)}(\rho\omega_Q^j)-\Li_r(\rho\omega_Q^j)
  \right).
\end{equation}
It remains to reduce the integer-affine upper limit $An+B$.  For any fixed
integer $B$ and any color $\xi$, decompose the range up to $An+B$ into residue
classes modulo $A$:
\begin{equation}
\label{eq:integer-affine-upper-depth-one-final}
  H_{An+B}^{(r)}(\xi)
  =
  \sum_{\ell=1}^{A}\xi^{\ell-A}
  \mathcal G_{L_{A,\ell-A}^{r}(\xi^A)}(n)+E_B(n;\xi,r),
\end{equation}
where the endpoint correction is the finite sum
\[
E_B(n;\xi,r)=
\begin{cases}
\displaystyle\sum_{t=1}^{B}\dfrac{\xi^{An+t}}{(An+t)^r},& B\ge1,\\[3mm]
0,& B=0,\\[2mm]
\displaystyle-\sum_{t=B+1}^{0}\dfrac{\xi^{An+t}}{(An+t)^r},& B\le -1.
\end{cases}
\]
Each term of $E_B(n;\xi,r)$ is an affine letter evaluated at $n$.  Therefore
\eqref{eq:integer-affine-upper-depth-one-final} lies in the affine summand
space, and substitution into
\eqref{eq:rational-affine-upper-harmonic-reduction-final} proves the assertion
for $H_{x(n)}^{(r)}(s)$.  The alternating case follows from
$A_x^{(r)}(s)=-H_x^{(r)}(-s)$.

For fixed hyperharmonic order $m$, the analytic-continuation formula expresses
$h_x^{[m]}(r;s)$ as a finite polynomial combination, in $x$, of terms
$H_x^{(r-a)}(s)$.  Since $x(n)$ is affine in $n$, the polynomial factors are
allowed affine prefactors and the harmonic factors have already been reduced.
\end{proof}

\begin{example}\normalfont
For every positive integer $p$,
\[
  H_{pn}^{(r)}(s)
  =
  \sum_{\ell=1}^{p}s^{\ell-p}
  \mathcal G_{L_{p,\ell-p}^{r}(s^p)}(n).
\]
In particular,
\[
  H_{2n}^{(r)}
  =
  \mathcal G_{L_{2,-1}^{r}(1)}(n)+\mathcal G_{L_{2,0}^{r}(1)}(n),
\]
which is the even/odd decomposition written with truncation index $n$.
\end{example}

\begin{example}\normalfont
For the alternating harmonic number with integer-affine upper argument,
\[
\begin{aligned}
A_{3n+1}^{(r)}(s)
&=-s^{-2}\mathcal G_{L_{3,-2}^{r}(-s^3)}(n)
  +s^{-1}\mathcal G_{L_{3,-1}^{r}(-s^3)}(n)
  -\mathcal G_{L_{3,0}^{r}(-s^3)}(n)  \\
&\quad
  +s\,\frac{(-s^3)^n}{(3n+1)^r}.
\end{aligned}
\]
The last term is an affine letter evaluated at the outer index $n$.
\end{example}

\begin{example}\normalfont
The fractional upper argument used in computations, for instance
$H_{n/2+1/3}^{(2)}(1/4)$, is covered by
\eqref{eq:rational-affine-upper-harmonic-reduction-final} with
$A=3$, $B=2$, and $Q=6$.  Hence it becomes a finite linear combination of
constants, affine outer letters, and terms
\[
  \mathcal G_{L_{3,1}^{2}(\xi)}(n),
  \qquad
  \mathcal G_{L_{3,2}^{2}(\xi)}(n),
\]
where $\xi$ runs through finitely many sixth-root colored constants.  Similarly,
for first-order hyperharmonic numbers,
\[
 h_{2n}^{[1]}(r;s)=(2n+1)H_{2n}^{(r)}(s)-H_{2n}^{(r-1)}(s),
\]
and the preceding formula for $H_{2n}^{(r)}(s)$ gives an affine reduction with
upper limit $n$.
\end{example}

\begin{proposition}[Integer-affine upper arguments in colored multiple harmonic numbers]
\label{prop:scaled-multiple-harmonic-affine-final}
Let $p\in\mathbb Z_{>0}$ and $q\in\mathbb Z_{\ge0}$.  With the notation for
colored multiple harmonic numbers from the previous subsection,
\[
  H_{pn+q}^{\mathbf r}(\mathbf s)
\]
is affine-harmonic-sum reducible for all finite vectors
$\mathbf r=(r_1,\ldots,r_d)$ and $\mathbf s=(s_1,\ldots,s_d)$.
\end{proposition}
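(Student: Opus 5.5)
We prove the claim by induction on the depth $d$, using the same residue-class decomposition that gave \eqref{eq:integer-affine-upper-depth-one-final}. The key observation is that the outermost index $n_1$ ranges over $1\le n_1\le pn+q$. We split this range into a main part $n_1\le pn$ and a finite endpoint part $pn<n_1\le pn+q$:
\[
H_{pn+q}^{\mathbf r}(\mathbf s)=H_{pn}^{\mathbf r}(\mathbf s)+\sum_{t=1}^{q}\frac{s_1^{pn+t}}{(pn+t)^{r_1}}\,H_{pn+t-1}^{(r_2,\ldots,r_d)}(s_2,\ldots,s_d).
\]
Each endpoint term is an affine letter in $n$ times a depth-$(d-1)$ colored multiple harmonic number with upper limit $pn+(t-1)$. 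By the induction hypothesis, that factor lies in $\mathscr E^{\rm aff}_n$. It therefore remains to show that $\mathscr E^{\rm aff}_n$ is closed under multiplication by affine letters, and to handle $q=0$.

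For $q=0$, we write every index as $n_j=pm_j-c_j$ with $m_j\ge1$ and $c_j\in\{0,\ldots,p-1\}$, which is the same parametrization as in the depth-one formula with $\ell=p-c_j$. Then $s_j^{n_j}n_j^{-r_j}=s_j^{-c_j}(s_j^p)^{m_j}(pm_j-c_j)^{-r_j}$, which is an affine letter in $m_j$. The condition $pn\ge n_1>\cdots>n_d\ge1$ becomes the lexicographic condition $n\ge m_1$ together with, for each adjacent pair, either $m_j>m_{j+1}$, or $m_j=m_{j+1}$ and $c_j<c_{j+1}$. So we fix the residue vector $(c_1,\ldots,c_d)\in\{0,\ldots,p-1\}^d$. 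We then decompose according to the set of adjacent positions where $m_j=m_{j+1}$; on a block of equal $m$'s the residues must be strictly increasing. Merging each block into a single affine letter via $L\circ M$ (colors multiply, affine factors concatenate) yields a strict sum $\mathcal G_\Gamma(n)$ over the coarsened word. Hence
\[
H_{pn}^{\mathbf r}(\mathbf s)=\sum_{(c_j)}\ \sum_{\text{admissible coarsenings}}\Bigl(\prod_j s_j^{-c_j}\Bigr)\mathcal G_{\Gamma_{c,\pi}}(n)\in\mathscr G_n,
\]
which is a finite sum. This is the exact analogue of the star-to-strict argument in Proposition~\ref{prop:standalone-multiple-star}.

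For the closure step, an element of $\mathscr E^{\rm aff}_n$ has the form $L(n)G(n)$ with $G\in\mathscr G_n$. Multiplying by the affine letter $\lambda(n)=s_1^{pn+t}(pn+t)^{-r_1}$ gives $(\lambda\circ L)(n)G(n)$, which is again of the admissible form, because $\lambda$ is an affine letter with color $s_1^p$, prefactor $s_1^t$, and base $(p,t)$. This is all the induction needs; no stuffle product of two $\mathcal G$'s is required. The base case $d=0$ is trivial, and $d=1$ is \eqref{eq:integer-affine-upper-depth-one-final}. Alternatively, one can skip the induction and apply the $q=0$ decomposition directly at upper limit $p(n+1)$, then subtract the tail, but the endpoint recursion is cleaner.

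The main obstacle is the bookkeeping in the $q=0$ step: checking that the map from $(n_j)$ to $(m_j,c_j)$ is a bijection onto the set of lexicographically strictly decreasing pairs, and that equal-$m$ blocks of arbitrary length (up to $p$ indices, and at most $\min(d,p)$ per block) produce valid merged affine letters. Branch choices for $s_j^{n_j}=s_j^{-c_j}(s_j^p)^{m_j}$ must be fixed consistently; for integer exponents this is automatic. Nonvanishing of the bases $pm-c$ on $m\ge1$ holds because $c<p$.
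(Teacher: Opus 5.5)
Your proposal is correct and follows the paper's proof: the $q=0$ residue-class parametrization $n_j=pm_j-c_j$ (the paper's $k_j=pm_j+\ell_j-p$ with $\ell_j=p-c_j$), followed by merging equality blocks of the $m_j$ into single affine letters, is exactly the paper's argument. Your outer-index peeling with induction on depth is just an unrolled form of the paper's step of choosing which initial indices lie in the finite tail $\{pn+1,\ldots,pn+q\}$, each choice giving an affine prefactor times a $q=0$ sum.
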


\begin{proof}
First consider $q=0$.  Split every index $k_j$ according to its residue class
modulo $p$:
\[
  k_j=pm_j+\ell_j-p,
  \qquad 1\le \ell_j\le p,
  \qquad 1\le m_j\le n.
\]
For fixed residues $(\ell_1,\ldots,\ell_d)$, the strict inequalities among the
$k_j$'s become finitely many alternatives among the $m_j$'s: either
$m_j>m_{j+1}$, or $m_j=m_{j+1}$ with $\ell_j>\ell_{j+1}$.  Decomposing by the
equality pattern among adjacent $m_j$'s gives a finite sum of affine words.  An
equality block contributes one affine letter whose value is the product of all
letters in that block.  The case $q>0$ is obtained by choosing which initial
indices lie in the finite tail $\{pn+1,\ldots,pn+q\}$; only finitely many such
choices occur, and each gives an affine prefactor times the case $q=0$.
\end{proof}

\begin{example}\normalfont
At depth one the proposition gives the already displayed formula
\[
  H_{pn}^{(r)}(s)=
  \sum_{\ell=1}^{p}s^{\ell-p}\mathcal G_{L_{p,\ell-p}^{r}(s^p)}(n).
\]
For $p=2$ and depth two one obtains the explicit affine decomposition
\[
\begin{aligned}
H_{2n}^{(r_1,r_2)}(s_1,s_2)
&=\sum_{\ell_1,\ell_2=1}^{2}
  s_1^{\ell_1-2}s_2^{\ell_2-2}
  \mathcal G_{L_{2,\ell_1-2}^{r_1}(s_1^2),
              L_{2,\ell_2-2}^{r_2}(s_2^2)}(n) \\
&\quad+
  s_1^{0}s_2^{-1}
  \mathcal G_{L_{2,0}^{r_1}(s_1^2)\circ
              L_{2,-1}^{r_2}(s_2^2)}(n),
\end{aligned}
\]
because the only equal-index residue possibility with $\ell_1>\ell_2$ is
$(\ell_1,\ell_2)=(2,1)$.
\end{example}

\begin{example}\normalfont
For a shifted upper argument, the tail is finite.  For example,
\[
  H_{2n+1}^{(r)}(s)
  =s^{-1}\mathcal G_{L_{2,-1}^{r}(s^2)}(n)
  +\mathcal G_{L_{2,0}^{r}(s^2)}(n)
  +s\,\frac{(s^2)^n}{(2n+1)^r}.
\]
The last term is again an affine letter evaluated at $n$.
\end{example}

\begin{proposition}[Truncated Hurwitz--Lerch sums and their multiple versions]
\label{prop:truncated-hurwitz-lerch-affine-final}
Let
\[
  \zeta_N(r,a):=\sum_{m=0}^{N-1}\frac1{(m+a)^r},
  \qquad
  \Phi_N(z,r,a):=\sum_{m=0}^{N-1}\frac{z^m}{(m+a)^r}.
\]
Then the truncated Hurwitz zeta value $\zeta_n(r,a)$ and the truncated Lerch
Phi value $\Phi_n(z,r,a)$ are affine-harmonic-sum reducible.  Their strict
multiple versions, viewed as finite analogues of the cyclotomic multiple
Hurwitz zeta values studied in \citep{Xu2026CyclotomicMultipleHurwitz},
\[
  \zeta_n(\mathbf r;\mathbf a)
  :=\sum_{n>m_1>\cdots>m_d\ge0}
  \prod_{j=1}^{d}\frac1{(m_j+a_j)^{r_j}}
\]
and
\[
  \Phi_n(\mathbf z;\mathbf r;\mathbf a)
  :=\sum_{n>m_1>\cdots>m_d\ge0}
  \prod_{j=1}^{d}\frac{z_j^{m_j}}{(m_j+a_j)^{r_j}}
\]
are also affine-harmonic-sum reducible.
\end{proposition}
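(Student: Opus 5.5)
The plan is to reindex each sum so that the summation index runs over $1,\ldots,N$ with strict decreasing order. Each factor then becomes an affine letter evaluated at the new index, and the sum is literally a word sum $\mathcal G_\Gamma(n)$. No stuffle expansion is needed.

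At depth one, put $k=m+1$, so $m=0,\ldots,n-1$ corresponds to $k=1,\ldots,n$. This gives
\[
\zeta_n(r,a)=\sum_{k=1}^{n}\frac{1}{(k+a-1)^r}=\mathcal G_{L^{r}_{1,a-1}(1)}(n),
\qquad
\Phi_n(z,r,a)=\sum_{k=1}^{n}\frac{z^{k-1}}{(k+a-1)^r}=z^{-1}\mathcal G_{L^{r}_{1,a-1}(z)}(n).
\]
Hence both lie in $\mathscr G_n\subset\mathscr E^{\rm aff}_n$; the coefficient $z^{-1}$ requires $z\neq0$. For $z=0$ the sum reduces to the single term $a^{-r}$, a constant, which lies in $\mathscr E^{\rm aff}_n$ with empty word and letter $1$.

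For the multiple versions, substitute $k_j=m_j+1$ for every index. The domain $n>m_1>\cdots>m_d\ge0$ maps bijectively onto $n\ge k_1>\cdots>k_d\ge1$, which is exactly the domain in \eqref{eq:affine-Gamma-definition-final}. The summand factorizes as $\prod_j z_j^{-1}\,z_j^{k_j}(k_j+a_j-1)^{-r_j}$. This gives
\[
\Phi_n(\mathbf z;\mathbf r;\mathbf a)=\Bigl(\prod_{j=1}^d z_j^{-1}\Bigr)\,
\mathcal G_{L^{r_1}_{1,a_1-1}(z_1),\ldots,L^{r_d}_{1,a_d-1}(z_d)}(n),
\]
and $\zeta_n(\mathbf r;\mathbf a)$ is the specialization $z_j=1$. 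Any $z_j=0$ forces $m_j=0$. That happens only when $j=d$, in which case the word is truncated to depth $d-1$ times a constant, or the sum vanishes identically. Either way the result stays in the span, or one may simply assume $z_j\ne0$.

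The only point requiring care is admissibility of the affine letters. The form $k+a_j-1$ must be nonzero for $1\le k\le n$, i.e.\ $a_j\notin\{0,-1,-2,\ldots\}$, which is the standing hypothesis for the truncated Hurwitz and Lerch sums to be defined at all. Branches of $(k+a_j-1)^{-r_j}$ are taken to agree with those used for $(m_j+a_j)^{-r_j}$. With these conventions the reindexing is an identity of finite sums and the proposition follows. I expect no real obstacle: the statement is essentially a definitional identification, and the affine alphabet was built to contain such shifts.
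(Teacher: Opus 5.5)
Your proposal is correct and is the paper's own argument: the shift $k_j=m_j+1$ turns each $m_j+a_j$ into the affine form $k_j+a_j-1$ and each color $z_j^{m_j}$ into $z_j^{-1}z_j^{k_j}$, so these sums are affine words $\mathcal G_\Gamma(n)$ up to a constant factor. Your aside on $z_d=0$ is slightly loose. After forcing $m_d=0$, the remaining index satisfies $k_{d-1}\ge 2$, so the result is not literally the truncated word, although it still lies in $\mathscr E^{\rm aff}_n$. The paper does not treat this degenerate case, and you rightly note that it can simply be excluded.
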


\begin{proof}
Put $k_j=m_j+1$.  Then $m_j+a_j=k_j+a_j-1$, so each denominator is an affine
form in $k_j$.  The Lerch colors give only the harmless constants $z_j^{-1}$,
since $z_j^{m_j}=z_j^{-1}z_j^{k_j}$.
\end{proof}

\begin{example}\normalfont
The depth-one reductions are
\[
  \zeta_n(r,a)=\mathcal G_{L_{1,a-1}^{r}(1)}(n),
  \qquad
  \Phi_n(z,r,a)=z^{-1}\mathcal G_{L_{1,a-1}^{r}(z)}(n).
\]
Thus, for instance,
\[
  \zeta_n\!\left(r,\frac12\right)
  =\mathcal G_{L_{1,-1/2}^{r}(1)}(n).
\]
\end{example}

\begin{example}\normalfont
At depth two,
\[
  \Phi_n((z_1,z_2);(r_1,r_2);(a_1,a_2))
  =z_1^{-1}z_2^{-1}
  \mathcal G_{L_{1,a_1-1}^{r_1}(z_1),
              L_{1,a_2-1}^{r_2}(z_2)}(n).
\]
The star version is obtained, as usual, by decomposing weak inequalities into
strict inequalities and equality blocks; equality blocks simply merge affine
letters by the operation $\circ$.
\end{example}

\begin{proposition}[Residue-class harmonic sums and finite level-$M$ values]
\label{prop:residue-class-level-M-affine-final}
The residue-class entities discussed in
Remark~\ref{rem:residue-class-level-M-values} are affine-harmonic-sum reducible.  Explicitly, this includes the odd harmonic numbers
$O_n^{(r)}(s)$, finite colored Hoffman multiple $t$-values
$t_n(\mathbf r;\mathbf s)$, finite Kaneko--Tsumura multiple $T$-values,
finite multiple mixed values, level-$M$ residue-class harmonic numbers
$R_{M,a;n}^{(r)}(s)$, finite multiple zeta values of level $M$, and the
corresponding colored level-$M$ variants.  More precisely, let $M\ge2$,
$1\le a\le M$, and use the normalization of
Remark~\ref{rem:residue-class-level-M-values}:
\[
  R_{M,a;n}^{(r)}(s):=
  M^r\sum_{j=1}^{n}\frac{s^{Mj+a-M}}{(Mj+a-M)^r}.
\]
Then $R_{M,a;n}^{(r)}(s)\in\mathscr G_n$.  Applying the same residue-class
restriction independently at each index shows that the finite Hoffman,
Kaneko--Tsumura, mixed, and level-$M$ multiple values listed above are affine-harmonic-sum reducible.
\end{proposition}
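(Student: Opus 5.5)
The plan is to show that the normalized definition of $R_{M,a;n}^{(r)}(s)$ is already a depth-one affine word sum with truncation index $n$. The multiple versions then follow by applying the same substitution at every index.

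\emph{Step 1: depth one.} Set $m=Mj+a-M$. As $j$ runs over $1,\ldots,n$, $m$ runs exactly over the integers $1\le m\le Mn$ with $m\equiv a\pmod M$, since $1\le a\le M$. This matches the normalization of Remark~\ref{rem:residue-class-level-M-values}. Each summand factors as
\[
M^r s^{a-M}(s^M)^j(Mj+a-M)^{-r}.
\]
Hence
\[
R_{M,a;n}^{(r)}(s)=M^r s^{a-M}\,\mathcal G_{L_{M,a-M}^{r}(s^M)}(n)\in\mathscr G_n .
\]
The affine form $Mj+a-M$ is positive for $j\ge1$, so the letter is admissible. A scalar multiple of an element of $\mathscr G_n$ lies in $\mathscr G_n$, and an element of $\mathscr G_n$ lies in $\mathscr E^{\rm aff}_n$ (take $L=1$). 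The odd harmonic numbers are the case $M=2$, $a=1$: $O_n^{(r)}(s)=2^r s^{-1}\mathcal G_{L_{2,-1}^{r}(s^2)}(n)$. Alternatively, $O_n^{(r)}(s)$ can be read off directly from its definition with the affine base $(1,-\tfrac12)$.

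\emph{Step 2: multiple versions.} Consider a finite multiple value in which the $\ell$-th index is restricted to a residue class $a_\ell\pmod M$ and all indices satisfy the strict order $Mk\ge m_1>\cdots>m_d\ge1$. Write $m_\ell=Mj_\ell+a_\ell-M$ with $1\le j_\ell\le n$. The main point is that strict inequalities among the $m_\ell$ do not translate directly into strict inequalities among the $j_\ell$. The condition $m_\ell>m_{\ell+1}$ holds if and only if either $j_\ell>j_{\ell+1}$, or $j_\ell=j_{\ell+1}$ and $a_\ell>a_{\ell+1}$. This is the same bookkeeping as in the proof of Proposition~\ref{prop:scaled-multiple-harmonic-affine-final}.

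\emph{Step 3: decomposition into blocks.} Decompose the $j$-region according to the pattern of equalities among adjacent $j_\ell$. An equality block is admissible only if its residues are strictly decreasing; otherwise it contributes zero. Each admissible block contributes one affine letter, namely the $\circ$-product of the letters $L_{M,a_\ell-M}^{r_\ell}(s_\ell^M)$ in the block, times the constant $\prod s_\ell^{a_\ell-M}$. Summing over patterns gives a finite linear combination of $\mathcal G_\Gamma(n)$.

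\emph{Step 4: the individual families.}
\begin{itemize}
\item Hoffman $t$-values are the case $M=2$ with all $a_\ell=1$. Here every equality block has length one, because residues are equal within the class, so no merged letters appear.
\item Kaneko--Tsumura $T$-values and the multiple mixed values are $M=2$ with prescribed alternating or arbitrary parity patterns.
\item Level-$M$ values allow arbitrary $a_\ell$.
\item Colored variants only change the colors $s_\ell$. Non-integer upper limits in these definitions are handled by the finite-tail argument of Proposition~\ref{prop:scaled-multiple-harmonic-affine-final}.
\end{itemize}

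The main obstacle is matching each family's precise index convention, such as $m_1<Mk$ versus $\le$, or which index carries which parity, to the template above. I would fix the template once and note that each family is a specialization of it.
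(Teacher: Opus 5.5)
Your proposal is correct and follows the paper's route: the substitution $m=Mj+a-M$ gives $R_{M,a;n}^{(r)}(s)=M^r s^{a-M}\mathcal G_{L_{M,a-M}^{r}(s^M)}(n)$, and the multiple values come from doing the same at every index. Your Step~2--3 equality-block decomposition is more careful than the paper's proof, which simply asserts that the multiple level-$M$ sums are single affine words. That assertion is exact only when the order is strict in the $j$'s, as in the Hoffman definition of Remark~\ref{rem:residue-class-level-M-values}. For mixed residues ordered by the original integers, such as Kaneko--Tsumura parities where $a_\ell=2>a_{\ell+1}=1$ allows $j_\ell=j_{\ell+1}$, your merged letters are genuinely needed, and the result is a finite combination in $\mathscr G_n$ rather than a single word.
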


\begin{proof}
The displayed residue-class sum is exactly
\[
  R_{M,a;n}^{(r)}(s)=M^r s^{a-M}\mathcal G_{L_{M,a-M}^{r}(s^M)}(n).
\]
At higher depth, each index has its own affine form $Mj+a_j-M$ and color
$s_j^M$, with the constant $s_j^{a_j-M}$ pulled out.  Thus the finite level-$M$
multiple sums are affine words.  The parity-restricted objects in
Remark~\ref{rem:residue-class-level-M-values} are the special case
$M=2$, and finite multiple mixed values are obtained by assigning, at each
summation level, the prescribed even or odd residue class.  Equivalently,
these residue restrictions may also be obtained by finite Fourier filters, but
the direct affine parameterization already places them in $\mathscr G_n$.
\end{proof}

\begin{example}\normalfont
Odd harmonic sums are the level-two case:
\[
  \sum_{j=1}^{n}\frac{s^{2j-1}}{(2j-1)^r}
  =s^{-1}\mathcal G_{L_{2,-1}^{r}(s^2)}(n).
\]
With the shifted normalization used for odd harmonic numbers,
\[
  \sum_{j=1}^{n}\frac{s^{2j-1}}{(j-\frac12)^r}
  =2^r s^{-1}\mathcal G_{L_{2,-1}^{r}(s^2)}(n).
\]
\end{example}

\begin{example}\normalfont
The finite Hoffman-type depth-two odd sum is already an affine word:
\[
  \sum_{n\ge j_1>j_2\ge1}
  \frac{1}{(2j_1-1)^{r_1}(2j_2-1)^{r_2}}
  =
  \mathcal G_{L_{2,-1}^{r_1}(1),L_{2,-1}^{r_2}(1)}(n).
\]
Similarly, the finite Kaneko--Tsumura parity pattern is obtained by assigning
one residue class modulo $2$ to each level.
\end{example}

\begin{example}\normalfont
For arbitrary level $M$ and residues $a_1,a_2$, the colored depth-two
residue-class sum is
\[
\begin{aligned}
&\sum_{n\ge j_1>j_2\ge1}
 \frac{s_1^{Mj_1+a_1-M}s_2^{Mj_2+a_2-M}}
 {(Mj_1+a_1-M)^{r_1}(Mj_2+a_2-M)^{r_2}} \\
&\qquad=
 s_1^{a_1-M}s_2^{a_2-M}
 \mathcal G_{L_{M,a_1-M}^{r_1}(s_1^M),
             L_{M,a_2-M}^{r_2}(s_2^M)}(n).
\end{aligned}
\]
\end{example}

\begin{proposition}[Affine-telescopic hypergeometric terms]
\label{prop:affine-telescopic-hypergeometric-final}
Let $(u_n)_{n\ge1}$ be a hypergeometric term for which there exist
$t\ge0$, $C,z\in\mathbb C$, affine forms $\ell_\nu(n)=a_\nu n+b_\nu$, and exponents
$q_\nu\in\mathbb C$ for $1\le\nu\le t$ such that
\begin{equation}
\label{eq:affine-telescopic-term-final}
  u_n=Cz^n\prod_{\nu=1}^{t}\ell_\nu(n)^{q_\nu}
\end{equation}
for all positive integers in the summation range, after fixing branches of the
complex powers.  Equivalently, such a term has telescopic quotient
\begin{equation}
\label{eq:affine-telescopic-quotient-final}
  \frac{u_{n+1}}{u_n}
  =z\prod_{\nu=1}^{t}
  \left(\frac{\ell_\nu(n+1)}{\ell_\nu(n)}\right)^{q_\nu}.
\end{equation}
Then $u_n$ is affine-harmonic-sum reducible.  More generally, if
$\alpha_1,\ldots,\alpha_m$ are words in the basic colored alphabet and
$e_1,\ldots,e_m\in\mathbb Z_{\ge0}$, then
\[
  u_n\prod_{j=1}^{m}\mathcal H_{\alpha_j}(n)^{e_j}
\]
is affine-harmonic-sum reducible, and finite summation of such terms lies in
$\mathscr G_N$.
\end{proposition}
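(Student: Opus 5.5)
The first claim is essentially a rewriting of \eqref{eq:affine-telescopic-term-final}. I will check that $u_n=Cz^n\prod_\nu\ell_\nu(n)^{q_\nu}$ is exactly $C$ times the affine letter $L=(\boldsymbol\rho,\sigma,\mathbf A)$ with $\sigma=z$, $\rho_\nu=-q_\nu$ and $\mathbf A=((a_1,b_1),\ldots,(a_t,b_t))$, evaluated at $n$, with the same fixed branches. Taking $G=\mathcal G_\emptyset(n)=1$ then puts $u_n$ in $\mathscr E^{\rm aff}_n$ by \eqref{eq:affine-summand-space-final}.

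For the equivalence with the quotient form \eqref{eq:affine-telescopic-quotient-final}, the forward direction is immediate. For the converse I will telescope the product $\prod_{k=1}^{n-1}u_{k+1}/u_k$. This gives $u_n=u_1z^{n-1}\prod_\nu(\ell_\nu(n)/\ell_\nu(1))^{q_\nu}$, which has the required form with $C=u_1z^{-1}\prod_\nu\ell_\nu(1)^{-q_\nu}$. The one point to watch is branches: the quotient should be read with the branches under which $(\ell_\nu(n+1)/\ell_\nu(n))^{q_\nu}=\ell_\nu(n+1)^{q_\nu}\ell_\nu(n)^{-q_\nu}$. I will state this convention rather than prove anything about it. This is the main obstacle I expect, although it is only a bookkeeping issue.

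For the general claim, I first expand $\prod_j\mathcal H_{\alpha_j}(n)^{e_j}$ by repeated use of the stuffle product \eqref{eq:monoidal-stuffle-product}. This gives a finite linear combination of $\mathcal H_\beta(n)$ over colored words $\beta$. Each colored letter $(r,s)$ is the affine letter $((r),s,((1,0)))$, so each $\mathcal H_\beta(n)$ is some $\mathcal G_\Gamma(n)$. Multiplying by the letter $Cz^n\prod_\nu\ell_\nu(n)^{q_\nu}$ then lands in $\mathscr E^{\rm aff}_n$.

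Finite summation follows directly from the definitions: $\sum_{n=1}^N L(n)\mathcal G_\Gamma(n)=\mathcal G_{(L,\Gamma)}(N)$. To see this, write out $\mathcal G_\Gamma(n)$ as a sum over $n\ge n_1>\cdots$ and split it into the term with $n=n_1$ and the terms with $n>n_1$. The $n=n_1$ part gives $\mathcal G_{(L\circ L_1,L_2,\ldots)}(N)$, and the $n>n_1$ part gives $\mathcal G_{(L,L_1,\ldots)}(N)$. Here $\circ$ is closed because the affine alphabet is monoidal. By linearity the finite sum therefore lies in $\mathscr G_N$.
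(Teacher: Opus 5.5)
Your proposal follows the paper's proof essentially step for step: $u_n$ is $C$ times the affine letter $((-q_1,\ldots,-q_t),z,((a_1,b_1),\ldots,(a_t,b_t)))$, the harmonic factors are stuffle-expanded and embedded via $((r),s,((1,0)))$, and the outer sum is closed by the strict/diagonal split; your telescoping argument for the quotient form is an extra the paper omits.

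One slip: the displayed identity $\sum_{n=1}^N L(n)\mathcal G_\Gamma(n)=\mathcal G_{(L,\Gamma)}(N)$ is only correct for $\Gamma=\emptyset$. For $\Gamma=(L_1,L_2,\ldots)$ it must read $\mathcal G_{(L,\Gamma)}(N)+\mathcal G_{(L\circ L_1,L_2,\ldots)}(N)$, which is exactly what your own splitting argument produces. The conclusion that the sum lies in $\mathscr G_N$ is unaffected.
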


\begin{proof}
Let
\[
  L=((-q_1,\ldots,-q_t),z,((a_1,b_1),\ldots,(a_t,b_t))).
\]
Then $L(n)=z^n\prod_\nu(a_\nu n+b_\nu)^{q_\nu}$, so $u_n=CL(n)$.  The product
of the basic sums $\mathcal H_{\alpha_j}(n)$ is a finite linear combination of
single basic sums by the quasi-shuffle product, and each basic letter is an
affine letter of the special form $((r),s,((1,0)))$.  Hence the summand lies in
$\mathscr E_n^{\rm aff}$.  Finite summation is obtained by adjoining the outer
affine letter and splitting the summation region into strict and equality
cases.
\end{proof}

\begin{example}\normalfont
The single sum
\[
  \sum_{m=1}^{n}\frac{z^m}{(2m+1)^a(3m-2)^b}
\]
is exactly
\[
  \mathcal G_{((-a,-b),z,((2,1),(3,-2)))}(n).
\]
\end{example}

\begin{example}\normalfont
Let $L=((-q),z,((2,1)))$ and $B=L_{1,0}^{r}(s)$.  Then
\[
  \sum_{m=1}^{n}z^m(2m+1)^q H_m^{(r)}(s)
  =\mathcal G_{L,B}(n)+\mathcal G_{L\circ B}(n).
\]
Thus products of affine-telescopic factors with harmonic factors remain in the
affine alphabet after summation.
\end{example}

\begin{proposition}[Cyclotomic harmonic sums]
\label{prop:cyclotomic-harmonic-sums-affine-final}
Finite cyclotomic harmonic sums, as used in Ablinger's HarmonicSums framework
\citep{AblingerThesis2012,Ablinger2014}, are affine-harmonic-sum reducible.
More explicitly, let $a_j,b_j,r_j\in\mathbb Z$ with $a_j>0$, $r_j>0$, and
$a_j n+b_j\ne0$ on the relevant summation range.  For
$\varepsilon_j\in\{\pm1\}$, define
\[
  C_N=
  \sum_{N\ge n_1\ge\cdots\ge n_d\ge1}
  \prod_{j=1}^{d}
  \frac{\varepsilon_j^{n_j}}{(a_j n_j+b_j)^{r_j}}.
\]
Then the sequence $C_n$ is affine-harmonic-sum reducible.  The same conclusion
holds for the strict version with $n_1>\cdots>n_d$, and for cyclotomic
$S$-sums with additional colors $x_j^{n_j}$.
\end{proposition}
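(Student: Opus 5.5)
The plan is to observe that each summand factor is already an affine letter. Then the star sum is converted to strict sums exactly as in Proposition~\ref{prop:standalone-multiple-star}, with the merge $(r,s)\circ(r',s')$ replaced by the affine merge $\circ$.

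\emph{Letters.} For each $j$, put
\[
  L_j:=((r_j),\varepsilon_j,((a_j,b_j)))=L^{r_j}_{a_j,b_j}(\varepsilon_j).
\]
By \eqref{eq:affine-letter-value-final}, $L_j(n)=\varepsilon_j^{n}(a_jn+b_j)^{-r_j}$. The hypothesis $a_jn+b_j\ne0$ on the summation range is exactly the admissibility condition for affine letters. Because $r_j\in\mathbb Z$, no branch choice is needed.

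\emph{Strict version.} With these letters the strict cyclotomic sum is literally $\mathcal G_{(L_1,\ldots,L_d)}(N)$, by \eqref{eq:affine-Gamma-definition-final}. It therefore lies in $\mathscr G_N$, and the sequence $n\mapsto\mathcal G_{(L_1,\ldots,L_d)}(n)$ lies in $\mathscr E^{\rm aff}_n$, taking the prefactor $1$.

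\emph{Star version.} I would decompose the weakly ordered region $N\ge n_1\ge\cdots\ge n_d\ge1$ by its equality pattern. Each of the $d-1$ adjacent relations is either $>$ or $=$. Equivalently, we sum over the $2^{d-1}$ compositions of $\{1,\ldots,d\}$ into consecutive blocks $B_1,\ldots,B_k$. On a fixed pattern, all indices in a block coincide, and the block's factors multiply pointwise into the single affine letter
\[
  L_{B}=\mathop{\circ}_{j\in B}L_j ,
\]
whose color is $\prod_{j\in B}\varepsilon_j$ and whose affine factors are concatenated. This merged letter is again admissible, since each of its factors is nonzero on the range. The restricted sum is then the strict sum $\mathcal G_{(L_{B_1},\ldots,L_{B_k})}(N)$. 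Summing over patterns gives
\[
  C_N=\sum_{\text{compositions}}\mathcal G_{(L_{B_1},\ldots,L_{B_k})}(N)\in\mathscr G_N .
\]

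\emph{Cyclotomic $S$-sums.} The additional colors $x_j^{n_j}$ are absorbed by replacing the color $\varepsilon_j$ with $\varepsilon_jx_j$ in $L_j$. This works because colors are arbitrary complex numbers and multiply under $\circ$. The same block decomposition then applies verbatim. If the $S$-sum convention carries several cyclotomic factors on one index, as in a product $\prod_\mu(a_{\mu}n+b_\mu)^{-r_\mu}$, I would use a multi-factor affine letter $(\boldsymbol\rho,\sigma,\mathbf A)$ directly.

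\emph{Main point to check.} The argument has essentially no hard step. The only point needing care is confirming that the paper's and Ablinger's conventions match. Specifically, I would check that the cyclotomic letters $(a_jn+b_j)^{-r_j}$ with sign or color factors are exactly of the form \eqref{eq:affine-letter-value-final}, and that the nonvanishing hypothesis survives merging. Both hold because merging only multiplies existing nonzero factors.
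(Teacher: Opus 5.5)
Your proposal is correct and follows the paper's proof essentially step for step. The strict sum is literally $\mathcal G_{L_1,\ldots,L_d}(N)$ with $L_j=L^{r_j}_{a_j,b_j}(\varepsilon_j)$, the weakly ordered sum is resolved by the equality-pattern decomposition with each block merged via $\circ$, and the extra colors are absorbed by $\varepsilon_j\mapsto\varepsilon_jx_j$; your additional remarks (membership in $\mathscr E^{\rm aff}_n$ via the prefactor $1$, and multi-factor letters when several cyclotomic factors sit at one level) are harmless refinements of the same argument.
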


\begin{proof}
For the strict version, take affine letters
\[
  L_j=L_{a_j,b_j}^{r_j}(\varepsilon_j)
  \qquad(1\le j\le d).
\]
Then the strict cyclotomic sum is exactly $\mathcal G_{L_1,\ldots,L_d}(N)$.
For the weakly ordered sum, decompose the region
$N\ge n_1\ge\cdots\ge n_d\ge1$ according to the equality pattern among adjacent
indices.  Each block of equal indices contributes one affine letter obtained by
multiplying the letters in that block; this only multiplies the colors and
concatenates the affine factors.  Thus $C_N$ is a finite linear combination of
strict affine sums.  Extra colors $x_j^{n_j}$ are absorbed by replacing
$\varepsilon_j$ with $\varepsilon_j x_j$ in the corresponding affine letters.
\end{proof}

\begin{example}\normalfont
A strict cyclotomic depth-two sum is literally an affine word:
\[
  \sum_{n\ge k_1>k_2\ge1}
  \frac{(-1)^{k_1}}{(2k_1+1)^2(3k_2-1)^3}
  =
  \mathcal G_{L_{2,1}^{2}(-1),L_{3,-1}^{3}(1)}(n).
\]
\end{example}

\begin{example}\normalfont
For a weakly ordered depth-two sum,
\[
\begin{aligned}
\sum_{n\ge k_1\ge k_2\ge1}
\frac{x^{k_1}y^{k_2}}{(2k_1+1)^r(3k_2+2)^s}
&=
\mathcal G_{L_{2,1}^{r}(x),L_{3,2}^{s}(y)}(n)  \\
&\quad+
\mathcal G_{L_{2,1}^{r}(x)\circ L_{3,2}^{s}(y)}(n).
\end{aligned}
\]
The second term is the equality block $k_1=k_2$.
\end{example}

\begin{remark}
The propositions above are input statements for the affine closure theorem
proved later.  Once a summand is placed in $\mathscr E^{\rm aff}_n$, finite
convolution-type summation is performed inside $\mathscr G_N$ by adjoining the
outer affine letter and splitting the summation region into strict and equality
cases.
\end{remark}

The following table summarizes the main affine-harmonic-sum reducible families
introduced in this subsection.  In the table, all parameters are fixed, affine
forms are assumed nonzero on the relevant summation range, and all reductions
are to affine sums with truncation index $n$.

\begin{center}
\scriptsize
\setlength{\tabcolsep}{3pt}
\renewcommand{\arraystretch}{1.08}
\begin{adjustbox}{max width=\textwidth}
\begin{tabular}{@{}>{\raggedright\arraybackslash}p{0.25\textwidth}>{\raggedright\arraybackslash}p{0.48\textwidth}>{\raggedright\arraybackslash}p{0.21\textwidth}@{}}
\hline
\textbf{Family} & \textbf{Typical objects} & \textbf{Reason}\\
\hline
Affine letters and affine words
& $\sigma^n\prod_\nu(a_\nu n+b_\nu)^{-\rho_\nu}$,
  $\mathcal G_\Gamma(n)$
& definition of the affine alphabet\\
Affine upper harmonic and alternating harmonic numbers
& $H_{p_1n+p_2}^{(r)}(s)$, $A_{p_1n+p_2}^{(r)}(s)$, with $p_1,p_2\in\mathbb Q$
& Lerch continuation plus root-of-unity distribution and residue-class splitting\\
Affine upper hyperharmonic numbers
& $h_{p_1n+p_2}^{[m]}(r;s)$ for fixed $m$
& finite polynomial combination of affine-upper harmonic terms\\
Integer-affine upper multiple harmonic numbers
& $H_{pn+q}^{\mathbf r}(\mathbf s)$, with $p\in\mathbb Z_{>0}$ and $q\in\mathbb Z_{\ge0}$
& residue classes modulo $p$ and finite tail corrections\\
Truncated Hurwitz and Lerch sums
& $\zeta_n(r,a)$, $\Phi_n(z,r,a)$, and strict multiple versions
& replace $m$ by $k-1$, giving affine denominators $k+a-1$\\
Residue-class, parity-restricted, and level-$M$ finite sums
& $O_n^{(r)}(s)$, finite colored Hoffman multiple $t$-values, finite Kaneko--Tsumura multiple $T$-values, finite multiple mixed values, $R_{M,a;n}^{(r)}(s)$, finite level-$M$ multiple zeta values, colored level-$M$ variants
& Remark~\ref{rem:residue-class-level-M-values}; each residue class gives an affine form $Mj+a-M$\\
Affine-telescopic hypergeometric terms
& $Cz^n\prod_\nu(a_\nu n+b_\nu)^{q_\nu}$ and products with harmonic factors
& the term is an affine letter; harmonic factors are basic affine words\\
Cyclotomic harmonic sums and cyclotomic $S$-sums
& $\displaystyle\sum_{n\ge k_1\ge\cdots\ge k_d\ge1}\prod_j\frac{x_j^{k_j}}{(a_jk_j+b_j)^{r_j}}$
& weak inequalities split into strict affine words and equality blocks\\
\hline
\end{tabular}
\end{adjustbox}
\end{center}

\subsection{Polynomial-base harmonic-number alphabet}
\label{subsec:polynomial-harmonic-sum-alphabet}

The next enlargement replaces affine linear factors by arbitrary polynomial
factors.  This is useful for finite polynomial zeta sums, polynomial-base
polylogarithms, one-dimensional Epstein--Hurwitz type sums, and finite Mathieu
series.  The point is not that these objects usually reduce to the ordinary or
affine alphabet; rather, after polynomial factors are admitted as letters, the
same stuffle and finite-summation mechanism applies without change.

A \emph{polynomial letter} is a triple
\[
  L=(\boldsymbol\rho,\sigma,\mathbf P),
  \qquad
  \boldsymbol\rho=(\rho_1,\ldots,\rho_t),
  \qquad
  \mathbf P=(P_1,\ldots,P_t),
\]
where \(\rho_\nu,\sigma\in\mathbb C\), \(P_\nu\in\mathbb C[x]\), and the
values \(P_\nu(n)\) are nonzero on the positive integers under consideration.
After branches for the powers have been fixed, the letter has value
\[
  L(n)=\sigma^n\prod_{\nu=1}^{t}P_\nu(n)^{-\rho_\nu}.
\]
For a word \(\Omega=(L_1,\ldots,L_d)\), define
\[
  \mathcal P_\Omega(N)
  :=
  \sum_{N\ge n_1>\cdots>n_d\ge1}
  \prod_{j=1}^{d}L_j(n_j),
  \qquad
  \mathcal P_\emptyset(N)=1.
\]
The identity polynomial \(\idpoly(x)=x\) embeds the basic colored harmonic
alphabet, while affine linear polynomials embed the affine alphabet.
Multiplication of letters is given by concatenating their lists of powers and
polynomials and multiplying their colors; hence the associated polynomial
harmonic sums satisfy the same quasi-shuffle product.

For summand-level reductions define
\begin{equation}
\label{eq:polynomial-summand-space-final}
  \mathscr E^{\rm pol}_n
  :=
  \operatorname{span}_{\mathbb C}
  \{L(n)\mathcal P_\Omega(n):L\text{ is a polynomial letter or }1,
  \;\Omega\text{ a word in polynomial letters}\}.
\end{equation}

\begin{definition}[Polynomial-harmonic-sum reducibility]
\label{def:polynomial-harmonic-sum-reducibility-final}
A sequence $f(n)$ is called \emph{polynomial-harmonic-sum reducible} if
$f(n)\in\mathscr E^{\rm pol}_n$.  Equivalently, it has a finite representation
\begin{equation}
\label{eq:polynomial-reducible-form-final}
  f(n)=\sum_{\nu=1}^{M}c_\nu\sigma_\nu^n
  \prod_{\mu=1}^{m_\nu}P_{\nu\mu}(n)^{-q_{\nu\mu}}
  \mathcal P_{\Omega_\nu}(n),
\end{equation}
where $c_\nu,\sigma_\nu,q_{\nu\mu}\in\mathbb C$, the polynomial factors are
nonzero on positive integers in the relevant range, and $\Omega_\nu$ is a word
in polynomial letters.  Thus the polynomial analogue of the basic factor
$z^n n^q\mathcal H_\alpha(n)$ is a finite product of polynomial powers times a
polynomial-base word sum.
\end{definition}

The general finite-summation step for polynomial letters is proved later in Theorem~\ref{thm:nested-polynomial-factor-closure}; the present subsection records the main finite families that belong to this summand space.

\begin{proposition}[Finite polynomial zeta sums and polynomial-base polylogarithms]
\label{prop:finite-polynomial-zeta-polylog-reduction}
Let \(P_1,\ldots,P_d\in\mathbb C[x]\), let \(r_1,\ldots,r_d\in\mathbb C\), and
let \(z_1,\ldots,z_d\in\mathbb C\).  The finite strict sums
\[
  \zeta_N(P_1^{r_1},\ldots,P_d^{r_d})
  :=
  \sum_{N\ge n_1>\cdots>n_d\ge1}
  \prod_{j=1}^{d}P_j(n_j)^{-r_j}
\]
and
\[
  \Li_N^{\mathbf P,\mathbf r}(\mathbf z)
  :=
  \sum_{N\ge n_1>\cdots>n_d\ge1}
  \prod_{j=1}^{d}z_j^{n_j}P_j(n_j)^{-r_j}
\]
are polynomial-harmonic-sum reducible.  In the convergent limit they give
polynomial zeta values and polynomial-base polylogarithmic values, a finite
counterpart of zeta functions associated with polynomials
\citep{EieChen1999,Dabrowski2000} and of the usual polylogarithmic construction
\citep{Lewin1981}.
\end{proposition}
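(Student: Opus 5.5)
The plan is to observe that both finite sums are, up to relabeling, literally single polynomial-base word sums $\mathcal P_\Omega(n)$. Membership in $\mathscr E^{\rm pol}_n$ then follows with the trivial outer letter $1$. No stuffle or summation-region decomposition is needed, because the sums are already strict with a single polynomial factor per index.

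\textbf{Step 1: the polylogarithmic sum.} For each $j$ define the one-factor polynomial letter
\[
  L_j:=((r_j),z_j,(P_j)),\qquad L_j(n)=z_j^{n}P_j(n)^{-r_j}.
\]
This requires $P_j(n_j)\neq0$ on the positive integers in the range, the standing hypothesis for polynomial letters; the branches of the complex powers are fixed as in the conventions. Set $\Omega=(L_1,\ldots,L_d)$. Comparing the definition of $\mathcal P_\Omega(N)$ term by term with the definition of $\Li_N^{\mathbf P,\mathbf r}(\mathbf z)$ gives
\[
  \Li_N^{\mathbf P,\mathbf r}(\mathbf z)=\mathcal P_{\Omega}(N).
\]
Evaluating at $N=n$ and taking $L=1$, $c=1$ in \eqref{eq:polynomial-reducible-form-final} shows that $\Li_n^{\mathbf P,\mathbf r}(\mathbf z)\in\mathscr E^{\rm pol}_n$.

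\textbf{Step 2: the zeta sum.} Take all $z_j=1$, that is, $\sigma=1$ in every letter. The same identification gives $\zeta_N(P_1^{r_1},\ldots,P_d^{r_d})=\mathcal P_{\Omega}(N)$ with $\Omega=(((r_1),1,(P_1)),\ldots,((r_d),1,(P_d)))$. The empty word $d=0$ gives $1=\mathcal P_\emptyset(N)$, which is consistent.

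\textbf{Remaining points.} The only genuine hypothesis to flag is nonvanishing of the $P_j$ on the summation range. If some $P_j$ has a positive-integer root, one either restricts the range or interprets the statement only for admissible data, as the paper does elsewhere. The closing remark about convergent limits is not part of the reducibility claim; I would mention only that the limits $N\to\infty$, when they exist, define $\zeta^{\mathrm{pb}}_\Omega$ and $\Li^{\mathrm{pb}}_\Omega$, with convergence criteria deferred to Appendix~B.

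I expect no real obstacle: the proposition is a definitional embedding, and the entire proof consists of the letter assignment in Step 1.
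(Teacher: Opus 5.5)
Your proposal is correct and matches the paper's proof: the paper likewise takes the one-factor letters $L_j=((r_j),1,(P_j))$ for the zeta sum and $L_j=((r_j),z_j,(P_j))$ for the polylogarithmic sum, and observes that each sum is literally $\mathcal P_{L_1,\ldots,L_d}(N)$. Your added remarks on nonvanishing of the $P_j$ and on the limit clause are harmless points that the paper leaves implicit.
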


\begin{proof}
Take the polynomial letters \(L_j=((r_j),1,(P_j))\) in the first case and
\(L_j=((r_j),z_j,(P_j))\) in the second.  The two displayed sums are precisely
\(\mathcal P_{L_1,\ldots,L_d}(N)\).
\end{proof}

\begin{example}\normalfont
For a quadratic polynomial \(P(x)=x^2+x+1\),
\[
  \sum_{n=1}^{N}\frac{z^n}{(n^2+n+1)^r}
  =
  \mathcal P_{((r),z,(x^2+x+1))}(N).
\]
At depth two,
\[
  \sum_{N\ge m>n\ge1}\frac{u^m v^n}{(m^2+1)^a(n^3+n+1)^b}
\]
is \(\mathcal P_{L_1,L_2}(N)\) with
\(L_1=((a),u,(x^2+1))\) and \(L_2=((b),v,(x^3+x+1))\).
\end{example}

\begin{proposition}[One-dimensional truncated Epstein--Hurwitz type sums]
\label{prop:truncated-epstein-hurwitz-polynomial-reduction}
Let \(A,B,C,z\in\mathbb C\), and assume that the quadratic polynomial
\(Q(x)=Ax^2+Bx+C\) does not vanish on the relevant positive integers.  Then
\[
  \sum_{n=1}^{N}\frac{z^n}{Q(n)^r}
  \quad\text{and, more generally,}\quad
  \sum_{N\ge n_1>\cdots>n_d\ge1}
  \prod_{j=1}^{d}\frac{z_j^{n_j}}{Q_j(n_j)^{r_j}}
\]
are polynomial-harmonic-sum reducible.  In particular, finite one-dimensional
Epstein--Hurwitz sums such as
\[
  \sum_{n=0}^{N-1}\frac{1}{\bigl((n+a)^2+b^2\bigr)^r}
\]
are included after the harmless index shift.  This is the finite polynomial-letter
analogue of the one-dimensional Epstein--Hurwitz zeta functions used in zeta
regularization and spectral applications \citep{Elizalde1994,ElizaldeEtAl1994}.
\end{proposition}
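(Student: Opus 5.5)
The plan is to show that every sum in the statement is literally a polynomial-base word sum $\mathcal P_\Omega$ for a suitable word $\Omega$. This is the same direct-identification strategy used for Proposition~\ref{prop:finite-polynomial-zeta-polylog-reduction}.

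For the depth-one sum I take the single polynomial letter $L=((r),z,(Q))$. By the definition of a polynomial letter, $L(n)=z^nQ(n)^{-r}$, and so $\sum_{n=1}^N z^n Q(n)^{-r}=\mathcal P_{L}(N)$. For the depth-$d$ sum I take the letters $L_j=((r_j),z_j,(Q_j))$. The strict nested sum is then exactly $\mathcal P_{L_1,\ldots,L_d}(N)$, which lies in the span defining $\mathscr E^{\rm pol}_N$ (with the outer letter equal to $1$). Both sums are therefore polynomial-harmonic-sum reducible once $N$ is replaced by $n$. The only hypothesis to check is that each value $Q_j(n_j)$ is nonzero on the summation range and that branches of $Q_j(n)^{-r_j}$ are fixed. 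Both are part of the assumptions, exactly as in the definition of polynomial letters.

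For the Epstein--Hurwitz example I will make the shift $k=n+1$, so that $0\le n\le N-1$ becomes $1\le k\le N$. The summand becomes $\bigl((k+a-1)^2+b^2\bigr)^{-r}$, and $\widetilde Q(x)=(x+a-1)^2+b^2$ is again a quadratic in $\mathbb C[x]$. The sum is thus $\mathcal P_{((r),1,(\widetilde Q))}(N)$, and the same shift handles multiple versions index by index, as in the proof of Proposition~\ref{prop:truncated-hurwitz-lerch-affine-final}. Any color $z^{n}$ is rewritten as $z^{-1}z^{k}$, which only produces a harmless constant.

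I do not expect a genuine obstacle. The only point needing care is the branch convention for complex $r$ when $Q(n)$ is complex, since $Q(n)^{-r}$ must be read with a fixed branch per letter. After the shift the branch choice must be transported consistently, so that $\widetilde Q(k)^{-r}$ agrees with $\bigl((n+a)^2+b^2\bigr)^{-r}$ term by term. I would simply define the letter's value by the original branch, which the polynomial-letter definition permits.
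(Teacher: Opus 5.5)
Your proposal is correct and follows the paper's proof: the summands are polynomial letters $((r_j),z_j,(Q_j))$, so the sums are literally $\mathcal P_{L_1,\ldots,L_d}(N)$. The Epstein--Hurwitz case is then handled by the shift $m=n+1$, which gives the quadratic $(m+a-1)^2+b^2$.
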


\begin{proof}
The displayed summands are products of colors and powers of polynomial factors.
After replacing \(n=0,\ldots,N-1\) by \(m=n+1\), the denominator
\((n+a)^2+b^2\) becomes the polynomial \((m+a-1)^2+b^2\) in \(m\).  The result is
therefore a polynomial-letter sum.
\end{proof}

\begin{example}\normalfont
The finite shifted quadratic tail
\[
  \sum_{n=0}^{N-1}\frac{(-1)^n}{\bigl((n+a)^2+b^2\bigr)^r}
\]
becomes
\[
  -\mathcal P_{((r),-1,((x+a-1)^2+b^2))}(N),
\]
up to the constant sign coming from \((-1)^{m-1}\).
\end{example}

\begin{proposition}[Finite Mathieu and generalized Mathieu series]
\label{prop:finite-mathieu-polynomial-reduction}
For fixed parameters for which the denominators do not vanish, the finite
Mathieu-type sums
\[
  \sum_{n=1}^{N}\frac{2n}{(n^2+a^2)^{r+1}},
  \qquad
  \sum_{n=1}^{N}\frac{2n z^n}{(n^2+a^2)^{r+1}},
\]
and their finite products with polynomial harmonic factors are polynomial-harmonic-sum reducible.  The same conclusion holds for generalized finite
Mathieu-type series in which the numerator and denominator are finite products
of fixed polynomial powers.  These finite sums mirror the classical and
generalized Mathieu series \citep{Mathieu1890,PoganySrivastavaTomovski2006,PoganyTomovski2006}.
\end{proposition}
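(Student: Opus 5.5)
The plan is to show that each displayed Mathieu-type summand is itself a single polynomial letter evaluated at the summation index. Finite summation then produces a depth-one word $\mathcal P_\Omega(N)$, exactly as in the proof of Proposition~\ref{prop:finite-polynomial-zeta-polylog-reduction}.

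For the first sum, the numerator $2n$ is a power of a polynomial with negative exponent. We therefore take the polynomial letter
\[
  L=((-1,\,r+1),\,1,\,(2\idpoly,\;\idpoly^2+a^2)),
\]
whose value is $L(n)=(2n)^{1}(n^2+a^2)^{-(r+1)}$. Alternatively, we can pull out the constant $2$ and use the polynomial $\idpoly$ with exponent $-1$. With this letter, $\sum_{n=1}^N 2n/(n^2+a^2)^{r+1}=\mathcal P_{L}(N)$. The colored version uses the same data with color $\sigma=z$.

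Two branch points need checking. First, positive integer powers such as $(2n)^1$ are unambiguous. Second, for complex $r$ we fix the branch of $(n^2+a^2)^{-(r+1)}$ once and for all, as the convention for polynomial letters allows. The non-vanishing hypothesis guarantees that $n^2+a^2\neq0$ on the range.

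For the generalized Mathieu-type series, write the summand as
\[
  \sigma^n\prod_\mu U_\mu(n)^{e_\mu}\prod_\nu V_\nu(n)^{-f_\nu},
\]
which is the single polynomial letter with exponent list $(-e_\mu,f_\nu)$ and polynomial list $(U_\mu,V_\nu)$. Its finite sum is a depth-one $\mathcal P$-word.

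For the products with polynomial harmonic factors $\prod_i\mathcal P_{\Omega_i}(n)$, we proceed in two steps. First, we expand the product into a linear combination of single words $\mathcal P_\Omega(n)$ by the quasi-shuffle identity~\eqref{eq:monoidal-stuffle-product}; this is valid because polynomial letters are closed under $\circ$. This already places the summand in $\mathscr E^{\rm pol}_n$, which is the reducibility claim. For finite summation, we then split the outer index $n$ against the first index $n_1$ of each word into the cases $n>n_1$ and $n=n_1$. This gives
\[
  \sum_{n\le N}L(n)\mathcal P_{(L_1,\ldots,L_d)}(n)
  =\mathcal P_{(L,L_1,\ldots,L_d)}(N)+\mathcal P_{(L\circ L_1,L_2,\ldots,L_d)}(N).
\]

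The only real obstacle is interpretive rather than technical: the letters must be allowed to carry negative exponents, which amounts to numerator polynomials. The definition permits arbitrary complex $\rho_\nu$, so this is legitimate. Beyond that, one must keep in mind that the general finite-summation closure is formally deferred to Theorem~\ref{thm:nested-polynomial-factor-closure}. To avoid forward dependence, I would present the depth-one case directly via the region split above.
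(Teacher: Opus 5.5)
Your proposal is correct and takes essentially the paper's route: the paper also observes that $2n$ is a scalar times $\idpoly(n)^1$ (your second option, exponent $-1$ on $\idpoly$) and that $(n^2+a^2)^{-(r+1)}$ is a polynomial power, so the whole level factor is a single polynomial letter. The only difference is that the paper cites Theorem~\ref{thm:nested-polynomial-factor-closure} for finite summation and for products with harmonic factors, whereas you write out the quasi-shuffle expansion and the split $n>n_1$ versus $n=n_1$ yourself; this avoids the forward reference and fits factors of the form $\mathcal P_{\Omega_i}(n)$ more precisely, since that theorem is stated for factors $\mathcal H_{\alpha}(n)$.
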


\begin{proof}
The factor \(2n\) is a scalar times the polynomial power \(\idpoly(n)^1\), and
\((n^2+a^2)^{-(r+1)}\) is a polynomial power.  Thus the whole level factor is a
polynomial letter, and Theorem~\ref{thm:nested-polynomial-factor-closure}
handles finite summation and products with polynomial harmonic factors.
\end{proof}

\begin{example}\normalfont
The colored finite Mathieu sum is represented as
\[
  \sum_{n=1}^{N}\frac{2n z^n}{(n^2+a^2)^{r+1}}
  =
  2\,\mathcal P_{((-1,r+1),z,(x,x^2+a^2))}(N),
\]
where the first exponent \(-1\) records the numerator factor \(x^1\).
\end{example}

\begin{center}
\scriptsize
\renewcommand{\arraystretch}{1.35}
\setlength{\tabcolsep}{4pt}
\begin{adjustbox}{max width=\textwidth}
\begin{tabular}{@{}>{\raggedright\arraybackslash}p{0.28\textwidth}>{\raggedright\arraybackslash}p{0.43\textwidth}>{\raggedright\arraybackslash}p{0.23\textwidth}@{}}
\toprule
\textbf{Family} & \textbf{Typical finite object} & \textbf{Polynomial-letter reason} \\
\midrule
Polynomial zeta sums
& $\displaystyle \sum_{n=1}^{N}P(n)^{-r}$ and strict multiple versions
& Single polynomial letter; multiple version is a word. \\
Polynomial-base polylogarithms
& $\displaystyle \sum_{n=1}^{N}z^nP(n)^{-r}$ and colored strict multiple versions
& Color plus polynomial denominator. \\
One-dimensional truncated Epstein--Hurwitz sums
& $\displaystyle \sum_{n=0}^{N-1}\bigl((n+a)^2+b^2\bigr)^{-r}$
& Shift gives a quadratic polynomial in the new index. \\
Finite Mathieu-type series
& $\displaystyle \sum_{n=1}^{N}2n\,(n^2+a^2)^{-r-1}$, with optional color $z^n$
& Numerator and denominator are polynomial powers. \\
Products with polynomial harmonic factors
& $\displaystyle \sum_{n=1}^{N}z^n\prod_\nu P_\nu(n)^{q_\nu}\prod_j\mathcal P_{\Omega_j}(n)^{e_j}$
& Quasi-shuffle product plus one-step polynomial summation. \\
\bottomrule
\end{tabular}
\end{adjustbox}
\end{center}

\section{Finite convolution in the basic colored alphabet}

We now prove the basic finite convolution theorem for summands of the form
\[
  z^n n^q\,\mathcal H_\alpha(n),
\]
and then record limited affine and arithmetic-progression cases that still remain within
this basic colored harmonic-number alphabet. 

\subsection{Basic finite convolution}

\begin{theorem}[Closure under finite Euler-type summation]
\label{thm:finite-euler-sum}
Let a \emph{letter} be a pair $(r,s)\in \mathbb{C}^2$, and let a
\emph{word} be a finite sequence of letters. For a word
\[
\alpha=((r_1,s_1),\ldots,(r_d,s_d)),
\]
define
\begin{equation}
\label{eq:generalized-harmonic-sum}
\mathcal H_{\alpha}(N)
:=
\sum_{N\ge n_1>\cdots>n_d\ge 1}
\prod_{i=1}^{d}\frac{s_i^{n_i}}{n_i^{r_i}},
\end{equation}
with the convention
\[
\mathcal H_{\emptyset}(N)=1.
\]
Here powers of positive integers are interpreted by
\[
n^r:=\exp(r\log n),
\]
where $\log n$ denotes the real logarithm.

For each $N$, let
\begin{equation}
\label{eq:AN-space}
\mathscr H_N
:=
\operatorname{span}_{\mathbb C}
\{\mathcal H_{\alpha}(N):\alpha \text{ is a word}\}.
\end{equation}
Similarly, for each $n$, let
\begin{equation}
\label{eq:Bn-space}
\mathscr E_n
:=
\operatorname{span}_{\mathbb C}
\{z^n n^q\mathcal H_{\alpha}(n):
z,q\in\mathbb C,\ \alpha \text{ is a word}\}.
\end{equation}

Suppose
\[
F_j(n)\in \mathscr E_n,
\qquad j=1,\ldots,m.
\]
Equivalently, suppose
\begin{equation}
\label{eq:Hj-expansion}
F_j(n)
=
\sum_{\ell=1}^{M_j}
c_{j,\ell}
z_{j,\ell}^{\,n}
n^{q_{j,\ell}}
\mathcal H_{\alpha_{j,\ell}}(n),
\end{equation}
where
\[
c_{j,\ell},z_{j,\ell},q_{j,\ell}\in\mathbb C,
\]
and each $\alpha_{j,\ell}$ is a word.

Then, for every
\[
e_1,\ldots,e_m\in\mathbb Z_{\ge 0},
\]
the finite sum
\begin{equation}
\label{eq:finite-euler-sum}
S(N)
:=
\sum_{n=1}^{N}
\prod_{j=1}^{m}F_j(n)^{e_j}
\end{equation}
belongs to $\mathscr H_N$. Equivalently, $S(N)$ is a finite
$\mathbb C$-linear combination of generalized finite harmonic numbers with
upper limit $N$.
\end{theorem}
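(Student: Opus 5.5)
The plan is to prove two closure facts and combine them: first, that $\mathscr E_n$ is closed under multiplication; second, that the finite summation map $f\mapsto\sum_{n=1}^N f(n)$ sends $\mathscr E_n$ into $\mathscr H_N$. Since $\prod_j F_j(n)^{e_j}$ is a finite product of elements of $\mathscr E_n$ (with the empty product equal to $1=1^n n^0\mathcal H_\emptyset(n)\in\mathscr E_n$), the first fact places the summand in $\mathscr E_n$. The second fact then gives $S(N)\in\mathscr H_N$ by linearity of summation.

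For multiplicative closure, it suffices by bilinearity to treat two spanning elements $z^n n^q\mathcal H_\alpha(n)$ and $w^n n^{p}\mathcal H_\beta(n)$. Their product is $(zw)^n n^{q+p}\mathcal H_\alpha(n)\mathcal H_\beta(n)$. Here $n^qn^p=n^{q+p}$ holds because all powers are defined through the real logarithm $\log n$, so no branch issue arises. Next I will invoke the stuffle identity \eqref{eq:monoidal-stuffle-product} for the colored alphabet, whose collisions merge letters by \eqref{eq:standalone-letter-merge}. This writes $\mathcal H_\alpha(n)\mathcal H_\beta(n)=\sum_{w\in\alpha*\beta}\mathcal H_w(n)$ as a finite sum. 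To keep the argument self-contained, I will prove the identity by induction on $\operatorname{depth}\alpha+\operatorname{depth}\beta$. Compare the top indices $n_1$ of $\alpha$ and $m_1$ of $\beta$, splitting into the cases $n_1>m_1$, $n_1<m_1$ and $n_1=m_1$. In each case the outer letter is pulled out as $\sum_{k\le N}(\text{letter})(k)\cdot(\text{product of shorter sums at }k-1)$, and the induction hypothesis applies to that shorter product. The product of two spanning elements is therefore again a finite combination of spanning elements.

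For the summation step, again by linearity it suffices to treat $\sum_{n=1}^N z^n n^q\mathcal H_\alpha(n)$. I will split $\mathcal H_\alpha(n)$ according to whether its top index $n_1$ equals $n$ or is strictly smaller. The strict part gives $\mathcal H_{((-q,z),\alpha)}(N)$, with the new letter $(-q,z)$ prepended. The equality part, where $n_1=n$, gives $\mathcal H_{((-q,z)\circ(r_1,s_1),\alpha')}(N)$, where $\alpha'$ is $\alpha$ with its first letter removed. If $\alpha=\emptyset$, only the depth-one word $((-q,z))$ occurs.

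The main obstacle is only bookkeeping: stating the stuffle recursion precisely enough that the induction closes, including the empty-word and boundary conventions. Analytically nothing can fail, because every sum is finite and $n\ge1$, so no convergence or branch issues appear.
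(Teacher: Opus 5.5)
Your proposal is correct and follows essentially the same route as the paper: closure of $\mathscr E_n$ under multiplication via the quasi-shuffle product together with $z^n n^q w^n n^p=(zw)^n n^{q+p}$, followed by the one-step summation rule (Lemma~\ref{lem:elementary-summation}), which splits $n>n_1$ from $n=n_1$ and gives $\mathcal H_{\lambda,\alpha}(N)+\mathcal H_{\lambda\circ a_1,\alpha'}(N)$ with $\lambda=(-q,z)$. The only difference is that you prove the stuffle identity by induction on total depth instead of citing it. You also note explicitly that the empty product lies in $\mathscr E_n$; both additions are sound.
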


\begin{lemma}[Elementary summation]
\label{lem:elementary-summation}
Let $z,q\in\mathbb C$, let $\lambda=(-q,z)$, and define the merge of two
letters by
\begin{equation}
\label{eq:letter-merge}
(r,s)\circ(r',s')=(r+r',ss').
\end{equation}
Then, for every word $\alpha$,
\begin{equation}
\label{eq:elementary-summation-step}
\sum_{n=1}^{N} z^n n^q \mathcal H_{\alpha}(n)
=
\begin{cases}
\mathcal H_{\lambda}(N), & \alpha=\emptyset,\\[1mm]
\mathcal H_{\lambda,\alpha}(N)+\mathcal H_{\lambda\circ a_1,\alpha'}(N),
& \alpha=(a_1,\alpha').
\end{cases}
\end{equation}
In particular, summation maps $\mathscr E_n$ into $\mathscr H_N$.
\end{lemma}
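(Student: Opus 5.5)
The plan is to prove the elementary summation identity by unfolding the definition of $\mathcal H_\alpha(n)$ and splitting the resulting two-level index region. First note that $z^n n^q$ is exactly the value at $n$ of the letter $\lambda=(-q,z)$, since $s^n n^{-r}$ with $(r,s)=(-q,z)$ equals $z^n n^{q}$. With the convention $n^r=\exp(r\log n)$ we have $n^{-r}n^{-r'}=n^{-(r+r')}$ exactly (no branch issues, since $\log n$ is real). Hence the value of $\lambda\circ a$ at $n$ is the product of the values of $\lambda$ and of $a$ at $n$. This is the only place where the merge rule \eqref{eq:letter-merge} enters.

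For $\alpha=\emptyset$ the claim is immediate: $\mathcal H_\emptyset(n)=1$, so the sum is $\sum_{n=1}^N \lambda(n)=\mathcal H_\lambda(N)$ by \eqref{eq:generalized-harmonic-sum} at depth one.

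For $\alpha=(a_1,\alpha')$ with $a_1=(r_1,s_1)$, I would write
\[
\sum_{n=1}^N \lambda(n)\mathcal H_\alpha(n)=\sum_{N\ge n\ge n_1>n_2>\cdots>n_d\ge1}\lambda(n)\,a_1(n_1)\prod_{i\ge2}a_i(n_i).
\]
I would then split the weak inequality $n\ge n_1$ into the disjoint cases $n>n_1$ and $n=n_1$. The strict part is, after renaming $n$ as the new top index, exactly the chain $N\ge n>n_1>\cdots>n_d\ge1$ for the word $(\lambda,a_1,\alpha')=(\lambda,\alpha)$, giving $\mathcal H_{\lambda,\alpha}(N)$. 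In the diagonal part the factor $\lambda(n_1)a_1(n_1)$ becomes $(\lambda\circ a_1)(n_1)$ by the first paragraph, and the remaining chain $N\ge n_1>n_2>\cdots$ gives $\mathcal H_{\lambda\circ a_1,\alpha'}(N)$. All sums are finite, so the rearrangements need no convergence argument.

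For the final assertion, a general element of $\mathscr E_n$ is a finite linear combination of terms $z^nn^q\mathcal H_\alpha(n)$. Summation over $1\le n\le N$ is linear, and by the two cases each term maps into $\mathscr H_N$, so $\mathscr E_n$ maps into $\mathscr H_N$. There is no real obstacle here. The only step needing care is checking that the exponent and color bookkeeping in the merge matches the sign convention $\lambda=(-q,z)$ and the real-logarithm convention for complex powers.
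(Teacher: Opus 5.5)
Your proposal is correct and follows essentially the same route as the paper: expand $\mathcal H_\alpha(n)$ and split the region $N\ge n\ge n_1>\cdots>n_d\ge1$ into the cases $n>n_1$ and $n=n_1$. The first case gives $\mathcal H_{\lambda,\alpha}(N)$, and in the second the letters $\lambda$ and $a_1$ merge to give $\mathcal H_{\lambda\circ a_1,\alpha'}(N)$. Your extra check that $n^{q}n^{-r_1}=n^{q-r_1}$ holds exactly under the real-logarithm convention is a detail the paper leaves implicit.
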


\begin{proof}
The case $\alpha=\emptyset$ is the definition of $\mathcal H_{\lambda}(N)$.
If $\alpha=(a_1,\alpha')$ is nonempty, expand $\mathcal H_\alpha(n)$ and
split the region $N\ge n\ge n_1>\cdots$ into the disjoint cases $n>n_1$ and
$n=n_1$.  The first part gives $\mathcal H_{\lambda,\alpha}(N)$; in the
second part the letters $\lambda$ and $a_1$ merge, giving
$\mathcal H_{\lambda\circ a_1,\alpha'}(N)$.
\end{proof}

\begin{proof}[Proof of Theorem~\ref{thm:finite-euler-sum}]
The quasi-shuffle product shows that products of sums
$\mathcal H_{\alpha}(n)$ with the same upper limit are finite
$\mathbb C$-linear combinations of such sums.  Since
\[
z_1^n n^{q_1}z_2^n n^{q_2}=(z_1z_2)^n n^{q_1+q_2},
\]
the space $\mathscr E_n$ is closed under multiplication.  Therefore
$\prod_{j=1}^{m}F_j(n)^{e_j}\in\mathscr E_n$.  Applying
Lemma~\ref{lem:elementary-summation} termwise gives
\[
S(N)=\sum_{n=1}^{N}\prod_{j=1}^{m}F_j(n)^{e_j}\in\mathscr H_N.
\]
\end{proof}

\subsubsection{Consequences and examples}
\label{subsubsec:basic-finite-consequences}

The theorem immediately covers finite Euler-type sums whose summand is built
from an elementary factor $z^n n^q$ and any finite product of generalized harmonic numbers with the same upper limit $n$.  Products are first reduced by the
quasi-shuffle product, and the remaining outer summation is absorbed by adding
one leading letter, with a possible merge at the first inner index.  In the
first rows of the table we display smaller generated spans.  Here
$\mathscr H_k[\mathcal B]$ denotes the span of all $\mathcal H_\beta(k)$ whose
letters lie in the merge-closed alphabet generated by the finite set
$\mathcal B$.

\begin{center}
\scriptsize
\renewcommand{\arraystretch}{1.08}
\setlength{\tabcolsep}{4pt}
\begin{adjustbox}{max width=\textwidth}
\begin{tabular}{@{}>{\raggedright\arraybackslash}p{0.48\textwidth}>{\centering\arraybackslash}p{0.28\textwidth}>{\raggedright\arraybackslash}p{0.16\textwidth}@{}}
\toprule
\textbf{Sum} & \textbf{Span} & \textbf{Comment} \\
\midrule
$\displaystyle \sum_{n=1}^{k}2^n n^3 H_n^{(2)}(-1)$
& $\mathscr H_k[\{(-3,2),(2,-1)\}]$
& Concrete depth-one case. \\

$\displaystyle \sum_{n=1}^{k}\frac{(-1)^n}{n^2}H_n^{(1)}(1)H_n^{(3)}(i)$
& $\mathscr H_k[\{(2,-1),(1,1),(3,i)\}]$
& Product reduced by quasi-shuffle. \\

$\displaystyle \sum_{n=1}^{k}3^n n^4\zeta(2,n+1)H_n^{(1)}(-1)$
& $\mathscr H_k[\{(-4,3),(2,1),(1,-1)\}]$
& Hurwitz tail gives a finite colored correction. \\

$\displaystyle \sum_{n=1}^{k}i^n n^2\Phi\!\left(\frac13,3,n+1\right)H_n^{(1,2)}(-1,i)$
& $\mathscr H_k[\{(-2,i),(3,\frac13),(1,-1),(2,i)\}]$
& Lerch tail plus depth-two colored factor. \\

$\displaystyle \sum_{n=1}^{k} z^n n^q H_n^{\mathbf r}(\mathbf s)\,H_n^{\star,\mathbf u}(\mathbf w)$
& $\operatorname{span}_{\mathbb C}\{\mathcal H_\beta(k)\}_\beta$
& Symbolic strict and star case. \\

$\displaystyle \sum_{n=1}^{k} z^n n^q F_n\,L_n\,H_n^{\star,(r_1,r_2,r_3)}(s_1,s_2,s_3)$
& $\operatorname{span}_{\mathbb C}\{\mathcal H_\beta(k)\}_\beta$
& Fixed recurrence factors become exponential-polynomial. \\

$\displaystyle \sum_{n=1}^{k} z^n n^q\chi(n)\left\lfloor\frac{3n+2}{5}\right\rfloor^m H_n^{(r)}(s)H_n^{(u)}(w)$
& $\operatorname{span}_{\mathbb C}\{\mathcal H_\beta(k)\}_\beta$
& Periodic and polynomial parts. \\

$\displaystyle \sum_{n=1}^{k} z^n n^q h_n^{[m]}(r;s)\,H_n^{\star,\mathbf u}(\mathbf w)\,\mathbf1_{n\equiv a\,({\rm mod}\,M)}$
& $\operatorname{span}_{\mathbb C}\{\mathcal H_\beta(k)\}_\beta$
& Hyperharmonic factor with a residue-class filter. \\
\bottomrule
\end{tabular}
\end{adjustbox}
\end{center}

\subsection{The aligned affine case}

We now record a useful affine extension of the closure principle proved in
Theorem~\ref{thm:finite-euler-sum}.  The case considered here is the
\emph{aligned affine case}: the same affine expression occurs both in the
outer power and in the upper limit of the generalized harmonic numbers.

Throughout this section we assume
\[
a\in\mathbb Z_{>0},\qquad b\in\mathbb Z,\qquad a+b\ge 1.
\]
Then
\[
an+b\in\mathbb Z_{\ge 1}
\qquad (n\ge 1),
\]
so that $\mathcal H_{\alpha}(an+b)$ is a genuine finite generalized harmonic number.  For complex $q$, the power $(an+b)^q$ is interpreted using the real
logarithm of the positive integer $an+b$.

\begin{theorem}[Closure in the aligned affine case]
\label{thm:aligned-affine-closure}
Let
\[
a\in\mathbb Z_{>0},\qquad b\in\mathbb Z,\qquad a+b\ge 1.
\]
Let $z,q\in\mathbb C$, let $\alpha_1,\ldots,\alpha_m$ be words in the
generalized harmonic number alphabet, and let
\[
e_1,\ldots,e_m\in\mathbb Z_{\ge 0}.
\]
Then
\begin{equation}
\label{eq:aligned-affine-sum}
S(k)
:=
\sum_{n=1}^{k}
z^n(an+b)^q
\prod_{j=1}^{m}\mathcal H_{\alpha_j}(an+b)^{e_j}
\end{equation}
belongs to
\begin{equation}
\label{eq:aligned-affine-span}
\operatorname{span}_{\mathbb C}
\left\{
\mathcal H_{\beta}(ak+b)
\right\}_{\beta}.
\end{equation}
Equivalently, every aligned affine sum of the form
\eqref{eq:aligned-affine-sum} reduces to a finite linear combination of
generalized harmonic numbers with upper limit $ak+b$.
\end{theorem}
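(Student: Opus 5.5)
The plan is to reparametrize the outer sum by $m=an+b$ and so reduce to Theorem~\ref{thm:finite-euler-sum}. First, by the quasi-shuffle product \eqref{eq:monoidal-stuffle-product}, the product $\prod_j\mathcal H_{\alpha_j}(an+b)^{e_j}$ is a finite linear combination $\sum_\beta c_\beta\mathcal H_\beta(an+b)$, since all factors share the upper limit $an+b$. By linearity it then suffices to treat
\[
S_\beta(k)=\sum_{n=1}^{k}z^n(an+b)^q\mathcal H_\beta(an+b).
\]

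Next I will write $m=an+b$, so $m$ runs over the residue class $m\equiv b\pmod a$ in $a+b\le m\le ak+b$. Since $z^n=z^{(m-b)/a}$, I fix a branch $\zeta=z^{1/a}$ and set $z^n=\zeta^{-b}\zeta^{m}$; this holds exactly because $n=(m-b)/a$ is an integer. The residue restriction is handled by the finite Fourier filter of Proposition~\ref{prop:standalone-periodic}:
\[
\mathbf 1_{m\equiv b\,(a)}=\frac1a\sum_{j=0}^{a-1}\omega_a^{j(m-b)}.
\]
The range $a+b\le m\le ak+b$ with $m\equiv b$ equals the full range $1\le m\le ak+b$ with $m\equiv b$, minus the finitely many $m\in\{1,\ldots,b\}$ with $m\equiv b\pmod a$ when $b\ge1$; for $b\le0$ there is nothing to remove since $a+b\ge1$. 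This gives
\[
S_\beta(k)=\frac{\zeta^{-b}}{a}\sum_{j=0}^{a-1}\omega_a^{-jb}\sum_{m=1}^{ak+b}(\zeta\omega_a^j)^m m^q\mathcal H_\beta(m)\;-\;(\text{finitely many constant terms}).
\]
Each inner sum is of the form covered by Lemma~\ref{lem:elementary-summation} with upper limit $N=ak+b$, so it equals $\mathcal H_{\lambda_j,\beta}(ak+b)+\mathcal H_{\lambda_j\circ b_1,\beta'}(ak+b)$ with $\lambda_j=(-q,\zeta\omega_a^j)$.

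The main obstacle is the set of boundary terms $m\le b$. These are fixed complex constants, independent of $k$, given by values $z^n(an+b)^q\mathcal H_\beta(an+b)$ with $n\le0$ but $an+b\ge1$. A constant is $c\,\mathcal H_\emptyset(ak+b)$, which lies in the span \eqref{eq:aligned-affine-span} since $\mathcal H_\emptyset=1$. So these terms are absorbed, and the span is correct as stated. Alternatively, one can avoid them by noting that $\sum_{m\le b}$ is exactly the subtracted piece. I will check that the branch choice $\zeta^{am}$ versus $z^{n}$ is consistent, which it is because $m-b=an$. Collecting the terms then shows that $S(k)$ is a linear combination of $\mathcal H_\beta(ak+b)$.
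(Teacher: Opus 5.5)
Your proposal is correct and follows the paper's proof essentially step for step. Both use the quasi-shuffle reduction to a single $\mathcal H_\gamma(an+b)$, the substitution $M=an+b$ with an $a$-th root of $z$, the root-of-unity filter for $M\equiv b\pmod a$, one-step summation at upper limit $ak+b$, and absorption of the $k$-independent lower-end constants via $\mathcal H_\emptyset=1$. The only missing detail is $z=0$, where $\zeta^{-b}$ is undefined for $b\ge1$; the paper handles this separately by noting that then $S(k)=0$.
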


\begin{proof}
By the quasi-shuffle product,
\[
\prod_{j=1}^{m}\mathcal H_{\alpha_j}(an+b)^{e_j}
\]
is a finite $\mathbb C$-linear combination of single generalized harmonic numbers $\mathcal H_{\gamma}(an+b)$.  Hence it is enough to consider
\[
\sum_{n=1}^{k}z^n(an+b)^q\mathcal H_{\gamma}(an+b).
\]
Put
\[
M=an+b.
\]
Then $M$ runs through the arithmetic progression
\[
a+b,\ 2a+b,\ \ldots,\ ak+b,
\]
or equivalently through the integers $M\le ak+b$ satisfying
\[
M\equiv b\pmod a.
\]
If $z=0$, the sum is zero.  Otherwise choose $\xi\in\mathbb C$ such that
$\xi^a=z$.  On the progression $M=an+b$ we have
\[
z^n=z^{(M-b)/a}=\xi^{M-b}=\xi^{-b}\xi^M.
\]
Thus the sum becomes
\[
\xi^{-b}
\sum_{\substack{a+b\le M\le ak+b\\ M\equiv b\pmod a}}
\xi^M M^q\mathcal H_{\gamma}(M).
\]
The congruence condition is imposed by the root-of-unity filter
\[
\mathbf 1_{M\equiv b\pmod a}
=
\frac1a\sum_{\ell=0}^{a-1}\omega_a^{\ell(M-b)},
\qquad
\omega_a=e^{2\pi i/a}.
\]
Therefore the preceding expression is a finite linear combination of sums of
the form
\[
\sum_{M=1}^{ak+b}
(\xi\omega_a^\ell)^M M^q\mathcal H_{\gamma}(M),
\]
together with finitely many lower-limit correction terms independent of $k$.
By Theorem~\ref{thm:finite-euler-sum}, each such sum belongs to
$\mathscr H_{ak+b}$, that is, to the span of generalized harmonic numbers with
upper limit $ak+b$.  The correction terms are constants and are included by
the empty word.  Hence $S(k)$ belongs to the same span.
\end{proof}

\subsubsection{Consequences and examples}

The table records aligned affine examples, where the same affine expression
appears in the power and in the harmonic upper limit.  Each entry is reduced to
harmonic sums at the corresponding affine terminal point.

\begin{center}
\scriptsize
\renewcommand{\arraystretch}{1.08}
\setlength{\tabcolsep}{4pt}
\begin{adjustbox}{max width=\textwidth}
\begin{tabular}{@{}>{\raggedright\arraybackslash}p{0.52\textwidth}>{\centering\arraybackslash}p{0.22\textwidth}>{\raggedright\arraybackslash}p{0.18\textwidth}@{}}
\toprule
\textbf{Sum} & \textbf{Span} & \textbf{Condition} \\
\midrule
$\displaystyle \sum_{n=1}^{k} z^n(2n+1)^q\mathcal H_\alpha(2n+1)$
& $\operatorname{span}_{\mathbb C}\{\mathcal H_\beta(2k+1)\}_\beta$
& None. \\

$\displaystyle \sum_{n=1}^{k} z^n(3n-2)^q H_{3n-2}^{(r_1,r_2)}(s_1,s_2)$
& $\operatorname{span}_{\mathbb C}\{\mathcal H_\beta(3k-2)\}_\beta$
& $r_1,r_2\in\mathbb C$. \\

$\displaystyle \sum_{n=1}^{k} z^n(2n+1)^q\zeta(u,2n+2)\,H_{2n+1}^{(r)}(s)\,A_{2n+1}^{(v)}(w)$
& $\operatorname{span}_{\mathbb C}\{\mathcal H_\beta(2k+1)\}_\beta$
& $u\ne1$. \\

$\displaystyle \sum_{n=1}^{k} z^n(3n-1)^q\Phi(\xi,u,3n)\,H_{3n-1}^{\star,(r_1,r_2)}(s_1,s_2)$
& $\operatorname{span}_{\mathbb C}\{\mathcal H_\beta(3k-1)\}_\beta$
& $\xi\ne0,1$. \\

$\displaystyle \sum_{n=1}^{k} z^n(4n-3)^q\bigl(H_{4n-3}^{(r)}(s)\bigr)^2A_{4n-3}^{(u)}(w)\,h_{4n-3}^{[m]}(v;\xi)$
& $\operatorname{span}_{\mathbb C}\{\mathcal H_\beta(4k-3)\}_\beta$
& $m\in\mathbb N_0$. \\

$\displaystyle \sum_{n=1}^{k} z^n(5n-4)^q\chi(5n-4)\,H_{5n-4}^{\mathbf r}(\mathbf s)\,H_{5n-4}^{\star,\mathbf u}(\mathbf w)$
& $\operatorname{span}_{\mathbb C}\{\mathcal H_\beta(5k-4)\}_\beta$
& Fixed modulus for $\chi$. \\

$\displaystyle \sum_{n=1}^{k} z^n(2n)^q F_{2n}\,H_{2n}^{(r_1,r_2,r_3)}(s_1,s_2,s_3)$
& $\operatorname{span}_{\mathbb C}\{\mathcal H_\beta(2k)\}_\beta$
& $F_m$ has a fixed recurrence. \\

$\displaystyle \sum_{n=2}^{k} z^n(n-1)^q H_{n-1}^{(r)}(s)\,\psi^{(m)}(n)$
& $\operatorname{span}_{\mathbb C}\{\mathcal H_\beta(k-1)\}_\beta$
& Endpoint avoids $0$. \\
\bottomrule
\end{tabular}
\end{adjustbox}
\end{center}

\subsection{Arithmetic-progression shifted powers}

Let $c$ be a positive integer.  This section treats finite sums in which the
outer power is attached to one member of the arithmetic progression
$cn-1,cn,cn+1$, while the generalized harmonic numbers are evaluated at a
neighboring member of the same progression.  The resulting sums are controlled
by the standard root-of-unity filter for a fixed residue class modulo $c$.

Put
\[
  \omega_c=\exp(2\pi i/c).
\]
For an integer residue $a$, define
\begin{equation}
\label{eq:residue-filter}
\delta_{c,a}(M)
=
\begin{cases}
1, & M\equiv a \pmod c,\\
0, & M\not\equiv a \pmod c,
\end{cases}
=
\frac1c\sum_{\ell=0}^{c-1}\omega_c^{\ell(M-a)}.
\end{equation}
Thus a sum over the residue class $M\equiv a\pmod c$ is a finite linear
combination of unrestricted sums with modified colors.  If $\xi^c=z$, then
for $M=cn+a$ one has
\[
  z^n=\xi^{cn}=\xi^{M-a}.
\]
The restriction to an arithmetic progression therefore only changes the colors
appearing in the resulting harmonic sums.

We shall also use the elementary boundary relation
\begin{equation}
\label{eq:affine-one-step-up}
\mathcal H_{(r,s),\alpha'}(M+1)
=
\mathcal H_{(r,s),\alpha'}(M)
+
\frac{s^{M+1}}{(M+1)^r}\mathcal H_{\alpha'}(M),
\end{equation}
valid for every nonempty word $((r,s),\alpha')$.
For the family involving $cn-1$, set
\[
\nu_c=
\begin{cases}
1,& c\ge 2,\\
2,& c=1.
\end{cases}
\]
This avoids the endpoint $cn-1=0$ when $c=1$.

\begin{theorem}[Arithmetic-progression lower shifts]
\label{thm:arithmetic-progression-lower-shifts}
Let $c\in\mathbb Z_{>0}$, let $z,q\in\mathbb C$, let
$\alpha_1,\ldots,\alpha_m$ be words in the generalized harmonic number alphabet,
and let $e_1,\ldots,e_m\in\mathbb Z_{\ge0}$.  Then
\begin{equation}
\label{eq:cn-plus-one-lower-shift}
\sum_{n=1}^{k}
 z^n(cn+1)^q
 \prod_{j=1}^{m}\mathcal H_{\alpha_j}(cn)^{e_j}
\in
\mathscr H_{ck+1}.
\end{equation}
Similarly,
\begin{equation}
\label{eq:cn-lower-shift}
\sum_{n=\nu_c}^{k}
 z^n(cn)^q
 \prod_{j=1}^{m}\mathcal H_{\alpha_j}(cn-1)^{e_j}
\in
\mathscr H_{ck}.
\end{equation}
\end{theorem}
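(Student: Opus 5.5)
We first reduce to a single word, as in the proof of Theorem~\ref{thm:aligned-affine-closure}. By the quasi-shuffle product, $\prod_j\mathcal H_{\alpha_j}(cn)^{e_j}$ is a finite linear combination of single sums $\mathcal H_\gamma(cn)$, and the same holds at upper limit $cn-1$. So it suffices to show
\[
\sum_{n=1}^{k}z^n(cn+1)^q\mathcal H_\gamma(cn)\in\mathscr H_{ck+1},
\qquad
\sum_{n=\nu_c}^{k}z^n(cn)^q\mathcal H_\gamma(cn-1)\in\mathscr H_{ck}
\]
for every word $\gamma$. If $z=0$ both sums vanish, so we assume $z\ne0$ and fix $\xi$ with $\xi^c=z$.

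\textbf{First sum.} Set $M=cn+1$; then $M$ runs over the integers $c+1\le M\le ck+1$ with $M\equiv1\pmod c$, and $z^n=\xi^{M-1}$. The summand becomes $\xi^{-1}\xi^M M^q\mathcal H_\gamma(M-1)$. The key observation is that $\mathcal H_\gamma(M-1)$ paired with the outer index $M$ is exactly the strict region $M>n_1$. We impose the congruence with the filter \eqref{eq:residue-filter}, which replaces $\xi^M$ by the finitely many colors $(\xi\omega_c^\ell)^M$ with constant coefficients $\omega_c^{-\ell}/c$. We then extend the range to $1\le M\le ck+1$. The omitted terms $M=1$ (when $c\ge2$ a residue-$1$ index below $c+1$ only arises at $M=1$; when $c=1$ the omitted term is also $M=1$) contribute $\mathcal H_\gamma(0)$. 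This is $0$ for nonempty $\gamma$ and a constant for empty $\gamma$, and constants lie in $\mathscr H_{ck+1}$ via the empty word. For each color $w=\xi\omega_c^\ell$ we then have
\[
\sum_{M=1}^{ck+1}w^M M^q\mathcal H_\gamma(M-1)=\mathcal H_{(-q,w),\gamma}(ck+1),
\]
directly from the definition \eqref{eq:generalized-harmonic-sum}, with no merge term. This proves \eqref{eq:cn-plus-one-lower-shift}. Alternatively one can write $\mathcal H_\gamma(M-1)=\mathcal H_\gamma(M)-(\text{last-step term})$ using \eqref{eq:affine-one-step-up} and invoke Lemma~\ref{lem:elementary-summation}, but the direct strict-inequality reading is cleaner.

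\textbf{Second sum.} This is identical with $M=cn$, residue $0$ modulo $c$, and $z^n=\xi^M$. The summand is $\xi^M M^q\mathcal H_\gamma(M-1)$ over $c\nu_c\le M\le ck$ with $c\mid M$. After filtering and extending to $1\le M\le ck$, each color gives $\mathcal H_{(-q,w),\gamma}(ck)$, minus finitely many boundary terms with $M<c\nu_c$. These boundary terms are constants independent of $k$: when $c\ge2$ the only excluded multiples of $c$ are none, and when $c=1$ the excluded term is $M=1$, where $\mathcal H_\gamma(0)$ is $0$ or $1$. They are absorbed by the empty word, which gives \eqref{eq:cn-lower-shift}.

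\textbf{Main obstacle.} The only delicate point is the bookkeeping at the lower endpoint. One must confirm that the filtered, extended sums over $1\le M$ differ from the original only by $k$-independent constants, and that the case $c=1$ with $cn-1=0$ is correctly excluded by $\nu_c$. Everything else is the definitional identity above; it works because the shift by one places the outer index strictly above every inner index.
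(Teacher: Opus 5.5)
Your proposal is correct and follows the paper's proof essentially step for step. Both arguments reduce to a single word by the quasi-shuffle product, substitute $M=cn+1$ (resp.\ $M=cn$) with $\xi^c=z$, impose the residue filter, and observe that $\sum_{M\le N}\eta^M M^q\mathcal H_\gamma(M-1)$ is exactly $\mathcal H_{(-q,\eta),\gamma}(N)$, with no merge term. Your explicit treatment of the lower end (the only boundary term is $M=1$, contributing a constant multiple of $\mathcal H_\gamma(0)\in\{0,1\}$) and your separate handling of $z=0$, which is needed because you divide by $\xi$, make precise what the paper summarizes as ``lower-end constants independent of $k$.''
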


\begin{proof}
By the quasi-shuffle product, products of generalized harmonic numbers with the
same upper limit are finite linear combinations of single generalized harmonic numbers with that upper limit.  It is therefore enough to treat one factor
$\mathcal H_\alpha$.

For \eqref{eq:cn-plus-one-lower-shift}, put $M=cn+1$.  Then
$M\equiv1\pmod c$, $M\le ck+1$, and, after choosing $\xi$ with $\xi^c=z$,
\[
  z^n(cn+1)^q\mathcal H_\alpha(cn)
  =
  \xi^{-1}\xi^M M^q\mathcal H_\alpha(M-1).
\]
Using the filter \eqref{eq:residue-filter}, the sum is a finite linear
combination, up to lower-end constants independent of $k$, of sums of the form
\[
  \sum_{M=1}^{ck+1}\eta^M M^q\mathcal H_\alpha(M-1),
  \qquad \eta\in\mathbb C.
\]
If $\lambda=(-q,\eta)$, this is $\mathcal H_{\lambda,\alpha}(ck+1)$ when
$\alpha$ is nonempty, and $\mathcal H_{\lambda}(ck+1)$ when $\alpha$ is empty.
Thus it belongs to $\mathscr H_{ck+1}$.

For \eqref{eq:cn-lower-shift}, put $M=cn$.  Then $M\equiv0\pmod c$ and
\[
  z^n(cn)^q\mathcal H_\alpha(cn-1)
  =
  \xi^M M^q\mathcal H_\alpha(M-1).
\]
The same residue-filter and leading-letter argument give an element of
$\mathscr H_{ck}$, with only lower-end constants added when the range starts at
$\nu_c$.
\end{proof}

The remaining two shifted families contain boundary terms produced by
\eqref{eq:affine-one-step-up}.  These boundary terms involve mixed rational
factors such as $M^q(M+1)^{-r}$.  Hence the closure statement is most naturally
formulated for integer exponents, unless the alphabet is enlarged further.

\begin{theorem}[Arithmetic-progression upper shifts in the integer-weight case]
\label{thm:arithmetic-progression-upper-shifts-integer}
Let $c\in\mathbb Z_{>0}$.  Assume that the first components of all letters
occurring in $\alpha_1,\ldots,\alpha_m$ are integers, and let $q\in\mathbb Z$.
Let $z\in\mathbb C$ and $e_1,\ldots,e_m\in\mathbb Z_{\ge0}$.  Then
\begin{equation}
\label{eq:cn-plus-one-upper-shift}
\sum_{n=1}^{k}
 z^n(cn)^q
 \prod_{j=1}^{m}\mathcal H_{\alpha_j}(cn+1)^{e_j}
\in
\mathscr H_{ck}+\mathscr H_{ck+1}.
\end{equation}
Similarly,
\begin{equation}
\label{eq:cn-minus-one-upper-shift}
\sum_{n=\nu_c}^{k}
 z^n(cn-1)^q
 \prod_{j=1}^{m}\mathcal H_{\alpha_j}(cn)^{e_j}
\in
\mathscr H_{ck-1}+\mathscr H_{ck}.
\end{equation}
\end{theorem}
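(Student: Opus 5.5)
The plan is to reduce to a single word, peel off the top index with the boundary relation \eqref{eq:affine-one-step-up}, and then treat the resulting mixed factor $M^q(M+1)^{-r}$ by partial fractions. Here the integer-weight hypothesis is essential. I treat \eqref{eq:cn-plus-one-upper-shift} in detail; \eqref{eq:cn-minus-one-upper-shift} is the same argument shifted by one.

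\emph{Reduction to one word.} As in the proof of Theorem~\ref{thm:arithmetic-progression-lower-shifts}, expand $\prod_j\mathcal H_{\alpha_j}(cn+1)^{e_j}$ by the quasi-shuffle product into a linear combination of single sums $\mathcal H_\gamma(cn+1)$. Merging letters adds first components, so every $\gamma$ still has integer first components. It therefore suffices to treat
\[
\sum_{n=1}^{k} z^n (cn)^q\,\mathcal H_\gamma(cn+1),\qquad q\in\mathbb Z .
\]
If $\gamma=\emptyset$, this is handled directly by the residue filter \eqref{eq:residue-filter} and Theorem~\ref{thm:finite-euler-sum}. So assume $\gamma=((r,s),\gamma')$ with $r\in\mathbb Z$.

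\emph{Peeling off the top index.} Put $M=cn$ and choose $\xi$ with $\xi^c=z$, so that $z^n=\xi^M$. By \eqref{eq:affine-one-step-up} the summand splits as
\[
\xi^M M^q\,\mathcal H_\gamma(M)\;+\;s\,(\xi s)^M\,M^q(M+1)^{-r}\,\mathcal H_{\gamma'}(M).
\]
The first term is summed over $M\equiv0\pmod c$, $M\le ck$. After applying the filter \eqref{eq:residue-filter}, Theorem~\ref{thm:finite-euler-sum} places it in $\mathscr H_{ck}$.

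\emph{The mixed factor.} This is the step I expect to be the crux. Since $q,r\in\mathbb Z$, the function $M^q(M+1)^{-r}$ is a rational function of $M$ with poles only at $0$ and $-1$. Its partial-fraction decomposition has the form
\[
M^q(M+1)^{-r}=\sum_i a_i M^{i}+\sum_{j\ge1} d_j\,(M+1)^{-j},
\]
where the first sum is a finite Laurent polynomial in $M$ (polynomial plus poles at $0$) and the second sum is finite. The terms $a_iM^i(\xi s)^M\mathcal H_{\gamma'}(M)$ are summed over $M\equiv0\pmod c$ and land in $\mathscr H_{ck}$, again by the filter and Theorem~\ref{thm:finite-euler-sum}. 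For the terms $d_j(M+1)^{-j}(\xi s)^M\mathcal H_{\gamma'}(M)$, substitute $M'=M+1\equiv1\pmod c$, with $M'\le ck+1$. Each such term becomes a constant multiple of $\eta^{M'}M'^{-j}\mathcal H_{\gamma'}(M'-1)$. After filtering, this is exactly the leading-letter situation of the proof of Theorem~\ref{thm:arithmetic-progression-lower-shifts}. Indeed, the strict inequality $M'>n_1$ gives
\[
\sum_{M'=1}^{ck+1}\eta^{M'}M'^{-j}\mathcal H_{\gamma'}(M'-1)=\mathcal H_{(j,\eta),\gamma'}(ck+1),
\]
and this equals $\mathcal H_{(j,\eta)}(ck+1)$ when $\gamma'$ is empty. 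Every lower-end correction coming from the filters is a constant, hence is represented by the empty word. Summing the three contributions gives membership in $\mathscr H_{ck}+\mathscr H_{ck+1}$.

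\emph{The second family.} For \eqref{eq:cn-minus-one-upper-shift}, put $M=cn-1$, so that $M\equiv-1\pmod c$. Then
\[
\mathcal H_\gamma(cn)=\mathcal H_\gamma(M+1)=\mathcal H_\gamma(M)+s^{M+1}(M+1)^{-r}\mathcal H_{\gamma'}(M).
\]
The first piece lies in $\mathscr H_{ck-1}$. The second is split by the same partial fractions of $M^q(M+1)^{-r}$. The Laurent-polynomial part lies in $\mathscr H_{ck-1}$. The $(M+1)^{-j}$ part is shifted to $M'=cn$, which gives leading-letter sums in $\mathscr H_{ck}$. The starting index $\nu_c$ only ensures $M\ge1$; it contributes only constants. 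The point requiring care throughout is that the partial-fraction step uses $q,r\in\mathbb Z$ in an essential way. This is exactly why the statement is restricted to integer weights.
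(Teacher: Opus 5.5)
Your proof is correct and follows the paper's argument: the boundary relation \eqref{eq:affine-one-step-up}, partial fractions of a rational prefactor with poles only at $0$ and $-1$ (where the integer hypotheses on $q$ and the first components enter), the residue filter together with Theorem~\ref{thm:finite-euler-sum} for the powers of $M$, and the shift $L=M+1$ with the leading-letter argument of Theorem~\ref{thm:arithmetic-progression-lower-shifts} for the powers of $M+1$. The only difference is the order of steps: you quasi-shuffle first at upper limit $cn+1$ and then peel a single word, whereas the paper peels every shifted factor first and then quasi-shuffles at upper limit $M$; both routes produce the same kind of rational prefactors and the same target spans.
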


\begin{proof}
We prove \eqref{eq:cn-plus-one-upper-shift}.  Put $M=cn$.  Applying
\eqref{eq:affine-one-step-up} to each shifted factor and expanding the product,
we obtain a finite linear combination of terms of the form
\[
  \eta^M R(M)
  \prod_{\ell}\mathcal H_{\beta_\ell}(M)^{f_\ell},
\]
where $\eta\in\mathbb C$, $f_\ell\in\mathbb Z_{\ge0}$, and $R(M)$ is a rational
function whose possible poles occur only at $M=0$ and $M=-1$.  The rationality
of $R(M)$ uses the hypotheses that $q$ and the first components of the letters
are integers.  After another quasi-shuffle expansion, it remains to sum terms
of the form
\[
  \eta^M R(M)\mathcal H_\beta(M)
\]
over the residue class $M\equiv0\pmod c$, with $M\le ck$.
By partial fractions, $R(M)$ is a finite linear combination of integral powers
of $M$ and of $M+1$.  The terms involving powers of $M$ are handled by the
finite Euler-type closure theorem together with the residue filter
\eqref{eq:residue-filter}; they lie in $\mathscr H_{ck}$.  For the terms
involving powers of $M+1$, set $L=M+1$.  Then $L\equiv1\pmod c$,
$L\le ck+1$, and $\mathcal H_\beta(M)=\mathcal H_\beta(L-1)$.  The
residue-filter and leading-letter argument used in
Theorem~\ref{thm:arithmetic-progression-lower-shifts} place these terms in
$\mathscr H_{ck+1}$.  Hence the whole sum lies in
$\mathscr H_{ck}+\mathscr H_{ck+1}$.

For \eqref{eq:cn-minus-one-upper-shift}, put $M=cn-1$.  Then
$M\equiv -1\pmod c$ and $\mathcal H_{\alpha_j}(cn)=\mathcal H_{\alpha_j}(M+1)$.
Using \eqref{eq:affine-one-step-up}, quasi-shuffle expansion, and partial
fractions gives powers of $M$ and $M+1$.  The $M$-terms are residue-filtered
sums with upper limit $ck-1$, hence lie in $\mathscr H_{ck-1}$.  For the
$(M+1)$-terms, set $L=M+1$; then $L\equiv0\pmod c$, $L\le ck$, and the
summands contain $\mathcal H_\beta(L-1)$, which gives elements of
$\mathscr H_{ck}$.  This proves
\eqref{eq:cn-minus-one-upper-shift}.
\end{proof}

\subsubsection{Consequences and examples}

The table records representative lower- and upper-shift arithmetic-progression
cases.  Lower-shift entries allow $q\in\mathbb C$; upper-shift entries use the
integer-power hypothesis from Theorem~\ref{thm:arithmetic-progression-upper-shifts-integer}.

\begin{center}
\scriptsize
\renewcommand{\arraystretch}{1.08}
\setlength{\tabcolsep}{4pt}
\begin{adjustbox}{max width=\textwidth}
\begin{tabular}{@{}>{\raggedright\arraybackslash}p{0.52\textwidth}>{\centering\arraybackslash}p{0.22\textwidth}>{\raggedright\arraybackslash}p{0.18\textwidth}@{}}
\toprule
\textbf{Sum} & \textbf{Span} & \textbf{Condition} \\
\midrule
$\displaystyle \sum_{n=1}^{k} z^n(2n+1)^q H_{2n}^{(r)}(s)$
& $\mathscr H_{2k+1}$
& $q\in\mathbb C$. \\

$\displaystyle \sum_{n=1}^{k} z^n(3n+1)^q\mathcal H_{\alpha_1}(3n)^2\mathcal H_{\alpha_2}(3n)$
& $\mathscr H_{3k+1}$
& $q\in\mathbb C$. \\

$\displaystyle \sum_{n=1}^{k} z^n(5n+1)^q h_{5n}^{[m]}(r;s)\,H_{5n}^{\star,\mathbf u}(\mathbf w)$
& $\mathscr H_{5k+1}$
& $q\in\mathbb C$. \\

$\displaystyle \sum_{n=1}^{k} z^n(3n)^q\zeta(u,3n)\,H_{3n-1}^{(r_1,r_2)}(s_1,s_2)$
& $\mathscr H_{3k}$
& $q\in\mathbb C$, $u\ne1$. \\

$\displaystyle \sum_{n=1}^{k} z^n(2n)^q H_{2n+1}^{(r)}(s)\,A_{2n+1}^{(u)}(w)$
& $\mathscr H_{2k}+\mathscr H_{2k+1}$
& $q\in\mathbb Z$. \\

$\displaystyle \sum_{n=1}^{k} z^n(3n-1)^q H_{3n}^{(r_1,r_2)}(s_1,s_2)\,H_{3n}^{\star,\mathbf u}(\mathbf w)$
& $\mathscr H_{3k-1}+\mathscr H_{3k}$
& $q\in\mathbb Z$. \\

$\displaystyle \sum_{n=1}^{k}\frac{z^n}{(4n-1)^2}\mathcal H_{\alpha_1}(4n)^2\mathcal H_{\alpha_2}(4n)\,\psi'(4n+1)$
& $\mathscr H_{4k-1}+\mathscr H_{4k}$
& Integer first components. \\

$\displaystyle \sum_{n=1}^{k} z^n(6n)^q\mathcal H_{\alpha_1}(6n+1)^2\mathcal H_{\alpha_2}(6n+1)\,\mathbf1_{n\equiv a\,({\rm mod}\,M)}$
& $\mathscr H_{6k}+\mathscr H_{6k+1}$
& $q\in\mathbb Z$. \\
\bottomrule
\end{tabular}
\end{adjustbox}
\end{center}

\section{Affine-letter extensions}
\label{sec:affine-letter-extensions}

\subsection{Affine letters and the closure theorem}

We now pass from the special cases considered above to finite sums with
general affine powers
\[
(an+b)^q,
\]
where $a,b,q\in\mathbb C$.  For arbitrary complex $q$, these factors are not
reducible in general to the basic colored harmonic-number alphabet.  Closure
therefore requires an enlarged alphabet in which affine powers may appear as
letters.  This enlargement is structural: quasi-shuffle multiplication and the
outer-index splitting used in the closure proof create diagonal contributions
in which several affine factors occur at the same summation level.  The target
algebra must therefore allow one letter to carry a finite product of affine
powers.

The affine-letter harmonic-number alphabet used below is strictly more general
than the cyclotomic harmonic sums in the HarmonicSums framework of Ablinger
\citep{Ablinger2014}.  A cyclotomic harmonic-number letter carries one affine
denominator at a level, whereas one affine letter here may carry several factors
\[
a_1n+b_1,\ldots,a_tn+b_t
\]
at the same level, with complex powers and an independent color.  This
level-wise product structure is precisely what the closure argument requires.

An \emph{affine letter} is a triple
\[
L=(\boldsymbol\rho,\sigma,\mathbf A),
\]
where
\[
\boldsymbol\rho=(\rho_1,\ldots,\rho_t),
\qquad
\mathbf A=((a_1,b_1),\ldots,(a_t,b_t)).
\]
Its value at a positive integer $n$ is
\begin{equation}
\label{eq:affine-letter-value}
L(n)
=
\sigma^n
\prod_{\nu=1}^{t}(a_\nu n+b_\nu)^{-\rho_\nu}.
\end{equation}
For a word $\Gamma=(L_1,\ldots,L_d)$ in affine letters, define
\begin{equation}
\label{eq:affine-G-definition}
\mathcal G_{\Gamma}(N)
:=
\sum_{N\ge n_1>\cdots>n_d\ge 1}
\prod_{j=1}^{d}L_j(n_j),
\end{equation}
with the convention $\mathcal G_{\emptyset}(N)=1$.  Ordinary generalized harmonic numbers are recovered by taking, for each letter, a single affine factor
$(1,0)$:
\[
\mathcal H_{((r_1,s_1),\ldots,(r_d,s_d))}(N)
=
\mathcal G_{\Gamma}(N),
\]
where
\[
L_j=((r_j),s_j,((1,0))).
\]
Complex powers are interpreted after fixing a branch of the logarithm.  We
assume that no affine factor appearing in a denominator vanishes at the relevant
positive integers.

\begin{theorem}[Closure for affine-harmonic-sum reducible summands]
\label{thm:affine-power-summand-closure}
Let
\[
  F_j(n)\in \mathscr E^{\rm aff}_n,
  \qquad j=1,\ldots,m,
\]
be affine-harmonic-sum reducible sequences in the sense of
Definition~\ref{def:affine-harmonic-sum-reducibility-final}.  Let
$t\ge0$, let $a_\nu,b_\nu,q_\nu,z\in\mathbb C$ for
$1\le\nu\le t$, and assume that
\[
  a_\nu n+b_\nu\ne0
  \qquad (1\le n\le k,
  \;1\le\nu\le t).
\]
After fixing branches for the powers, for every
$e_1,\ldots,e_m\in\mathbb Z_{\ge0}$ the finite sum
\begin{equation}
\label{eq:affine-power-main-sum}
S(k)
=
\sum_{n=1}^{k}
 z^n
 \prod_{\nu=1}^{t}(a_\nu n+b_\nu)^{q_\nu}
 \prod_{j=1}^{m}F_j(n)^{e_j}
\end{equation}
belongs to
\[
\mathscr G_k
=
\operatorname{span}_{\mathbb C}
\{\mathcal G_\Gamma(k):\Gamma\text{ a word in affine letters}\}.
\]
Here the empty product is interpreted as $1$ when $t=0$.  Thus the
finite-summation operator sends affine-harmonic-sum reducible summands, after
multiplication by an additional finite product of affine powers, back to the
multiple affine harmonic-number span with upper limit $k$.
\end{theorem}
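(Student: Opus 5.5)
The plan mirrors the proof of Theorem~\ref{thm:finite-euler-sum}, carried out inside the affine alphabet. There are three steps: show that $\mathscr E^{\rm aff}_n$ is closed under multiplication, absorb the outer factor $z^n\prod_\nu(a_\nu n+b_\nu)^{q_\nu}$ as one affine letter, and then apply an affine version of Lemma~\ref{lem:elementary-summation} term by term.

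\emph{Step 1 (product closure).} By Definition~\ref{def:affine-harmonic-sum-reducibility-final}, each $F_j(n)$ is a finite linear combination of terms $L(n)\,\mathcal G_\Gamma(n)$, where $L$ is an affine letter or $1$. Expanding $\prod_j F_j(n)^{e_j}$ multilinearly gives a finite linear combination of products
\[
L^{(1)}(n)\cdots L^{(p)}(n)\,\mathcal G_{\Gamma_1}(n)\cdots\mathcal G_{\Gamma_p}(n).
\]
The prefactor product equals $(L^{(1)}\circ\cdots\circ L^{(p)})(n)$, since $\circ$ multiplies colors and concatenates affine factors. The product of word sums with the common upper limit $n$ is a finite linear combination of single $\mathcal G_\Gamma(n)$. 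This follows from the stuffle identity \eqref{eq:monoidal-stuffle-product}, applied to the monoidal alphabet of affine letters, where a collision of two letters is replaced by their $\circ$-product. The outer factor is also an affine letter,
\[
\Lambda:=((-q_1,\ldots,-q_t),z,((a_1,b_1),\ldots,(a_t,b_t))),
\]
so the whole summand is a finite linear combination of terms $\tilde\Lambda(n)\,\mathcal G_\Gamma(n)$, with $\tilde\Lambda=\Lambda\circ L^{(1)}\circ\cdots$ a single affine letter.

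\emph{Step 2 (elementary summation).} For an affine letter $\lambda$ and an affine word $\Gamma$, the claim is
\[
\sum_{n=1}^{k}\lambda(n)\mathcal G_\Gamma(n)=
\begin{cases}\mathcal G_{\lambda}(k),&\Gamma=\emptyset,\\
\mathcal G_{\lambda,\Gamma}(k)+\mathcal G_{\lambda\circ L_1,\Gamma'}(k),&\Gamma=(L_1,\Gamma').\end{cases}
\]
The proof is the same as for Lemma~\ref{lem:elementary-summation}. Expand $\mathcal G_\Gamma(n)$ and split the region $k\ge n\ge n_1>\cdots$ into the disjoint pieces $n>n_1$ and $n=n_1$. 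On the diagonal piece the two letters sit at the same level, and their values multiply to $(\lambda\circ L_1)(n)$. Summing term by term then places $S(k)$ in $\mathscr G_k$.

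\emph{Main obstacle.} The main difficulty is branch bookkeeping, not combinatorics. We need the pointwise identity $(\lambda\circ L)(n)=\lambda(n)L(n)$ to hold exactly. The issue is that $(an+b)^{q}(an+b)^{q'}$ need not equal $(an+b)^{q+q'}$ once branches are fixed independently, and the same holds for merged color powers. I will avoid this by defining $\circ$ as \emph{concatenation} of the factor lists, keeping each factor with its own branch, rather than combining exponents of equal bases. Then the value of the merged letter is literally the product of the values, and no exponent addition is ever needed. Colors multiply as $\sigma^n\tau^n=(\sigma\tau)^n$, which is harmless for integer $n$. The hypotheses that $a_\nu n+b_\nu\neq0$ for $1\le n\le k$, together with the standing nonvanishing assumption on the letters in $F_j$, ensure that every letter produced is well defined on the summation range. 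These nonvanishing conditions are preserved under concatenation.
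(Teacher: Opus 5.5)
Your proposal is correct and follows essentially the same route as the paper. The paper also establishes product closure of $\mathscr E^{\rm aff}_n$ through the affine quasi-shuffle product, absorbs $z^n\prod_\nu(a_\nu n+b_\nu)^{q_\nu}$ as the single affine letter $((-q_1,\ldots,-q_t),z,((a_1,b_1),\ldots,(a_t,b_t)))$, and applies Lemma~\ref{lem:affine-elementary-summation} termwise, splitting into $n>n_1$ and $n=n_1$. Your choice to define $\circ$ by concatenating factor lists, so that $(L\circ M)(n)=L(n)M(n)$ holds exactly without combining exponents under independent branches, agrees with how the paper defines the merge of affine letters in this section.
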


\begin{lemma}[Affine elementary summation]
\label{lem:affine-elementary-summation}
Let $A$ be an affine letter and let $\Gamma$ be a word in affine letters.  If
$\Gamma=\emptyset$, then
\[
  \sum_{n=1}^{k}A(n)=\mathcal G_A(k).
\]
If $\Gamma=(B_1,\Gamma')$ is nonempty, then
\begin{equation}
\label{eq:affine-elementary-summation-step}
\sum_{n=1}^{k}A(n)\mathcal G_{B_1,\Gamma'}(n)
=
\mathcal G_{A,B_1,\Gamma'}(k)
+
\mathcal G_{A\circ B_1,\Gamma'}(k).
\end{equation}
Consequently, finite summation maps $\mathscr E^{\rm aff}_n$ into
$\mathscr G_k$.
\end{lemma}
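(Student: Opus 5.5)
The proof mirrors Lemma~\ref{lem:elementary-summation}, with the basic merge replaced by the affine product $\circ$. The case $\Gamma=\emptyset$ is immediate: by \eqref{eq:affine-G-definition} with a one-letter word, $\mathcal G_A(k)=\sum_{k\ge n_1\ge1}A(n_1)$. For $\Gamma=(B_1,\Gamma')$ nonempty, I expand
\[
\sum_{n=1}^{k}A(n)\mathcal G_{B_1,\Gamma'}(n)
=\sum_{k\ge n\ge n_1>n_2>\cdots>n_d\ge1}A(n)B_1(n_1)\prod_{j\ge2}L_j(n_j),
\]
where $\Gamma'=(L_2,\ldots,L_d)$. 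I then split the summation region into the disjoint pieces $n>n_1$ and $n=n_1$.

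On the piece $n>n_1$, the index chain is $k\ge n>n_1>\cdots>n_d\ge1$. This is exactly the defining region of $\mathcal G_{A,B_1,\Gamma'}(k)$. On the diagonal $n=n_1$, the factor $A(n_1)B_1(n_1)$ equals $(A\circ B_1)(n_1)$ by the definition of $\circ$: colors multiply and affine factor lists concatenate. That identity holds pointwise once branches are fixed, so the diagonal piece is $\mathcal G_{A\circ B_1,\Gamma'}(k)$. This gives \eqref{eq:affine-elementary-summation-step}.

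For the consequence, I take a spanning element $L(n)G(n)$ of $\mathscr E^{\rm aff}_n$, with $L$ an affine letter or $1$ and $G\in\mathscr G_n$. By linearity it suffices to treat $L(n)\mathcal G_\Gamma(n)$ for a single word $\Gamma$.
\begin{itemize}
\item If $L$ is an affine letter, the displayed identity applies directly with $A=L$.
\item If $L=1$, I regard $1$ as the trivial affine letter $((),1,())$, with color $1$ and empty factor list. The same identity then applies.
\end{itemize}
Summing termwise places the result in $\mathscr G_k$.

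The only point needing care is the branch issue in the merge step. The identity $(A\circ B_1)(n)=A(n)B_1(n)$ requires that concatenating factor lists does not recombine powers of equal bases under a different branch. I would therefore insist that $\circ$ concatenates factor lists rather than adding exponents of equal bases, or else note that adding exponents is legitimate because each power is defined as $\exp(-\rho\log(\cdot))$ with one fixed logarithm per base. Nonvanishing of the affine forms on $1\le n\le k$ is inherited from the hypotheses on the letters.
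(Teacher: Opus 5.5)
Your proof is correct and follows the paper's argument: expand $\mathcal G_{B_1,\Gamma'}(n)$, split the region $k\ge n\ge n_1>\cdots>n_d\ge1$ into $n>n_1$ and $n=n_1$, and identify the diagonal piece via the merged letter $A\circ B_1$. Your extra details are sound and fill in points the paper leaves implicit: treating $1$ as the trivial letter $((),1,())$ for the consequence, and checking that $\circ$ respects the fixed branches.
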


\begin{proof}
The empty-word case is immediate from the definition.  In the nonempty case,
expand $\mathcal G_{B_1,\Gamma'}(n)$.  The summation region is
\[
  k\ge n\ge n_1>n_2>\cdots\ge1.
\]
The part with $n>n_1$ gives $\mathcal G_{A,B_1,\Gamma'}(k)$, while the
part with $n=n_1$ merges the two letters at that level and gives
$\mathcal G_{A\circ B_1,\Gamma'}(k)$.
\end{proof}

\begin{proof}[Proof of Theorem~\ref{thm:affine-power-summand-closure}]
The affine sums $\mathcal G_\Gamma(n)$ form a quasi-shuffle algebra, and the
pointwise product of affine letters is again an affine letter.  Hence
$\mathscr E^{\rm aff}_n$ is closed under multiplication, and therefore
\[
  \prod_{j=1}^{m}F_j(n)^{e_j}\in\mathscr E^{\rm aff}_n.
\]
The additional factor
\[
  z^n\prod_{\nu=1}^{t}(a_\nu n+b_\nu)^{q_\nu}
\]
is itself the value of one affine letter, namely
\[
A=
\bigl((-q_1,\ldots,-q_t),z,
((a_1,b_1),\ldots,(a_t,b_t))\bigr),
\]
with the evident empty list when $t=0$.  Multiplying by this letter still gives
an element of $\mathscr E^{\rm aff}_n$.  Applying
Lemma~\ref{lem:affine-elementary-summation} termwise proves that
$S(k)\in\mathscr G_k$.
\end{proof}

\begin{corollary}[Closure for products of affine harmonic numbers]
\label{thm:affine-product-powers-summand}
Let $\Gamma_1,\ldots,\Gamma_m$ be words in affine letters, and let
$e_1,\ldots,e_m\in\mathbb Z_{\ge0}$.  Under the same nonvanishing and branch
assumptions as in Theorem~\ref{thm:affine-power-summand-closure}, the sum
\begin{equation}
\label{eq:affine-product-main-sum}
\sum_{n=1}^{k}
 z^n
 \prod_{\nu=1}^{t}(a_\nu n+b_\nu)^{q_\nu}
 \prod_{j=1}^{m}\mathcal G_{\Gamma_j}(n)^{e_j}
\end{equation}
belongs to $\mathscr G_k$.
\end{corollary}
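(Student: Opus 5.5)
The corollary follows from Theorem~\ref{thm:affine-power-summand-closure} once each factor $\mathcal G_{\Gamma_j}(n)$ is recognised as an admissible input. I will take $F_j(n):=\mathcal G_{\Gamma_j}(n)$ and check that $F_j(n)\in\mathscr E^{\rm aff}_n$. By \eqref{eq:affine-summand-space-final}, $\mathscr E^{\rm aff}_n$ is spanned by the products $L(n)G(n)$, where $L$ is an affine letter or $1$ and $G(n)\in\mathscr G_n$. Choosing $L=1$ and $G(n)=\mathcal G_{\Gamma_j}(n)\in\mathscr G_n$ shows that $F_j(n)\in\mathscr E^{\rm aff}_n$ directly. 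In the notation of Definition~\ref{def:affine-harmonic-sum-reducibility-final}, this is the one-term representation with $c=1$, $\sigma=1$, an empty product of affine powers, and word $\Gamma_j$.

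With these $F_j$, the sum \eqref{eq:affine-product-main-sum} is literally the sum \eqref{eq:affine-power-main-sum}, with the same $z$, $a_\nu$, $b_\nu$, $q_\nu$ and exponents $e_j$. The nonvanishing assumption on $a_\nu n+b_\nu$ for $1\le n\le k$ and the choice of branches are inherited unchanged. Theorem~\ref{thm:affine-power-summand-closure} therefore applies and gives membership in $\mathscr G_k$.

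The only point that needs any care is that the letters occurring in the words $\Gamma_j$ have nonvanishing denominators on $1\le n\le k$. This is already part of the standing assumption under which $\mathcal G_{\Gamma_j}(n)$ is defined, so it adds no new hypothesis. I therefore expect no real obstacle: the content is the closure of $\mathscr E^{\rm aff}_n$ under multiplication, which is the quasi-shuffle product with merged affine letters, together with Lemma~\ref{lem:affine-elementary-summation}, and both are already used inside the proof of Theorem~\ref{thm:affine-power-summand-closure}.

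If a self-contained argument is preferred, I would proceed in three steps.
\begin{enumerate}
\item Expand $\prod_j\mathcal G_{\Gamma_j}(n)^{e_j}$ by the stuffle product \eqref{eq:monoidal-stuffle-product}. This writes it as a finite combination $\sum_\Delta c_\Delta\mathcal G_\Delta(n)$, where collisions are resolved by $\circ$.
\item Absorb the factor $z^n\prod_\nu(a_\nu n+b_\nu)^{q_\nu}$ as the single affine letter $A=((-q_1,\ldots,-q_t),z,((a_1,b_1),\ldots,(a_t,b_t)))$.
\item Apply \eqref{eq:affine-elementary-summation-step} to each $\sum_{n\le k}A(n)\mathcal G_\Delta(n)$. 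This yields $\mathcal G_{A,\Delta}(k)+\mathcal G_{A\circ D_1,\Delta'}(k)$, or $\mathcal G_A(k)$ when $\Delta=\emptyset$.
\end{enumerate}
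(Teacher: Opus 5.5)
Your proposal is correct and matches the paper's proof. Both observe that each $\mathcal G_{\Gamma_j}(n)$ lies in $\mathscr E^{\rm aff}_n$ (taking $L=1$), and then read the sum as the special case $F_j=\mathcal G_{\Gamma_j}$ of Theorem~\ref{thm:affine-power-summand-closure}. Your optional self-contained sketch just unpacks that theorem's proof: stuffle expansion, absorbing the outer factor as a single affine letter, and Lemma~\ref{lem:affine-elementary-summation}.
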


\begin{proof}
Each $\mathcal G_{\Gamma_j}(n)$ belongs to $\mathscr E^{\rm aff}_n$.  The
claim is therefore the corresponding special case of
Theorem~\ref{thm:affine-power-summand-closure}.
\end{proof}

\begin{corollary}[Ordinary harmonic-number specialization]
\label{cor:affine-self-closure}
Let $\alpha_1,\ldots,\alpha_m$ be words in the generalized harmonic-number
alphabet and let $e_1,\ldots,e_m\in\mathbb Z_{\ge0}$.  Under the same
nonvanishing and branch assumptions as in
Theorem~\ref{thm:affine-power-summand-closure}, the sum
\begin{equation}
\label{eq:affine-self-closure-sum}
\sum_{n=1}^{k}
z^n
\prod_{\nu=1}^{t}(a_\nu n+b_\nu)^{q_\nu}
\prod_{j=1}^{m}\mathcal H_{\alpha_j}(n)^{e_j}
\end{equation}
belongs to $\mathscr G_k$.
\end{corollary}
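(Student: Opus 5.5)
The plan is to reduce Corollary~\ref{cor:affine-self-closure} to Corollary~\ref{thm:affine-product-powers-summand}. Equivalently, one can use Theorem~\ref{thm:affine-power-summand-closure} directly, by showing that each factor $\mathcal H_{\alpha_j}(n)$ is itself an affine word sum.

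\textbf{Embedding.} For a word $\alpha=((r_1,s_1),\ldots,(r_d,s_d))$ in the basic colored alphabet, take the affine letters $L_i=((r_i),s_i,((1,0)))$ and put $\Gamma_\alpha=(L_1,\ldots,L_d)$. Then
\[
L_i(n_i)=s_i^{n_i}(1\cdot n_i+0)^{-r_i}=s_i^{n_i}n_i^{-r_i}.
\]
For positive integers, the branch of the power is the one given by the real logarithm, so this agrees with the convention of Theorem~\ref{thm:finite-euler-sum}. Comparing the defining sums \eqref{eq:generalized-harmonic-sum} and \eqref{eq:affine-G-definition} over the same region $N\ge n_1>\cdots>n_d\ge1$ gives
\[
\mathcal H_\alpha(N)=\mathcal G_{\Gamma_\alpha}(N).
\]
For the empty word both sides equal $1$. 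The affine form $(1,0)$ takes the value $n\ne0$ on positive integers, so the nonvanishing hypothesis is automatically satisfied.

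\textbf{Reduction.} With this embedding, the summand in \eqref{eq:affine-self-closure-sum} is literally
\[
z^n\prod_{\nu}(a_\nu n+b_\nu)^{q_\nu}\prod_j\mathcal G_{\Gamma_{\alpha_j}}(n)^{e_j}.
\]
This is exactly the form treated in Corollary~\ref{thm:affine-product-powers-summand}, with $\Gamma_j=\Gamma_{\alpha_j}$, and the remaining hypotheses on the outer affine powers are assumed verbatim. That corollary therefore places the sum in $\mathscr G_k$.

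Alternatively, one can note that each $\mathcal G_{\Gamma_{\alpha_j}}(n)\in\mathscr E^{\rm aff}_n$, taking the prefactor letter equal to $1$. Theorem~\ref{thm:affine-power-summand-closure} then applies with $F_j=\mathcal H_{\alpha_j}$.

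\textbf{Main obstacle.} There is no real obstacle. The only point needing care is branch consistency: the complex power $n^{-r_i}$ in the affine letter must be fixed by the same real-logarithm branch used for $\mathcal H_\alpha$, so that the identity $\mathcal H_\alpha=\mathcal G_{\Gamma_\alpha}$ holds exactly rather than up to branch factors. I will state this explicitly, and note that the outer factor $z^n\prod_\nu(a_\nu n+b_\nu)^{q_\nu}$ keeps whatever branches are fixed in the hypotheses.
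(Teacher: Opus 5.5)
Your proposal is correct and follows the paper's proof: the paper also embeds each colored letter $(r,s)$ as the affine letter $((r),s,((1,0)))$, so that each $\mathcal H_{\alpha_j}(n)$ is a multiple affine harmonic number lying in $\mathscr E^{\rm aff}_n$, and then applies Theorem~\ref{thm:affine-power-summand-closure}. Your remark that $(1\cdot n+0)^{-r_i}$ must use the real-logarithm branch, so that $\mathcal H_\alpha=\mathcal G_{\Gamma_\alpha}$ holds exactly, is a useful precision the paper leaves implicit.
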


\begin{proof}
Embed the basic colored alphabet into the affine alphabet by identifying the
colored letter $(r,s)$ with the affine letter $((r),s,((1,0)))$.  Then each
$\mathcal H_{\alpha_j}(n)$ is a special case of a multiple affine harmonic
number, and hence is affine-harmonic-sum reducible.  The claim follows from
Theorem~\ref{thm:affine-power-summand-closure}.
\end{proof}

\subsection{Consequences and examples}

Recall that
\[
\mathscr G_k:=
\operatorname{span}_{\mathbb C}
\{\mathcal G_\Gamma(k):\Gamma\text{ a word in affine letters}\}.
\]
We also use the local refinement $\mathscr G_k[\mathcal B]$ for the subspan
whose affine letters lie in the merge-closed alphabet generated by the finite
set $\mathcal B$.  For the concrete rows put
\[
\begin{aligned}
\mathcal B_{\mathrm{a},1}&:=\{((3),-1,((2,-1))),((2),i,((1,0)))\},\\
\mathcal B_{\mathrm{a},2}&:=\{((-2,1),3,((2,1),(5,-2))),((1),-1,((1,0))),((2),i,((1,0)))\}.
\end{aligned}
\]
The table records representative consequences of
Theorem~\ref{thm:affine-power-summand-closure} and
Corollary~\ref{thm:affine-product-powers-summand}; the first rows display concrete
letter data, while the later rows keep the symbolic span.

\begin{center}
\scriptsize
\renewcommand{\arraystretch}{1.08}
\setlength{\tabcolsep}{3pt}
\begin{longtable}{@{}>{\raggedright\arraybackslash}p{0.48\textwidth}>{\raggedright\arraybackslash}p{0.30\textwidth}>{\raggedright\arraybackslash}p{0.16\textwidth}@{}}
\toprule
\textbf{Convergent infinite sum} & \textbf{Resulting value space} & \textbf{Reason for convergence} \\
\midrule
\endfirsthead
\toprule
\textbf{Convergent infinite sum} & \textbf{Resulting value space} & \textbf{Reason for convergence} \\
\midrule
\endhead
\bottomrule
\endlastfoot

$\displaystyle \sum_{n=1}^{k}2^n(3n+1)^5$
& $\mathcal G_{((-5),2,((3,1)))}(k)$
& $3n+1\ne0$. \\

$\displaystyle \sum_{n=1}^{k}\frac{(-1)^n}{(2n-1)^3}H_n^{(2)}(i)$
& $\mathscr G_k[\mathcal B_{\mathrm{a},1}]$
& Nonzero factor. \\

$\displaystyle \sum_{n=1}^{k}3^n\frac{(2n+1)^2}{5n-2}H_n^{\star,(1,2)}(-1,i)$
& $\mathscr G_k[\mathcal B_{\mathrm{a},2}]$
& Star split into strict sums. \\

$\displaystyle \sum_{n=1}^{k} z^n(an+b)^q(\alpha n+\beta)^\lambda H_n^{\mathbf r}(\mathbf s)\,H_n^{\star,\mathbf u}(\mathbf w)$
& $\mathscr G_k$
& Both affine factors nonzero. \\

$\displaystyle \sum_{n=1}^{k}\frac{z^n\,A_n^{(r)}(s)\,\Phi(\xi,u,n+1)\,H_n^{(v)}(w)}{(a n+b)^p(c n+d)^\lambda}$
& $\mathscr G_k$
& Nonzero denominators, $\xi\ne0,1$. \\

$\displaystyle \sum_{n=1}^{k} z^n\chi(n)\left\lfloor\frac{3n+2}{5}\right\rfloor^m(an+b)^q\mathcal H_\alpha(n)$
& $\mathscr G_k$
& Fixed modulus; $m\in\mathbb N_0$. \\

$\displaystyle \sum_{n=1}^{k} z^n\prod_{\nu=1}^{t}(a_\nu n+b_\nu)^{q_\nu}\mathcal G_\Gamma(n)^2\mathcal G_\Delta(n)$
& $\mathscr G_k$
& Affine self-closure. \\

$\displaystyle \sum_{n=2}^{k} z^n(n-1)^q(n+1)^\lambda\mathcal H_\alpha(n)\,\psi^{(m)}(n+1)$
& $\mathscr G_k$
& Endpoint avoids $n-1=0$. \\

\end{longtable}
\end{center}

\section{Polynomial-letter extensions}
\label{sec:polynomial-letter-extensions}

\subsection{Polynomial letters and the closure theorem}

We next enlarge the affine alphabet to a polynomial-letter alphabet in order to
handle finite sums with polynomial powers
\[
P(n)^q,
\]
where $P\in\mathbb C[x]$ and $q\in\mathbb C$.  Special polynomial cases were
treated earlier, either by direct expansion or by reduction to affine factors.
For a general polynomial with a complex exponent, however, $P(n)^q$ is not
reducible in general to the affine-letter alphabet.  Closure therefore requires
a polynomial-letter alphabet.

This enlargement is again structural.  Quasi-shuffle multiplication and the
outer-index split produce diagonal terms in which several polynomial factors
occur at the same summation level.  Hence one polynomial letter is allowed to
carry a finite list of polynomial factors, together with their complex powers
and an independent color.  This is the polynomial analogue of the affine-letter
alphabet above and the natural target for polynomial-power convolution.

A \emph{polynomial letter} is a triple
\[
L=(\boldsymbol\rho,\sigma,\mathbf P),
\]
where
\[
\boldsymbol\rho=(\rho_1,\ldots,\rho_t),
\qquad
\mathbf P=(P_1,\ldots,P_t),
\qquad
P_\nu\in\mathbb C[x].
\]
Its value at a positive integer $n$ is
\begin{equation}
\label{eq:poly-letter-value}
L(n)
=
\sigma^n
\prod_{\nu=1}^{t}P_\nu(n)^{-\rho_\nu}.
\end{equation}
For a word $\Omega=(L_1,\ldots,L_d)$ in polynomial letters, define
\begin{equation}
\label{eq:poly-P-definition}
\mathcal P_{\Omega}(N)
:=
\sum_{N\ge n_1>\cdots>n_d\ge 1}
\prod_{j=1}^{d}L_j(n_j),
\end{equation}
with the convention $\mathcal P_{\emptyset}(N)=1$.  Ordinary generalized harmonic numbers are recovered by taking the identity polynomial
\[
\idpoly(x)=x.
\]
Indeed,
\[
\mathcal H_{((r_1,s_1),\ldots,(r_d,s_d))}(N)
=
\mathcal P_{\Omega}(N),
\]
where
\[
L_j=((r_j),s_j,(\idpoly)),
\qquad j=1,\ldots,d.
\]
Complex powers are interpreted after fixing branches of the logarithm.  We
assume throughout that no polynomial appearing in a denominator vanishes at the
positive integers in the summation range.

If
\[
L=(\boldsymbol\rho,\sigma,\mathbf P),
\qquad
M=(\boldsymbol\eta,\tau,\mathbf Q),
\]
we write
\[
L\circ M
=
(\boldsymbol\rho\mathbin{\Vert}\boldsymbol\eta,
\sigma\tau,
\mathbf P\mathbin{\Vert}\mathbf Q),
\]
where $\Vert$ denotes concatenation.  Then
\[
(L\circ M)(n)=L(n)M(n).
\]

\begin{theorem}[Closure for polynomial-harmonic-sum reducible summands]
\label{thm:poly-power-summand-closure}
Let
\[
  F_j(n)\in \mathscr E^{\rm pol}_n,
  \qquad j=1,\ldots,m,
\]
be polynomial-harmonic-sum reducible sequences in the sense of
Definition~\ref{def:polynomial-harmonic-sum-reducibility-final}.  Let
$t\ge0$, let $P_1,\ldots,P_t\in\mathbb C[x]$, let
$q_1,\ldots,q_t,z\in\mathbb C$, and assume that
\[
  P_\nu(n)\ne0
  \qquad (1\le n\le k,
  \;1\le\nu\le t).
\]
After fixing branches for the powers, for every
$e_1,\ldots,e_m\in\mathbb Z_{\ge0}$ the finite sum
\begin{equation}
\label{eq:poly-power-main-sum}
S(k)
=
\sum_{n=1}^{k}
 z^n
 \prod_{\nu=1}^{t}P_\nu(n)^{q_\nu}
 \prod_{j=1}^{m}F_j(n)^{e_j}
\end{equation}
belongs to
\[
\mathscr P_k
=
\operatorname{span}_{\mathbb C}
\{\mathcal P_\Omega(k):\Omega\text{ a word in polynomial letters}\}.
\]
Here the empty product is interpreted as $1$ when $t=0$.  Thus the
finite-summation operator sends polynomial-harmonic-sum reducible summands,
after multiplication by an additional finite product of polynomial powers, back
to the polynomial-base harmonic-number span with upper limit $k$.
\end{theorem}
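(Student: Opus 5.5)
The plan is to follow the proof of Theorem~\ref{thm:affine-power-summand-closure} verbatim, replacing affine letters by polynomial letters. The argument has three steps.

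\textbf{Step 1: a polynomial elementary-summation lemma.} Let $A$ be a polynomial letter and $\Omega$ a word in polynomial letters. I would show
\[
\sum_{n=1}^{k}A(n)\,\mathcal P_{\Omega}(n)=\mathcal P_{A}(k)
\]
when $\Omega=\emptyset$, and
\[
\sum_{n=1}^{k}A(n)\,\mathcal P_{B_1,\Omega'}(n)=\mathcal P_{A,B_1,\Omega'}(k)+\mathcal P_{A\circ B_1,\Omega'}(k)
\]
when $\Omega=(B_1,\Omega')$. The proof splits the region $k\ge n\ge n_1>n_2>\cdots\ge1$ into the disjoint parts $n>n_1$ and $n=n_1$. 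On the diagonal $n=n_1$ we use $(A\circ B_1)(n)=A(n)B_1(n)$, which holds by the definition of $\circ$ via concatenation. The branch convention is that each factor $P_\nu(n)^{-\rho_\nu}$ keeps its own branch; concatenation stores the factors separately rather than combining them, so no branch ambiguity arises. Summing termwise then shows that finite summation maps $\mathscr E^{\rm pol}_n$ into $\mathscr P_k$.

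\textbf{Step 2: $\mathscr E^{\rm pol}_n$ is closed under multiplication.} Two ingredients are needed.
\begin{itemize}
\item[(i)] The polynomial word sums satisfy the stuffle identity \eqref{eq:monoidal-stuffle-product}, because polynomial letters form a monoidal alphabet under $\circ$. The unit is the letter $((),1,())$, and closure holds since $\circ$ realizes the pointwise product. Hence $\mathcal P_{\Omega_1}(n)\mathcal P_{\Omega_2}(n)$ is a finite linear combination of single sums $\mathcal P_\Omega(n)$.
\item[(ii)] The product of two outer letters $L(n)L'(n)$ is again a letter, namely $(L\circ L')(n)$.
\end{itemize}
It follows that a product of two spanning elements $L(n)\mathcal P_{\Omega_1}(n)\cdot L'(n)\mathcal P_{\Omega_2}(n)$ lies in $\mathscr E^{\rm pol}_n$. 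By bilinearity and induction on $\sum_j e_j$, the product $\prod_j F_j(n)^{e_j}$ lies in $\mathscr E^{\rm pol}_n$. The empty product is the element $1$.

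\textbf{Step 3: absorb the extra factor and sum.} The factor $z^n\prod_\nu P_\nu(n)^{q_\nu}$ is the value of the single letter
\[
A=\bigl((-q_1,\ldots,-q_t),z,(P_1,\ldots,P_t)\bigr),
\]
with empty lists when $t=0$. Multiplying by $A$ keeps us in $\mathscr E^{\rm pol}_n$, since each spanning term $L(n)\mathcal P_\Omega(n)$ becomes $(A\circ L)(n)\mathcal P_\Omega(n)$. Applying Step 1 termwise then gives $S(k)\in\mathscr P_k$.

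\textbf{Main obstacle.} There is no real combinatorial difficulty here. The one point requiring care is the bookkeeping of branches and nonvanishing. The exponents are complex and the polynomials may take complex values, so
\[
P(n)^{a}P(n)^{b}\neq P(n)^{a+b}
\]
in general under a fixed branch. This is why the letter product must be defined by concatenation and not by merging exponents. I would also check that every letter arising on the diagonals and in the stuffle products involves only polynomials already present among the $P_\nu$ or among those in the $F_j$. All of these are assumed nonzero on $1\le n\le k$, so every letter appearing in the argument is well defined there.
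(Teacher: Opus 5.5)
Your proposal is correct and follows the paper's proof essentially step for step. The paper likewise proves the polynomial elementary-summation lemma by splitting $k\ge n\ge n_1>\cdots$ into $n>n_1$ and $n=n_1$, uses the quasi-shuffle product together with closure of letters under $\circ$ to get $\prod_j F_j(n)^{e_j}\in\mathscr E^{\rm pol}_n$, and absorbs $z^n\prod_\nu P_\nu(n)^{q_\nu}$ as the single letter $A=((-q_1,\ldots,-q_t),z,(P_1,\ldots,P_t))$ before summing termwise; your remarks on keeping factors separate under concatenation and on nonvanishing only make explicit what the paper leaves implicit.
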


\begin{lemma}[Polynomial elementary summation]
\label{lem:polynomial-elementary-summation}
Let $A$ be a polynomial letter and let $\Omega$ be a word in polynomial letters.
If $\Omega=\emptyset$, then
\[
  \sum_{n=1}^{k}A(n)=\mathcal P_A(k).
\]
If $\Omega=(B_1,\Omega')$ is nonempty, then
\begin{equation}
\label{eq:polynomial-elementary-summation-step}
\sum_{n=1}^{k}A(n)\mathcal P_{B_1,\Omega'}(n)
=
\mathcal P_{A,B_1,\Omega'}(k)
+
\mathcal P_{A\circ B_1,\Omega'}(k).
\end{equation}
Consequently, finite summation maps $\mathscr E^{\rm pol}_n$ into
$\mathscr P_k$.
\end{lemma}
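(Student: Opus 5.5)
The argument will follow the proofs of Lemma~\ref{lem:elementary-summation} and Lemma~\ref{lem:affine-elementary-summation} almost verbatim. The only structural input needed is the pointwise identity $(A\circ B_1)(n)=A(n)B_1(n)$, which holds by the definition of $\circ$ through concatenation of exponent and polynomial lists and multiplication of colors.

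For the empty word, $\mathcal P_\emptyset(n)=1$. The left-hand side is then $\sum_{n=1}^k A(n)$, and this is exactly $\mathcal P_A(k)$ by \eqref{eq:poly-P-definition} at depth one.

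For $\Omega=(B_1,\Omega')$ of depth $d\ge1$, I will expand $\mathcal P_{B_1,\Omega'}(n)$ as a sum over $n\ge n_1>n_2>\cdots>n_d\ge1$. The total summation region is then $k\ge n\ge n_1>\cdots>n_d\ge1$, with summand $A(n)B_1(n_1)\prod_{j\ge2}L_j(n_j)$. I will split this finite region into the disjoint pieces $n>n_1$ and $n=n_1$.

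\begin{itemize}
\item On the piece $n>n_1$, renaming $n$ as the leading index gives a strictly decreasing chain of length $d+1$ with letters $A,B_1,\Omega'$. This is $\mathcal P_{A,B_1,\Omega'}(k)$.
\item On the piece $n=n_1$, the factor $A(n)B_1(n)$ equals $(A\circ B_1)(n)$, and the remaining chain $k\ge n>n_2>\cdots$ gives $\mathcal P_{A\circ B_1,\Omega'}(k)$.
\end{itemize}

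No convergence issues arise, since all sums are finite. The nonvanishing hypothesis on the polynomials in denominators, together with the fixed branches, guarantees that every term is defined. The branch convention must be read so that $(A\circ B_1)(n)$ is literally the product of the separately computed factors. Since $\circ$ concatenates lists rather than combining powers of equal bases, no branch-merging issue arises; this is the one point worth stating explicitly.

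For the consequence, an element of $\mathscr E^{\rm pol}_n$ is a finite linear combination of terms $L(n)\mathcal P_\Omega(n)$, where $L$ is a polynomial letter or $1$. The constant $1$ is itself the polynomial letter $((),1,())$. Summation is linear, so the two cases above, applied termwise, place $\sum_{n=1}^k f(n)$ in $\mathscr P_k$.

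There is no real obstacle here. The only care needed is the bookkeeping of the diagonal case and the identification of $1$ as a letter.
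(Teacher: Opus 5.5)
Your proposal is correct and matches the paper's proof: expand $\mathcal P_{B_1,\Omega'}(n)$, split the region $k\ge n\ge n_1>\cdots$ into the disjoint parts $n>n_1$ and $n=n_1$, and use $(A\circ B_1)(n)=A(n)B_1(n)$ on the diagonal. Your two additional remarks fill in details the paper leaves implicit: identifying the factor $1$ with the empty-list letter, and noting that concatenation in $\circ$ avoids any branch-merging issue.
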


\begin{proof}
The empty-word case is immediate.  In the nonempty case, expand
$\mathcal P_{B_1,\Omega'}(n)$.  The summation region is
\[
  k\ge n\ge n_1>n_2>\cdots\ge1.
\]
The part with $n>n_1$ gives $\mathcal P_{A,B_1,\Omega'}(k)$, while the part
with $n=n_1$ merges the two polynomial letters at that level and gives
$\mathcal P_{A\circ B_1,\Omega'}(k)$.
\end{proof}

\begin{proof}[Proof of Theorem~\ref{thm:poly-power-summand-closure}]
The polynomial-letter sums $\mathcal P_\Omega(n)$ form a quasi-shuffle algebra,
and the pointwise product of polynomial letters is again a polynomial letter.
Hence $\mathscr E^{\rm pol}_n$ is closed under multiplication, and therefore
\[
  \prod_{j=1}^{m}F_j(n)^{e_j}\in\mathscr E^{\rm pol}_n.
\]
The additional factor
\[
  z^n\prod_{\nu=1}^{t}P_\nu(n)^{q_\nu}
\]
is itself the value of one polynomial letter, namely
\[
A=
\bigl((-q_1,\ldots,-q_t),z,(P_1,\ldots,P_t)\bigr),
\]
with the evident empty list when $t=0$.  Multiplying by this letter still gives
an element of $\mathscr E^{\rm pol}_n$.  Applying
Lemma~\ref{lem:polynomial-elementary-summation} termwise proves that
$S(k)\in\mathscr P_k$.
\end{proof}

\begin{corollary}[Closure for products of polynomial-base harmonic numbers]
\label{thm:poly-product-powers-summand}
Let $\Omega_1,\ldots,\Omega_m$ be words in polynomial letters, and let
$e_1,\ldots,e_m\in\mathbb Z_{\ge0}$.  Under the same nonvanishing and branch
assumptions as in Theorem~\ref{thm:poly-power-summand-closure}, the sum
\begin{equation}
\label{eq:poly-product-main-sum}
\sum_{n=1}^{k}
 z^n
 \prod_{\nu=1}^{t}P_\nu(n)^{q_\nu}
 \prod_{j=1}^{m}\mathcal P_{\Omega_j}(n)^{e_j}
\end{equation}
belongs to $\mathscr P_k$.
\end{corollary}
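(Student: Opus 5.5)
The plan is to reduce Corollary~\ref{thm:poly-product-powers-summand} to Theorem~\ref{thm:poly-power-summand-closure}, in the same way that Corollary~\ref{thm:affine-product-powers-summand} was obtained from Theorem~\ref{thm:affine-power-summand-closure}. The only thing to check is that each factor $\mathcal P_{\Omega_j}(n)$ is polynomial-harmonic-sum reducible, that is, $\mathcal P_{\Omega_j}(n)\in\mathscr E^{\rm pol}_n$ in the sense of Definition~\ref{def:polynomial-harmonic-sum-reducibility-final}.

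For this, I would take, in the spanning set of \eqref{eq:polynomial-summand-space-final}, the choice ``$L=1$'' together with the word $\Omega=\Omega_j$. This exhibits $\mathcal P_{\Omega_j}(n)$ directly as a spanning element. If one insists that $L$ be a genuine letter, the trivial polynomial letter $((),1,())$ with empty lists serves equally well, since its value is $1^n=1$. When $\Omega_j=\emptyset$ the factor is the constant $1$, and the same remark applies.

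Next I would set $F_j(n):=\mathcal P_{\Omega_j}(n)$ for $j=1,\ldots,m$. I would then invoke Theorem~\ref{thm:poly-power-summand-closure} with the same $t$, $P_\nu$, $q_\nu$, $z$ and exponents $e_j$. The nonvanishing hypothesis $P_\nu(n)\ne0$ for $1\le n\le k$ and the branch conventions are assumed verbatim in the corollary. The polynomials occurring inside the letters of the $\Omega_j$ are covered by the standing assumption that no denominator polynomial vanishes on the summation range. With these inputs, the theorem gives that \eqref{eq:poly-product-main-sum} lies in $\mathscr P_k$.

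There is no real obstacle. The only point requiring a moment's care is the bookkeeping behind the theorem's proof: the quasi-shuffle product for $\mathcal P$-sums must be available, and the merge $\circ$ must realise pointwise products of letters. Both facts are recorded just before the theorem and are used there, not in the corollary itself.
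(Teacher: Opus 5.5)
Your proposal is correct and is the paper's own argument: the paper notes that each $\mathcal P_{\Omega_j}(n)$ lies in $\mathscr E^{\rm pol}_n$ and then treats the corollary as the special case $F_j=\mathcal P_{\Omega_j}$ of Theorem~\ref{thm:poly-power-summand-closure}. Your explicit use of the ``$L=1$'' option in the spanning set \eqref{eq:polynomial-summand-space-final} simply spells out the membership step that the paper leaves implicit.
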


\begin{proof}
Each $\mathcal P_{\Omega_j}(n)$ belongs to $\mathscr E^{\rm pol}_n$.  The claim
is therefore the corresponding special case of
Theorem~\ref{thm:poly-power-summand-closure}.
\end{proof}

\begin{corollary}[Ordinary harmonic-number specialization]
\label{cor:poly-self-closure}
Let $\alpha_1,\ldots,\alpha_m$ be words in the generalized harmonic-number
alphabet and let $e_1,\ldots,e_m\in\mathbb Z_{\ge0}$.  Under the same
nonvanishing and branch assumptions as in
Theorem~\ref{thm:poly-power-summand-closure}, the sum
\begin{equation}
\label{eq:poly-self-closure-sum}
\sum_{n=1}^{k}
z^n
\prod_{\nu=1}^{t}P_\nu(n)^{q_\nu}
\prod_{j=1}^{m}\mathcal H_{\alpha_j}(n)^{e_j}
\end{equation}
belongs to $\mathscr P_k$.
\end{corollary}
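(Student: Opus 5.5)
The plan is to reduce Corollary~\ref{cor:poly-self-closure} to Theorem~\ref{thm:poly-power-summand-closure}, in the same way that Corollary~\ref{cor:affine-self-closure} was reduced to the affine closure theorem. It suffices to check that each factor $\mathcal H_{\alpha_j}(n)$ lies in $\mathscr E^{\rm pol}_n$. The summand then has exactly the form \eqref{eq:poly-power-main-sum} with $F_j(n)=\mathcal H_{\alpha_j}(n)$.

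For membership, embed the basic colored alphabet into the polynomial alphabet by sending each letter $(r,s)$ to the polynomial letter $((r),s,(\idpoly))$, where $\idpoly(x)=x$. This gives a word $\Omega_j$ in polynomial letters. The identity
\[
\mathcal H_{\alpha_j}(N)=\mathcal P_{\Omega_j}(N)
\]
then holds termwise. Each summand $\prod_i s_i^{n_i}n_i^{-r_i}$ coincides with $\prod_i L_i(n_i)$, since $\idpoly(n_i)^{-r_i}=n_i^{-r_i}$ with the real-logarithm branch fixed in Theorem~\ref{thm:finite-euler-sum}. Taking the outer letter equal to $1$ in \eqref{eq:polynomial-summand-space-final} shows that $\mathcal H_{\alpha_j}(n)\in\mathscr E^{\rm pol}_n$.

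Next, I will check the hypotheses of Theorem~\ref{thm:poly-power-summand-closure}. The nonvanishing assumption on the $P_\nu$ is inherited verbatim. The embedded letters only involve $\idpoly(n)=n\neq0$ for $n\ge1$, so they introduce no new vanishing denominators. With these checks, the theorem gives $S(k)\in\mathscr P_k$.

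The only point needing care is branch consistency. The branch used for $\idpoly(n)^{-r}$ inside $\mathcal P$ must be the real-logarithm branch used for $n^{-r}$ in $\mathcal H_\alpha$. I will simply fix that choice in the embedding. Beyond this, no obstacle is expected: the quasi-shuffle expansion of the products and the final summation step are already carried out inside the proof of Theorem~\ref{thm:poly-power-summand-closure} via Lemma~\ref{lem:polynomial-elementary-summation}.
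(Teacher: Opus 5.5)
Your proposal is correct and matches the paper's proof: the paper also embeds each colored letter $(r,s)$ as the polynomial letter $((r),s,(\idpoly))$, concludes that each $\mathcal H_{\alpha_j}(n)$ lies in $\mathscr E^{\rm pol}_n$, and then applies Theorem~\ref{thm:poly-power-summand-closure}. Your extra checks, that the branch is the real-logarithm one and that $\idpoly(n)=n\neq0$, are harmless refinements which the paper leaves implicit.
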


\begin{proof}
Embed the basic colored alphabet into the polynomial alphabet by identifying the
colored letter $(r,s)$ with the polynomial letter $((r),s,(\idpoly))$.  Then
each $\mathcal H_{\alpha_j}(n)$ is a special case of a multiple
polynomial-base harmonic number, and hence is polynomial-harmonic-sum
reducible.  The claim follows from
Theorem~\ref{thm:poly-power-summand-closure}.
\end{proof}

\subsection{Consequences and examples}

Recall that
\[
\mathscr P_k:=
\operatorname{span}_{\mathbb C}
\{\mathcal P_\Omega(k):\Omega\text{ a word in polynomial letters}\}.
\]
We also write $\mathscr P_k[\mathcal B]$ for the subspan generated by a finite
merge-closed polynomial-letter set $\mathcal B$.  For the concrete rows put
\[
\begin{aligned}
\mathcal B_{\mathrm{p},1}&:=\{((2),-1,(x^2+1)),((3),i,(x))\},\\
\mathcal B_{\mathrm{p},2}&:=\{((-2,1),3,(x^2+x+1,x^2+4)),((1),-1,(x)),((2),i,(x))\},\\
\mathcal B_{\mathrm{p},3}&:=\{((-1,p+1),z,(x,x^2+a^2)),((r),s,(x))\}.
\end{aligned}
\]
The table records representative consequences of Theorem~\ref{thm:poly-power-summand-closure}
and Corollary~\ref{thm:poly-product-powers-summand}.  Nonvanishing is understood for the
summation range, and branches are fixed once and for all.

\begin{center}
\scriptsize
\renewcommand{\arraystretch}{1.08}
\setlength{\tabcolsep}{4pt}
\begin{adjustbox}{max width=\textwidth}
\begin{tabular}{@{}>{\raggedright\arraybackslash}p{0.47\textwidth}>{\centering\arraybackslash}p{0.31\textwidth}>{\raggedright\arraybackslash}p{0.14\textwidth}@{}}
\toprule
\textbf{Sum} & \textbf{Span} & \textbf{Condition} \\
\midrule
$\displaystyle \sum_{n=1}^{k}2^n(n^2+n+1)^3$
& $\mathcal P_{((-3),2,(x^2+x+1))}(k)$
& Polynomial nonzero. \\

$\displaystyle \sum_{n=1}^{k}\frac{(-1)^n}{(n^2+1)^2}H_n^{(3)}(i)$
& $\mathscr P_k[\mathcal B_{\mathrm{p},1}]$
& Depth-one factor. \\

$\displaystyle \sum_{n=1}^{k}3^n\frac{(n^2+n+1)^2}{n^2+4}H_n^{\star,(1,2)}(-1,i)$
& $\mathscr P_k[\mathcal B_{\mathrm{p},2}]$
& Star split into strict sums. \\

$\displaystyle \sum_{n=1}^{k}\frac{2n\,z^n H_n^{(r)}(s)}{(n^2+a^2)^{p+1}}$
& $\mathscr P_k[\mathcal B_{\mathrm{p},3}]$
& Mathieu-type summand. \\

$\displaystyle \sum_{n=1}^{k} z^n\bigl(n^2+n+1\bigr)^q H_n^{\star,\mathbf r}(\mathbf s)\,A_n^{(u)}(w)$
& $\mathscr P_k$
& Symbolic polynomial case. \\

$\displaystyle \sum_{n=1}^{k} z^n\prod_{\nu=1}^{t}P_\nu(n)^{q_\nu}\,\bigl(H_n^{(r)}(s)\bigr)^2H_n^{\star,\mathbf u}(\mathbf w)$
& $\mathscr P_k$
& $P_\nu(n)\ne0$. \\

$\displaystyle \sum_{n=1}^{k} z^nP(n)^q\mathcal P_\Omega(n)^2\mathcal P_\Theta(n)$
& $\mathscr P_k$
& Polynomial self-closure. \\

$\displaystyle \sum_{n=2}^{k} z^nP(n-1)^q(n^2+1)^\lambda\mathcal H_\alpha(n)$
& $\mathscr P_k$
& Nonzero factors on range. \\
\bottomrule
\end{tabular}
\end{adjustbox}
\end{center}

\section{Scaled-index sums}
\label{sec:scaled-upper-limit-lifting}

This section collects the scaled-index principles for the three alphabets
developed above.  The common idea is to synchronize all upper limits \(p_j n\)
by lifting them to the common limit \(pn\), where
\(p=\operatorname{lcm}(p_1,\ldots,p_m)\), and then to recover the original
summation over \(n\) by a divisibility filter and one new leading letter.  Thus
scaled-index sums are reduced to the same finite convolution mechanism, with the
only extra factor being \(z^n n^q\) in the basic colored case, a finite product
of affine powers in the affine case, and a finite product of polynomial powers
in the polynomial case.

\subsection{Scaled upper-limit lifting theorems}
\label{subsec:scaled-upper-limit-lifting-theorems}

\begin{theorem}[Scaled upper limits in the basic colored alphabet]
\label{thm:scale-unification}
Let
\[
 p_1,\ldots,p_m\in\mathbb Z_{>0},
 \qquad
 p=\lcm(p_1,\ldots,p_m),
\]
and let \(\alpha_1,\ldots,\alpha_m\) be words in the generalized harmonic number alphabet.  Let
\[
 e_1,\ldots,e_m\in\mathbb Z_{\ge0},
 \qquad z,q\in\mathbb C.
\]
Then the finite sum
\begin{equation}
\label{eq:scaled-sum}
S(k)
=
\sum_{n=1}^{k}
z^n n^q
\prod_{j=1}^{m}\mathcal H_{\alpha_j}(p_jn)^{e_j}
\end{equation}
belongs to the finite linear span
\begin{equation}
\label{eq:scaled-span-inclusion}
S(k)
\in
\operatorname{span}_{\mathbb C}
\left\{
\mathcal H_\beta(pk)
\right\}_{\beta}.
\end{equation}
Thus products of generalized harmonic numbers evaluated at different integer
multiples of the summation index, even after multiplication by the elementary
factor \(z^n n^q\), reduce to generalized harmonic numbers at the common upper
limit \(pk\).
\end{theorem}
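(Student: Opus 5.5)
The plan is to lift every factor to the common upper limit $pn$, reindex the outer sum by $M=pn$, and then impose the divisibility $p\mid M$ with a root-of-unity filter, exactly as in the proof of Theorem~\ref{thm:aligned-affine-closure}.

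\emph{Step 1 (lifting to the common limit).} For each $j$ write $p=p_jd_j$ with $d_j\in\mathbb Z_{>0}$. A generalized harmonic number with upper limit $p_jn$ is a sum over indices $n_1\le p_jn$. Setting $M=pn$, this condition reads $n_1\le M/d_j$, so
\[
\mathcal H_{\alpha_j}(p_jn)=\mathcal H_{\alpha_j}(\lfloor M/d_j\rfloor).
\]
I would prove that $\mathcal H_{\alpha}(\lfloor M/d\rfloor)$ lies in $\mathscr H_M$ for every word $\alpha$ and every positive integer $d$, by a depth-wise version of Proposition~\ref{prop:standalone-rational-harmonic}. Substitute $n_i=m_i/d$ at every level, with $m_i\le M$ ranging over multiples of $d$. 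The strict order $n_1>\cdots>n_\ell$ is equivalent to $m_1>\cdots>m_\ell$, so no merges occur. Each letter $s_i^{n_i}n_i^{-r_i}$ becomes $d^{r_i}\rho_i^{m_i}m_i^{-r_i}$ with $\rho_i^d=s_i$. The divisibility $d\mid m_i$ is imposed independently at each level by $\frac1d\sum_{\nu}\omega_d^{\nu m_i}$. This gives
\[
\mathcal H_\alpha(\lfloor M/d\rfloor)=\sum_{\nu_1,\ldots,\nu_\ell}\Bigl(\prod_i d^{r_i-1}\Bigr)\mathcal H_{((r_1,\rho_1\omega_d^{\nu_1}),\ldots,(r_\ell,\rho_\ell\omega_d^{\nu_\ell}))}(M),
\]
which is a finite linear combination with upper limit $M$.

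\emph{Step 2 (products).} After Step 1, each summand is $z^nn^q$ times a product of elements of $\mathscr H_{pn}$. By the quasi-shuffle product~\eqref{eq:monoidal-stuffle-product}, this product is a finite linear combination of single $\mathcal H_\gamma(pn)$.

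\emph{Step 3 (outer filter).} It remains to treat $\sum_{n=1}^k z^nn^q\mathcal H_\gamma(pn)$. Put $M=pn$ and choose $\xi$ with $\xi^p=z$. Then $z^nn^q=p^{-q}\xi^MM^q$, using the real logarithm, so that $n^q=p^{-q}M^q$ for positive integers. The sum becomes
\[
p^{-q}\sum_{M\le pk,\ p\mid M}\xi^MM^q\mathcal H_\gamma(M).
\]
Insert $\mathbf 1_{p\mid M}=\frac1p\sum_{\ell=0}^{p-1}\omega_p^{\ell M}$. This produces the unrestricted sums
\[
\sum_{M=1}^{pk}(\xi\omega_p^\ell)^MM^q\mathcal H_\gamma(M).
\]
By Lemma~\ref{lem:elementary-summation} each of these lies in $\mathscr H_{pk}$. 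No lower-end corrections arise, because $M$ starts at $1$ and the filter kills every $M<p$.

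\emph{Main obstacle.} The only real care is needed in Step 1 at depth $\ge2$. One must check that the per-index filter is legitimate, i.e.\ that the strict inequalities among the $n_i$ transfer verbatim to the $m_i$ and no equality blocks appear. One must also keep the branch convention $(m/d)^{-r}=d^{r}m^{-r}$ consistent for complex $r$. Both hold because all quantities involved are positive reals, so the constants $d^{r_i-1}$ come out cleanly.
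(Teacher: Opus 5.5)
Your proposal is correct and follows the paper's proof essentially step for step. You use the per-level root-of-unity filter to lift $\mathcal H_{\alpha_j}(p_jn)$ to upper limit $pn$ (your constant $\prod_i d^{r_i-1}$ is the paper's $M^{\operatorname{wt}(\alpha)-|\alpha|}$), then the quasi-shuffle product at the common limit, and finally the outer divisibility filter with $\xi^p=z$, which yields the factor $p^{-q-1}$ and one new leading letter (plus the merged term) via Lemma~\ref{lem:elementary-summation}.
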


\begin{proof}
We prove the result in three steps.

First, we describe the scale-up mechanism.  Let
\[
\alpha=((r_1,s_1),\ldots,(r_d,s_d))
\]
be a word, and suppose that \(c,p\in\mathbb Z_{>0}\) with \(c\mid p\).  Put
\[
M=\frac pc,
\qquad
\omega_M=e^{2\pi i/M}.
\]
Choose \(M\)-th roots
\[
t_i^M=s_i,
\qquad i=1,\ldots,d.
\]
Also write
\[
|\alpha|=d,
\qquad
\operatorname{wt}(\alpha)=r_1+\cdots+r_d.
\]
Then
\begin{equation}
\label{eq:scale-up-identity}
\mathcal H_\alpha(cn)
=
M^{\operatorname{wt}(\alpha)-|\alpha|}
\sum_{a_1,\ldots,a_d=0}^{M-1}
\mathcal H_{\alpha(a_1,\ldots,a_d)}(pn),
\end{equation}
where
\[
\alpha(a_1,\ldots,a_d)
=
\left(
(r_1,t_1\omega_M^{a_1}),
\ldots,
(r_d,t_d\omega_M^{a_d})
\right).
\]
For the empty word this identity is interpreted as
\(\mathcal H_\emptyset(cn)=\mathcal H_\emptyset(pn)=1\).  Indeed, after
expanding the right-hand side and interchanging the finite root sums with the
nested sum, the factor
\[
\sum_{a_i=0}^{M-1}\omega_M^{a_iq_i}
\]
forces \(q_i=M\ell_i\).  On these surviving terms,
\[
t_i^{q_i}=t_i^{M\ell_i}=s_i^{\ell_i},
\qquad
q_i^{r_i}=M^{r_i}\ell_i^{r_i},
\]
and the scalar in \eqref{eq:scale-up-identity} exactly cancels the powers of
\(M\) introduced by the filter and the denominator.  This proves
\eqref{eq:scale-up-identity}.

Applying \eqref{eq:scale-up-identity} with \(c=p_j\) shows that every
\(\mathcal H_{\alpha_j}(p_jn)\) is a finite linear combination of generalized harmonic numbers with common upper limit \(pn\).  The quasi-shuffle product at this
common upper limit then gives
\begin{equation}
\label{eq:product-common-scale-reduction-new}
\prod_{j=1}^{m}\mathcal H_{\alpha_j}(p_jn)^{e_j}
=
\sum_\gamma C_\gamma\mathcal H_\gamma(pn),
\qquad C_\gamma\in\mathbb C.
\end{equation}

It remains to sum the elementary factor against \(\mathcal H_\gamma(pn)\).  Fix
a \(p\)-th root \(\rho^p=z\), put \(\zeta_p=e^{2\pi i/p}\), and define
\[
\lambda_a=(-q,\rho\zeta_p^a),
\qquad 0\le a\le p-1.
\]
For nonempty \(\gamma=(c_1,\gamma')\), using
\[
\mathbf 1_{p\mid Q}=\frac1p\sum_{a=0}^{p-1}\zeta_p^{aQ}
\]
and the change of variable \(Q=pn\) gives
\begin{equation}
\label{eq:scaled-elementary-summation-formula}
\sum_{n=1}^{k}z^n n^q\mathcal H_\gamma(pn)
=
p^{-q-1}\sum_{a=0}^{p-1}
\left(
\mathcal H_{\lambda_a,\gamma}(pk)
+
\mathcal H_{\lambda_a\circ c_1,\gamma'}(pk)
\right).
\end{equation}
Here \(p^{-q}=\exp(-q\log p)\), and the merge is the usual colored-letter merge.
Indeed, the divisibility filter converts the left hand side into a finite
linear combination of sums
\[
\sum_{Q=1}^{pk}(\rho\zeta_p^a)^Q Q^q\mathcal H_\gamma(Q),
\]
up to the scalar \(p^{-q-1}\).  The summation region
\(pk\ge Q\ge Q_1>\cdots\) splits into \(Q>Q_1\) and \(Q=Q_1\), giving the two
terms in \eqref{eq:scaled-elementary-summation-formula}.  If \(\gamma=\emptyset\),
the same argument gives
\begin{equation}
\label{eq:scaled-empty-elementary-summation-formula}
\sum_{n=1}^{k}z^n n^q
=
p^{-q-1}\sum_{a=0}^{p-1}\mathcal H_{\lambda_a}(pk).
\end{equation}
Combining \eqref{eq:product-common-scale-reduction-new} with
\eqref{eq:scaled-elementary-summation-formula} and
\eqref{eq:scaled-empty-elementary-summation-formula} proves the theorem.
\end{proof}

\begin{theorem}[Scaled upper limits in the affine-letter alphabet]
\label{thm:affine-scaled-upper-limits}
Let
\[
 p_1,\ldots,p_m\in\mathbb Z_{>0},
 \qquad
 p=\lcm(p_1,\ldots,p_m).
\]
Let \(\Gamma_1,\ldots,\Gamma_m\) be words in affine letters, and let
\(e_1,\ldots,e_m\in\mathbb Z_{\ge0}\).  Let also \(z\in\mathbb C\), let \(t\in\mathbb Z_{\ge0}\), and let
\[
 a_\nu,b_\nu,q_\nu\in\mathbb C,
 \qquad 1\le \nu\le t,
\]
where the case \(t=0\) is allowed.  Assume that all affine factors appearing
below are nonzero at the positive integers at which they are evaluated, after
all scale changes, and fix compatible branches of the powers.  Then
\begin{equation}
\label{eq:affine-scaled-upper-limit-sum}
S(k)=
\sum_{n=1}^{k}
z^n\prod_{\nu=1}^{t}(a_\nu n+b_\nu)^{q_\nu}
\prod_{j=1}^{m}\mathcal G_{\Gamma_j}(p_jn)^{e_j}
\end{equation}
belongs to the finite linear span
\begin{equation}
\label{eq:affine-scaled-upper-limit-span}
S(k)\in
\operatorname{span}_{\mathbb C}\{\mathcal G_\Delta(pk)\}_\Delta,
\end{equation}
where \(\Delta\) ranges over words in affine letters.  Thus affine-letter sums
with different integer-scaled upper limits can be unified at the common upper
limit \(pk\), and the outer affine factor is absorbed into the same alphabet.
\end{theorem}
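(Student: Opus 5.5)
The proof will follow the three steps of Theorem~\ref{thm:scale-unification}, but for affine letters the root-of-unity filter is replaced by a direct residue-class reparametrization. Nothing in the affine alphabet forces a scale $M$ to sit inside the denominator of the form $\ell^{r}$, so splitting indices by residue classes is more natural here.

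\emph{Step 1 (scale-up).} Fix an affine word $\Gamma=(L_1,\ldots,L_d)$ and $c\mid p$, and put $M=p/c$. I claim that $\mathcal G_\Gamma(cn)$ is a finite linear combination of sums $\mathcal G_{\Gamma'}(pn)$. Rather than use a Fourier filter, I will substitute $m=cu$ at every index of $\mathcal G_\Gamma(cn)=\sum_{cn\ge m_1>\cdots}$. The condition $m\le cn$ does not force $c\mid m$, so this needs care. The cleaner device is to write $\mathcal G_\Gamma(cn)$ as a sum over $Q_1>\cdots>Q_d$ with $Q_i\le pn$ and $M\mid Q_i$, and to set $m_i=Q_i/M$. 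Each letter $L_i(m_i)=\sigma_i^{m_i}\prod_\nu(a_\nu m_i+b_\nu)^{-\rho_\nu}$ becomes $\tau_i^{Q_i}\prod_\nu\bigl((a_\nu/M)Q_i+b_\nu\bigr)^{-\rho_\nu}$ with $\tau_i^M=\sigma_i$. The divisibility is imposed by $\mathbf 1_{M\mid Q}=M^{-1}\sum_a\omega_M^{aQ}$, which only changes colors. The result is
\[
\mathcal G_\Gamma(cn)=M^{-d}\sum_{a_1,\ldots,a_d=0}^{M-1}\mathcal G_{\Gamma(a)}(pn),
\]
where $\Gamma(a)$ has the rescaled affine data $(a_\nu/M,b_\nu)$ and colors $\tau_i\omega_M^{a_i}$. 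The one point to check is that the power $\bigl((a_\nu/M)Q+b_\nu\bigr)^{-\rho_\nu}$ equals the original value. This is an identity of complex numbers, because $(a_\nu/M)Q+b_\nu=a_\nu m+b_\nu$ literally. So no branch issue arises, provided we keep the same branch convention for the same base value. This is where the ``compatible branches'' hypothesis is used.

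\emph{Step 2.} Applying Step 1 with $c=p_j$ puts every $\mathcal G_{\Gamma_j}(p_jn)$ at the common upper limit $pn$. The quasi-shuffle product of affine sums, whose collisions are resolved by $\circ$, then gives $\prod_j\mathcal G_{\Gamma_j}(p_jn)^{e_j}=\sum_\Delta C_\Delta\mathcal G_\Delta(pn)$.

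\emph{Step 3 (outer sum).} We must sum $z^n\prod_\nu(a_\nu n+b_\nu)^{q_\nu}\mathcal G_\Delta(pn)$ over $1\le n\le k$. Put $Q=pn$ and choose $\rho^p=z$. The outer factor then becomes the value at $Q$ of the affine letter
\[
A=\bigl((-q_\nu)_\nu,\rho,((a_\nu/p,b_\nu))_\nu\bigr),
\]
again with identical base values. Inserting $\mathbf 1_{p\mid Q}=p^{-1}\sum_a\zeta_p^{aQ}$ turns the sum into $p^{-1}\sum_a\sum_{Q=1}^{pk}A_a(Q)\mathcal G_\Delta(Q)$, where $A_a$ is $A$ with color $\rho\zeta_p^a$. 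Lemma~\ref{lem:affine-elementary-summation} evaluates each inner sum as $\mathcal G_{A_a,\Delta}(pk)+\mathcal G_{A_a\circ D_1,\Delta'}(pk)$, or as $\mathcal G_{A_a}(pk)$ when $\Delta=\emptyset$. The case $z=0$ is trivial.

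The main obstacle I expect is the nonvanishing and branch bookkeeping. Affine factors are now evaluated at the filtered-out integers $Q\not\equiv0\pmod M$, and there a rescaled form such as $(a_\nu/M)Q+b_\nu$ might vanish. The statement's hypothesis that all affine factors are nonzero after all scale changes is exactly what covers this. If it were not available, I would use the alternative of decomposing $m=Mu+\ell-M$ over residues, as in Proposition~\ref{prop:scaled-multiple-harmonic-affine-final}, which only ever evaluates at the original integers.
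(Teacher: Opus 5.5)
Your proof is correct and follows the paper's argument. Despite your opening remark about replacing the filter, your Step~1 is the paper's scale-up identity $\mathcal G_\Gamma(cn)=M^{-d}\sum_{\mathbf h}\mathcal G_{\Gamma^{[M,\mathbf h]}}(pn)$, obtained from the same root-of-unity filter with rescaled forms $(a/M,b)$ and colors $\tau_i\omega_M^{h_i}$, and your Step~3 letter $A_a$ is the paper's $C_a\circ A$, followed by the same affine one-step summation. You also correctly see that the nonvanishing-after-scale-changes hypothesis is needed because the filtered letters are evaluated at non-multiples $Q$, so the residue-splitting fallback is unnecessary; it would in any case naturally land in $\mathscr G_k$ rather than at upper limit $pk$.
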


\begin{proof}
Let \(c\mid p\), put \(M=p/c\), and let
\(\Gamma=(L_1,\ldots,L_d)\), where
\[
L_i=(\boldsymbol\rho_i,\sigma_i,\mathbf A_i),
\qquad
\mathbf A_i=((a_{i,1},b_{i,1}),\ldots,(a_{i,t_i},b_{i,t_i})).
\]
Choose \(M\)-th roots \(\tau_i^M=\sigma_i\), and put
\(\omega_M=e^{2\pi i/M}\).  For
\(\mathbf h=(h_1,\ldots,h_d)\in\{0,\ldots,M-1\}^d\), define
\[
L_i^{[M,h_i]}
=
\left(
\boldsymbol\rho_i,
\tau_i\omega_M^{h_i},
\left((a_{i,1}/M,b_{i,1}),\ldots,(a_{i,t_i}/M,b_{i,t_i})\right)
\right)
\]
and
\[
\Gamma^{[M,\mathbf h]}
=
(L_1^{[M,h_1]},\ldots,L_d^{[M,h_d]}).
\]
Then
\begin{equation}
\label{eq:affine-scale-up-identity}
\mathcal G_\Gamma(cn)
=
M^{-d}\sum_{h_1,\ldots,h_d=0}^{M-1}
\mathcal G_{\Gamma^{[M,\mathbf h]}}(pn),
\end{equation}
with the evident interpretation when \(d=0\).  After expansion, the root sums
force the inner indices to be multiples of \(M\), and the scaled affine factors
satisfy
\[
(a_{i,\nu}/M)(M\ell_i)+b_{i,\nu}=a_{i,\nu}\ell_i+b_{i,\nu},
\qquad
(\tau_i\omega_M^{h_i})^{M\ell_i}=\sigma_i^{\ell_i}.
\]
The factor \(M^{-d}\) cancels the \(M^d\) produced by the root filters, proving
\eqref{eq:affine-scale-up-identity}.

Using \eqref{eq:affine-scale-up-identity} with \(c=p_j\), every
\(\mathcal G_{\Gamma_j}(p_jn)\) is converted into a finite linear combination
of affine-letter sums with upper limit \(pn\).  The quasi-shuffle product for
affine-letter sums then reduces
\[
\prod_{j=1}^{m}\mathcal G_{\Gamma_j}(p_jn)^{e_j}
\]
to a finite linear combination of single affine-letter sums \(\mathcal G_\Delta(pn)\).

Choose a \(p\)-th root \(\rho^p=z\), and define the affine letter
\[
A=
\left(
(-q_1,\ldots,-q_t),
\rho,
((a_1/p,b_1),\ldots,(a_t/p,b_t))
\right),
\]
with the empty-product convention when \(t=0\).  Then
\[
A(pn)=z^n\prod_{\nu=1}^{t}(a_\nu n+b_\nu)^{q_\nu}.
\]
Let \(\zeta_p=e^{2\pi i/p}\) and set
\[
C_a=((0),\zeta_p^a,((1,0))),
\qquad 0\le a\le p-1.
\]
For nonempty \(\Delta=(D_1,\ldots,D_d)\), inserting the divisibility filter for
\(p\mid Q\) and splitting the boundary \(Q=m_1\) gives
\[
\sum_{n=1}^{k}A(pn)\mathcal G_\Delta(pn)
=
\frac1p\sum_{a=0}^{p-1}
\left(
\mathcal G_{C_a\circ A,D_1,\ldots,D_d}(pk)
+
\mathcal G_{C_a\circ A\circ D_1,D_2,\ldots,D_d}(pk)
\right).
\]
If \(\Delta=\emptyset\), the same argument gives
\[
\sum_{n=1}^{k}A(pn)
=
\frac1p\sum_{a=0}^{p-1}\mathcal G_{C_a\circ A}(pk).
\]
All terms lie in the affine span with upper limit \(pk\), and the theorem
follows by linearity.
\end{proof}

\begin{theorem}[Scaled upper limits in the polynomial-letter alphabet]
\label{thm:poly-scaled-upper-limits}
Let
\[
 p_1,\ldots,p_m\in\mathbb Z_{>0},
 \qquad
 p=\lcm(p_1,\ldots,p_m).
\]
Let \(\Omega_1,\ldots,\Omega_m\) be words in polynomial letters, and let
\(e_1,\ldots,e_m\in\mathbb Z_{\ge0}\).  Let also \(z\in\mathbb C\), let \(t\in\mathbb Z_{\ge0}\), let
\(P_1,\ldots,P_t\in\mathbb C[x]\), and let \(q_1,\ldots,q_t\in\mathbb C\),
where \(t=0\) is allowed.  Assume that all polynomial factors appearing below
are nonzero at the positive integers at which they are evaluated, after all
scale changes, and fix compatible branches of the powers.  Then
\begin{equation}
\label{eq:poly-scaled-upper-limit-sum}
S(k)=
\sum_{n=1}^{k}
z^n\prod_{\nu=1}^{t}P_\nu(n)^{q_\nu}
\prod_{j=1}^{m}\mathcal P_{\Omega_j}(p_jn)^{e_j}
\end{equation}
belongs to the finite linear span
\begin{equation}
\label{eq:poly-scaled-upper-limit-span}
S(k)\in
\operatorname{span}_{\mathbb C}\{\mathcal P_\Theta(pk)\}_\Theta,
\end{equation}
where \(\Theta\) ranges over words in polynomial letters.  Thus polynomial-letter
sums with different integer-scaled upper limits can be unified at the common
upper limit \(pk\), and the outer polynomial factor is absorbed into the same
alphabet.
\end{theorem}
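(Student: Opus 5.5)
The proof follows the proof of Theorem~\ref{thm:affine-scaled-upper-limits} step by step, with affine forms replaced by polynomials. The one new ingredient is how to rescale a polynomial argument. For $c\mid p$, put $M=p/c$. For a polynomial $P\in\mathbb C[x]$, define $P^{\langle M\rangle}(x):=P(x/M)$. This is again a polynomial in $\mathbb C[x]$, and it satisfies $P^{\langle M\rangle}(M\ell)=P(\ell)$.

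\textbf{Step 1: scale-up identity.} Let $\Omega=(L_1,\ldots,L_d)$ with $L_i=(\boldsymbol\rho_i,\sigma_i,\mathbf P_i)$. Choose $M$-th roots $\tau_i^M=\sigma_i$, and for $h_i\in\{0,\ldots,M-1\}$ set
\[
L_i^{[M,h_i]}=(\boldsymbol\rho_i,\tau_i\omega_M^{h_i},\mathbf P_i^{\langle M\rangle}),
\]
where $\mathbf P_i^{\langle M\rangle}$ applies $P\mapsto P^{\langle M\rangle}$ entrywise. I then claim
\[
\mathcal P_\Omega(cn)=M^{-d}\sum_{h_1,\ldots,h_d=0}^{M-1}\mathcal P_{\Omega^{[M,\mathbf h]}}(pn).
\]
To verify it, expand the right-hand side and interchange the finite root sums with the nested sum. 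The filter $\sum_{h_i}\omega_M^{h_iQ_i}$ kills every $Q_i$ not divisible by $M$ and gives the factor $M$ on $Q_i=M\ell_i$. On the surviving terms, $(\tau_i\omega_M^{h_i})^{M\ell_i}=\sigma_i^{\ell_i}$ and $P^{\langle M\rangle}(M\ell_i)^{-\rho}=P(\ell_i)^{-\rho}$. Strict order and the range $1\le\ell_i\le cn$ are preserved, and $M^{-d}$ cancels the $M^d$ from the filters.

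Branches need care here. The point is that the value $P^{\langle M\rangle}(M\ell)$ literally equals $P(\ell)$, so using the same branch choice for both sides is consistent. This is what the hypothesis on ``compatible branches'' is for, and the nonvanishing hypothesis covers the rescaled factors.

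\textbf{Step 2: common upper limit.} Applying Step~1 with $c=p_j$ turns each $\mathcal P_{\Omega_j}(p_jn)$ into a combination of sums with upper limit $pn$. The quasi-shuffle product for polynomial-letter sums, closed because $L\circ M$ is again a polynomial letter, then reduces $\prod_j\mathcal P_{\Omega_j}(p_jn)^{e_j}$ to $\sum_\Theta C_\Theta\mathcal P_\Theta(pn)$.

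\textbf{Step 3: outer factor.} Choose $\rho^p=z$ and define the outer letter
\[
A=\bigl((-q_1,\ldots,-q_t),\rho,(P_1^{\langle p\rangle},\ldots,P_t^{\langle p\rangle})\bigr).
\]
Then $A(pn)=z^n\prod_\nu P_\nu(n)^{q_\nu}$. With $C_a=((0),\zeta_p^a,(\idpoly))$, inserting the divisibility filter $\mathbf 1_{p\mid Q}$ and splitting the region at $Q=Q_1$ into $Q>Q_1$ and $Q=Q_1$, as in Lemma~\ref{lem:polynomial-elementary-summation}, gives
\[
\sum_{n=1}^kA(pn)\mathcal P_\Theta(pn)=\frac1p\sum_{a=0}^{p-1}\Bigl(\mathcal P_{C_a\circ A,D_1,\ldots}(pk)+\mathcal P_{C_a\circ A\circ D_1,D_2,\ldots}(pk)\Bigr)
\]
for $\Theta=(D_1,D_2,\ldots)$. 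For $\Theta=\emptyset$ only the first term appears. Linearity then finishes the proof.

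The only real obstacle is Step~1's bookkeeping: the rescaled polynomials $P(x/M)$ must remain polynomials (they do, over $\mathbb C$), and the branches must stay compatible. Everything else is formally identical to the affine case.
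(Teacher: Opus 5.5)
Your proposal is correct and follows the paper's proof essentially step for step. The paper likewise uses the root-of-unity scale-up identity with rescaled polynomials $P(x/M)$ and colors $\tau_i\omega_M^{h_i}$, reduces by quasi-shuffle at the common upper limit $pn$, and absorbs the outer factor into the letter $A$ built from $P_\nu(x/p)$ together with the divisibility-filter letters $C_a=((0),\zeta_p^a,(\idpoly))$.
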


\begin{proof}
Let \(c\mid p\), put \(M=p/c\), and let
\(\Omega=(L_1,\ldots,L_d)\), where
\[
L_i=(\boldsymbol\rho_i,\sigma_i,\mathbf Q_i),
\qquad
\mathbf Q_i=(Q_{i,1},\ldots,Q_{i,t_i}).
\]
Choose \(M\)-th roots \(\tau_i^M=\sigma_i\), and put
\(\omega_M=e^{2\pi i/M}\).  For
\(\mathbf h=(h_1,\ldots,h_d)\in\{0,\ldots,M-1\}^d\), define
\[
L_i^{[M,h_i]}
=
\left(
\boldsymbol\rho_i,
\tau_i\omega_M^{h_i},
(Q_{i,1}(x/M),\ldots,Q_{i,t_i}(x/M))
\right)
\]
and
\[
\Omega^{[M,\mathbf h]}
=
(L_1^{[M,h_1]},\ldots,L_d^{[M,h_d]}).
\]
Then
\begin{equation}
\label{eq:poly-scale-up-identity}
\mathcal P_\Omega(cn)
=
M^{-d}\sum_{h_1,\ldots,h_d=0}^{M-1}
\mathcal P_{\Omega^{[M,\mathbf h]}}(pn).
\end{equation}
Again, the root filter forces \(Q_i=M\ell_i\), and then
\[
Q_{i,\nu}(Q_i/M)=Q_{i,\nu}(\ell_i),
\qquad
(\tau_i\omega_M^{h_i})^{Q_i}=\sigma_i^{\ell_i}.
\]
This proves \eqref{eq:poly-scale-up-identity}.

Using \eqref{eq:poly-scale-up-identity} with \(c=p_j\), every
\(\mathcal P_{\Omega_j}(p_jn)\) is converted into a finite linear combination
of polynomial-letter sums with upper limit \(pn\).  The quasi-shuffle product
for polynomial-letter sums then reduces
\[
\prod_{j=1}^{m}\mathcal P_{\Omega_j}(p_jn)^{e_j}
\]
to a finite linear combination of single polynomial-letter sums \(\mathcal P_\Theta(pn)\).

Choose a \(p\)-th root \(\rho^p=z\), and define the polynomial letter
\[
A=
\left(
(-q_1,\ldots,-q_t),
\rho,
(P_1(x/p),\ldots,P_t(x/p))
\right),
\]
with the evident empty-product convention if \(t=0\).  Then
\[
A(pn)=z^n\prod_{\nu=1}^{t}P_\nu(n)^{q_\nu}.
\]
Let \(\zeta_p=e^{2\pi i/p}\) and set
\[
C_a=((0),\zeta_p^a,(\idpoly)),
\qquad 0\le a\le p-1.
\]
For nonempty \(\Theta=(D_1,\ldots,D_d)\), inserting the divisibility filter for
\(p\mid Q\) and splitting the boundary \(Q=m_1\) gives
\[
\sum_{n=1}^{k}A(pn)\mathcal P_\Theta(pn)
=
\frac1p\sum_{a=0}^{p-1}
\left(
\mathcal P_{C_a\circ A,D_1,\ldots,D_d}(pk)
+
\mathcal P_{C_a\circ A\circ D_1,D_2,\ldots,D_d}(pk)
\right).
\]
If \(\Theta=\emptyset\), then
\[
\sum_{n=1}^{k}A(pn)
=
\frac1p\sum_{a=0}^{p-1}\mathcal P_{C_a\circ A}(pk).
\]
All resulting terms are polynomial-letter sums with upper limit \(pk\), and the
claim follows by linearity.
\end{proof}

\subsection{Consequences and examples}
\label{subsec:scaled-upper-limit-lifting-examples}

The table records representative scaled upper-limit sums in the three alphabets.
The target upper limit is always the least common multiple of the displayed
scales times \(k\).  In the basic colored case, the factor \(z^n n^q\) is part
of the theorem.  In the affine and polynomial cases, the corresponding outer
factors are absorbed as single affine or polynomial letters.

\begin{center}
\scriptsize
\renewcommand{\arraystretch}{1.08}
\setlength{\tabcolsep}{4pt}
\begin{adjustbox}{max width=\textwidth}
\begin{tabular}{@{}>{\raggedright\arraybackslash}p{0.47\textwidth}>{\centering\arraybackslash}p{0.25\textwidth}>{\raggedright\arraybackslash}p{0.20\textwidth}@{}}
\toprule
\textbf{Sum} & \textbf{Target span} & \textbf{Reason} \\
\midrule
$\displaystyle \sum_{n=1}^{k} z^n n^q H_{2n}^{(r)}(s)H_{3n}^{(u)}(w)$
& $\operatorname{span}_{\mathbb C}\{\mathcal H_\beta(6k)\}_\beta$
& Basic colored theorem; scales $2,3$. \\

$\displaystyle \sum_{n=1}^{k} z^n n^q H_{2n}^{(1,2)}(1,i)\,H_{5n}^{\star,(u,v)}(s,w)$
& $\operatorname{span}_{\mathbb C}\{\mathcal H_\beta(10k)\}_\beta$
& Star sums split into strict sums. \\

$\displaystyle \sum_{n=1}^{k} z^n n^q\zeta(u,2n+1)\,\Phi(\xi,v,3n+1)\,A_{5n}^{(r)}(s)$
& $\operatorname{span}_{\mathbb C}\{\mathcal H_\beta(30k)\}_\beta$
& Tails reduce to scaled colored sums. \\

$\displaystyle \sum_{n=1}^{k} z^n n^q F_n\chi(n)\,H_{2n}^{\mathbf r}(\mathbf s)\,H_{7n}^{\star,\mathbf u}(\mathbf w)$
& $\operatorname{span}_{\mathbb C}\{\mathcal H_\beta(14k)\}_\beta$
& Recurrence and periodic factors only modify colors. \\

$\displaystyle \sum_{n=1}^{k} z^n(a n+b)^q\mathcal G_\Gamma(2n)^2\mathcal G_\Delta(3n)$
& $\operatorname{span}_{\mathbb C}\{\mathcal G_\Upsilon(6k)\}_\Upsilon$
& Affine theorem; outer affine power is a letter. \\

$\displaystyle \sum_{n=1}^{k} z^n\prod_{\nu=1}^{t}(a_\nu n+b_\nu)^{q_\nu}\mathcal G_\Gamma(4n)\mathcal G_\Delta(6n)$
& $\operatorname{span}_{\mathbb C}\{\mathcal G_\Upsilon(12k)\}_\Upsilon$
& Product of affine powers in one outer letter. \\

$\displaystyle \sum_{n=1}^{k} z^n(n^2+a^2)^q\mathcal P_\Omega(2n)\mathcal P_\Theta(5n)$
& $\operatorname{span}_{\mathbb C}\{\mathcal P_\Xi(10k)\}_\Xi$
& Polynomial theorem; Epstein--Hurwitz type outer factor. \\

$\displaystyle \sum_{n=1}^{k} z^n\prod_{\nu=1}^{t}P_\nu(n)^{q_\nu}\mathcal P_\Omega(3n)^2\mathcal P_\Theta(4n)$
& $\operatorname{span}_{\mathbb C}\{\mathcal P_\Xi(12k)\}_\Xi$
& Product of polynomial powers in one outer letter. \\
\bottomrule
\end{tabular}
\end{adjustbox}
\end{center}

The first two rows are the direct scaled versions of the colored harmonic-number
closure theorem.  The middle two rows show that no separate reduction to the
basic alphabet is needed once affine letters are admitted.  The next two rows
are the polynomial-base analogue.  The last two rows indicate how the earlier
rationally scaled harmonic and hyperharmonic reductions feed into the same
scaled-lifting mechanism.

\section{Nested sums}

We now collect the nested versions of the three finite closure principles
developed above.  The summation region is the weak simplex
\[
1\le n_1\le n_2\le\cdots\le n_r\le k,
\]
and the factor attached to the level $n_\ell$ may belong to the basic colored
alphabet, the affine-letter alphabet, or the polynomial-letter alphabet.  The
first result treats nested sums whose level-wise factors are built from scaled
generalized harmonic numbers together with elementary factors $z^n n^q$.  The next
results record the affine and polynomial-base liftings of ordinary harmonic
sums, followed by the corresponding self-closure statements for the affine and
polynomial-base alphabets.

\subsection{Nested sums in the basic colored alphabet}

We now extend Theorem~\ref{thm:scale-unification} to nested sums in which a
possibly different scaled harmonic-number summand may occur at each level of the
nesting.  Thus, instead of summing only a function of the innermost variable,
we allow products of the form
\[
F_1(n_1)F_2(n_2)\cdots F_r(n_r)
\]
over the simplex
\[
1\le n_1\le n_2\le\cdots\le n_r\le k.
\]

For a positive integer $L$, write
\begin{equation}
\label{eq:VL-definition}
\mathcal V_L(k)
:=
\operatorname{span}_{\mathbb C}
\left\{
\mathcal H_{\beta}(Lk)
\right\}_{\beta}.
\end{equation}
Thus $\mathcal V_L(k)$ is the vector space generated by generalized harmonic numbers with common upper limit $Lk$.

\begin{theorem}[Nested closure of the basic harmonic alphabet]
\label{thm:nested-scale-closure}
For $1\le \ell\le r$, let
\begin{equation}
\label{eq:level-summand}
F_\ell(n)
=
z_\ell^n n^{q_\ell}
\prod_{j=1}^{m_\ell}
\mathcal H_{\alpha_{\ell,j}}(p_{\ell,j}n)^{e_{\ell,j}},
\end{equation}
where
\[
z_\ell,q_\ell\in\mathbb C,
\qquad
p_{\ell,j}\in\mathbb Z_{>0},
\qquad
 e_{\ell,j}\in\mathbb Z_{\ge 0},
\]
and where each $\alpha_{\ell,j}$ is a word in the generalized harmonic number
alphabet.  Let
\begin{equation}
\label{eq:global-scale-L}
L=
\operatorname{lcm}
\{p_{\ell,j}:1\le \ell\le r,\ 1\le j\le m_\ell\},
\end{equation}
with $L=1$ if the displayed set is empty.  Then the nested sum
\begin{equation}
\label{eq:nested-sum-Sr}
S_r(k)
=
\sum_{n_r=1}^{k}
\sum_{n_{r-1}=1}^{n_r}
\cdots
\sum_{n_1=1}^{n_2}
\prod_{\ell=1}^{r}F_\ell(n_\ell)
\end{equation}
belongs to the finite linear span
\begin{equation}
\label{eq:nested-span-inclusion}
S_r(k)
\in
\operatorname{span}_{\mathbb C}
\left\{
\mathcal H_{\beta}(Lk)
\right\}_{\beta}.
\end{equation}
In particular, if $F_1=\cdots=F_r=F$, then
\[
\sum_{n_r=1}^{k}
\sum_{n_{r-1}=1}^{n_r}
\cdots
\sum_{n_1=1}^{n_2}
F(n_1)F(n_2)\cdots F(n_r)
\in
\operatorname{span}_{\mathbb C}
\left\{
\mathcal H_{\beta}(Lk)
\right\}_{\beta}.
\]
\end{theorem}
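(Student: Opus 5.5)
Proceed by induction on the depth $r$, carrying a stronger hypothesis that is stable under one more outer summation. The natural invariant is membership in the space $\mathscr W_L(n)$ of sequences of the form $\sum_\beta C_\beta\mathcal H_\beta(Ln)$, i.e.\ finite combinations of generalized harmonic numbers evaluated at $Ln$ with $n$ the current outer variable. Fix $L$ as in \eqref{eq:global-scale-L} once and for all, and define the partial nested sums
\[
T_\ell(n)=\sum_{n_\ell=1}^{n}\sum_{n_{\ell-1}=1}^{n_\ell}\cdots\sum_{n_1=1}^{n_2}\prod_{i=1}^{\ell}F_i(n_i),
\qquad T_0(n)=1,
\]
so that $T_\ell(n)=\sum_{m=1}^{n}F_\ell(m)T_{\ell-1}(m)$ and $S_r(k)=T_r(k)$. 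The claim to prove by induction is $T_\ell(n)\in\mathscr W_L(n)$ for every $\ell$ and every $n$. The case $\ell=0$ holds via the empty word.

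For the inductive step, assume $T_{\ell-1}(m)=\sum_\beta C_\beta\mathcal H_\beta(Lm)$ with constant coefficients $C_\beta$. Then
\[
F_\ell(m)T_{\ell-1}(m)=z_\ell^m m^{q_\ell}\prod_{j}\mathcal H_{\alpha_{\ell,j}}(p_{\ell,j}m)^{e_{\ell,j}}\sum_\beta C_\beta\mathcal H_\beta(Lm).
\]
This is a summand of exactly the type handled in the proof of Theorem~\ref{thm:scale-unification}, with an extra factor $\mathcal H_\beta(Lm)$ at scale $L$. Every $p_{\ell,j}$ divides $L$, and $L$ itself is among the scales, so the lcm of all the scales occurring is still $L$. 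Theorem~\ref{thm:scale-unification} therefore applies termwise with common scale $p=L$, giving $T_\ell(n)\in\operatorname{span}_{\mathbb C}\{\mathcal H_\gamma(Ln)\}_\gamma$. Concretely, \eqref{eq:scale-up-identity} lifts each $\mathcal H_{\alpha_{\ell,j}}(p_{\ell,j}m)$ to upper limit $Lm$, and the quasi-shuffle product collapses the whole product to a combination of single $\mathcal H_\gamma(Lm)$. Then \eqref{eq:scaled-elementary-summation-formula} and \eqref{eq:scaled-empty-elementary-summation-formula}, with the divisibility filter $\mathbf 1_{L\mid Q}$, turn $\sum_{m\le n}z_\ell^m m^{q_\ell}\mathcal H_\gamma(Lm)$ into sums $\mathcal H_{\lambda_a,\gamma}(Ln)+\mathcal H_{\lambda_a\circ c_1,\gamma'}(Ln)$. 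Setting $\ell=r$ and $n=k$ gives \eqref{eq:nested-span-inclusion}. The special case $F_1=\cdots=F_r$ follows immediately.

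The main point to check is that the inductive hypothesis is of a form the scaled theorem accepts. Its output must be a combination at the \emph{same} scale $L$ with coefficients independent of $n$. Each step of the proof of Theorem~\ref{thm:scale-unification} supplies this, because the root-of-unity filters and merges introduce only constant scalars such as $L^{-q-1}$. Hence the scale does not grow with the depth. This is the reason for fixing $L$ globally rather than using the lcm of the scales at each level. The weak inequalities $n_{\ell-1}\le n_\ell$ need no separate treatment: they are already encoded by the inclusive range $m\le n$ together with the diagonal merge term in the elementary summation step.
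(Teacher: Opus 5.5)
Your proposal is correct and follows the paper's proof essentially step for step. Both use the recursion $T_\ell(n)=\sum_{m\le n}F_\ell(m)T_{\ell-1}(m)$ with membership in the span at the fixed global scale $Ln$ as the invariant, then lift each level factor to $Ln$ by \eqref{eq:scale-up-identity}, collapse by quasi-shuffle, and apply the divisibility-filtered leading-letter step. The paper isolates that last step as the one-step rule \eqref{eq:basic-one-step-with-z-nq}, whereas you cite Theorem~\ref{thm:scale-unification} with $\mathcal H_\beta(Lm)$ as an extra factor at scale $L$; your insistence that the coefficients be independent of $n$ is a worthwhile sharpening, since that uniformity is what the induction actually uses.
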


\begin{proof}
We first record the one-step rule needed to handle the elementary factor
$z^n n^q$.  For every word $\gamma$ and every $z,q\in\mathbb C$,
\begin{equation}
\label{eq:basic-one-step-with-z-nq}
\sum_{n=1}^{k}z^n n^q\mathcal H_\gamma(Ln)
\in
\mathcal V_L(k).
\end{equation}
Indeed, write $m=Ln$ and choose an $L$-th root $\zeta$ of $z$ when $z\ne0$.
The condition $L\mid m$ is encoded by the root-of-unity filter
\[
\mathbf 1_{L\mid m}=\frac1L\sum_{\omega^L=1}\omega^m.
\]
Thus, up to the harmless scalar $L^{-q}$, the left hand side of
\eqref{eq:basic-one-step-with-z-nq} is a finite linear combination of sums
\[
\sum_{m=1}^{Lk}(\omega\zeta)^m m^q\mathcal H_\gamma(m).
\]
The factor $(\omega\zeta)^m m^q$ is a single colored letter of weight $-q$.
If $\gamma=\emptyset$, the last sum is a depth-one harmonic sum.  If
$\gamma=(L_1,\ldots,L_d)$, the region
$Lk\ge m\ge m_1>\cdots>m_d\ge1$ splits into $m>m_1$ and $m=m_1$, giving a
finite linear combination of generalized harmonic numbers with upper limit $Lk$.
This proves \eqref{eq:basic-one-step-with-z-nq}.

We now prove the theorem by induction on the depth of the outer nesting.  Since
every $p_{\ell,j}$ divides $L$, Theorem~\ref{thm:scale-unification} expresses
each scaled factor $\mathcal H_{\alpha_{\ell,j}}(p_{\ell,j}n)$ as a finite
linear combination of generalized harmonic numbers with common upper limit $Ln$.
Powers and products of these factors are then reduced by the quasi-shuffle
product.  Consequently the harmonic part of $F_\ell(n)$ lies in
$\mathcal V_L(n)$.

Define
\[
S_0(k)=1,
\qquad
S_t(k)=\sum_{n=1}^{k}F_t(n)S_{t-1}(n)
\quad(1\le t\le r).
\]
Assume that $S_{t-1}(n)\in\mathcal V_L(n)$.  Multiplying $S_{t-1}(n)$ by the
harmonic part of $F_t(n)$ and using the quasi-shuffle product gives a finite
linear combination of terms $\mathcal H_\gamma(Ln)$.  Hence
$F_t(n)S_{t-1}(n)$ is a finite linear combination of terms
\[
z_t^n n^{q_t}\mathcal H_\gamma(Ln).
\]
Applying the one-step rule \eqref{eq:basic-one-step-with-z-nq} to each such
term gives $S_t(k)\in\mathcal V_L(k)$.  The induction gives the result at
$t=r$.
\end{proof}

\paragraph{Consequences and examples}

The table records nested examples in which scaled harmonic factors occur at
several levels.  The displayed span uses the least common multiple of all
scales appearing in the summand.

\begin{center}
\scriptsize
\renewcommand{\arraystretch}{1.08}
\setlength{\tabcolsep}{4pt}
\begin{adjustbox}{max width=\textwidth}
\begin{tabular}{@{}>{\raggedright\arraybackslash}p{0.53\textwidth}>{\centering\arraybackslash}p{0.22\textwidth}>{\raggedright\arraybackslash}p{0.17\textwidth}@{}}
\toprule
\textbf{Sum} & \textbf{Span} & \textbf{Condition} \\
\midrule
$\displaystyle \sum_{1\le n_1\le n_2\le k} z_1^{n_1}n_1^p H_{2n_1}^{(1)}(a)\,z_2^{n_2}n_2^q H_{3n_2}^{(2)}(b)$
& $\operatorname{span}_{\mathbb C}\{\mathcal H_\beta(6k)\}_\beta$
& None. \\

$\displaystyle \sum_{1\le n_1\le n_2\le n_3\le k} H_{n_1}\,H_{2n_2}^{(2)}(i)\,H_{3n_3}^{(1)}(-1)$
& $\operatorname{span}_{\mathbb C}\{\mathcal H_\beta(6k)\}_\beta$
& None. \\

$\displaystyle \begin{aligned}[t]
&\sum_{1\le n_1\le n_2\le n_3\le k}
\bigl(H_{n_1}H_{2n_1}^{(2)}(i)\bigr)
\bigl(A_{3n_2}^{(1)}(s)\bigr)^2\\[-1mm]
&\qquad\qquad\times
H_{2n_3}^{\star,(1,2)}(1,i)
\end{aligned}$
& $\operatorname{span}_{\mathbb C}\{\mathcal H_\beta(6k)\}_\beta$
& Star and alternating factors. \\

$\displaystyle \sum_{1\le n_1\le n_2\le k}\bigl(H_{2n_1}^{(1)}(-1)\bigr)^2\bigl(H_{3n_2}^{(3)}(z)\bigr)^3$
& $\operatorname{span}_{\mathbb C}\{\mathcal H_\beta(6k)\}_\beta$
& Powers reduced by quasi-shuffle. \\

$\displaystyle \begin{aligned}[t]
&\sum_{1\le n_1\le n_2\le n_3\le k}
 z_1^{n_1}n_1^{q_1}\zeta(u,2n_1+1)\\[-1mm]
&\qquad\times z_2^{n_2}n_2^{q_2}\Phi(\xi,v,3n_2+1)
\,z_3^{n_3}n_3^{q_3}H_{5n_3}^{(r)}(s)
\end{aligned}$
& $\operatorname{span}_{\mathbb C}\{\mathcal H_\beta(30k)\}_\beta$
& Hurwitz and Lerch tails at scaled limits. \\

$\displaystyle \begin{aligned}[t]
&\sum_{1\le n_1\le n_2\le n_3\le n_4\le k}
F_{n_1}\,H_{2n_1}^{(r)}(s)\,
\chi(n_2)H_{3n_2}^{\star,\mathbf u}(\mathbf w)\\[-1mm]
&\qquad\times A_{5n_3}^{(v)}(\eta)\,H_{7n_4}^{(a,b)}(\lambda,\mu)
\end{aligned}$
& $\operatorname{span}_{\mathbb C}\{\mathcal H_\beta(210k)\}_\beta$
& Recurrence and fixed-periodic factors. \\

$\displaystyle \begin{aligned}[t]
&\sum_{1\le n_1\le\cdots\le n_4\le k}
H_{2n_1}^{\star,(1,2)}(1,i)
H_{3n_2}^{(2,1)}(-1,z)\\[-1mm]
&\qquad\times h_{4n_3}^{[m]}(r;s)\,H_{5n_4}^{(u)}(w)
\end{aligned}$
& $\operatorname{span}_{\mathbb C}\{\mathcal H_\beta(60k)\}_\beta$
& Hyperharmonic scaled factor. \\

$\displaystyle \begin{aligned}[t]
&\sum_{1\le n_1\le\cdots\le n_5\le k}
\prod_{j=1}^{5} z_j^{n_j}n_j^{q_j}\,
H_{p_j n_j}^{\mathbf r_j}(\mathbf s_j)
\end{aligned}$
& $\operatorname{span}_{\mathbb C}\{\mathcal H_\beta(Lk)\}_\beta$
& $L=\lcm(p_1,\ldots,p_5)$. \\
\bottomrule
\end{tabular}
\end{adjustbox}
\end{center}

\subsection{Nested sums in the affine-letter alphabet}

Recall that an affine letter is a triple
\[
L=(\boldsymbol\rho,\sigma,\mathbf A),
\qquad
\mathbf A=((a_1,b_1),\ldots,(a_t,b_t)),
\]
with value
\[
L(n)=\sigma^n\prod_{\nu=1}^{t}(a_\nu n+b_\nu)^{-\rho_\nu}.
\]
For a word $\Gamma=(L_1,\ldots,L_d)$, put
\[
\mathcal G_\Gamma(N)
=
\sum_{N\ge m_1>\cdots>m_d\ge1}
\prod_{j=1}^{d}L_j(m_j),
\qquad
\mathcal G_\emptyset(N)=1.
\]
If $L=(\boldsymbol\rho,\sigma,\mathbf A)$ and
$M=(\boldsymbol\eta,\tau,\mathbf B)$, write
\[
L\circ M=(\boldsymbol\rho\Vert\boldsymbol\eta,\sigma\tau,
\mathbf A\Vert\mathbf B),
\]
so that $(L\circ M)(n)=L(n)M(n)$.

\begin{theorem}[Affine lifting theorem for ordinary harmonic sums]
\label{thm:nested-affine-factor-closure}
For $1\le\ell\le r$, let
\begin{equation}
\label{eq:nested-affine-level-factor}
F_\ell(n)
=
z_\ell^n
\prod_{\nu=1}^{m_\ell}(a_{\ell,\nu}n+b_{\ell,\nu})^{q_{\ell,\nu}}
\prod_{j=1}^{h_\ell}\mathcal H_{\alpha_{\ell,j}}(n)^{e_{\ell,j}},
\end{equation}
where
\[
z_\ell,a_{\ell,\nu},b_{\ell,\nu},q_{\ell,\nu}\in\mathbb C,
\qquad
 e_{\ell,j}\in\mathbb Z_{\ge0}.
\]
Assume that no affine factor vanishes on the relevant positive integers, and fix
branches for the complex powers.  Then
\begin{equation}
\label{eq:nested-affine-main-sum}
S_r(k)
=
\sum_{n_r=1}^{k}
\sum_{n_{r-1}=1}^{n_r}
\cdots
\sum_{n_1=1}^{n_2}
\prod_{\ell=1}^{r}F_\ell(n_\ell)
\end{equation}
belongs to the finite linear span
\begin{equation}
\label{eq:nested-affine-span}
S_r(k)\in
\operatorname{span}_{\mathbb C}
\{\mathcal G_\Gamma(k)\}_\Gamma,
\end{equation}
where $\Gamma$ ranges over words in affine letters.
\end{theorem}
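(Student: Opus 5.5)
The argument is an induction on the nesting depth, run exactly as in Theorem~\ref{thm:nested-scale-closure}. Since all upper limits here are unscaled, no root-of-unity lifting is needed. Put
\[
S_0(k)=1,\qquad S_t(k)=\sum_{n=1}^{k}F_t(n)\,S_{t-1}(n)\quad(1\le t\le r).
\]
Then $S_r(k)$ is the nested sum \eqref{eq:nested-affine-main-sum}, because the weak chain $n_1\le n_2\le\cdots\le n_r\le k$ is exactly the iterated sum with each inner index bounded by the next outer one. The inductive claim is $S_t(n)\in\mathscr G_n$ for every $t$. The case $t=0$ holds via the empty word.

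For the inductive step, first rewrite each $F_t(n)$ in affine language.
\begin{itemize}
\item Embed every basic letter $(r,s)$ as the affine letter $((r),s,((1,0)))$, as in Corollary~\ref{cor:affine-self-closure}. Then each $\mathcal H_{\alpha_{t,j}}(n)$ becomes some $\mathcal G_{\Gamma}(n)$.
\item The outer factor $z_t^n\prod_\nu(a_{t,\nu}n+b_{t,\nu})^{q_{t,\nu}}$ is the value $A_t(n)$ of the single affine letter
\[
A_t=\bigl((-q_{t,1},\ldots,-q_{t,m_t}),\,z_t,\,((a_{t,1},b_{t,1}),\ldots)\bigr).
\]
\end{itemize}
Assume $S_{t-1}(n)\in\mathscr G_n$. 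Then $F_t(n)S_{t-1}(n)$ is $A_t(n)$ times a finite product of affine word sums $\mathcal G_\Gamma(n)$, all with the same upper limit $n$. This is precisely a summand of the form treated in Corollary~\ref{thm:affine-product-powers-summand}, with $S_{t-1}(n)$ expanded linearly into its basis words. Hence $S_t(k)\in\mathscr G_k$.

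Spelled out, this step rests on two facts. First, the quasi-shuffle product for affine words reduces the product to a linear combination of single $\mathcal G_\Delta(n)$; collisions merge letters by $\circ$, which stays inside the affine alphabet. Second, Lemma~\ref{lem:affine-elementary-summation} adjoins $A_t$ as a leading letter, together with a diagonal merge $A_t\circ D_1$.

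The only genuine checks are bookkeeping.
\begin{itemize}
\item \emph{Nonvanishing.} The affine factors in the merged letters $A_t\circ D_1$ and in the quasi-shuffle collisions are only ever evaluated at integers inside the original range $1\le n\le k$. The hypothesis that no affine factor vanishes there therefore persists.
\item \emph{Branches.} The branch choices must be fixed once per factor, so that the identity $(L\circ M)(n)=L(n)M(n)$ holds literally. Because concatenation keeps each factor with its own exponent rather than combining powers, no branch conflict arises.
\end{itemize}
I expect no substantive obstacle. The mildest subtlety is making the inductive hypothesis apply at the variable upper limit $n$ rather than at $k$. This is handled by stating it as membership in $\mathscr G_n$ for all $n$, where the coefficients and words are independent of $n$, which is exactly what Lemma~\ref{lem:affine-elementary-summation} produces.
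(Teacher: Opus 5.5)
Your proposal is correct and follows the paper's proof essentially step for step. Both use the same recursion $S_t(k)=\sum_{n=1}^{k}F_t(n)S_{t-1}(n)$, reduce the embedded harmonic part times $S_{t-1}(n)$ by quasi-shuffle to single affine words $\mathcal G_\Delta(n)$, absorb $z_t^n\prod_\nu(a_{t,\nu}n+b_{t,\nu})^{q_{t,\nu}}$ as the single affine letter $A_t$, and apply the one-step rule that splits $n>m_1$ from $n=m_1$. Your remarks on $n$-independent coefficients, nonvanishing, and branch choices are accurate; the paper leaves that bookkeeping implicit.
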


\begin{proof}
We first note the one-step summation rule.  If $A$ is an affine letter and
$\Gamma=(L_1,\ldots,L_d)$, then
\begin{equation}
\label{eq:affine-one-step-rule}
\sum_{n=1}^{k}A(n)\mathcal G_\Gamma(n)
=
\begin{cases}
\mathcal G_A(k), & d=0,\\[1mm]
\mathcal G_{A,L_1,\ldots,L_d}(k)
+\mathcal G_{A\circ L_1,L_2,\ldots,L_d}(k), & d\ge1.
\end{cases}
\end{equation}
Indeed, for $d\ge1$ the region
$k\ge n\ge m_1>\cdots>m_d\ge1$ splits into the two cases
$n>m_1$ and $n=m_1$.

Now define recursively
\[
S_0(k)=1,
\qquad
S_t(k)=\sum_{n=1}^{k}F_t(n)S_{t-1}(n)
\quad(1\le t\le r).
\]
We prove by induction that $S_t(k)$ lies in the affine span
$\operatorname{span}_{\mathbb C}\{\mathcal G_\Gamma(k)\}_\Gamma$.
Assume this for $t-1$.  The harmonic part of $F_t(n)$,
\[
\prod_{j=1}^{h_t}\mathcal H_{\alpha_{t,j}}(n)^{e_{t,j}},
\]
is a finite linear combination of single harmonic sums by the quasi-shuffle
product; each of these is an affine sum with ordinary letters
$((u),v,((1,0)))$.  Hence the product of this harmonic part with
$S_{t-1}(n)$ is, again by quasi-shuffle, a finite linear combination of affine
sums $\mathcal G_\Delta(n)$.

The remaining level factor is the single affine letter
\[
A_t=
\bigl((-q_{t,1},\ldots,-q_{t,m_t}),z_t,
((a_{t,1},b_{t,1}),\ldots,(a_{t,m_t},b_{t,m_t}))\bigr),
\]
with the evident empty-tuple interpretation when $m_t=0$.  Therefore
$F_t(n)S_{t-1}(n)$ is a finite linear combination of terms
$A_t(n)\mathcal G_\Delta(n)$.  Applying
\eqref{eq:affine-one-step-rule} to each such term gives
$S_t(k)\in\operatorname{span}_{\mathbb C}\{\mathcal G_\Gamma(k)\}_\Gamma$.
The induction gives the result at $t=r$.
\end{proof}

\begin{corollary}[Self-closure of the affine harmonic alphabet]
\label{cor:nested-affine-self-closure}
For $1\le\ell\le r$, let
\[
F_\ell(n)
=
z_\ell^n
\prod_{\nu=1}^{m_\ell}(a_{\ell,\nu}n+b_{\ell,\nu})^{q_{\ell,\nu}}
\prod_{j=1}^{h_\ell}\mathcal G_{\Gamma_{\ell,j}}(n)^{e_{\ell,j}},
\]
where the affine factors are nonzero on the relevant positive integers and
branches for all complex powers are fixed.  Then
\[
\sum_{n_r=1}^{k}
\sum_{n_{r-1}=1}^{n_r}
\cdots
\sum_{n_1=1}^{n_2}
\prod_{\ell=1}^{r}F_\ell(n_\ell)
\in
\operatorname{span}_{\mathbb C}\{\mathcal G_\Gamma(k)\}_\Gamma.
\]
\end{corollary}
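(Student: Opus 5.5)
The plan is to repeat the induction used for Theorem~\ref{thm:nested-affine-factor-closure}. The only change is that the inner harmonic factors are now affine word sums $\mathcal G_{\Gamma_{\ell,j}}(n)$ rather than ordinary sums $\mathcal H_{\alpha_{\ell,j}}(n)$. Because the ordinary letters were immediately embedded as affine letters $((u),v,((1,0)))$ in that proof, the argument should transfer with essentially no new work.

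First, I record the two algebraic inputs.

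\emph{Quasi-shuffle closure.} Since the pointwise product of affine letters is again an affine letter (via $\circ$, which multiplies colors and concatenates factor lists), the stuffle identity \eqref{eq:monoidal-stuffle-product} applies to $\mathfrak H_{\mathcal A}$ with $\mathcal A$ the affine alphabet. Hence any finite product $\prod_j\mathcal G_{\Gamma_{\ell,j}}(n)^{e_{\ell,j}}$ is a finite $\mathbb C$-linear combination of single sums $\mathcal G_\Delta(n)$. The same holds after multiplying by any element of $\operatorname{span}_{\mathbb C}\{\mathcal G_\Gamma(n)\}_\Gamma$.

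\emph{One-step rule.} This is \eqref{eq:affine-one-step-rule}, equivalently Lemma~\ref{lem:affine-elementary-summation}. The outer factor $z_\ell^n\prod_\nu(a_{\ell,\nu}n+b_{\ell,\nu})^{q_{\ell,\nu}}$ is the value of the single affine letter
\[
A_\ell=\bigl((-q_{\ell,1},\ldots,-q_{\ell,m_\ell}),z_\ell,((a_{\ell,1},b_{\ell,1}),\ldots,(a_{\ell,m_\ell},b_{\ell,m_\ell}))\bigr).
\]

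Next, I set $S_0(k)=1$ and $S_t(k)=\sum_{n=1}^{k}F_t(n)S_{t-1}(n)$. The nested sum over $1\le n_1\le\cdots\le n_r\le k$ is exactly $S_r(k)$, because the weak inequalities match the recursive upper limits $n_{t}\le n_{t+1}$. I then prove by induction on $t$ that $S_t(k)\in\operatorname{span}_{\mathbb C}\{\mathcal G_\Gamma(k)\}_\Gamma$.

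Assume the claim for $t-1$. By quasi-shuffle closure, the product $\prod_j\mathcal G_{\Gamma_{t,j}}(n)^{e_{t,j}}\,S_{t-1}(n)$ is a finite combination of $\mathcal G_\Delta(n)$. So $F_t(n)S_{t-1}(n)$ is a finite combination of terms $A_t(n)\mathcal G_\Delta(n)$. Applying the one-step rule termwise, each such sum becomes $\mathcal G_{A_t,\Delta}(k)$ plus, when $\Delta$ is nonempty, the merged term $\mathcal G_{A_t\circ D_1,\Delta'}(k)$ from the diagonal $n=m_1$. Hence $S_t(k)$ lies in the affine span.

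The only step needing care is the quasi-shuffle closure for general affine words, rather than for ordinary words embedded affinely. I would justify it directly from the general monoidal-alphabet discussion: decompose the product of two ordered index sets into strict interleavings and collisions, and observe that each collision yields the letter $L\circ M$. Nonvanishing of merged factors is automatic, since they are concatenations of factors already assumed nonzero, and the branches are those already fixed for each constituent power. With this established, the induction closes, and the corollary is simply Theorem~\ref{thm:nested-affine-factor-closure} with $\mathcal H$ replaced by $\mathcal G$.
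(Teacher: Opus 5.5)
Your proposal is correct and follows the paper's own argument: quasi-shuffle reduction of the products of $\mathcal G_{\Gamma_{\ell,j}}(n)$ with the previous stage, absorption of $z_\ell^n\prod_\nu(a_{\ell,\nu}n+b_{\ell,\nu})^{q_{\ell,\nu}}$ into a single affine letter, and the one-step rule \eqref{eq:affine-one-step-rule}, iterated over the levels. Your explicit recursion $S_t(k)=\sum_{n=1}^{k}F_t(n)S_{t-1}(n)$ and your remarks on nonvanishing and branches for merged letters simply spell out what the paper compresses into ``iterating over the levels gives the claim.''
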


\begin{proof}
Products of the multiple affine harmonic numbers $\mathcal G_{\Gamma_{\ell,j}}(n)$ with the
same upper limit reduce, by the quasi-shuffle product for affine letters, to
finite linear combinations of single multiple affine harmonic numbers
$\mathcal G_\Delta(n)$.  Multiplication by the elementary affine factor gives
terms of the form $A(n)\mathcal G_\Delta(n)$, and the one-step rule
\eqref{eq:affine-one-step-rule} closes the outer summation.  Iterating over the
levels gives the claim.
\end{proof}

\subsection{Nested sums in the polynomial-letter alphabet}

For the polynomial version, a polynomial letter is a triple
\[
L=(\boldsymbol\rho,\sigma,\mathbf P),
\qquad
\mathbf P=(P_1,\ldots,P_t),
\qquad P_\nu\in\mathbb C[x],
\]
with value
\[
L(n)=\sigma^n\prod_{\nu=1}^{t}P_\nu(n)^{-\rho_\nu}.
\]
For a word $\Omega=(L_1,\ldots,L_d)$, put
\[
\mathcal P_\Omega(N)
=
\sum_{N\ge m_1>\cdots>m_d\ge1}
\prod_{j=1}^{d}L_j(m_j),
\qquad
\mathcal P_\emptyset(N)=1.
\]
Ordinary harmonic letters correspond to the identity polynomial
$\idpoly(x)=x$.

\begin{theorem}[Polynomial-base lifting theorem for ordinary harmonic sums]
\label{thm:nested-polynomial-factor-closure}
For $1\le\ell\le r$, let
\begin{equation}
\label{eq:nested-polynomial-level-factor}
F_\ell(n)
=
z_\ell^n
\prod_{\nu=1}^{m_\ell}P_{\ell,\nu}(n)^{q_{\ell,\nu}}
\prod_{j=1}^{h_\ell}\mathcal H_{\alpha_{\ell,j}}(n)^{e_{\ell,j}},
\end{equation}
where
\[
P_{\ell,\nu}\in\mathbb C[x],
\qquad
z_\ell,q_{\ell,\nu}\in\mathbb C,
\qquad
 e_{\ell,j}\in\mathbb Z_{\ge0}.
\]
Assume that no polynomial factor vanishes on the relevant positive integers,
and fix branches for the complex powers.  Then
\begin{equation}
\label{eq:nested-polynomial-main-sum}
T_r(k)
=
\sum_{n_r=1}^{k}
\sum_{n_{r-1}=1}^{n_r}
\cdots
\sum_{n_1=1}^{n_2}
\prod_{\ell=1}^{r}F_\ell(n_\ell)
\end{equation}
belongs to the finite linear span
\begin{equation}
\label{eq:nested-polynomial-span}
T_r(k)\in
\operatorname{span}_{\mathbb C}
\{\mathcal P_\Omega(k)\}_\Omega,
\end{equation}
where $\Omega$ ranges over words in polynomial letters.
\end{theorem}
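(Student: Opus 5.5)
The plan mirrors the proof of Theorem~\ref{thm:nested-affine-factor-closure}, with affine letters replaced by polynomial letters. First I record the polynomial one-step rule, which is Lemma~\ref{lem:polynomial-elementary-summation}. If $A$ is a polynomial letter and $\Omega=(L_1,\ldots,L_d)$, then $\sum_{n=1}^{k}A(n)\mathcal P_\Omega(n)$ equals $\mathcal P_A(k)$ when $d=0$. When $d\ge1$ it equals $\mathcal P_{A,L_1,\ldots,L_d}(k)+\mathcal P_{A\circ L_1,L_2,\ldots,L_d}(k)$. This comes from splitting the region $k\ge n\ge m_1>\cdots$ into the cases $n>m_1$ and $n=m_1$.

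Next, I rewrite the nested sum recursively. Put $T_0(k)=1$ and $T_t(k)=\sum_{n=1}^{k}F_t(n)T_{t-1}(n)$ for $1\le t\le r$. Unwinding the weak simplex $1\le n_1\le\cdots\le n_r\le k$ from the innermost variable shows $T_r(k)$ coincides with \eqref{eq:nested-polynomial-main-sum}. I then prove by induction on $t$ that $T_t(k)\in\operatorname{span}_{\mathbb C}\{\mathcal P_\Omega(k)\}_\Omega$. The base case $t=0$ is the empty word.

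For the inductive step, embed each ordinary harmonic sum $\mathcal H_{\alpha_{t,j}}(n)$ as $\mathcal P$ with letters $((u),v,(\idpoly))$. Since polynomial letters are closed under $\circ$, the $\mathcal P$-sums with common upper limit $n$ satisfy the quasi-shuffle product \eqref{eq:monoidal-stuffle-product}. Applying it, the harmonic part of $F_t(n)$ times $T_{t-1}(n)$ becomes a finite linear combination of single sums $\mathcal P_\Delta(n)$. The remaining factor $z_t^n\prod_\nu P_{t,\nu}(n)^{q_{t,\nu}}$ is the value of the single polynomial letter $A_t=((-q_{t,1},\ldots,-q_{t,m_t}),z_t,(P_{t,1},\ldots,P_{t,m_t}))$, with an empty list when $m_t=0$. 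Hence $F_t(n)T_{t-1}(n)$ is a linear combination of terms $A_t(n)\mathcal P_\Delta(n)$. The one-step rule then places $T_t(k)$ in the $\mathcal P$-span at upper limit $k$.

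There is no real obstacle here, only bookkeeping. Two points need care. First, the merged letters $A_t\circ L_1$ must still satisfy the nonvanishing hypothesis; they do, because their factors are exactly the original polynomial factors. Second, the branches of the merged powers must be fixed consistently, so that $(L\circ M)(n)=L(n)M(n)$ holds literally. This is ensured by defining $\circ$ as concatenation rather than combining exponents, so each power keeps its own prescribed branch.
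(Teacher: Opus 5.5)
Your proposal is correct and follows essentially the same route as the paper's proof. Both use the one-step rule obtained by splitting $n>m_1$ versus $n=m_1$, the recursion $T_t(k)=\sum_{n=1}^{k}F_t(n)T_{t-1}(n)$, quasi-shuffle reduction after embedding the harmonic factors as identity-polynomial letters, and absorption of $z_t^n\prod_\nu P_{t,\nu}(n)^{q_{t,\nu}}$ as the single polynomial letter $A_t$; your remarks on nonvanishing and on branches being preserved because $\circ$ concatenates rather than combines exponents are sound and slightly more explicit than the paper.
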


\begin{proof}
The proof is identical in structure to the affine case.  The one-step rule is
\begin{equation}
\label{eq:polynomial-one-step-rule}
\sum_{n=1}^{k}A(n)\mathcal P_\Omega(n)
=
\begin{cases}
\mathcal P_A(k), & \Omega=\emptyset,\\[1mm]
\mathcal P_{A,L_1,\ldots,L_d}(k)
+\mathcal P_{A\circ L_1,L_2,\ldots,L_d}(k),
& \Omega=(L_1,\ldots,L_d),\ d\ge1,
\end{cases}
\end{equation}
where $A\circ L_1$ is obtained by concatenating exponents and polynomial lists
and multiplying the colors.  This follows by splitting
$k\ge n\ge m_1>\cdots>m_d\ge1$ into $n>m_1$ and $n=m_1$.

Inductively write
\[
T_0(k)=1,
\qquad
T_t(k)=\sum_{n=1}^{k}F_t(n)T_{t-1}(n).
\]
By the induction hypothesis, $T_{t-1}(n)$ is a finite linear combination of
polynomial sums $\mathcal P_\Omega(n)$.  The harmonic part of $F_t(n)$ is first
reduced by the ordinary quasi-shuffle product and is then regarded as a product
of polynomial sums with identity-polynomial letters.  Multiplying these with
$T_{t-1}(n)$ and using the quasi-shuffle product for polynomial letters gives a
finite linear combination of $\mathcal P_\Delta(n)$.

The remaining level factor is the polynomial letter
\[
A_t=
\bigl((-q_{t,1},\ldots,-q_{t,m_t}),z_t,
(P_{t,1},\ldots,P_{t,m_t})\bigr).
\]
Thus $F_t(n)T_{t-1}(n)$ is a finite linear combination of terms
$A_t(n)\mathcal P_\Delta(n)$, and \eqref{eq:polynomial-one-step-rule} places
the outer sum in the polynomial span at $k$.  This completes the induction.
\end{proof}

\begin{corollary}[Self-closure of the polynomial-base harmonic alphabet]
\label{cor:nested-polynomial-self-closure}
For $1\le\ell\le r$, let
\[
F_\ell(n)
=
z_\ell^n
\prod_{\nu=1}^{m_\ell}P_{\ell,\nu}(n)^{q_{\ell,\nu}}
\prod_{j=1}^{h_\ell}\mathcal P_{\Omega_{\ell,j}}(n)^{e_{\ell,j}},
\]
where the polynomial factors are nonzero on the relevant positive integers and
branches for all complex powers are fixed.  Then
\[
\sum_{n_r=1}^{k}
\sum_{n_{r-1}=1}^{n_r}
\cdots
\sum_{n_1=1}^{n_2}
\prod_{\ell=1}^{r}F_\ell(n_\ell)
\in
\operatorname{span}_{\mathbb C}\{\mathcal P_\Omega(k)\}_\Omega.
\]
\end{corollary}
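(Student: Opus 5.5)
The plan is to copy the proof of Theorem~\ref{thm:nested-polynomial-factor-closure} and the affine Corollary~\ref{cor:nested-affine-self-closure}. The only difference is that the level-wise harmonic factors $\mathcal P_{\Omega_{\ell,j}}(n)$ are now arbitrary polynomial-base words rather than identity-polynomial embeddings of ordinary words. I will write the nested sum as an iterated outer summation,
\[
T_0(k)=1,\qquad T_t(k)=\sum_{n=1}^{k}F_t(n)\,T_{t-1}(n)\quad(1\le t\le r),
\]
so that the weak simplex $1\le n_1\le\cdots\le n_r\le k$ is exactly the range of this recursion. I then prove by induction on $t$ that $T_t(k)\in\operatorname{span}_{\mathbb C}\{\mathcal P_\Omega(k)\}_\Omega$, with the case $t=0$ given by the empty word.

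For the inductive step, assume $T_{t-1}(n)$ is a finite linear combination of sums $\mathcal P_\Omega(n)$. The factor $\prod_j\mathcal P_{\Omega_{t,j}}(n)^{e_{t,j}}$ is a product of polynomial-letter sums with the same upper limit $n$. By the quasi-shuffle product \eqref{eq:monoidal-stuffle-product} for the polynomial alphabet, this product is a finite linear combination of single sums $\mathcal P_\Delta(n)$. The reason is that the alphabet is closed under $\circ$ (concatenate exponent and polynomial lists, multiply colors), so collisions stay in the alphabet. Multiplying by $T_{t-1}(n)$ and applying the quasi-shuffle product once more gives a finite linear combination of terms $\mathcal P_\Delta(n)$.

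The elementary part $z_t^n\prod_\nu P_{t,\nu}(n)^{q_{t,\nu}}$ is the value of the single polynomial letter $A_t=((-q_{t,1},\ldots,-q_{t,m_t}),z_t,(P_{t,1},\ldots,P_{t,m_t}))$, taken as the empty-list letter when $m_t=0$. Hence $F_t(n)T_{t-1}(n)$ is a finite linear combination of terms $A_t(n)\mathcal P_\Delta(n)$. The one-step rule \eqref{eq:polynomial-one-step-rule} (equivalently Lemma~\ref{lem:polynomial-elementary-summation}) sums each such term to $\mathcal P_{A_t,\Delta}(k)+\mathcal P_{A_t\circ D_1,\Delta'}(k)$ when $\Delta=(D_1,\Delta')$, and to $\mathcal P_{A_t}(k)$ when $\Delta$ is empty. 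This completes the induction, and $t=r$ gives the claim.

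No step is a real obstacle; the argument is a direct combination of results already stated. The one point that needs care is bookkeeping of hypotheses. Merged letters $A_t\circ D_1$ and stuffle collisions only multiply existing polynomial powers at a single index, so they introduce no new zeros of denominators beyond those assumed nonvanishing. Branches fixed for each individual factor are carried along unchanged, because a merged letter is defined as the pointwise product of the factor values with their already-fixed branches; no identity of the form $P^aP^b=P^{a+b}$ is invoked.
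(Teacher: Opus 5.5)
Your proposal is correct and follows the paper's argument. It iterates over the levels via the recursion $T_t(k)=\sum_{n=1}^{k}F_t(n)T_{t-1}(n)$, reduces the products of the $\mathcal P_{\Omega_{t,j}}(n)$ together with $T_{t-1}(n)$ by the polynomial quasi-shuffle product, absorbs $z_t^n\prod_\nu P_{t,\nu}(n)^{q_{t,\nu}}$ as one polynomial letter $A_t$, and closes the outer sum with \eqref{eq:polynomial-one-step-rule}; your remark on nonvanishing and branches (merged letters are pointwise products by concatenation) just spells out what the paper leaves implicit.
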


\begin{proof}
Products of multiple polynomial-base harmonic numbers with the same upper limit reduce, by
the quasi-shuffle product for polynomial letters, to finite linear combinations
of single multiple polynomial-base harmonic numbers $\mathcal P_\Delta(n)$.  Multiplication
by the elementary polynomial letter gives terms of the form
$A(n)\mathcal P_\Delta(n)$, and the one-step rule
\eqref{eq:polynomial-one-step-rule} closes the outer summation.  Iterating over
the levels gives the result.
\end{proof}

\subsection{Consequences and examples}

Put
\[
\Delta_r(k):=\{(n_1,\ldots,n_r):1\le n_1\le\cdots\le n_r\le k\},
\]
and recall the spans
\[
\begin{aligned}
\mathscr G_k&:=\operatorname{span}_{\mathbb C}
\{\mathcal G_\Gamma(k):\Gamma\text{ a word in affine letters}\},\\
\mathscr P_k&:=\operatorname{span}_{\mathbb C}
\{\mathcal P_\Omega(k):\Omega\text{ a word in polynomial letters}\}.
\end{aligned}
\]
The table records the nested affine and polynomial closures.  Nonvanishing is
understood throughout the relevant summation range, and branches are fixed once
and for all.  By Corollaries~\ref{cor:nested-affine-self-closure} and
\ref{cor:nested-polynomial-self-closure}, the same conclusions remain valid when the
ordinary harmonic factors are replaced by multiple affine or multiple polynomial-base harmonic numbers from the corresponding alphabet.  For compactness,
$\mathscr G_k[\mathcal B]$ and $\mathscr P_k[\mathcal B]$ denote the finite-generated
subspans determined by the indicated merge-closed letter sets.  In the concrete
rows below, put
\[
\begin{aligned}
\mathcal B_{\rm aff}&:=\{((-3),2,((2,1))),((2),-1,((1,0))),
((2),-1,((3,-1))),((1),i,((1,0)))\},\\
\mathcal B_{\rm poly}&:=\{((2),-1,(x^2+1)),((3),i,(x)),
((-2,1),3,(x^2+x+1,x^2+4)),((1),-1,(x))\}.
\end{aligned}
\]

\begin{center}
\scriptsize
\renewcommand{\arraystretch}{1.08}
\setlength{\tabcolsep}{4pt}
\begin{adjustbox}{max width=\textwidth}
\begin{tabular}{@{}>{\raggedright\arraybackslash}p{0.55\textwidth}>{\centering\arraybackslash}p{0.20\textwidth}>{\raggedright\arraybackslash}p{0.17\textwidth}@{}}
\toprule
\textbf{Sum} & \textbf{Span} & \textbf{Condition} \\
\midrule
$\displaystyle \sum_{1\le n_1\le n_2\le k}
2^{n_1}(2n_1+1)^3H_{n_1}^{(2)}(-1)
\frac{(-1)^{n_2}}{(3n_2-1)^2}H_{n_2}^{(1)}(i)$
& $\mathscr G_k[\mathcal B_{\rm aff}]$
& Concrete affine two-level case. \\

$\displaystyle \sum_{n_2=1}^{k}\sum_{n_1=1}^{n_2}
 z_1^{n_1}(a_1n_1+b_1)^p\mathcal H_{\alpha_1}(n_1)
 z_2^{n_2}(a_2n_2+b_2)^q\mathcal H_{\alpha_2}(n_2)$
& $\mathscr G_k$
& $a_i n+b_i\ne0$. \\

$\displaystyle \begin{aligned}[t]
&\sum_{\Delta_3(k)}\frac{z_1^{n_1}H_{n_1}^{(r)}(s)}{(2n_1+1)^p}
\,z_2^{n_2}(3n_2-1)^q\Phi(\xi,u,n_2+1)\\[-1mm]
&\qquad\times z_3^{n_3}(5n_3+2)^\lambda A_{n_3}^{(v)}(w)
\end{aligned}$
& $\mathscr G_k$
& Nonzero affine factors, $\xi\ne0,1$. \\

$\displaystyle \sum_{\Delta_r(k)}\prod_{\ell=1}^{r}
 z_\ell^{n_\ell}
 \prod_{\nu=1}^{m_\ell}(a_{\ell,\nu}n_\ell+b_{\ell,\nu})^{q_{\ell,\nu}}
 \prod_j\mathcal G_{\Gamma_{\ell,j}}(n_\ell)^{e_{\ell,j}}$
& $\mathscr G_k$
& Affine self-closure. \\

$\displaystyle \sum_{1\le n_1\le n_2\le k}
\frac{(-1)^{n_1}}{(n_1^2+1)^2}H_{n_1}^{(3)}(i)
\,3^{n_2}\frac{(n_2^2+n_2+1)^2}{n_2^2+4}H_{n_2}^{(1)}(-1)$
& $\mathscr P_k[\mathcal B_{\rm poly}]$
& Concrete polynomial two-level case. \\

$\displaystyle \begin{aligned}[t]
&\sum_{\Delta_3(k)}
\frac{z_1^{n_1}H_{n_1}^{(r)}(s)}{(n_1^2+a^2)^p}
\,z_2^{n_2}(n_2^2+n_2+1)^qH_{n_2}^{\star,\mathbf u}(\mathbf w)\\[-1mm]
&\qquad\times \frac{2n_3\,z_3^{n_3}A_{n_3}^{(v)}(\eta)}{(n_3^2+b^2)^{\lambda+1}}
\end{aligned}$
& $\mathscr P_k$
& Polynomial factors nonzero. \\

$\displaystyle \sum_{\Delta_r(k)}\prod_{\ell=1}^{r}
 z_\ell^{n_\ell}
 \prod_{\nu=1}^{m_\ell}P_{\ell,\nu}(n_\ell)^{q_{\ell,\nu}}
 \prod_j\mathcal H_{\alpha_{\ell,j}}(n_\ell)^{e_{\ell,j}}$
& $\mathscr P_k$
& $P_{\ell,\nu}(n)\ne0$. \\

$\displaystyle \sum_{\Delta_r(k)}\prod_{\ell=1}^{r}
 z_\ell^{n_\ell}
 \prod_{\nu=1}^{m_\ell}P_{\ell,\nu}(n_\ell)^{q_{\ell,\nu}}
 \prod_j\mathcal P_{\Omega_{\ell,j}}(n_\ell)^{e_{\ell,j}}$
& $\mathscr P_k$
& Polynomial self-closure. \\
\bottomrule
\end{tabular}
\end{adjustbox}
\end{center}

\section{Infinite sums}
\label{sec:infinite-sums}

The closure theorems of the preceding sections are finite statements.  They
reduce finite Euler-type sums to finite linear combinations of harmonic sums
with one of the three alphabets used in this paper: the basic colored alphabet,
the affine alphabet, and the polynomial alphabet.  Infinite sums are obtained by
passing to the limit of these finite reductions.

For a basic word
\[
\alpha=((r_1,s_1),\ldots,(r_d,s_d)),
\]
we write
\begin{equation}
\label{eq:section7-H-definition}
\mathcal H_{\alpha}(N)
=
\sum_{N\ge n_1>\cdots>n_d\ge 1}
\prod_{j=1}^{d}\frac{s_j^{n_j}}{n_j^{r_j}},
\qquad
\mathcal H_{\emptyset}(N)=1.
\end{equation}
When the limit exists, it is the corresponding multiple polylogarithmic value,
\begin{equation}
\label{eq:section7-multiple-polylogarithm-definition}
\Li_{r_1,\ldots,r_d}(s_1,\ldots,s_d)
:=
\sum_{n_1>\cdots>n_d\ge 1}
\prod_{j=1}^{d}\frac{s_j^{n_j}}{n_j^{r_j}}.
\end{equation}
Thus the finite closure space has a natural limiting value space.

\subsection{Limit value spaces}
\label{subsec:finite-closures-limiting-value-spaces}

All limit passages in this subsection are understood under termwise convergence
of the displayed finite reduction.  That is, whenever a finite identity has the
form
\begin{equation}
\label{eq:finite-reduction-general-limit}
S_N=
\sum_{\nu=1}^{M}c_\nu X_\nu(N),
\end{equation}
we pass to the limit only in the case where each $X_\nu(N)$ has an ordinary
finite limit.  Then
\begin{equation}
\label{eq:termwise-limit-general}
\lim_{N\to\infty}S_N=
\sum_{\nu=1}^{M}c_\nu\lim_{N\to\infty}X_\nu(N).
\end{equation}
The point of the closure theorems is that the possible $X_\nu$ belong to a
controlled finite alphabet, so that the limiting constants also belong to a
controlled value space.  Sufficient convergence criteria for the basic, affine,
and polynomial-base multiple polylogarithms used in these limit passages are
given in \ref{app:convergence-mpl-variants}.

\begin{center}
\small
\renewcommand{\arraystretch}{1.25}
\setlength{\tabcolsep}{5pt}
\begin{adjustbox}{max width=\textwidth}
\begin{tabular}{@{}p{0.25\textwidth}p{0.30\textwidth}p{0.34\textwidth}@{}}
\toprule
\textbf{finite object} & \textbf{limit at $N\to\infty$} & \textbf{value space} \\
\midrule
Uncolored basic multiple harmonic numbers
$\mathcal H_{((r_1,1),\ldots,(r_d,1))}(N)$
&
$\zeta(r_1,\ldots,r_d)$
&
Multiple zeta values, under the usual admissibility condition. \\
\addlinespace
Colored basic multiple harmonic numbers
$\mathcal H_{((r_1,s_1),\ldots,(r_d,s_d))}(N)$
&
$\Li_{r_1,\ldots,r_d}(s_1,\ldots,s_d)$
&
Multiple polylogarithms and their special values. \\
\addlinespace
Multiple affine harmonic numbers
$\mathcal G_{\Gamma}(N)$
&
$\displaystyle \Li^{\mathrm{aff}}_{\Gamma}:=\lim_{N\to\infty}\mathcal G_{\Gamma}(N)$
&
Affine multiple-polylogarithmic constants. \\
\addlinespace
Multiple polynomial-base harmonic numbers
$\mathcal P_{\Omega}(N)$
&
$\displaystyle \Li^{\mathrm{pb}}_{\Omega}:=\lim_{N\to\infty}\mathcal P_{\Omega}(N)$
&
Polynomial-base multiple-polylogarithmic constants. \\
\bottomrule
\end{tabular}
\end{adjustbox}
\end{center}

\begin{proposition}[Limit passage from finite closure]
\label{prop:limit-passage-finite-closure}
Let $\mathscr A_N$ be one of the finite closure spaces generated by basic,
affine, or polynomial letters.  Suppose that a finite Euler-type sum has been
reduced to
\begin{equation}
\label{eq:finite-closure-limit-prop}
S_N=
\sum_{\nu=1}^{M}c_\nu X_{\omega_\nu}(N),
\qquad X_{\omega_\nu}(N)\in \mathscr A_N.
\end{equation}
If each $X_{\omega_\nu}(N)$ has an ordinary finite limit, then
\begin{equation}
\label{eq:finite-closure-limit-prop-conclusion}
\lim_{N\to\infty}S_N=
\sum_{\nu=1}^{M}c_\nu\mathcal L(X_{\omega_\nu}),
\qquad
\mathcal L(X_{\omega}):=\lim_{N\to\infty}X_{\omega}(N).
\end{equation}
For the basic colored alphabet, the values $\mathcal L(X_\omega)$ are ordinary
multiple polylogarithms.  For uncolored basic letters they are multiple zeta
values.  For affine and polynomial letters they are, respectively, affine and
polynomial-base analogues of multiple-polylogarithmic constants.
\end{proposition}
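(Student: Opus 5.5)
The statement is essentially the algebra of limits applied to a finite identity, so I will first fix the finite identity and then pass to the limit. By hypothesis, for every $N$ in the range where the reduction holds (all sufficiently large $N$, after the nonvanishing and branch conventions have been fixed once and for all), we have $S_N=\sum_{\nu=1}^{M}c_\nu X_{\omega_\nu}(N)$. Here $M$ and the coefficients $c_\nu$ do not depend on $N$. I will stress this point, because it is what the closure theorems actually deliver. In Theorem~\ref{thm:finite-euler-sum}, Theorem~\ref{thm:affine-power-summand-closure} and Theorem~\ref{thm:poly-power-summand-closure}, the coefficients come only from quasi-shuffle expansion, from the elementary summation lemmas, and from finite root-of-unity filters. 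All of these are determined by the letters and not by the upper limit.

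Next I will apply linearity of limits. Each of the finitely many sequences $X_{\omega_\nu}(N)$ converges by assumption to $\mathcal L(X_{\omega_\nu})$. A fixed finite linear combination of convergent complex sequences converges to the same combination of the limits. So $\lim_N S_N$ exists and equals $\sum_\nu c_\nu\mathcal L(X_{\omega_\nu})$. This gives \eqref{eq:finite-closure-limit-prop-conclusion}, and no uniformity argument is needed because $M$ is finite.

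Finally I will identify the limits. For a basic word $\omega=((r_1,s_1),\ldots,(r_d,s_d))$, $\mathcal H_\omega(N)$ is the partial sum of the nested series over the region $n_1\le N$. Its limit, when it exists, is by definition \eqref{eq:section7-multiple-polylogarithm-definition} the value $\Li_{r_1,\ldots,r_d}(s_1,\ldots,s_d)$, read as the limit of truncations in the outermost index. Setting all $s_j=1$ gives $\zeta(r_1,\ldots,r_d)$. For affine and polynomial words, the symbols $\Li^{\mathrm{aff}}_\Gamma$ and $\Li^{\mathrm{pb}}_\Omega$ are defined in the table as exactly these limits, so the identification is tautological.

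The only step needing care is this identification. When one quotes an absolutely convergent multiple series for $\Li$, one should note that the outer-truncation limit agrees with the unordered sum. I would handle this by invoking the convergence criteria of the appendix: under those criteria the nested series converges absolutely, so its value equals the limit of the truncations $n_1\le N$. Otherwise, I would simply take the truncation limit as the definition, as the statement permits.
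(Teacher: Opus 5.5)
Your argument is correct and matches the paper's proof, which is simply the termwise limit of the finite identity (finitely many convergent sequences, constant coefficients) followed by the definitional identification of each limit as the corresponding ordinary, affine, or polynomial-base multiple-polylogarithmic constant. One small caution: the main criteria in Appendix~B allow conditional, Dirichlet-type convergence when $|\Lambda_k|=1$, so they do not by themselves give absolute convergence; your fallback of taking the outer-truncation limit as the definition is exactly the paper's convention, so nothing is lost.
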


\begin{proof}
The assertion is the ordinary termwise limit of the finite identity
\eqref{eq:finite-closure-limit-prop}.  The description of the value spaces is
exactly the limiting interpretation of the corresponding alphabet.
\end{proof}

For example, the basic depth-one reduction
\begin{equation}
\label{eq:section7-depth-one-finite}
\sum_{n=1}^{N}\frac{z^n}{n^q}H_n^{(r)}(s)
=
\mathcal H_{((q,z),(r,s))}(N)
+
\mathcal H_{(q+r,zs)}(N)
\end{equation}
gives
\begin{equation}
\label{eq:section7-depth-one-infinite}
\sum_{n=1}^{\infty}\frac{z^n}{n^q}H_n^{(r)}(s)
=
\Li_{q,r}(z,s)+\Li_{q+r}(zs)
\end{equation}
whenever the two limiting terms converge.  Products of harmonic sums are first
reduced by the quasi-shuffle product and then the same limiting rule is applied.

The general affine and polynomial cases are conceptually identical.  A finite
closure theorem gives
\[
S_N=\sum_\nu c_\nu\mathcal G_{\Gamma_\nu}(N)
\quad\text{or}\quad
S_N=\sum_\nu c_\nu\mathcal P_{\Omega_\nu}(N),
\]
and the infinite value, in the termwise-convergent case, is obtained by
replacing the finite sums by their corresponding affine or polynomial-base
limits.

\subsection{Divergence peeling}
\label{subsec:letter-peeling-cancellation}

The preceding subsection only covers reductions whose limiting terms converge
one by one.  In applications, the finite reduction may contain terms whose
individual limits diverge, although the original infinite sum is convergent.
A useful way to remove such artificial divergences is to work in the finite
quasi-shuffle algebra before taking the limit.  This gives a purely algebraic
``letter peeling'' procedure.

Let letters be denoted by $a=(r,s)$ and let the merged letter be
\[
a\circ b=(r+r',ss')
\quad\text{for}\quad b=(r',s').
\]
For words $u,v$, let $u*v$ denote the quasi-shuffle product.  Thus, for every
finite $N$,
\begin{equation}
\label{eq:finite-quasi-shuffle-product}
\mathcal H_u(N)\mathcal H_v(N)
=
\sum_{w\in u*v}m_w\mathcal H_w(N),
\end{equation}
where $m_w$ is the multiplicity with which the word $w$ occurs.  If $a$ is a
single letter and $\beta$ is a nonempty word, then one of the words in
$a*\beta$ is the concatenation $a\beta$.  Hence
\begin{equation}
\label{eq:finite-peeling-identity}
\mathcal H_{a\beta}(N)
=
\mathcal H_a(N)\mathcal H_\beta(N)
-
\sum_{\substack{w\in a*\beta\\w\ne a\beta}}
 m_w\mathcal H_w(N).
\end{equation}
This identity peels the first letter $a$ off the word $a\beta$.

\begin{proposition}[Finite letter-peeling criterion]
\label{prop:finite-letter-peeling-criterion}
Let
\begin{equation}
\label{eq:peeling-input-expression}
E_N=
\sum_{w}c_w\mathcal H_w(N)
\end{equation}
be a finite reduction of a convergent infinite sum.  Suppose that repeated use
of the finite identity \eqref{eq:finite-peeling-identity}, followed by
quasi-shuffle reduction of products, transforms $E_N$ into
\begin{equation}
\label{eq:peeling-output-expression}
E_N=R_N+D_N,
\end{equation}
where $R_N$ is a finite linear combination of harmonic sums with ordinary
limits and $D_N\to0$ as $N\to\infty$.  Then
\begin{equation}
\label{eq:peeling-output-limit}
\lim_{N\to\infty}E_N
=
\lim_{N\to\infty}R_N.
\end{equation}
In particular, if the peeling process rewrites $E_N$ as a finite linear
combination of convergent words after cancellation of the divergent parts, then
the infinite sum is obtained by taking the ordinary limits of those remaining
words.
\end{proposition}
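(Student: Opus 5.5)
The argument only needs two facts: every peeling step is an exact identity for each fixed $N$, and the limit of a sum of sequences that both converge is the sum of their limits.

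\emph{Step 1: $E_N$ itself converges.} By hypothesis, $E_N$ is a finite reduction of a convergent infinite sum. That is, $E_N$ equals the $N$-th partial sum of that series, by one of the finite closure theorems. Hence $\lim_{N\to\infty}E_N$ exists and equals the value of the series. I also note that this hypothesis is not actually needed: it will follow from Step 3.

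\emph{Step 2: the transformation preserves $E_N$ exactly.} Each use of \eqref{eq:finite-peeling-identity} is the finite quasi-shuffle identity \eqref{eq:finite-quasi-shuffle-product} rearranged. It holds exactly for every $N$, with no limit involved. Each subsequent quasi-shuffle reduction of a product $\mathcal H_u(N)\mathcal H_v(N)$ is also an exact finite identity. The procedure consists of finitely many such steps. By induction on the number of steps, the expression obtained after each step equals $E_N$ for every $N$. In particular the final form gives $E_N=R_N+D_N$ identically in $N$. The only point needing care is that no limits are taken in the middle of the procedure. That is why peeling is carried out entirely in the finite algebra $\mathscr H_N$.

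\emph{Step 3: take the limit.} By hypothesis, $R_N$ is a finite linear combination of harmonic sums, each with an ordinary finite limit. So Proposition~\ref{prop:limit-passage-finite-closure}, or simply termwise limits as in \eqref{eq:termwise-limit-general}, shows that $\lim_N R_N$ exists. Also $D_N\to0$. Therefore
\[
\lim_{N\to\infty}E_N=\lim_{N\to\infty}R_N+\lim_{N\to\infty}D_N=\lim_{N\to\infty}R_N,
\]
which is \eqref{eq:peeling-output-limit}.

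\emph{The ``in particular'' clause.} This is the special case in which the divergent contributions cancel identically, so $D_N=0$. The limit is then the sum of the ordinary limits of the remaining convergent words.

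I expect no real obstacle. The only subtle point is Step 2: the identities must be exact finite identities rather than regularized ones, so that the pointwise equality $E_N=R_N+D_N$ is legitimate before any limit is taken.
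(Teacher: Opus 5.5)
Your proof is correct and follows the paper's argument. The peeling and quasi-shuffle steps are exact identities at each finite $N$, so $E_N=R_N+D_N$ holds pointwise, and \eqref{eq:peeling-output-limit} follows by taking termwise limits using $D_N\to0$. One small point on Step 1: $E_N$ need not literally be the $N$-th partial sum of the series. In the paper's depth-one example it is only asymptotic to it, differing by an $o(1)$ term. As you note, though, Step 1 is not needed for the displayed limit identity.
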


\begin{proof}
All transformations are identities in the finite quasi-shuffle algebra, hence
hold for each finite $N$.  Taking $N\to\infty$ in
\eqref{eq:peeling-output-expression} gives the result, since the terms in
$R_N$ have ordinary limits and $D_N\to0$.
\end{proof}

The criterion should be read as a sufficient algebraic test.  It does not
assign values to divergent multiple polylogarithms.  The products and
cancellations are performed at finite $N$; only after the divergent part has
cancelled do we pass to the limit.

\begin{example}[A depth-one subtraction]
\label{ex:peeling-depth-one-subtraction}
Let $p>1$.  Consider
\begin{equation}
\label{eq:peeling-example-one-target}
S=
\sum_{n=1}^{\infty}\frac{H_n^{(p)}-\zeta(p)}{n}.
\end{equation}
Since $H_N^{(p)}\to\zeta(p)$ and
$(H_N^{(p)}-\zeta(p))H_N=o(1)$, this is the limit of
\begin{equation}
\label{eq:peeling-example-one-truncation}
S_N=
\sum_{n=1}^{N}\frac{H_n^{(p)}-H_N^{(p)}}{n}.
\end{equation}
The finite closure gives
\begin{equation}
\label{eq:peeling-example-one-finite}
S_N
=
\mathcal H_{(1,1),(p,1)}(N)
+
\mathcal H_{(p+1,1)}(N)
-
\mathcal H_{(1,1)}(N)\mathcal H_{(p,1)}(N).
\end{equation}
The term $\mathcal H_{(1,1),(p,1)}(N)$ and the product involving
$\mathcal H_{(1,1)}(N)$ are not separately convergent.  Peeling uses the finite
stuffle identity
\begin{equation}
\label{eq:peeling-example-one-stuffle}
\mathcal H_{(1,1)}(N)\mathcal H_{(p,1)}(N)
=
\mathcal H_{(1,1),(p,1)}(N)
+
\mathcal H_{(p,1),(1,1)}(N)
+
\mathcal H_{(p+1,1)}(N).
\end{equation}
Substitution into \eqref{eq:peeling-example-one-finite} gives the convergent
finite identity
\begin{equation}
\label{eq:peeling-example-one-reduced}
S_N=-\mathcal H_{(p,1),(1,1)}(N).
\end{equation}
Therefore
\begin{equation}
\label{eq:peeling-example-one-answer}
\sum_{n=1}^{\infty}\frac{H_n^{(p)}-\zeta(p)}{n}
=
-\zeta(p,1).
\end{equation}
For $p=2$ this gives $-\zeta(2,1)=-\zeta(3)$.
\end{example}

\begin{example}[Peeling a depth-two harmonic factor]
\label{ex:peeling-depth-two-subtraction}
Let $p>1$ and $q\in\mathbb N$.  Put
\[
\mathcal H_{p,q}(N):=
\mathcal H_{(p,1),(q,1)}(N).
\]
Then
\begin{equation}
\label{eq:peeling-example-two-target}
T=
\sum_{n=1}^{\infty}\frac{\mathcal H_{p,q}(n)-\zeta(p,q)}{n}
\end{equation}
is obtained as the limit of
\begin{equation}
\label{eq:peeling-example-two-truncation}
T_N=
\sum_{n=1}^{N}\frac{\mathcal H_{p,q}(n)-\mathcal H_{p,q}(N)}{n}.
\end{equation}
The summation over $n$ gives
\begin{equation}
\label{eq:peeling-example-two-finite}
T_N=
\mathcal H_{(1,1),(p,1),(q,1)}(N)
+
\mathcal H_{(p+1,1),(q,1)}(N)
-
\mathcal H_{(1,1)}(N)\mathcal H_{(p,1),(q,1)}(N).
\end{equation}
The product expands as
\begin{align}
\label{eq:peeling-example-two-stuffle}
\mathcal H_{(1,1)}(N)\mathcal H_{(p,1),(q,1)}(N)
={}&
\mathcal H_{(1,1),(p,1),(q,1)}(N)
+
\mathcal H_{(p,1),(1,1),(q,1)}(N)\notag\\
&+
\mathcal H_{(p,1),(q,1),(1,1)}(N)
+
\mathcal H_{(p+1,1),(q,1)}(N)
+
\mathcal H_{(p,1),(q+1,1)}(N).
\end{align}
Hence all terms with the divergent first letter cancel and
\begin{equation}
\label{eq:peeling-example-two-reduced}
T_N=
-
\mathcal H_{(p,1),(1,1),(q,1)}(N)
-
\mathcal H_{(p,1),(q,1),(1,1)}(N)
-
\mathcal H_{(p,1),(q+1,1)}(N).
\end{equation}
Passing to the limit gives
\begin{equation}
\label{eq:peeling-example-two-answer}
\sum_{n=1}^{\infty}\frac{\mathcal H_{p,q}(n)-\zeta(p,q)}{n}
=
-
\zeta(p,1,q)-\zeta(p,q,1)-\zeta(p,q+1).
\end{equation}
This example illustrates the general pattern: subtracting the limiting value of
a convergent inner word produces divergent finite pieces, but the first-letter
peeling identity moves the leading divergent letter into a product where it
cancels.
\end{example}

\subsection{Consequences and examples}
\label{subsec:infinite-sums-consequences-examples}

The following table records representative convergent infinite sums obtained
from the finite closure theorems.  Only the resulting value space is shown; no
explicit evaluation is displayed.  The examples deliberately include star sums,
periodic factors, Dirichlet characters, multiple affine harmonic numbers, and
multiple polynomial-base harmonic numbers.  We use
\begin{align*}
\mathcal Z_w&:=\operatorname{span}_{\mathbb Q}
 \{\text{ordinary multiple zeta values of weight }w\},\\
\mathcal M(\mathcal C)&:=\operatorname{span}_{\mathbb C}
 \{\Li_{\mathbf r}(\mathbf s):s_j\in\langle\mathcal C\rangle\},\\
\mathcal V_{\rm aff}(\Lambda)&:=\operatorname{span}_{\mathbb C}
 \left\{\Li^{\mathrm{aff}}_{\Gamma}:
 \begin{array}{l}
 \Gamma\text{ is a word in the merge-closed}\\
 \text{affine alphabet generated by }\Lambda
 \end{array}\right\},\\
\mathcal V_{\rm pb}(\Pi)&:=\operatorname{span}_{\mathbb C}
 \left\{\Li^{\mathrm{pb}}_{\Omega}:
 \begin{array}{l}
 \Omega\text{ is a word in the merge-closed}\\
 \text{polynomial alphabet generated by }\Pi
 \end{array}\right\}.
\end{align*}
Here \(\langle\mathcal C\rangle\) is the finite multiplicative color set
generated by \(\mathcal C\), enlarged when necessary by roots of unity coming
from residue-class filters or scaled upper limits.  We write
\(\boldsymbol\mu_m\) for the set of \(m\)-th roots of unity,
\(r_M(t)\) for the least non-negative residue of \(t\) modulo
\(M\), and \(\chi_M\) for a fixed Dirichlet character modulo \(M\).  For the
affine and polynomial examples below let
\[
\begin{gathered}
L_1=((2),1,((2,1))),\qquad
L_2=((1),-1,((3,-1))),\qquad
L_3=((3/2),1,((1,2))),\\
P_1=((1),1,(x^2+1)),
\qquad P_2=((2),-1,(x^2+x+1)),
\qquad P_3=((3/2),1,(x^2+2x+2)).
\end{gathered}
\]

\begin{center}
\scriptsize
\renewcommand{\arraystretch}{1.12}
\setlength{\tabcolsep}{5pt}
\begin{adjustbox}{max width=\textwidth}
\begin{tabular}{@{}>{\raggedright\arraybackslash}p{0.60\textwidth}>{\raggedright\arraybackslash}p{0.32\textwidth}@{}}
\toprule
\textbf{Convergent infinite sum} & \textbf{Resulting value space} \\
\midrule
\(\displaystyle
\sum_{n=1}^{\infty}
\frac{H_n^{\star,(2,1,2)}(1,1,1)\,H_n^{(3)}}{n^4}
\)
& \(\displaystyle \mathcal Z_{12}\). \\
\addlinespace
\(\displaystyle
\sum_{n=1}^{\infty}
\frac{(-1)^n H_n^{\star,(1,2)}(-1,i)\,H_n^{(3)}(1/2)}{n^{5+i/4}}
\)
& \(\displaystyle \mathcal M\bigl(\{-1,i,\tfrac12\}\bigr)\), with complex exponents. \\
\addlinespace
\(\displaystyle
\sum_{n=1}^{\infty}
\frac{r_5(3n+2)\,H_n^{\star,(2,1)}(1,-1)\,H_n^{(2)}(1/3)}{n^6}
\)
& \(\displaystyle \mathcal M\bigl(\{-1,\tfrac13\}\cup\boldsymbol\mu_5\bigr)\). \\
\addlinespace
\(\displaystyle
\sum_{n=1}^{\infty}
\frac{\chi_7(n)\,H_n^{(1,2)}(i,-1/2)\,H_n^{\star,(2,1)}(1,-1)}{n^5}
\)
& \(\displaystyle \mathcal M\bigl(\{i,-\tfrac12,-1\}\cup\boldsymbol\mu_7\bigr)\). \\
\addlinespace
\(\displaystyle
\sum_{n=1}^{\infty}
\frac{(1/2)^n H_{2n}^{\star,(1,2)}(1,-1)\,H_{3n}^{(2,1)}(i,1/3)}{n^4}
\)
& \(\displaystyle \mathcal M\bigl(\{\tfrac12,-1,i,\tfrac13\}\cup\boldsymbol\mu_6\bigr)\). \\
\addlinespace
\(\displaystyle
\sum_{n=1}^{\infty}
\frac{(1/3)^n\mathcal G_{L_1,L_2}(n)\,\mathcal G_{L_3}(n)}{n^{2+i/3}}
\)
& \(\displaystyle \mathcal V_{\rm aff}\bigl(\{((2+i/3),\tfrac13,((1,0))),L_1,L_2,L_3\}\bigr)\). \\
\addlinespace
\(\displaystyle
\sum_{n=1}^{\infty}
\frac{\chi_4(n)\,r_3(n)\,\mathcal G_{L_1}(n)\,H_n^{\star,(1,1)}(-1,1/2)}{n^5}
\)
& Affine multiple-polylogarithmic span generated by \(L_1\), \(\boldsymbol\mu_{12}\), \(-1\), and \(1/2\). \\
\addlinespace
\(\displaystyle
\sum_{n=1}^{\infty}
\frac{(2/3)^n\mathcal P_{P_1,P_2}(n)\,H_n^{\star,(1,2)}(-1,1/2)}{(n^2+1)^2}
\)
& \(\displaystyle \mathcal V_{\rm pb}\bigl(\{((2),\tfrac23,(x^2+1)),P_1,P_2,((1),-1,x),((2),\tfrac12,x)\}\bigr)\). \\
\addlinespace
\(\displaystyle
\sum_{n=1}^{\infty}
\frac{\mathcal P_{P_2}(n)\,\mathcal P_{P_3}(n)\,H_n^{(2)}(-1/3)}{(n^2+n+1)^3}
\)
& \(\displaystyle \mathcal V_{\rm pb}\bigl(\{((3),1,(x^2+x+1)),P_2,P_3,((2),-\tfrac13,x)\}\bigr)\). \\
\addlinespace
\(\displaystyle
\sum_{n_1=1}^{\infty}\sum_{n_2=1}^{n_1}
\frac{\chi_5(n_1)(1/2)^{n_2}H_{n_1}^{\star,(1,2)}(-1,1)\,\mathcal P_{P_1}(n_2)}{n_1^3n_2^2}
\)
& Mixed colored and polynomial-base multiple-polylogarithmic span generated by \(\boldsymbol\mu_5\), \(-1\), \(1/2\), and \(P_1\). \\
\bottomrule
\end{tabular}
\end{adjustbox}
\end{center}

The two additional mechanisms used for integer-affine multiple-polylogarithmic
values and for shifted-denominator cancellations are useful but somewhat
tangential to the main closure framework.  They are therefore recorded separately
in \ref{app:additional-infinite-sum-reductions}.

\section{Normal forms and further reductions}

The closure theorems above place the resulting finite sums in harmonic-sum,
multiple affine harmonic number, or multiple polynomial-base harmonic-number spaces, and place their
convergent limits in corresponding multiple-polylogarithmic spaces.  This
section records only the most useful post-processing reductions.  They are not
part of the closure proofs; they are normalization steps applied after the sums
have already been brought into the appropriate space.

\subsection{Finite sums}

For finite sums, the first simplification is to detect special multiple harmonic numbers that collapse to ordinary harmonic numbers, polynomials in $n$, or
polynomials in ordinary harmonic numbers.  For example,
\begin{equation}
\label{eq:section8-all-zero-simplification}
\mathcal H_{\underbrace{(0,1),\ldots,(0,1)}_{d}}(n)
=
\binom{n}{d}.
\end{equation}
The all-one word is the elementary symmetric polynomial in
$1,1/2,\ldots,1/n$; equivalently,
\begin{equation}
\label{eq:section8-all-one-bell}
\mathcal H_{\underbrace{(1,1),\ldots,(1,1)}_{d}}(n)
=
\frac{1}{d!}
Y_d\left(
H_n,
-1!H_n^{(2)},
2!H_n^{(3)},
\ldots,
(-1)^{d-1}(d-1)!H_n^{(d)}
\right),
\end{equation}
where $Y_d$ is the complete exponential Bell polynomial.  In depth two one also
has, for instance,
\begin{equation}
\label{eq:section8-depth-two-one-one}
\mathcal H_{((1,1),(1,1))}(n)
=
\frac12\left(H_n^2-H_n^{(2)}\right),
\end{equation}
and words containing zero powers frequently reduce to lower-depth expressions,
such as
\begin{equation}
\label{eq:section8-zero-r-special}
\mathcal H_{((0,1),(r,1))}(n)
=
nH_n^{(r)}-H_n^{(r-1)},
\qquad
\mathcal H_{((r,1),(0,1))}(n)
=
H_n^{(r-1)}-H_n^{(r)}.
\end{equation}
Star variants are treated similarly by resolving weak inequalities into strict
ones and merging equal indices.  Similar reductions for star harmonic sums and
their generalized versions are described in Ablinger's thesis and implemented in
the HarmonicSums framework \citep{AblingerThesis2012,Ablinger2014}.

The second finite normal form comes from the quasi-shuffle product.  Products of
finite harmonic sums may be rewritten as linear combinations of single finite
harmonic sums by the stuffle/quasi-shuffle algebra \citep{Hoffman1992}.  In
depth one,
\begin{equation}
\label{eq:section8-depth-one-stuffle}
\mathcal H_{(a,x)}(n)\mathcal H_{(b,y)}(n)
=
\mathcal H_{((a,x),(b,y))}(n)
+
\mathcal H_{((b,y),(a,x))}(n)
+
\mathcal H_{(a+b,xy)}(n).
\end{equation}
After fixing an order on the alphabet, the Chen--Fox--Lyndon factorization gives
a canonical factorization of words into Lyndon words \citep{ChenFoxLyndon1958};
Duval's algorithm gives an efficient construction of this factorization
\citep{Duval1983}.  The triangularity of the quasi-shuffle product then permits
recursive rewriting of non-Lyndon words in terms of products of Lyndon generators
and lower correction terms.  Thus a finite expression
\begin{equation}
\label{eq:section8-finite-linear-H-simplification}
\sum_{\alpha} c_{\alpha}\mathcal H_{\alpha}(n)
\end{equation}

can be converted to a Lyndon normal form, after which cancellations and
low-depth reductions are usually easier to see.

\begin{example}[Chen--Fox--Lyndon simplification in a finite Euler sum]
A small example illustrates the practical effect of this normal form.  Let
\[
  H_n=\mathcal H_{(1,1)}(n),
  \qquad
  A_n=A_n^{(1)}=\sum_{j=1}^{n}\frac{(-1)^{j-1}}{j}
  =-\mathcal H_{(1,-1)}(n),
\]
and consider
\begin{equation}
\label{eq:cfl-example-sum}
  S(k)=\sum_{n=1}^{k}H_n A_n^{2}.
\end{equation}
The direct finite-convolution reduction gives the following linear combination
of multiple harmonic numbers:
\begin{align}
S(k)=
&\,\mathcal H_{(3,1)}(k)
+\mathcal H_{((0,1),(3,1))}(k)
+2\mathcal H_{((1,-1),(2,-1))}(k)
+\mathcal H_{((1,1),(2,1))}(k) \notag\\
&+2\mathcal H_{((2,-1),(1,-1))}(k)
+\mathcal H_{((2,1),(1,1))}(k)
+2\mathcal H_{((0,1),(1,-1),(2,-1))}(k) \notag\\
&+\mathcal H_{((0,1),(1,1),(2,1))}(k)
+2\mathcal H_{((0,1),(2,-1),(1,-1))}(k)
+\mathcal H_{((0,1),(2,1),(1,1))}(k) \notag\\
&+2\mathcal H_{((1,-1),(1,-1),(1,1))}(k)
+2\mathcal H_{((1,-1),(1,1),(1,-1))}(k)
+2\mathcal H_{((1,1),(1,-1),(1,-1))}(k) \notag\\
&+2\mathcal H_{((0,1),(1,-1),(1,-1),(1,1))}(k)
+2\mathcal H_{((0,1),(1,-1),(1,1),(1,-1))}(k) \notag\\
&+2\mathcal H_{((0,1),(1,1),(1,-1),(1,-1))}(k).
\label{eq:cfl-example-raw-H}
\end{align}
After applying the quasi-shuffle relations and rewriting the result in
Chen--Fox--Lyndon normal form, the same sum collapses to the much smaller
boundary expression
\begin{equation}
\label{eq:cfl-example-closed-form}
  S(k)
  =\frac12\left(
    2(-1)^k A_k(H_k-1)
    +A_k^{2}\bigl(-1-2k+2(k+1)H_k\bigr)
    +H_k^{(2)}
  \right).
\end{equation}
Thus the normal form replaces a sixteen-term expression involving words of
length up to four by an expression involving only ordinary and alternating
depth-one harmonic numbers and the boundary factor $(-1)^k$.
\end{example}

\subsection{Infinite sums}

After a justified passage to a convergent limit, the simplest reduction occurs
when all colors are $1$:
\begin{equation}
\label{eq:section8-mpl-to-mzeta}
\Li_{r_1,\ldots,r_d}(1,\ldots,1)
=
\zeta(r_1,\ldots,r_d),
\end{equation}
provided the defining series is convergent.  These multiple zeta values may then
be reduced, in known weights, by shuffle, stuffle, duality, sum relations, and
data-mine bases; for example,
\begin{equation}
\label{eq:section8-mzeta-examples}
\zeta(2,1)=\zeta(3),
\qquad
\zeta(3,1)=\frac14\zeta(4).
\end{equation}
For systematic reduction data and bases for multiple zeta values, we cite the
multiple-zeta data mine \citep{BluemleinBroadhurstVermaseren2010}.

If the colors are roots of unity, the limiting values are level-$M$ multiple
polylogarithmic constants.  For example, with
$\mu_M=e^{2\pi i/M}$ and
$s_j\in\{1,\mu_M,\ldots,\mu_M^{M-1}\}$, one obtains constants of the form
\begin{equation}
\label{eq:section8-level-M-mpl}
\Li_{r_1,\ldots,r_d}(s_1,\ldots,s_d).
\end{equation}
These may be further reduced only when the corresponding level, weight, and
basis data are available.  We keep this point structural here; for algorithms and relations for multiple
polylogarithms at algebraic arguments, including root-of-unity specializations,
we refer to the work of K.~C.~Au \citep{Au2022}.

A useful bridge to classical functions is obtained by rewriting multiple
polylogarithms as generalized polylogarithms.  With
\begin{equation}
\label{eq:section8-G-definition}
G(a_1,\ldots,a_m;z)
=
\int_0^z\frac{dt}{t-a_1}G(a_2,\ldots,a_m;t),
\qquad
G(;z)=1,
\end{equation}
one has the standard conversion
\begin{equation}
\label{eq:section8-Li-to-G}
\Li_{r_1,\ldots,r_d}(z_1,\ldots,z_d)
=
(-1)^d
G\left(
\underbrace{0,\ldots,0}_{r_1-1},\frac1{z_1},
\underbrace{0,\ldots,0}_{r_2-1},\frac1{z_1z_2},
\ldots,
\underbrace{0,\ldots,0}_{r_d-1},\frac1{z_1\cdots z_d};1
\right),
\end{equation}
away from the usual singular cases and after compatible branch choices have
been fixed.  For low weights and suitable alphabets, generalized
polylogarithms can often be reduced to logarithms, classical polylogarithms,
zeta values, and products thereof.  In particular, GPLs through weight four
can be reduced to logarithms, classical polylogarithms $\Li_n$,
and $\Li_{2,2}$, as in the reduction framework of
\citet{FrellesvigTommasiniWever2016}.
For related algorithms and symbol-based reductions, see also
\citet{VollingaWeinzierl2005} and \citet{DuhrGanglRhodes2012}.

For letters in the harmonic-polylogarithm alphabet $\{-1,0,1\}$, the above
$G$-functions reduce directly to harmonic polylogarithms.  With the usual HPL
kernels
\[
 f_0(t)=\frac1t,
 \qquad
 f_1(t)=\frac1{1-t},
 \qquad
 f_{-1}(t)=\frac1{1+t},
\]
the conversion is
\begin{equation}
\label{eq:section8-G-to-HPL}
G(a_1,\ldots,a_m;z)
=
(-1)^{\#\{j:a_j=1\}}
H_{a_1,\ldots,a_m}(z),
\qquad a_j\in\{-1,0,1\}.
\end{equation}
Thus a root-of-unity or special-point multiple-polylogarithm reduction may pass through the chain
\[
\begin{aligned}
&\text{multiple polylogarithms}
  \longrightarrow G\text{-functions}
  \longrightarrow \operatorname{HPLs}\\
&\hspace{2.5em}\longrightarrow
  \text{multiple-zeta and harmonic-polylogarithm constants},
\end{aligned}
\]
with the final reductions again controlled by known HPL and multiple-zeta data,
including the data mine \citep{RemiddiVermaseren2000,BluemleinBroadhurstVermaseren2010}.

\section{Limitations and future directions}

\paragraph{Infinite limits}
The infinite-limit part of the present framework is still incomplete.  At
present, it applies mainly when the resulting multiple polylogarithms are
convergent, or become convergent after peeling off divergent first letters.  A
full theory is still needed for limits such as
\[
  \lim_{k\to\infty}\sum_{n=1}^{k} c_n\,\mathcal H_{\alpha}(n),
\]
especially when several shifted or scaled upper limits occur:
\[
  \mathcal H_{\alpha_1}(a_1n+b_1),\ldots,
  \mathcal H_{\alpha_r}(a_rn+b_r).
\]
Such a theory would have to combine synchronization, asymptotic expansion,
regularization, and cancellation of divergent parts.  Vermaseren's work on
harmonic sums and Mellin transforms, and the HarmonicSums framework of Ablinger
and collaborators, provide useful precedents
\citep{Vermaseren1999,Ablinger2014,AblingerBluemleinRaabSchneider2014}.

\paragraph{Reduction of constants}
A second challenge is reduction.  Infinite limits may produce multiple zeta
values, multiple polylogarithms, multiple affine zeta/polylog constants, and
multiple polynomial-base zeta/polylog constants, but the resulting expressions need not
be minimal.  For classical multiple zeta values of positive integral weight,
Bl\"umlein, Broadhurst, and Vermaseren give explicit proven reductions through
weight $22$, and extend the computational evidence, using modular arithmetic,
to weight $30$ \citep{BluemleinBroadhurstVermaseren2010}.  For multiple
polylogarithms with positive integral weights and root-of-unity colors, Au gives
a systematic iterated-integral method for converting many such values into
colored multiple zeta values and then expanding them in an explicit basis.  In
particular, his implementation favors bases built from elementary constants such
as ordinary logarithms, zeta values, Dirichlet $L$-values, and classical
polylogarithms like $\Li_n(\alpha)$, together with higher-depth constants when
needed \citep{Au2022}.  However, comparable reduction theory for complex
powers, complex colors, affine letters, and polynomial-base letters is still
missing.  Such reductions would make the final closed forms substantially more
useful.

\paragraph{Binomial and hypergeometric weights}
A third limitation is that binomial, inverse-binomial, and more general
hypergeometric weights are not yet included.  Thus sums involving factors such
as
\[
  \binom{2n}{n},\qquad
  \binom{3n}{n},\qquad
  \binom{2n}{n}^{-1}
\]
or, more generally,
\[
  z^n\frac{(a_1)_n\cdots(a_r)_n}{(b_1)_n\cdots(b_s)_n}\,
  \mathcal H_{\alpha}(n)
\]
lie outside the present framework.  Such weights occur naturally in
hypergeometric series, binomial Euler sums, and Feynman-integral expansions.
Weinzierl developed algorithms for binomial and inverse-binomial sums as an
extension of nested-sum techniques \citep{Weinzierl2004}.  More directly,
Ablinger studied infinite nested sums involving Pochhammer symbols and reduced
them, via generating series and root-valued or cyclotomic iterated integrals, to
constants such as powers of $\pi$, $\log 2$, and zeta values
\citep{Ablinger2019}.  Incorporating such weights would require enlarging the
alphabet from colored harmonic letters to hypergeometric, factorial-ratio, or
root-valued letters.

\paragraph{Multi-lattice zeta kernels}
Another useful extension would be to include Witten, Barnes, Shintani, and
conical-zeta-type kernels.  For example, one would like to treat sums such as
\[
  \sum_{i,j,k\ge 1}
  \frac{\mathcal H_{\alpha}(ijk)}{(2i+3j+4k)^4},
\]
or variants with several linear forms in the denominator.  This would move the
framework from one-dimensional Euler-type convolution sums toward multi-lattice
sums with harmonic-type factors.  Witten and Mordell--Tornheim type zeta
functions already show how positive linear forms arise in multiple zeta-type
problems \citep{Matsumoto2006,ZhaoZhou2011}; conical zeta values provide a
geometric generalization of multiple zeta values through sums over cones
\citep{GuoPaychaZhang2014}.  Adding harmonic-type factors such as
$\mathcal H_{\alpha}(ijk)$ would likely require diagonal, partition, or
cone-decomposition methods beyond the present quasi-shuffle framework.

\paragraph{$q$-analogues}
Finally, one can ask for a $q$-deformation of the theory.  One may replace
ordinary powers and harmonic factors by $q$-integer analogues, for instance
\[
  [n]_{\mathfrak q}=\frac{1-\mathfrak q^n}{1-\mathfrak q},
  \qquad
  \sum_{n\ge 1} z^n [n]_{\mathfrak q}^{\lambda}\,\mathcal H^{(\mathfrak q)}_{\alpha}(n).
\]
The infinite limits would then be expected to involve $q$-multiple zeta values,
$q$-multiple polylogarithms, and $q$-Euler sums.  Bradley introduced multiple
$q$-zeta values and showed that they satisfy $q$-stuffle and $q$-shuffle type
products \citep{Bradley2005}.  Hessami Pilehrood, Hessami Pilehrood, and Zhao
developed $q$-analogues of families of multiple harmonic-number and multiple
zeta-star identities, recovering the classical identities as $q\to 1$
\citep{HessamiPilehroodHessamiPilehroodZhao2016}.  Extending the present
framework in this direction would require a $q$-quasi-shuffle algebra,
$q$-limit analysis, and compatible reduction rules for the resulting
$q$-constants.

\section{Conclusion}

This paper has developed an alphabetic convolution framework for Euler-type
sums.  Finite nested sums are treated as word sums over multiplicative
alphabets of one-variable letters.  When the alphabet is closed under
pointwise multiplication, the word sums are stable under the stuffle, or
quasi-shuffle, product.  Products, convolutions, shifts, scalings, and nested
domains can therefore be converted into finite linear combinations of
structured word sums.

The framework was carried out for three elementary monoidal alphabets.  The
colored harmonic alphabet covers ordinary, alternating, colored, and finite
multiple harmonic numbers.  The affine alphabet covers shifted and rationally
shifted denominators, residue-class filters, level constructions, truncated
Hurwitz-type sums, and truncated Lerch-type sums.  The polynomial-base alphabet
covers polynomial denominator families, including finite Mathieu-type,
one-dimensional Epstein--Hurwitz-type, polynomial zeta, and polynomial-base
polylogarithmic sums.

The same closure mechanism handles scaled indices and nested summation domains.
Scaling is separated from the summand letters, while nested domains are treated
by repeated one-variable convolution with diagonal and boundary terms.  Passing
to convergent infinite limits gives the corresponding constants; in borderline
cases, divergence peeling and cancellation identities isolate the convergent
parts.

Thus the construction gives a systematic framework for studying a large class
of harmonic-sum extensions.  It supports closed-form evaluations by first
placing a sum in its natural finite alphabetic space, and then applying
Lyndon-basis normalization, multiple-zeta reductions, root-of-unity reductions,
affine specializations, or polynomial factorizations inside that space.  In the
same way, it provides a common language for reduction problems, structural
identities, and the special functions generated by these sums.  Open problems
include regularized limiting theories, larger monoidal alphabets such as
hypergeometric or recurrence-defined letters, and efficient canonical forms for
these larger spaces.

\appendix
\addtocontents{toc}{\protect\setcounter{tocdepth}{1}}
\renewcommand{\thesubsection}{\Alph{section}.\arabic{subsection}}
\renewcommand{\thetheorem}{\Alph{section}.\arabic{theorem}}

\section{Further monoidal alphabets}
\label{app:further-monoidal-alphabets}

The construction in the main text only requires a product-closed alphabet of
one-variable scalar sequences.  The
following two examples illustrate how this mechanism extends beyond the three
principal alphabets used in the main text.  In each example we first give the
finite superfunction, then record two simple evaluations as values of that
superfunction.  Further analytic reduction is a separate step and depends on the
larger class of constants and iterated integrals generated by the chosen
alphabet.

\subsection{Central-binomial letters}
\label{app:central-binomial-letters}

Let
\[
C_n=\binom{2n}{n}.
\]
For parameters \(r,s,\beta\in\mathbb C\), and with a fixed branch for
\(C_n^\beta\), define
\begin{equation}
\label{eq:central-binomial-letter}
\phi_{r,s,\beta}(n)=s^n C_n^{\beta} n^{-r}.
\end{equation}
These letters are closed under pointwise multiplication, since
\begin{equation}
\label{eq:central-binomial-letter-product}
\phi_{r,s,\beta}\phi_{r',s',\beta'}
=
\phi_{r+r',ss',\beta+\beta'}.
\end{equation}
Thus coincident indices in the stuffle product are resolved by the same
collision rule as before, with the extra exponent \(\beta\) added on collision.
The associated finite nested superfunction is
\begin{equation}
\label{eq:central-binomial-superfunction}
\mathfrak B_N\bigl((r_1,s_1,\beta_1),\ldots,(r_d,s_d,\beta_d)\bigr)
=
\sum_{N\ge n_1>\cdots>n_d\ge1}
\prod_{j=1}^{d}
s_j^{n_j}\binom{2n_j}{n_j}^{\beta_j}n_j^{-r_j},
\qquad
\mathfrak B_N(\emptyset)=1.
\end{equation}
Here \(\beta=1\) gives central-binomial weights, \(\beta=-1\) gives
inverse-binomial weights, and mixed binomial powers are handled without changing
the formal word algebra.

Two elementary depth-one limits are most naturally written first as
superfunction values:
\begin{align}
\lim_{N\to\infty}\mathfrak B_N\bigl((1,\tfrac14,1)\bigr)
&=
\sum_{n=1}^{\infty}\binom{2n}{n}\frac{1}{4^n n}
=
2\log 2,
\label{eq:central-binomial-example-one}\\[4pt]
\lim_{N\to\infty}\mathfrak B_N\bigl((2,\tfrac14,1)\bigr)
&=
\sum_{n=1}^{\infty}\binom{2n}{n}\frac{1}{4^n n^2}
=
\zeta(2)-2\log^2 2.
\label{eq:central-binomial-example-two}
\end{align}
These formulas show only the first reduction step in a very simple case.  For
more general binomially weighted nested sums, the analytic reduction commonly
leads to iterated integrals over root-valued alphabets and to the binomial and
inverse-binomial sum technologies developed in the literature
\citep{Weinzierl2004,Ablinger2014,AblingerBluemleinRaabSchneider2014,Ablinger2019}.

\subsection{Fractional powers of harmonic-number bases}
\label{app:fractional-harmonic-power-letters}

Let
\[
H_n^{(p)}=\sum_{k=1}^{n}\frac{1}{k^p},
\qquad
H_0^{(p)}=0,
\]
and fix a finite list of orders \(p_1,\ldots,p_m\).  For
\(r,s\in\mathbb C\) and exponent vectors
\(\mathbf q=(q_1,\ldots,q_m)\),
\(\mathbf u=(u_1,\ldots,u_m)\), define
\begin{equation}
\label{eq:fractional-harmonic-letter}
\psi_{r,s,\mathbf q,\mathbf u}(n)
=
s^n n^{-r}
\prod_{j=1}^{m}\bigl(H_n^{(p_j)}\bigr)^{q_j}
\prod_{j=1}^{m}\bigl(H_{n-1}^{(p_j)}\bigr)^{u_j},
\end{equation}
where the relevant branches are fixed on the summation range.  For the
examples below all fractional powers are taken on the positive real branch, and
\(0^a=0\) for the positive exponents that occur at \(n=1\).  The product rule is
again monoidal:
\begin{equation}
\label{eq:fractional-harmonic-letter-product}
\psi_{r,s,\mathbf q,\mathbf u}
\psi_{r',s',\mathbf q',\mathbf u'}
=
\psi_{r+r',ss',\mathbf q+\mathbf q',\mathbf u+\mathbf u'}.
\end{equation}
The corresponding finite nested superfunction is
\begin{equation}
\label{eq:fractional-harmonic-superfunction}
\mathfrak F_N\bigl((r_1,s_1,\mathbf q_1,\mathbf u_1),\ldots,
(r_d,s_d,\mathbf q_d,\mathbf u_d)\bigr)
=
\sum_{N\ge n_1>\cdots>n_d\ge1}
\prod_{i=1}^{d}\psi_{r_i,s_i,\mathbf q_i,\mathbf u_i}(n_i),
\qquad
\mathfrak F_N(\emptyset)=1.
\end{equation}

For \(p_1=2\), the following finite identity is a difference of two values of
this same superfunction:
\begin{align}
\mathfrak F_N\bigl((0,1,(\tfrac12),(0))\bigr)
-
\mathfrak F_N\bigl((0,1,(0),(\tfrac12))\bigr)
&=
\sum_{n=1}^{N}\left(\bigl(H_n^{(2)}\bigr)^{1/2}
-
\bigl(H_{n-1}^{(2)}\bigr)^{1/2}\right)
\notag\\
&=
\bigl(H_N^{(2)}\bigr)^{1/2}.
\label{eq:fractional-harmonic-finite-one}
\end{align}
Taking \(N\to\infty\) gives the closed-form superfunction limit
\begin{equation}
\label{eq:fractional-harmonic-example-one}
\lim_{N\to\infty}
\left[
\mathfrak F_N\bigl((0,1,(\tfrac12),(0))\bigr)
-
\mathfrak F_N\bigl((0,1,(0),(\tfrac12))\bigr)
\right]
=
\zeta(2)^{1/2}
=
\frac{\pi}{\sqrt6}.
\end{equation}

Similarly, for \((p_1,p_2)=(2,3)\),
\begin{align}
&\mathfrak F_N\bigl((0,1,(\tfrac12,\tfrac13),(0,0))\bigr)
-
\mathfrak F_N\bigl((0,1,(0,0),(\tfrac12,\tfrac13))\bigr)
\notag\\
&\qquad =
\sum_{n=1}^{N}\left(
\bigl(H_n^{(2)}\bigr)^{1/2}\bigl(H_n^{(3)}\bigr)^{1/3}
-
\bigl(H_{n-1}^{(2)}\bigr)^{1/2}\bigl(H_{n-1}^{(3)}\bigr)^{1/3}
\right)
\notag\\
&\qquad =
\bigl(H_N^{(2)}\bigr)^{1/2}\bigl(H_N^{(3)}\bigr)^{1/3}.
\label{eq:fractional-harmonic-finite-two}
\end{align}
Hence
\begin{equation}
\label{eq:fractional-harmonic-example-two}
\lim_{N\to\infty}
\left[
\mathfrak F_N\bigl((0,1,(\tfrac12,\tfrac13),(0,0))\bigr)
-
\mathfrak F_N\bigl((0,1,(0,0),(\tfrac12,\tfrac13))\bigr)
\right]
=
\zeta(2)^{1/2}\zeta(3)^{1/3}
=
\frac{\pi}{\sqrt6}\,\zeta(3)^{1/3}.
\end{equation}

The two displayed identities are deliberately elementary telescoping examples.
They show that products of fractional powers of harmonic-number bases can be
placed inside the same finite monoidal superfunction framework.  When the
exponents are nonnegative integers, the usual stuffle expansion reduces products
of harmonic numbers to multiple harmonic numbers, and many convergent Euler sums
then reduce to multiple zeta values or their alternating analogues
\citep{FlajoletSalvy1998,Xu2017,XuWang2020,XuWang2022}.  For arbitrary fractional
powers, no such general reduction is implied; the monoidal alphabet gives the
closed formal nested-sum space, while simplification to known constants or
functions remains a separate analytic problem.

\section{Convergence criteria for multiple polylogarithmic variants}
\label{app:convergence-mpl-variants}

We record convergence tests for the basic, affine, and polynomial-base multiple
polylogarithms used in the paper.  The purpose of this appendix is not to discuss
all possible borderline or regularized cases, but to give a simple and useful
sufficient criterion for the series that occur in the main text.

Throughout, let
\[
\Lambda_k=\lambda_1\lambda_2\cdots \lambda_k,
\qquad 1\leq k\leq d,
\]
and let \(\mathbf 1_{\mathcal P}\) denote \(1\) or \(0\) according as the
condition \(\mathcal P\) is true or false.

\subsection{The basic case}

Consider the ordinary multiple polylogarithm
\[
\Li_{s_1,\ldots,s_d}(\lambda_1,\ldots,\lambda_d)
=
\sum_{n_1>\cdots>n_d\geq 1}
\frac{\lambda_1^{n_1}\cdots \lambda_d^{n_d}}
     {n_1^{s_1}\cdots n_d^{s_d}} .
\]
A standard convergence criterion is the following.  The series converges if,
for every \(1\leq k\leq d\),
\[
|\Lambda_k|<1
\]
or
\[
|\Lambda_k|=1
\quad\text{and}\quad
\operatorname{Re}(s_1+\cdots+s_k)
>
\sum_{\ell=1}^k \mathbf 1_{\Lambda_\ell=1}.
\]
Equivalently, at the \(k\)-th boundary one compares the real part of the partial
weight
\[
s_1+\cdots+s_k
\]
with the number of non-oscillatory partial products \(\Lambda_\ell=1\) among
\(\Lambda_1,\ldots,\Lambda_k\).

This criterion recovers the usual multiple-zeta-value condition.  Indeed, if
\(\lambda_1=\cdots=\lambda_d=1\), then \(\Lambda_\ell=1\) for every \(\ell\),
and the condition becomes
\[
\operatorname{Re}(s_1+\cdots+s_k)>k,
\qquad 1\leq k\leq d.
\]

\subsection{The affine case}

Let
\[
L_j(n)=\alpha_j n+\beta_j,
\qquad \alpha_j\neq 0,
\]
and assume that \(L_j(n)\neq 0\) for all positive integers \(n\).  Consider the
affine multiple polylogarithm
\[
\Li^{\mathrm{aff}}_{\mathbf s}
(\boldsymbol\lambda;\boldsymbol\alpha,\boldsymbol\beta)
=
\sum_{n_1>\cdots>n_d\geq 1}
\prod_{j=1}^d
\frac{\lambda_j^{n_j}}
     {(\alpha_j n_j+\beta_j)^{s_j}} .
\]
A sufficient condition for convergence is the same as in the ordinary case:
for every \(1\leq k\leq d\),
\[
|\Lambda_k|<1
\]
or
\[
|\Lambda_k|=1
\quad\text{and}\quad
\operatorname{Re}(s_1+\cdots+s_k)
>
\sum_{\ell=1}^k \mathbf 1_{\Lambda_\ell=1}.
\]
Thus affine shifts do not change the tail-convergence condition.  They affect
the finite initial values of the summand, and may introduce excluded singular
values if some \(\alpha_j n+\beta_j\) vanishes, but they do not change the
degree-one decay at infinity.

\subsection{The polynomial-base case}

Now let \(P_j(n)\in\mathbb C[n]\) be nonzero polynomials of positive degrees
\[
\nu_j=\deg P_j\geq 1,
\]
and assume that
\[
P_j(n)\neq 0
\qquad\text{for all } n\geq 1 .
\]
Consider the polynomial-base multiple polylogarithm
\[
\Li^{\mathrm{pb}}_{\mathbf s}
(\boldsymbol\lambda;\mathbf P)
=
\sum_{n_1>\cdots>n_d\geq 1}
\prod_{j=1}^d
\frac{\lambda_j^{n_j}}{P_j(n_j)^{s_j}} .
\]
A sufficient condition for convergence is that, for every \(1\leq k\leq d\),
\[
|\Lambda_k|<1
\]
or
\[
|\Lambda_k|=1
\quad\text{and}\quad
\sum_{j=1}^k \nu_j\,\operatorname{Re}(s_j)
>
\sum_{\ell=1}^k \mathbf 1_{\Lambda_\ell=1}.
\]
Equivalently, the ordinary convergence test is applied after replacing each
weight \(s_j\) by the effective weight
\[
\nu_j s_j=(\deg P_j)s_j .
\]
The affine case is the special case \(\nu_j=1\) for all \(j\).

\subsection{Absolute convergence}

The preceding tests allow conditional convergence coming from oscillation of the
partial products \(\Lambda_k\).  A stronger but sometimes more convenient absolute
convergence test is obtained by replacing \(\mathbf 1_{\Lambda_\ell=1}\) by
\(\mathbf 1_{|\Lambda_\ell|=1}\).  Thus, in the polynomial-base case, absolute
convergence is guaranteed if, for every \(1\leq k\leq d\),
\[
|\Lambda_k|<1
\]
or
\[
|\Lambda_k|=1
\quad\text{and}\quad
\sum_{j=1}^k \nu_j\,\operatorname{Re}(s_j)
>
\sum_{\ell=1}^k \mathbf 1_{|\Lambda_\ell|=1}.
\]
The corresponding affine and ordinary absolute convergence tests are obtained
by setting \(\nu_j=1\).

\subsection{Proof}

We first recall the standard proof mechanism in the ordinary case.  Put
\[
n_j=m_j+m_{j+1}+\cdots+m_d,
\qquad 1\leq j\leq d,
\]
where \(m_1,\ldots,m_d\geq 1\).  Then
\[
\lambda_1^{n_1}\cdots \lambda_d^{n_d}
=
\Lambda_1^{m_1}\Lambda_2^{m_2}\cdots \Lambda_d^{m_d}.
\]
Thus the summation directions are controlled by the partial products
\(\Lambda_k\).  If \(|\Lambda_k|<1\), the \(m_k\)-direction has exponential
decay.  If \(|\Lambda_k|=1\) but \(\Lambda_k\neq 1\), the corresponding
geometric partial sums are bounded, and one obtains convergence by Dirichlet
summation, provided the relevant power factor tends to zero.  If \(\Lambda_k=1\),
there is no oscillation in that direction, and one loses one full power of
summability.

For the first \(k\) directions, the total polynomial decay is governed by
\[
\operatorname{Re}(s_1+\cdots+s_k).
\]
The number of non-oscillatory unit directions among the first \(k\) variables is
\[
\sum_{\ell=1}^k \mathbf 1_{\Lambda_\ell=1}.
\]
The ordinary criterion follows by applying the multidimensional Dirichlet test,
or equivalently by repeated Abel summation in the variables
\(m_1,\ldots,m_d\).

For the affine case, we have, as \(n\to\infty\),
\[
(\alpha_j n+\beta_j)^{-s_j}
=
\alpha_j^{-s_j} n^{-s_j}\left(1+O\left(\frac1n\right)\right).
\]
Thus each affine denominator has the same tail order as \(n^{s_j}\).  Replacing
\(n_j^{-s_j}\) by \((\alpha_j n_j+\beta_j)^{-s_j}\) therefore changes the
ordinary summand only by lower-order terms.  These lower-order terms are handled
by the same Dirichlet summation argument, with strictly stronger decay.  Hence
the ordinary criterion remains a sufficient convergence criterion for the affine
series.

For the polynomial-base case, write
\[
P_j(n)=c_j n^{\nu_j}\left(1+O\left(\frac1n\right)\right),
\qquad c_j\neq 0,
\qquad \nu_j=\deg P_j .
\]
Therefore
\[
P_j(n)^{-s_j}
=
c_j^{-s_j} n^{-\nu_j s_j}
\left(1+O\left(\frac1n\right)\right).
\]
Consequently, the polynomial denominator contributes the same leading decay as
\(n^{\nu_j s_j}\).  Applying the ordinary convergence criterion to the effective
weights
\[
s_j\longmapsto \nu_j s_j
\]
gives the stated sufficient condition:
\[
\sum_{j=1}^k \nu_j\,\operatorname{Re}(s_j)
>
\sum_{\ell=1}^k \mathbf 1_{\Lambda_\ell=1}
\]
whenever \(|\Lambda_k|=1\).  The lower-order terms in the asymptotic expansion
of \(P_j(n)^{-s_j}\) again have stronger decay and are covered by the same
Abel-summation argument.  This proves the polynomial-base sufficient convergence
test.

\section{Additional infinite-sum reductions}
\label{app:additional-infinite-sum-reductions}

This appendix collects two technical refinements for infinite sums.  They are
kept separate from Section~\ref{sec:infinite-sums} because they are evaluation
mechanisms for special limiting situations rather than part of the main finite
alphabetic closure theory.  Bailey and McPhedran recently gave general formulas
for a large class of Euler sums built from ordinary harmonic numbers
\citep{BaileyMcPhedran2026}; the reductions below are in a related spirit, but
are formulated for colored multiple harmonic numbers rather than only depth-one
harmonic-number factors.

\subsection{Integer-affine multiple-polylogarithm values}
\label{subsec:integer-affine-denominators-direct-mpl-values}

The affine limiting constants described in Subsection~\ref{subsec:finite-closures-limiting-value-spaces}
are the natural closure space for general affine letters.  There is nevertheless
an important integer-affine subcase in which no new affine constants are needed.
When the denominator is an integral affine factor raised to a positive integer
power, a beta-kernel integral converts the sum to an iterated-integral
calculation.  The final constants are ordinary multiple polylogarithm values at
complex arguments; in the algebraic-parameter subcase, these arguments are
algebraic.

\begin{theorem}[Integer-affine denominator method]
\label{thm:integer-affine-denominator-mpl}
Let $p,a,b,q\in\mathbb N$, let $z\in\mathbb C$, and let
\[
 A_N=\prod_{j=1}^{m}\mathcal H_{\alpha_j}(N)^{e_j},
 \qquad e_j\in\mathbb N_0,
\]
where the colors occurring in the words $\alpha_j$ are complex and all relevant
prefix products are nonzero.  If the series converges, then
\begin{equation}
\label{eq:integer-affine-denominator-target}
 \sum_{n=1}^{\infty}\frac{z^n A_{pn}}{(an+b)^q}
\end{equation}
is a finite linear combination of multiple polylogarithm values at complex
arguments.  If the colors and $z$ are algebraic, then the resulting arguments
and coefficients are algebraic after adjoining the roots introduced by the
construction.
\end{theorem}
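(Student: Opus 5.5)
The plan is to convert the affine denominator into an integral that reproduces an ordinary (non-affine) colored Euler sum, and then apply the finite closure results already proved. First, by Theorem~\ref{thm:scale-unification}, more precisely its scale-up identity~\eqref{eq:scale-up-identity} combined with the quasi-shuffle product, write
\[
A_{pn}=\sum_\gamma C_\gamma\mathcal H_\gamma(pn)
\]
as a finite linear combination of single colored words with upper limit $pn$. Since the series converges, we may then use the representation
\[
\frac{1}{(an+b)^q}=\frac{1}{(q-1)!}\int_0^1 x^{an+b-1}\bigl(-\log x\bigr)^{q-1}\,dx .
\]
This turns the target into
\[
\frac1{(q-1)!}\int_0^1 x^{b-1}(-\log x)^{q-1}\sum_{n\ge1}(zx^a)^n\mathcal H_\gamma(pn)\,dx .
\]
The interchange of sum and integral is justified first for $|z|<1$ by absolute convergence. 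The general convergent case is then recovered by Abel's theorem, letting $z$ approach the boundary radially; this continuity argument is one place where care is needed.

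Next we evaluate the inner generating function. Choose $\rho$ with $\rho^p=zx^a$ and apply the divisibility filter $\mathbf 1_{p\mid Q}=\frac1p\sum_{c}\zeta_p^{cQ}$, exactly as in~\eqref{eq:scaled-elementary-summation-formula} with $q=0$. By Lemma~\ref{lem:elementary-summation}, the inner sum becomes a finite combination of multiple polylogarithms of the form
\[
\Li_{0,\gamma}(\rho\zeta_p^c,\dots)\quad\text{and}\quad \Li_{\lambda\circ c_1,\gamma'}(\dots),
\]
whose first argument is $\rho\zeta_p^c$, a root of $x^a$ times constants. Weight-zero leading letters cause no trouble: they are rational functions of $\rho$, since $\Li_{0,\dots}$ equals a prefactor $\frac{t}{1-t}$ times lower-depth objects. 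So the integrand is $x^{b-1}(-\log x)^{q-1}$ times a finite combination of multiple polylogarithms in the variable $u=x^{a/p}$ with constant colors.

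Then we substitute $u=x^{a/p}$, noting that $(-\log x)^{q-1}$ is a power of $\log u$ and $x^{b-1}dx$ is a power of $u$ times $du$. Each resulting term has the form
\[
\int_0^1 u^{k}\log^{j}u\;\Li_{\mathbf r}(\kappa_1 u,\kappa_2,\dots)\,du,
\]
with $k\in\mathbb Z_{\ge0}$ up to a rational shift. We write this via~\eqref{eq:section8-Li-to-G} as a $G$-function with letters $0$ and $1/(\kappa\cdots)$. The monomial $u^k$ is removed by integrating by parts, or equivalently by partial fractions $u^k/(u-c)=\text{polynomial}+c^k/(u-c)$. The powers of $\log u$ are absorbed as extra $0$ letters via the shuffle product with $G(0,\dots,0;u)$. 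The end result is a finite combination of $G(\dots;1)$, which are multiple polylogarithms at complex (algebraic, if the data are algebraic) arguments by the inverse of~\eqref{eq:section8-Li-to-G}.

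The main obstacle is the bookkeeping needed to keep everything finite and free of spurious divergences. Fractional exponents $u^{b\,p/a}$ appear when $a\nmid bp$, and these must be removed by a further substitution $u=v^{a}$. That substitution introduces the additional roots of unity mentioned in the statement as ``roots introduced by the construction.'' We also need endpoint singularities of $G$ at $u=1$ to cancel, which we would handle using shuffle regularization and the convergence hypothesis.
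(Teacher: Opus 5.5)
This is essentially the paper's proof. Your two substitutions $u=x^{a/p}$ and $u=v^{a}$ turn your kernel $x^{an+b-1}$ into the paper's kernel $x^{p(an+b)-1}$ (your $v$ is the paper's $x$). From there the remaining steps run exactly as in the paper: the root-of-unity filter applied to $\sum_N t^N\mathcal H_\gamma(N)$, which is a rational factor times a polylogarithm; adjoining $a$-th roots (the paper's endpoint-power expansion); the shuffle with powers of $\log$; and integration to $G(\dots;1)$. One small correction: the absolute convergence that justifies the interchange holds inside the disk of convergence of the series in $z$, which need not contain $|z|<1$ when some colors have modulus greater than $1$. Also, the scale-up identity is unnecessary, since all factors of $A_{pn}$ already share the upper limit $pn$.
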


\begin{proof}
By the quasi-shuffle product, $A_N$ is a finite linear combination of single
colored multiple harmonic numbers $\mathcal H_{\alpha}(N)$.  Hence it is enough to
consider one such summand.  The denominator is represented by the beta kernel
\begin{equation}
\label{eq:integer-affine-beta-kernel}
 \frac{1}{(an+b)^q}
 =
 \frac{p^q}{(q-1)!}
 \int_0^1 x^{p(an+b)-1}(-\log x)^{q-1}\,dx.
\end{equation}
After interchanging summation and integration in the convergence range, the
inner sum has the form
\[
 \sum_{n\ge1}(\rho x^a)^{pn}\mathcal H_{\alpha}(pn),
 \qquad \rho^p=z.
\]
The condition that the upper limit is a multiple of $p$ is imposed by the
root-of-unity filter
\begin{equation}
\label{eq:integer-affine-root-filter}
 \sum_{n\ge1}(\rho x^a)^{pn}\mathcal H_{\alpha}(pn)
 =
 \frac1p\sum_{\ell=0}^{p-1}
 F_{\alpha}(\omega_p^\ell\rho x^a),
 \qquad
 \omega_p=e^{2\pi i/p},
\end{equation}
where
\[
 F_{\alpha}(t)=\sum_{N\ge1}t^N\mathcal H_{\alpha}(N)
\]
is the ordinary generating function of the finite harmonic sums.  This
generating function is a rational factor times a generalized polylogarithm
$G(W_\alpha;t)$, where the word $W_\alpha$ is obtained from the powers and
prefix products of the colors in $\alpha$.

Thus the original sum becomes a finite linear combination of integrals whose
integrands are products of powers of $\log x$, rational factors of the form
$(1-\xi x^a)^{-1}$, and generalized polylogarithms with endpoint $x^a$, with
complex letters.  The endpoint-power expansion rewrites the generalized
polylogarithms with endpoint $x^a$ as generalized polylogarithms with endpoint
$x$ and complex letters.  The factors $(1-\xi x^a)^{-1}$ decompose into simple
GPL letters after adjoining the $a$th roots of $\xi^{-1}$, and powers of
$\log x$ are GPLs with repeated zero letters.  Products are reduced by the
shuffle product, and the integration from $0$ to $1$ appends one further letter.
Therefore the result is a finite linear combination of GPL values at $1$ with
complex letters.  Finally, GPL values at $1$ are equivalent to multiple
polylogarithm values at complex arguments.
\end{proof}

This theorem gives a direct multiple-polylogarithm evaluation route for the special sums
\[
 \sum_{n=1}^{\infty}\frac{z^n}{(an+b)^q}
 \prod_{j=1}^{m}\mathcal H_{\alpha_j}(pn)^{e_j},
\]
whereas the general affine-letter framework naturally leads to affine
multiple-polylogarithmic constants.  In practice, the difference is that the
integer-affine denominator is absorbed by an integral kernel, rather than by
adjoining a new affine summation letter.  No algebraicity of the colors is
needed for this mechanism; algebraicity only specializes the resulting multiple-polylogarithm
arguments to algebraic points.

As an example of explicit evaluation, let
\[
 \alpha=((1,1/2),(2,1/4)).
\]
Then
\begin{align}
\sum_{n=1}^{\infty}\frac{\mathcal H_{\alpha}(n)}{(n+3)^2}
&=\frac1{16}\Biggl(
162+28G(2,0,8;1)-16G(0,1,2,0,8;1)
-805\log\frac87 \notag\\
&\qquad\qquad
+64\Li_{2,2}\!\left(\frac12,\frac14\right)
-444\Li_2\!\left(\frac18\right)
\Biggr).
\label{eq:app-C-integer-affine-example}
\end{align}

\subsection{Shifted denominator cancellations}
\label{subsec:shifted-denominator-cancellation}

A related phenomenon occurs for denominator power one.  Individual summands of
the form
\[
 \sum_{n\ge1}\frac{\mathcal H_{\alpha}(pn)}{an+b}
\]
may diverge, but suitable linear combinations of shifted denominators can be
convergent.  The method below evaluates the convergent combination as a whole;
it does not assign separate values to the divergent pieces.

\begin{theorem}[Shifted-denominator cancellation]
\label{thm:shifted-denominator-cancellation}
Let $p\in\mathbb N$, let $a_j,b_j\in\mathbb N$, and let
$\lambda_j\in\mathbb C$.  Put
\[
 C_j=\frac{\lambda_j}{a_j},
 \qquad
 \theta_j=\frac{b_j}{a_j}.
\]
Let
\[
 A_N=\prod_{i=1}^{m}\mathcal H_{\beta_i}(N)^{e_i},
 \qquad e_i\in\mathbb N_0,
\]
with complex colors whose relevant prefix products are nonzero.  Assume that
the combined series converges and that
\begin{equation}
\label{eq:shifted-denominator-cancellation-condition}
 \sum_{j=1}^{M}C_j=0.
\end{equation}
Then
\begin{equation}
\label{eq:shifted-denominator-cancellation-target}
 \sum_{n=1}^{\infty}
 A_{pn}\sum_{j=1}^{M}\frac{\lambda_j}{a_jn+b_j}
\end{equation}
is a finite linear combination of multiple polylogarithm values at complex
arguments.  If all colors and coefficients are algebraic, then the resulting
arguments and coefficients are algebraic after adjoining the roots introduced
by the construction.
\end{theorem}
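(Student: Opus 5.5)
The plan mirrors the proof of Theorem~\ref{thm:integer-affine-denominator-mpl}, with the single difference that the kernel for $(a_jn+b_j)^{-1}$ has no $\log x$ factor. The cancellation condition \eqref{eq:shifted-denominator-cancellation-condition} is used to make the interchange of sum and integral legitimate for the combination as a whole.

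First reduce $A_N$ by the quasi-shuffle product to a finite linear combination of single words $\mathcal H_\alpha(N)$. Then handle one word at a time. This linear split is itself a concern, since the pieces may diverge. To avoid the issue, keep the weighted combination $\sum_j\lambda_j/(a_jn+b_j)$ intact throughout and split only inside the integrand.

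\textbf{Kernel.} Set $L=\lcm(a_1,\ldots,a_M)$ and use
\[
 \frac{1}{a_jn+b_j}=\frac1{a_j}\int_0^1 x^{n+\theta_j-1}\,dx .
\]
After the substitution $x=y^{L'}$, with $L'$ a common denominator of the $\theta_j$, every exponent becomes an integer. The combined summand then becomes
\[
 \int_0^1 x^{n-1}K(x)\,dx,\qquad K(x)=\sum_j C_jx^{\theta_j}.
\]
The condition $\sum_jC_j=0$ gives $K(1)=0$, so $K(x)=O(1-x)$ near $x=1$. This is the key point. The generating function $\sum_n x^n\mathcal H_\alpha(pn)$ has at worst a pole of order one times logarithms at $x=1$, coming from the prefix product equal to $1$, and the factor $K$ cancels this pole.

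\textbf{Interchange.} Take partial sums $n\le N$ and interchange the finite sum with the integral. The aim is to show that the remainder integral tends to $0$, using the convergence hypothesis together with an Abel/Tauberian argument. Alternatively, one can show that $\int_0^1 K(x)x^{-1}\sum_{n>N}x^n\mathcal H_\alpha(pn)\,dx$ vanishes via the bound $|K(x)|\le C(1-x)$ and the growth $\mathcal H_\alpha(N)=O(\log^d N)$. I expect this justification, not the algebra, to be the main obstacle. When some color products have modulus greater than $1$, I would fall back on Abel summation: introduce $t^n$, evaluate at $t<1$, and let $t\to1^-$ using continuity of GPLs.

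\textbf{Evaluation.} Apply the root-of-unity filter \eqref{eq:integer-affine-root-filter} to get
\[
 \sum_n x^{n}\mathcal H_\alpha(pn)=\frac1p\sum_\ell F_\alpha(\omega_p^\ell x^{1/p}) .
\]
After rescaling $x=u^{p}$, write each $F_\alpha$ as a rational factor times a GPL in $u$. Next expand $K(u^p)u^{-1}\cdot(\text{rational})$ by partial fractions into the kernels $u^k$ and $(u-c)^{-1}$. The polynomial terms are absorbed by integration by parts, or equivalently by GPL recursion, which produces lower-weight GPLs and rational constants.

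The pole at $u=1$ cancels against the zero of $K$. As a result, only the letters $c\ne1$, together with regular pieces, remain in the integration. Integrating from $0$ to $1$ appends one letter, which yields finitely many GPL values at $1$ with complex letters, namely roots of the colors and of unity. These are multiple polylogarithm values by \eqref{eq:section8-Li-to-G}. Tracking the roots adjoined at each step gives the algebraicity statement.
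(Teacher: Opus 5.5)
Your proposal is correct and takes essentially the same route as the paper. You combine the shifted denominators through the integral kernel into $K(x)=\sum_j C_jx^{\theta_j}$; the paper's $P(x)$ is $K(x^{\ell})$. The condition $\sum_jC_j=0$ removes the pole at $x=1$ of the identity-root term in the root-of-unity filter for $\mathcal H_\alpha(pn)$. Integrating the remaining rational kernels against GPLs then gives GPL values at $1$, and hence multiple polylogarithms. Your justification of the sum--integral interchange via $|K(x)|\le C(1-x)$ and the logarithmic growth of $\mathcal H_\alpha(N)$, and your integration-by-parts handling of polynomial parts, fill in details the paper leaves implicit.
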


\begin{proof}
Again the quasi-shuffle product reduces $A_N$ to a finite linear combination of
single colored multiple harmonic numbers $\mathcal H_\beta(N)$.  We therefore
consider one such term.  Choose $\ell$ to be a common multiple of $p$ and of the
denominators of the rational shifts $\theta_j$, and put $k=\ell/p$.  For each
shift,
\[
 \frac{1}{n+\theta_j}
 =
 \ell\int_0^1 x^{\ell(n+\theta_j)-1}\,dx.
\]
After summing the shifted denominators, the polynomial factor
\[
 P(x)=\sum_{j=1}^{M}C_jx^{\ell\theta_j}
\]
appears.  The condition \eqref{eq:shifted-denominator-cancellation-condition}
is exactly the condition $P(1)=0$.

The root-of-unity filter gives
\[
 \sum_{n\ge1}\mathcal H_\beta(pn)x^{\ell n}
 =
 \frac1p\sum_{r=0}^{p-1}F_\beta(\omega_p^r x^k),
\]
where $F_\beta(t)=\sum_{N\ge1}t^N\mathcal H_\beta(N)$ is the GPL generating
function of the finite harmonic sums.  Consequently the required combination is
a finite linear combination of integrals of the form
\begin{equation}
\label{eq:shifted-denominator-integral}
 \int_0^1
 \frac{P(x)}{x(1-\omega_p^r x^k)}
 G(W_\beta;\omega_p^r x^k)\,dx.
\end{equation}
For $r=0$, the possible pole at $x=1$ is removable because $P(1)=0$ and
$1-x^k=(1-x)(1+x+\cdots+x^{k-1})$.  For $r\ne0$, there is no pole at $x=1$.
Thus the cancellation has removed the common divergent $1/n$ tail before the
integral is evaluated.

As in Theorem~\ref{thm:integer-affine-denominator-mpl}, the endpoint-power
expansion rewrites GPLs with endpoint $x^k$ in endpoint $x$, the rational
kernel decomposes into simple complex GPL letters, products are reduced by the
shuffle product, and integration from $0$ to $1$ gives GPL values at $1$.
These values are then converted to multiple polylogarithm values at complex
arguments.
\end{proof}

In depth zero, the theorem reduces to the classical cancellation
\[
 \sum_{n=1}^{\infty}\sum_{j=1}^{M}\frac{C_j}{n+\theta_j}
 =
 -\sum_{j=1}^{M}C_j H_{\theta_j},
 \qquad
 \sum_{j=1}^{M}C_j=0,
\]
where $H_x=\psi(x+1)+\gamma$.  For rational shifts this is a
combination of logarithms of algebraic numbers, hence a weight-one multiple-polylogarithm value.

As a second example of explicit evaluation, let
\[
 \alpha=((1,1/2),(2,1/4)).
\]
Then
\begin{align}
&\sum_{n=1}^{\infty}
\mathcal H_{\alpha}(n)
\left(\frac{1}{2n+1}-\frac{1}{2n+3}\right)
\notag\\
&\quad=2\Biggl(
-4+8\sqrt2\,\operatorname{arccoth}(2\sqrt2)
+(1+\sqrt2)G(-\sqrt2,0,-2\sqrt2;1)\notag\\
&\qquad\quad
+(1+\sqrt2)G(-\sqrt2,0,2\sqrt2;1)
-(\sqrt2-1)G(\sqrt2,0,-2\sqrt2;1)\notag\\
&\qquad\quad
-(\sqrt2-1)G(\sqrt2,0,2\sqrt2;1)
+\log\frac{49}{64}+\Li_2\!\left(\frac18\right)
\Biggr).
\label{eq:app-C-shifted-cancellation-example}
\end{align}

Together with the termwise-convergent and peeling criteria in
Section~\ref{sec:infinite-sums}, these two reductions give useful sufficient
mechanisms for passing from finite identities to infinite evaluations.  They do
not amount to a complete asymptotic or regularization theory for all convergent
infinite sums.

\paragraph{AI-assisted preparation disclosure}
AI-assisted tools were used in a limited way to improve the clarity and presentation of selected parts of the manuscript. The mathematical ideas, definitions, results, proofs, computations, and overall substance are the author's own. All AI-assisted suggestions were carefully reviewed, and the author remains fully responsible for the accuracy, originality, and final form of the article.

\section*{Declaration of competing interest}
The author declares no competing interests.

\section*{Acknowledgements}
I would like to thank Devendra Kapadia, Tigran Ishkhanyan, and Roger Germundsson of Wolfram Research for many helpful discussions and valuable feedback.

\clearpage
\phantomsection

\end{document}


\maketitle

This supplementary file collects examples of explicit evaluations in the notation used in the paper.  The section and subsection numbers correspond to those in the paper.  In particular,
\(\mathcal H_N^{(r_1,\ldots,r_d)}(s_1,\ldots,s_d)\) denotes the corresponding strict colored multiple harmonic number with upper limit \(N\); when the color list is omitted, all colors are \(1\).  The symbols \(\mathcal A\), \(h^{[m]}\), \(\mathcal G\), and \(\mathcal P\) denote the corresponding alternating, hyperharmonic, affine-letter, and polynomial-letter harmonic numbers, respectively.  The affine and polynomial identities display the original letter data.

\section*{3.1 Basic finite convolution}

\subsection*{Identity 1}
\begin{dmath*}
\sum_{n=1}^{k} z^{n}\,\allowbreak n^{q}\,\allowbreak \bigl(\mathcal{H}_{n}^{(r)}\!(s)\bigr)^{3} = \mathcal{H}_{k}^{(-q +\allowbreak 3\,\allowbreak r)}\!(s^{3}\,\allowbreak z) +\allowbreak \mathcal{H}_{k}^{(-q,3\,\allowbreak r)}\!(z,\allowbreak s^{3}) +\allowbreak 3\,\allowbreak \mathcal{H}_{k}^{(-q +\allowbreak r,2\,\allowbreak r)}\!(s\,\allowbreak z,\allowbreak s^{2}) +\allowbreak 3\,\allowbreak \mathcal{H}_{k}^{(-q +\allowbreak 2\,\allowbreak r,r)}\!(s^{2}\,\allowbreak z,\allowbreak s) +\allowbreak 3\,\allowbreak \mathcal{H}_{k}^{(-q,r,2\,\allowbreak r)}\!(z,\allowbreak s,\allowbreak s^{2}) +\allowbreak 3\,\allowbreak \mathcal{H}_{k}^{(-q,2\,\allowbreak r,r)}\!(z,\allowbreak s^{2},\allowbreak s) +\allowbreak 6\,\allowbreak \mathcal{H}_{k}^{(-q +\allowbreak r,r,r)}\!(s\,\allowbreak z,\allowbreak s,\allowbreak s)
\end{dmath*}
\vspace{-1.1em}
\begin{dmath*}
{}\quad +\allowbreak 6\,\allowbreak \mathcal{H}_{k}^{(-q,r,r,r)}\!(z,\allowbreak s,\allowbreak s,\allowbreak s)
\end{dmath*}
\subsection*{Identity 2}
\begin{dmath*}
\sum_{n=1}^{k} \frac{\mathcal{H}_{n}^{(2)}\,\allowbreak \mathcal{H}_{n}^{(3)}}{n^{4 +\allowbreak 2\,\allowbreak \mathrm{i}}} = -\mathcal{H}_{k}^{(2,7 +\allowbreak 2\,\allowbreak \mathrm{i})} +\allowbreak \mathcal{H}_{k}^{(2)}\,\allowbreak \bigl(\mathcal{H}_{k}^{(3)}\,\allowbreak \mathcal{H}_{k}^{(4 +\allowbreak 2\,\allowbreak \mathrm{i})} -\allowbreak \mathcal{H}_{k}^{(3,4 +\allowbreak 2\,\allowbreak \mathrm{i})}\bigr) -\allowbreak \mathcal{H}_{k}^{(2,4 +\allowbreak 2\,\allowbreak \mathrm{i},3)}
\end{dmath*}
\subsection*{Identity 3}
\begin{dmath*}
\sum_{n=1}^{k} \bigl(\frac{1}{2}\bigr)^{n}\,\allowbreak \mathcal{H}_{n}^{(\mathrm{i})}\,\allowbreak \mathcal{A}_{n}^{(2)}\!(\mathrm{i} +\allowbreak \frac{1}{3}) = \mathcal{H}_{k}^{(\mathrm{i})}\,\allowbreak \bigl(2^{-k}\,\allowbreak \mathcal{H}_{k}^{(2)}\!(-\frac{1}{3} -\allowbreak \mathrm{i}) -\allowbreak 2\,\allowbreak \mathcal{H}_{k}^{(2)}\!(-\frac{1}{6} -\allowbreak \frac{\mathrm{i}}{2})\bigr) -\allowbreak 2\,\allowbreak \mathcal{H}_{k}^{(\mathrm{i},2)}\!(\frac{1}{2},\allowbreak -\frac{1}{3} -\allowbreak \mathrm{i}) +\allowbreak 2\,\allowbreak \mathcal{H}_{k}^{(\mathrm{i},2)}\!(1,\allowbreak -\frac{1}{6} -\allowbreak \frac{\mathrm{i}}{2})
\end{dmath*}
\subsection*{Identity 4}
\begin{dmath*}
\sum_{n=1}^{k} \chi_{3,2}\!(n)\,\allowbreak \mathcal{H}_{n}^{(1,2)}\!(s_{1},\allowbreak s_{2}) = \frac{1}{3}\,\allowbreak \bigl(-1\bigr)^{\frac{1}{3}}\,\allowbreak \bigl(1 +\allowbreak \bigl(-1\bigr)^{\frac{1}{3}}\bigr)\,\allowbreak \bigl(\mathcal{H}_{k}^{(1,2)}\!(-\bigl(\bigl(-1\bigr)^{\frac{1}{3}}\bigr)\,\allowbreak s_{1},\allowbreak s_{2}) -\allowbreak \mathcal{H}_{k}^{(1,2)}\!(\bigl(-1\bigr)^{\frac{2}{3}}\,\allowbreak s_{1},\allowbreak s_{2}) +\allowbreak \mathcal{H}_{k}^{(0,1,2)}\!(-\bigl(\bigl(-1\bigr)^{\frac{1}{3}}\bigr),\allowbreak s_{1},\allowbreak s_{2}) -\allowbreak \mathcal{H}_{k}^{(0,1,2)}\!(\bigl(-1\bigr)^{\frac{2}{3}},\allowbreak s_{1},\allowbreak s_{2})\bigr)
\end{dmath*}
\subsection*{Identity 5}
\begin{dmath*}
\sum_{n=1}^{k} \operatorname{Mod}\!(n,3)\,\allowbreak \mathcal{H}_{n}^{(1,2)}\,\allowbreak H_{n} = \frac{1}{18 +\allowbreak 6\,\allowbreak \mathrm{i}\,\allowbreak \sqrt{3}}\,\allowbreak 2\,\allowbreak \bigl(-1\bigr)^{\frac{1}{6}}\,\allowbreak \bigl(3\,\allowbreak \sqrt{3}\,\allowbreak \bigl(H_{k}\bigr)^{2} -\allowbreak 2^{-k}\,\allowbreak H_{k}\,\allowbreak \bigl(3\,\allowbreak 2^{1 +\allowbreak k}\,\allowbreak \sqrt{3}\,\allowbreak k\,\allowbreak \mathcal{H}_{k}^{(2)} +\allowbreak \bigl(2^{k}\,\allowbreak \bigl(-3\,\allowbreak \mathrm{i} +\allowbreak \sqrt{3}\bigr)\,\allowbreak \mathrm{e}^{\frac{2\,\allowbreak \mathrm{i}\,\allowbreak k\,\allowbreak \pi}{3}} +\allowbreak 2^{k}\,\allowbreak \bigl(5\,\allowbreak \mathrm{i} +\allowbreak \sqrt{3}\bigr)\,\allowbreak \mathrm{e}^{\frac{4\,\allowbreak \mathrm{i}\,\allowbreak k\,\allowbreak \pi}{3}}
\end{dmath*}
\vspace{-1.1em}
\begin{dmath*}
{}\quad -\allowbreak 2\,\allowbreak \bigl(3\,\allowbreak 2^{k}\,\allowbreak \sqrt{3} +\allowbreak \mathrm{i}\,\allowbreak \bigl(-1 -\allowbreak \mathrm{i}\,\allowbreak \sqrt{3}\bigr)^{k} +\allowbreak 3\,\allowbreak 2^{k}\,\allowbreak \sqrt{3}\,\allowbreak k\bigr)\bigr)\,\allowbreak \mathcal{H}_{k}^{(1,2)} +\allowbreak 2^{1 +\allowbreak k}\,\allowbreak \sqrt{3}\,\allowbreak \bigl(6 +\allowbreak \mathcal{H}_{k}^{(1,2)}\!(-\frac{1}{2} -\allowbreak \frac{\mathrm{i}\,\allowbreak \sqrt{3}}{2},\allowbreak 1) +\allowbreak \mathcal{H}_{k}^{(1,2)}\!(\frac{1}{2}\,\allowbreak \mathrm{i}\,\allowbreak \bigl(\mathrm{i} +\allowbreak \sqrt{3}\bigr),\allowbreak 1)\bigr)\bigr)
\end{dmath*}
\vspace{-1.1em}
\begin{dmath*}
{}\quad +\allowbreak \sqrt{3}\,\allowbreak \bigl(\mathcal{H}_{k}^{(2)}\,\allowbreak \bigl(3 +\allowbreak 12\,\allowbreak k +\allowbreak 2\,\allowbreak \mathcal{H}_{k}^{(2)}\!(-\frac{1}{2} -\allowbreak \frac{\mathrm{i}\,\allowbreak \sqrt{3}}{2}) +\allowbreak 2\,\allowbreak \mathcal{H}_{k}^{(2)}\!(\frac{1}{2}\,\allowbreak \mathrm{i}\,\allowbreak \bigl(\mathrm{i} +\allowbreak \sqrt{3}\bigr))\bigr) -\allowbreak 2\,\allowbreak \bigl(\mathcal{H}_{k}^{(4)}\!(-\frac{1}{2} -\allowbreak \frac{\mathrm{i}\,\allowbreak \sqrt{3}}{2}) +\allowbreak \mathcal{H}_{k}^{(4)}\!(\frac{1}{2}\,\allowbreak \mathrm{i}\,\allowbreak \bigl(\mathrm{i} +\allowbreak \sqrt{3}\bigr)) +\allowbreak 3\,\allowbreak \mathcal{H}_{k}^{(1,2)}
\end{dmath*}
\vspace{-1.1em}
\begin{dmath*}
{}\quad +\allowbreak 3\,\allowbreak k\,\allowbreak \mathcal{H}_{k}^{(1,2)} +\allowbreak \mathcal{H}_{k}^{(1)}\!(-\frac{1}{2} -\allowbreak \frac{\mathrm{i}\,\allowbreak \sqrt{3}}{2})\,\allowbreak \mathcal{H}_{k}^{(1,2)} +\allowbreak \mathcal{H}_{k}^{(1)}\!(\frac{1}{2}\,\allowbreak \mathrm{i}\,\allowbreak \bigl(\mathrm{i} +\allowbreak \sqrt{3}\bigr))\,\allowbreak \mathcal{H}_{k}^{(1,2)} -\allowbreak \mathcal{H}_{k}^{(1,3)}\!(1,\allowbreak -\frac{1}{2} -\allowbreak \frac{\mathrm{i}\,\allowbreak \sqrt{3}}{2}) -\allowbreak \mathcal{H}_{k}^{(1,3)}\!(1,\allowbreak \frac{1}{2}\,\allowbreak \mathrm{i}\,\allowbreak \bigl(\mathrm{i} +\allowbreak \sqrt{3}\bigr))
\end{dmath*}
\vspace{-1.1em}
\begin{dmath*}
{}\quad +\allowbreak \mathcal{H}_{k}^{(2,2)}\!(1,\allowbreak -\frac{1}{2} -\allowbreak \frac{\mathrm{i}\,\allowbreak \sqrt{3}}{2}) +\allowbreak \mathcal{H}_{k}^{(2,2)}\!(1,\allowbreak \frac{1}{2}\,\allowbreak \mathrm{i}\,\allowbreak \bigl(\mathrm{i} +\allowbreak \sqrt{3}\bigr)) -\allowbreak 2\,\allowbreak \mathcal{H}_{k}^{(1,1,2)}\!(1,\allowbreak -\frac{1}{2} -\allowbreak \frac{\mathrm{i}\,\allowbreak \sqrt{3}}{2},\allowbreak 1) -\allowbreak 2\,\allowbreak \mathcal{H}_{k}^{(1,1,2)}\!(1,\allowbreak \frac{1}{2}\,\allowbreak \mathrm{i}\,\allowbreak \bigl(\mathrm{i} +\allowbreak \sqrt{3}\bigr),\allowbreak 1) -\allowbreak \mathcal{H}_{k}^{(1,2,1)}\!(1,\allowbreak 1,\allowbreak -\frac{1}{2}
\end{dmath*}
\vspace{-1.1em}
\begin{dmath*}
{}\quad -\allowbreak \frac{\mathrm{i}\,\allowbreak \sqrt{3}}{2}) -\allowbreak \mathcal{H}_{k}^{(1,2,1)}\!(1,\allowbreak 1,\allowbreak \frac{1}{2}\,\allowbreak \mathrm{i}\,\allowbreak \bigl(\mathrm{i} +\allowbreak \sqrt{3}\bigr))\bigr)\bigr)\bigr)
\end{dmath*}
\subsection*{Identity 6}
\begin{dmath*}
\sum_{n=1}^{k} \operatorname{lcm}\!(n,\allowbreak 2,\allowbreak 3)\,\allowbreak \mathcal{H}_{n}^{(1,2)} = \frac{1}{6}\,\allowbreak \bigl(-\bigl(\frac{7}{2}\bigr)\,\allowbreak \bigl(\mathcal{A}_{k}^{(2)} -\allowbreak \bigl(-1\bigr)^{1 +\allowbreak k}\,\allowbreak \mathcal{H}_{k}^{(2)}\bigr) +\allowbreak 21\,\allowbreak \bigl(-H_{k} +\allowbreak k\,\allowbreak \mathcal{H}_{k}^{(2)}\bigr) -\allowbreak \frac{6\,\allowbreak \bigl(-\bigl(\bigl(-\frac{1}{2} -\allowbreak \frac{\mathrm{i}\,\allowbreak \sqrt{3}}{2}\bigr)^{1 +\allowbreak k}\bigr)\,\allowbreak \mathcal{H}_{k}^{(2)} +\allowbreak \bigl(-\frac{1}{2} -\allowbreak \frac{\mathrm{i}\,\allowbreak \sqrt{3}}{2}\bigr)\,\allowbreak \mathcal{H}_{k}^{(2)}\!(-\frac{1}{2} -\allowbreak \frac{\mathrm{i}\,\allowbreak \sqrt{3}}{2})\bigr)}{\frac{3}{2} +\allowbreak \frac{\mathrm{i}\,\allowbreak \sqrt{3}}{2}}
\end{dmath*}
\vspace{-1.1em}
\begin{dmath*}
{}\quad +\allowbreak \frac{2\,\allowbreak \bigl(-\bigl(\bigl(\frac{1}{2} -\allowbreak \frac{\mathrm{i}\,\allowbreak \sqrt{3}}{2}\bigr)^{1 +\allowbreak k}\bigr)\,\allowbreak \mathcal{H}_{k}^{(2)} +\allowbreak \bigl(\frac{1}{2} -\allowbreak \frac{\mathrm{i}\,\allowbreak \sqrt{3}}{2}\bigr)\,\allowbreak \mathcal{H}_{k}^{(2)}\!(\frac{1}{2} -\allowbreak \frac{\mathrm{i}\,\allowbreak \sqrt{3}}{2})\bigr)}{\frac{1}{2} +\allowbreak \frac{\mathrm{i}\,\allowbreak \sqrt{3}}{2}} +\allowbreak \frac{2\,\allowbreak \bigl(-\bigl(\bigl(\frac{1}{2} +\allowbreak \frac{\mathrm{i}\,\allowbreak \sqrt{3}}{2}\bigr)^{1 +\allowbreak k}\bigr)\,\allowbreak \mathcal{H}_{k}^{(2)} +\allowbreak \bigl(\frac{1}{2} +\allowbreak \frac{\mathrm{i}\,\allowbreak \sqrt{3}}{2}\bigr)\,\allowbreak \mathcal{H}_{k}^{(2)}\!(\frac{1}{2} +\allowbreak \frac{\mathrm{i}\,\allowbreak \sqrt{3}}{2})\bigr)}{\frac{1}{2} -\allowbreak \frac{\mathrm{i}\,\allowbreak \sqrt{3}}{2}}
\end{dmath*}
\vspace{-1.1em}
\begin{dmath*}
{}\quad -\allowbreak \frac{6\,\allowbreak \bigl(-\bigl(\bigl(\frac{1}{2}\,\allowbreak \mathrm{i}\,\allowbreak \bigl(\mathrm{i} +\allowbreak \sqrt{3}\bigr)\bigr)^{1 +\allowbreak k}\bigr)\,\allowbreak \mathcal{H}_{k}^{(2)} +\allowbreak \frac{1}{2}\,\allowbreak \mathrm{i}\,\allowbreak \bigl(\mathrm{i} +\allowbreak \sqrt{3}\bigr)\,\allowbreak \mathcal{H}_{k}^{(2)}\!(\frac{1}{2}\,\allowbreak \mathrm{i}\,\allowbreak \bigl(\mathrm{i} +\allowbreak \sqrt{3}\bigr))\bigr)}{1 -\allowbreak \frac{1}{2}\,\allowbreak \mathrm{i}\,\allowbreak \bigl(\mathrm{i} +\allowbreak \sqrt{3}\bigr)} +\allowbreak 21\,\allowbreak \mathcal{H}_{k}^{(-1,1,2)} -\allowbreak 7\,\allowbreak \mathcal{H}_{k}^{(-1,1,2)}\!(-1,\allowbreak 1,\allowbreak 1)
\end{dmath*}
\vspace{-1.1em}
\begin{dmath*}
{}\quad -\allowbreak 6\,\allowbreak \mathcal{H}_{k}^{(-1,1,2)}\!(-\frac{1}{2} -\allowbreak \frac{\mathrm{i}\,\allowbreak \sqrt{3}}{2},\allowbreak 1,\allowbreak 1) +\allowbreak 2\,\allowbreak \mathcal{H}_{k}^{(-1,1,2)}\!(\frac{1}{2} -\allowbreak \frac{\mathrm{i}\,\allowbreak \sqrt{3}}{2},\allowbreak 1,\allowbreak 1) +\allowbreak 2\,\allowbreak \mathcal{H}_{k}^{(-1,1,2)}\!(\frac{1}{2} +\allowbreak \frac{\mathrm{i}\,\allowbreak \sqrt{3}}{2},\allowbreak 1,\allowbreak 1) -\allowbreak 6\,\allowbreak \mathcal{H}_{k}^{(-1,1,2)}\!(\frac{1}{2}\,\allowbreak \mathrm{i}\,\allowbreak \bigl(\mathrm{i} +\allowbreak \sqrt{3}\bigr),\allowbreak 1,\allowbreak 1)\bigr)
\end{dmath*}
\subsection*{Identity 7}
\begin{dmath*}
\sum_{n=1}^{k} \frac{\mathcal{H}_{n}^{(2)}\!(\frac{1}{3})\,\allowbreak h_{n}^{[2]}\!(1;1)}{n^{2}} = \mathcal{H}_{k}^{(5)}\!(\frac{1}{3}) +\allowbreak 3\,\allowbreak \mathcal{H}_{k}^{(2,3)}\!(1,\allowbreak \frac{1}{3}) +\allowbreak \mathcal{H}_{k}^{(3,2)}\!(1,\allowbreak \frac{1}{3}) +\allowbreak 3\,\allowbreak \mathcal{H}_{k}^{(4,1)}\!(\frac{1}{3},\allowbreak 1) +\allowbreak 3\,\allowbreak \mathcal{H}_{k}^{(2,0,3)}\!(1,\allowbreak 1,\allowbreak \frac{1}{3}) +\allowbreak 3\,\allowbreak \mathcal{H}_{k}^{(2,1,2)}\!(1,\allowbreak 1,\allowbreak \frac{1}{3}) +\allowbreak 6\,\allowbreak \mathcal{H}_{k}^{(2,2,1)}\!(1,\allowbreak \frac{1}{3},\allowbreak 1) +\allowbreak 3\,\allowbreak \mathcal{H}_{k}^{(4,0,1)}\!(\frac{1}{3},\allowbreak 1,\allowbreak 1)
\end{dmath*}
\vspace{-1.1em}
\begin{dmath*}
{}\quad +\allowbreak \mathcal{H}_{k}^{(2,0,0,3)}\!(1,\allowbreak 1,\allowbreak 1,\allowbreak \frac{1}{3}) +\allowbreak 3\,\allowbreak \mathcal{H}_{k}^{(2,0,1,2)}\!(1,\allowbreak 1,\allowbreak 1,\allowbreak \frac{1}{3}) +\allowbreak 4\,\allowbreak \mathcal{H}_{k}^{(2,0,2,1)}\!(1,\allowbreak 1,\allowbreak \frac{1}{3},\allowbreak 1) +\allowbreak 4\,\allowbreak \mathcal{H}_{k}^{(2,2,0,1)}\!(1,\allowbreak \frac{1}{3},\allowbreak 1,\allowbreak 1) +\allowbreak \mathcal{H}_{k}^{(4,0,0,1)}\!(\frac{1}{3},\allowbreak 1,\allowbreak 1,\allowbreak 1) +\allowbreak \mathcal{H}_{k}^{(2,0,0,1,2)}\!(1,\allowbreak 1,\allowbreak 1,\allowbreak 1,\allowbreak \frac{1}{3}) +\allowbreak \mathcal{H}_{k}^{(2,0,0,2,1)}\!(1,\allowbreak 1,\allowbreak 1,\allowbreak \frac{1}{3},\allowbreak 1)
\end{dmath*}
\vspace{-1.1em}
\begin{dmath*}
{}\quad +\allowbreak \mathcal{H}_{k}^{(2,0,2,0,1)}\!(1,\allowbreak 1,\allowbreak \frac{1}{3},\allowbreak 1,\allowbreak 1) +\allowbreak \mathcal{H}_{k}^{(2,2,0,0,1)}\!(1,\allowbreak \frac{1}{3},\allowbreak 1,\allowbreak 1,\allowbreak 1)
\end{dmath*}
\subsection*{Identity 8}
\begin{dmath*}
\sum_{n=1}^{k} F_{n,\allowbreak 3}\,\allowbreak \bigl(\mathcal{H}_{n}^{(2\,\allowbreak \mathrm{i})}\bigr)^{2}\,\allowbreak \left\{\begin{matrix}n\\ 2\end{matrix}\right\} = \frac{1}{4\,\allowbreak \sqrt{13}}\,\allowbreak \bigl(-4\,\allowbreak \mathcal{A}_{k}^{(4\,\allowbreak \mathrm{i})}\!(\frac{1}{2}\,\allowbreak \bigl(-3 +\allowbreak \sqrt{13}\bigr)) +\allowbreak 2\,\allowbreak \mathcal{A}_{k}^{(4\,\allowbreak \mathrm{i})}\!(-3 +\allowbreak \sqrt{13}) +\allowbreak \frac{\bigl(3 -\allowbreak \sqrt{13}\bigr)^{2}\,\allowbreak \mathcal{A}_{k}^{(4\,\allowbreak \mathrm{i})}\!(-3 +\allowbreak \sqrt{13})}{2\,\allowbreak \bigl(-2 +\allowbreak \sqrt{13}\bigr)\,\allowbreak \bigl(1 +\allowbreak \frac{1}{2}\,\allowbreak \bigl(-3 +\allowbreak \sqrt{13}\bigr)\bigr)} -\allowbreak \frac{\bigl(3 -\allowbreak \sqrt{13}\bigr)^{1 +\allowbreak k}\,\allowbreak \mathcal{H}_{k}^{(4\,\allowbreak \mathrm{i})}}{2 -\allowbreak \sqrt{13}}
\end{dmath*}
\vspace{-1.1em}
\begin{dmath*}
{}\quad +\allowbreak \frac{\bigl(3 +\allowbreak \sqrt{13}\bigr)^{1 +\allowbreak k}\,\allowbreak \mathcal{H}_{k}^{(4\,\allowbreak \mathrm{i})}}{2 +\allowbreak \sqrt{13}} +\allowbreak \frac{2\,\allowbreak \bigl(-\bigl(\frac{1}{2}\bigr)\,\allowbreak \bigl(3 -\allowbreak \sqrt{13}\bigr)\,\allowbreak \mathcal{A}_{k}^{(4\,\allowbreak \mathrm{i})}\!(\frac{1}{2}\,\allowbreak \bigl(-3 +\allowbreak \sqrt{13}\bigr)) -\allowbreak \bigl(\frac{1}{2}\,\allowbreak \bigl(3 -\allowbreak \sqrt{13}\bigr)\bigr)^{1 +\allowbreak k}\,\allowbreak \mathcal{H}_{k}^{(4\,\allowbreak \mathrm{i})}\bigr)}{1 +\allowbreak \frac{1}{2}\,\allowbreak \bigl(-3 +\allowbreak \sqrt{13}\bigr)}
\end{dmath*}
\vspace{-1.1em}
\begin{dmath*}
{}\quad -\allowbreak \frac{-\bigl(3 -\allowbreak \sqrt{13}\bigr)\,\allowbreak \mathcal{A}_{k}^{(4\,\allowbreak \mathrm{i})}\!(-3 +\allowbreak \sqrt{13}) -\allowbreak \bigl(3 -\allowbreak \sqrt{13}\bigr)^{1 +\allowbreak k}\,\allowbreak \mathcal{H}_{k}^{(4\,\allowbreak \mathrm{i})}}{-2 +\allowbreak \sqrt{13}} +\allowbreak \frac{2^{-k}\,\allowbreak \bigl(3 -\allowbreak \sqrt{13}\bigr)^{1 +\allowbreak k}\,\allowbreak \mathcal{H}_{k}^{(4\,\allowbreak \mathrm{i})}\!(2)}{-1 +\allowbreak \frac{1}{2}\,\allowbreak \bigl(3 -\allowbreak \sqrt{13}\bigr)} -\allowbreak \frac{2^{-k}\,\allowbreak \bigl(3 +\allowbreak \sqrt{13}\bigr)^{1 +\allowbreak k}\,\allowbreak \mathcal{H}_{k}^{(4\,\allowbreak \mathrm{i})}\!(2)}{-1 +\allowbreak \frac{1}{2}\,\allowbreak \bigl(3 +\allowbreak \sqrt{13}\bigr)}
\end{dmath*}
\vspace{-1.1em}
\begin{dmath*}
{}\quad -\allowbreak \frac{-\bigl(\frac{1}{2}\bigr)\,\allowbreak \bigl(3 -\allowbreak \sqrt{13}\bigr)\,\allowbreak \mathcal{A}_{k}^{(4\,\allowbreak \mathrm{i})}\!(-3 +\allowbreak \sqrt{13}) -\allowbreak \bigl(\frac{1}{2}\,\allowbreak \bigl(3 -\allowbreak \sqrt{13}\bigr)\bigr)^{1 +\allowbreak k}\,\allowbreak \mathcal{H}_{k}^{(4\,\allowbreak \mathrm{i})}\!(2)}{1 +\allowbreak \frac{1}{2}\,\allowbreak \bigl(-3 +\allowbreak \sqrt{13}\bigr)} -\allowbreak \frac{1}{-1 +\allowbreak \frac{1}{2}\,\allowbreak \bigl(3 -\allowbreak \sqrt{13}\bigr)}\,\allowbreak \bigl(-\bigl(3 -\allowbreak \sqrt{13}\bigr)\,\allowbreak \mathcal{A}_{k}^{(4\,\allowbreak \mathrm{i})}\!(\frac{1}{2}\,\allowbreak \bigl(-3
\end{dmath*}
\vspace{-1.1em}
\begin{dmath*}
{}\quad +\allowbreak \sqrt{13}\bigr)) +\allowbreak \frac{1}{2}\,\allowbreak \bigl(3 -\allowbreak \sqrt{13}\bigr)\,\allowbreak \mathcal{A}_{k}^{(4\,\allowbreak \mathrm{i})}\!(-3 +\allowbreak \sqrt{13}) -\allowbreak 2^{-k}\,\allowbreak \bigl(3 -\allowbreak \sqrt{13}\bigr)^{1 +\allowbreak k}\,\allowbreak \mathcal{H}_{k}^{(4\,\allowbreak \mathrm{i})} +\allowbreak \bigl(\frac{1}{2}\,\allowbreak \bigl(3 -\allowbreak \sqrt{13}\bigr)\bigr)^{1 +\allowbreak k}\,\allowbreak \mathcal{H}_{k}^{(4\,\allowbreak \mathrm{i})}\!(2)\bigr) -\allowbreak 4\,\allowbreak \mathcal{H}_{k}^{(4\,\allowbreak \mathrm{i})}\!(\frac{1}{2}\,\allowbreak \bigl(3 +\allowbreak \sqrt{13}\bigr))
\end{dmath*}
\vspace{-1.1em}
\begin{dmath*}
{}\quad -\allowbreak \frac{2\,\allowbreak \bigl(-\bigl(\bigl(\frac{2}{3 +\allowbreak \sqrt{13}}\bigr)^{-1 -\allowbreak k}\bigr)\,\allowbreak \mathcal{H}_{k}^{(4\,\allowbreak \mathrm{i})} +\allowbreak \frac{1}{2}\,\allowbreak \bigl(3 +\allowbreak \sqrt{13}\bigr)\,\allowbreak \mathcal{H}_{k}^{(4\,\allowbreak \mathrm{i})}\!(\frac{1}{2}\,\allowbreak \bigl(3 +\allowbreak \sqrt{13}\bigr))\bigr)}{1 +\allowbreak \frac{1}{2}\,\allowbreak \bigl(-3 -\allowbreak \sqrt{13}\bigr)} +\allowbreak 2\,\allowbreak \mathcal{H}_{k}^{(4\,\allowbreak \mathrm{i})}\!(3 +\allowbreak \sqrt{13}) +\allowbreak \frac{\bigl(3 +\allowbreak \sqrt{13}\bigr)^{2}\,\allowbreak \mathcal{H}_{k}^{(4\,\allowbreak \mathrm{i})}\!(3 +\allowbreak \sqrt{13})}{2\,\allowbreak \bigl(-2 -\allowbreak \sqrt{13}\bigr)\,\allowbreak \bigl(1 +\allowbreak \frac{1}{2}\,\allowbreak \bigl(-3 -\allowbreak \sqrt{13}\bigr)\bigr)}
\end{dmath*}
\vspace{-1.1em}
\begin{dmath*}
{}\quad +\allowbreak \frac{1}{-1 +\allowbreak \frac{1}{2}\,\allowbreak \bigl(3 +\allowbreak \sqrt{13}\bigr)}\,\allowbreak \bigl(-\bigl(2^{-k}\bigr)\,\allowbreak \bigl(3 +\allowbreak \sqrt{13}\bigr)^{1 +\allowbreak k}\,\allowbreak \mathcal{H}_{k}^{(4\,\allowbreak \mathrm{i})} +\allowbreak \bigl(\frac{1}{2}\,\allowbreak \bigl(3 +\allowbreak \sqrt{13}\bigr)\bigr)^{1 +\allowbreak k}\,\allowbreak \mathcal{H}_{k}^{(4\,\allowbreak \mathrm{i})}\!(2) +\allowbreak \bigl(3 +\allowbreak \sqrt{13}\bigr)\,\allowbreak \mathcal{H}_{k}^{(4\,\allowbreak \mathrm{i})}\!(\frac{1}{2}\,\allowbreak \bigl(3 +\allowbreak \sqrt{13}\bigr)) -\allowbreak \frac{1}{2}\,\allowbreak \bigl(3
\end{dmath*}
\vspace{-1.1em}
\begin{dmath*}
{}\quad +\allowbreak \sqrt{13}\bigr)\,\allowbreak \mathcal{H}_{k}^{(4\,\allowbreak \mathrm{i})}\!(3 +\allowbreak \sqrt{13})\bigr) +\allowbreak \frac{-\bigl(\bigl(\frac{2}{3 +\allowbreak \sqrt{13}}\bigr)^{-1 -\allowbreak k}\bigr)\,\allowbreak \mathcal{H}_{k}^{(4\,\allowbreak \mathrm{i})}\!(2) +\allowbreak \frac{1}{2}\,\allowbreak \bigl(3 +\allowbreak \sqrt{13}\bigr)\,\allowbreak \mathcal{H}_{k}^{(4\,\allowbreak \mathrm{i})}\!(3 +\allowbreak \sqrt{13})}{1 +\allowbreak \frac{1}{2}\,\allowbreak \bigl(-3 -\allowbreak \sqrt{13}\bigr)} +\allowbreak \frac{-\bigl(\bigl(3 +\allowbreak \sqrt{13}\bigr)^{1 +\allowbreak k}\bigr)\,\allowbreak \mathcal{H}_{k}^{(4\,\allowbreak \mathrm{i})} +\allowbreak \bigl(3 +\allowbreak \sqrt{13}\bigr)\,\allowbreak \mathcal{H}_{k}^{(4\,\allowbreak \mathrm{i})}\!(3 +\allowbreak \sqrt{13})}{-2 -\allowbreak \sqrt{13}}
\end{dmath*}
\vspace{-1.1em}
\begin{dmath*}
{}\quad +\allowbreak 4\,\allowbreak \mathcal{H}_{k}^{(2\,\allowbreak \mathrm{i},2\,\allowbreak \mathrm{i})}\!(\frac{1}{2}\,\allowbreak \bigl(3 -\allowbreak \sqrt{13}\bigr),\allowbreak 1) -\allowbreak 2\,\allowbreak \mathcal{H}_{k}^{(2\,\allowbreak \mathrm{i},2\,\allowbreak \mathrm{i})}\!(\frac{1}{2}\,\allowbreak \bigl(3 -\allowbreak \sqrt{13}\bigr),\allowbreak 2) -\allowbreak 2\,\allowbreak \mathcal{H}_{k}^{(2\,\allowbreak \mathrm{i},2\,\allowbreak \mathrm{i})}\!(3 -\allowbreak \sqrt{13},\allowbreak 1) -\allowbreak 4\,\allowbreak \mathcal{H}_{k}^{(2\,\allowbreak \mathrm{i},2\,\allowbreak \mathrm{i})}\!(\frac{1}{2}\,\allowbreak \bigl(3 +\allowbreak \sqrt{13}\bigr),\allowbreak 1)
\end{dmath*}
\vspace{-1.1em}
\begin{dmath*}
{}\quad +\allowbreak 2\,\allowbreak \mathcal{H}_{k}^{(2\,\allowbreak \mathrm{i},2\,\allowbreak \mathrm{i})}\!(\frac{1}{2}\,\allowbreak \bigl(3 +\allowbreak \sqrt{13}\bigr),\allowbreak 2) +\allowbreak 2\,\allowbreak \mathcal{H}_{k}^{(2\,\allowbreak \mathrm{i},2\,\allowbreak \mathrm{i})}\!(3 +\allowbreak \sqrt{13},\allowbreak 1) +\allowbreak 4\,\allowbreak \mathcal{H}_{k}^{(0,2\,\allowbreak \mathrm{i},2\,\allowbreak \mathrm{i})}\!(\frac{1}{2}\,\allowbreak \bigl(3 -\allowbreak \sqrt{13}\bigr),\allowbreak 1,\allowbreak 1) -\allowbreak 2\,\allowbreak \mathcal{H}_{k}^{(0,2\,\allowbreak \mathrm{i},2\,\allowbreak \mathrm{i})}\!(\frac{1}{2}\,\allowbreak \bigl(3
\end{dmath*}
\vspace{-1.1em}
\begin{dmath*}
{}\quad -\allowbreak \sqrt{13}\bigr),\allowbreak 1,\allowbreak 2) -\allowbreak 2\,\allowbreak \mathcal{H}_{k}^{(0,2\,\allowbreak \mathrm{i},2\,\allowbreak \mathrm{i})}\!(\frac{1}{2}\,\allowbreak \bigl(3 -\allowbreak \sqrt{13}\bigr),\allowbreak 2,\allowbreak 1) -\allowbreak 2\,\allowbreak \mathcal{H}_{k}^{(0,2\,\allowbreak \mathrm{i},2\,\allowbreak \mathrm{i})}\!(3 -\allowbreak \sqrt{13},\allowbreak 1,\allowbreak 1) -\allowbreak 4\,\allowbreak \mathcal{H}_{k}^{(0,2\,\allowbreak \mathrm{i},2\,\allowbreak \mathrm{i})}\!(\frac{1}{2}\,\allowbreak \bigl(3 +\allowbreak \sqrt{13}\bigr),\allowbreak 1,\allowbreak 1) +\allowbreak 2\,\allowbreak \mathcal{H}_{k}^{(0,2\,\allowbreak \mathrm{i},2\,\allowbreak \mathrm{i})}\!(\frac{1}{2}\,\allowbreak \bigl(3
\end{dmath*}
\vspace{-1.1em}
\begin{dmath*}
{}\quad +\allowbreak \sqrt{13}\bigr),\allowbreak 1,\allowbreak 2) +\allowbreak 2\,\allowbreak \mathcal{H}_{k}^{(0,2\,\allowbreak \mathrm{i},2\,\allowbreak \mathrm{i})}\!(\frac{1}{2}\,\allowbreak \bigl(3 +\allowbreak \sqrt{13}\bigr),\allowbreak 2,\allowbreak 1) +\allowbreak 2\,\allowbreak \mathcal{H}_{k}^{(0,2\,\allowbreak \mathrm{i},2\,\allowbreak \mathrm{i})}\!(3 +\allowbreak \sqrt{13},\allowbreak 1,\allowbreak 1) -\allowbreak 2\,\allowbreak \mathcal{H}_{k}^{(2\,\allowbreak \mathrm{i},0,2\,\allowbreak \mathrm{i})}\!(\frac{1}{2}\,\allowbreak \bigl(3 -\allowbreak \sqrt{13}\bigr),\allowbreak 2,\allowbreak 1) +\allowbreak 2\,\allowbreak \mathcal{H}_{k}^{(2\,\allowbreak \mathrm{i},0,2\,\allowbreak \mathrm{i})}\!(\frac{1}{2}\,\allowbreak \bigl(3
\end{dmath*}
\vspace{-1.1em}
\begin{dmath*}
{}\quad +\allowbreak \sqrt{13}\bigr),\allowbreak 2,\allowbreak 1) -\allowbreak 2\,\allowbreak \mathcal{H}_{k}^{(2\,\allowbreak \mathrm{i},2\,\allowbreak \mathrm{i},0)}\!(\frac{1}{2}\,\allowbreak \bigl(3 -\allowbreak \sqrt{13}\bigr),\allowbreak 1,\allowbreak 2) +\allowbreak 2\,\allowbreak \mathcal{H}_{k}^{(2\,\allowbreak \mathrm{i},2\,\allowbreak \mathrm{i},0)}\!(\frac{1}{2}\,\allowbreak \bigl(3 +\allowbreak \sqrt{13}\bigr),\allowbreak 1,\allowbreak 2) -\allowbreak 2\,\allowbreak \mathcal{H}_{k}^{(0,0,2\,\allowbreak \mathrm{i},2\,\allowbreak \mathrm{i})}\!(\frac{1}{2}\,\allowbreak \bigl(3 -\allowbreak \sqrt{13}\bigr),\allowbreak 2,\allowbreak 1,\allowbreak 1)
\end{dmath*}
\vspace{-1.1em}
\begin{dmath*}
{}\quad +\allowbreak 2\,\allowbreak \mathcal{H}_{k}^{(0,0,2\,\allowbreak \mathrm{i},2\,\allowbreak \mathrm{i})}\!(\frac{1}{2}\,\allowbreak \bigl(3 +\allowbreak \sqrt{13}\bigr),\allowbreak 2,\allowbreak 1,\allowbreak 1) -\allowbreak 2\,\allowbreak \mathcal{H}_{k}^{(0,2\,\allowbreak \mathrm{i},0,2\,\allowbreak \mathrm{i})}\!(\frac{1}{2}\,\allowbreak \bigl(3 -\allowbreak \sqrt{13}\bigr),\allowbreak 1,\allowbreak 2,\allowbreak 1) +\allowbreak 2\,\allowbreak \mathcal{H}_{k}^{(0,2\,\allowbreak \mathrm{i},0,2\,\allowbreak \mathrm{i})}\!(\frac{1}{2}\,\allowbreak \bigl(3 +\allowbreak \sqrt{13}\bigr),\allowbreak 1,\allowbreak 2,\allowbreak 1) -\allowbreak 2\,\allowbreak \mathcal{H}_{k}^{(0,2\,\allowbreak \mathrm{i},2\,\allowbreak \mathrm{i},0)}\!(\frac{1}{2}\,\allowbreak \bigl(3
\end{dmath*}
\vspace{-1.1em}
\begin{dmath*}
{}\quad -\allowbreak \sqrt{13}\bigr),\allowbreak 1,\allowbreak 1,\allowbreak 2) +\allowbreak 2\,\allowbreak \mathcal{H}_{k}^{(0,2\,\allowbreak \mathrm{i},2\,\allowbreak \mathrm{i},0)}\!(\frac{1}{2}\,\allowbreak \bigl(3 +\allowbreak \sqrt{13}\bigr),\allowbreak 1,\allowbreak 1,\allowbreak 2)\bigr)
\end{dmath*}
\subsection*{Identity 9}
\begin{dmath*}
\sum_{n=1}^{k} H_{n}\,\allowbreak \bigl(\left\{\begin{matrix}n\\ 2\end{matrix}\right\}\bigr)^{2} = \frac{1}{48}\,\allowbreak \bigl(12\,\allowbreak \bigl(-2 +\allowbreak 2^{k}\bigr)^{2}\,\allowbreak \bigl(1 +\allowbreak k\bigr)\,\allowbreak H_{k} +\allowbreak 12\,\allowbreak \mathcal{H}_{k}^{(-1,1)}\!(2,\allowbreak 1) -\allowbreak 6\,\allowbreak \mathcal{H}_{k}^{(-1,1)}\!(2,\allowbreak 2) -\allowbreak 3\,\allowbreak \mathcal{H}_{k}^{(-1,1)}\!(4,\allowbreak 1) -\allowbreak 2\,\allowbreak \bigl(8\,\allowbreak \bigl(5 -\allowbreak 3\,\allowbreak 2^{1 +\allowbreak k} +\allowbreak 4^{k} +\allowbreak 3\,\allowbreak k\bigr) +\allowbreak 3\,\allowbreak \mathcal{H}_{k}^{(-1,0,1)}\!(2,\allowbreak 2,\allowbreak 1) +\allowbreak 3\,\allowbreak \mathcal{H}_{k}^{(-1,1,0)}\!(2,\allowbreak 1,\allowbreak 2)\bigr)\bigr)
\end{dmath*}
\subsection*{Identity 10}
\begin{dmath*}
\sum_{i=1}^{n} \psi^{(3)}\!(i)\,\allowbreak \mathcal{A}_{i}^{(2\,\allowbreak \mathrm{i})} = \frac{1}{15}\,\allowbreak \bigl(-\bigl(\pi^{4}\bigr)\,\allowbreak \mathcal{A}_{n}^{(-1 +\allowbreak 2\,\allowbreak \mathrm{i})} +\allowbreak \bigl(1 +\allowbreak n\bigr)\,\allowbreak \pi^{4}\,\allowbreak \mathcal{A}_{n}^{(2\,\allowbreak \mathrm{i})} +\allowbreak 90\,\allowbreak \bigl(\mathcal{A}_{n}^{(3 +\allowbreak 2\,\allowbreak \mathrm{i})} -\allowbreak n\,\allowbreak \mathcal{A}_{n}^{(4 +\allowbreak 2\,\allowbreak \mathrm{i})} +\allowbreak \mathcal{H}_{n}^{(2\,\allowbreak \mathrm{i},4)}\!(-1,\allowbreak 1) +\allowbreak \mathcal{H}_{n}^{(0,2\,\allowbreak \mathrm{i},4)}\!(1,\allowbreak -1,\allowbreak 1) +\allowbreak \mathcal{H}_{n}^{(0,4,2\,\allowbreak \mathrm{i})}\!(1,\allowbreak 1,\allowbreak -1)\bigr)\bigr)
\end{dmath*}
\subsection*{Identity 11}
\begin{dmath*}
\sum_{n=1}^{k} \zeta\!(2,n)\,\allowbreak \mathcal{H}_{n}^{(1,2)} = \frac{1}{6}\,\allowbreak \bigl(\pi^{2}\,\allowbreak \bigl(\mathcal{H}_{k}^{(1,2)} +\allowbreak \mathcal{H}_{k}^{(0,1,2)}\bigr) -\allowbreak 6\,\allowbreak \bigl(\mathcal{H}_{k}^{(1,4)} +\allowbreak \mathcal{H}_{k}^{(0,1,4)} +\allowbreak \mathcal{H}_{k}^{(0,3,2)} +\allowbreak 2\,\allowbreak \bigl(\mathcal{H}_{k}^{(1,2,2)} +\allowbreak \mathcal{H}_{k}^{(0,1,2,2)}\bigr) +\allowbreak \mathcal{H}_{k}^{(0,2,1,2)}\bigr)\bigr)
\end{dmath*}
\subsection*{Identity 12}
\begin{dmath*}
\sum_{n=1}^{k} \tau\!(2^{n})\,\allowbreak \mathcal{H}_{n}^{(1,2)} = \frac{1}{493374}\,\allowbreak \bigl(8\,\allowbreak \bigl(\bigl(-12 -\allowbreak 4\,\allowbreak \mathrm{i}\,\allowbreak \sqrt{119}\bigr)^{k}\,\allowbreak \bigl(30821 -\allowbreak 713\,\allowbreak \mathrm{i}\,\allowbreak \sqrt{119}\bigr) +\allowbreak \bigl(30821 +\allowbreak 713\,\allowbreak \mathrm{i}\,\allowbreak \sqrt{119}\bigr)\,\allowbreak \bigl(4\,\allowbreak \mathrm{i}\,\allowbreak \bigl(3\,\allowbreak \mathrm{i} +\allowbreak \sqrt{119}\bigr)\bigr)^{k}\bigr)\,\allowbreak \mathcal{H}_{k}^{(1,2)} +\allowbreak \bigl(119 -\allowbreak 515\,\allowbreak \mathrm{i}\,\allowbreak \sqrt{119}\bigr)\,\allowbreak \mathcal{H}_{k}^{(1,2)}\!(-12 -\allowbreak 4\,\allowbreak \mathrm{i}\,\allowbreak \sqrt{119},\allowbreak 1)
\end{dmath*}
\vspace{-1.1em}
\begin{dmath*}
{}\quad +\allowbreak \bigl(119 +\allowbreak 515\,\allowbreak \mathrm{i}\,\allowbreak \sqrt{119}\bigr)\,\allowbreak \mathcal{H}_{k}^{(1,2)}\!(-12 +\allowbreak 4\,\allowbreak \mathrm{i}\,\allowbreak \sqrt{119},\allowbreak 1)\bigr)
\end{dmath*}
\subsection*{Identity 13}
\begin{dmath*}
\sum_{n=1}^{k} (\frac{n}{15})\,\allowbreak \mathcal{H}_{n}^{(1,2)} = \frac{-1}{\sqrt{15}}\,\allowbreak \mathrm{i}\,\allowbreak \bigl(\mathcal{H}_{k}^{(1,2)}\!(-\bigl(\bigl(-1\bigr)^{\frac{1}{15}}\bigr),\allowbreak 1) +\allowbreak \mathcal{H}_{k}^{(1,2)}\!(\bigl(-1\bigr)^{\frac{2}{15}},\allowbreak 1) +\allowbreak \mathcal{H}_{k}^{(1,2)}\!(\bigl(-1\bigr)^{\frac{4}{15}},\allowbreak 1) -\allowbreak \mathcal{H}_{k}^{(1,2)}\!(-\bigl(\bigl(-1\bigr)^{\frac{7}{15}}\bigr),\allowbreak 1) +\allowbreak \mathcal{H}_{k}^{(1,2)}\!(\bigl(-1\bigr)^{\frac{8}{15}},\allowbreak 1) -\allowbreak \mathcal{H}_{k}^{(1,2)}\!(-\bigl(\bigl(-1\bigr)^{\frac{11}{15}}\bigr),\allowbreak 1) -\allowbreak \mathcal{H}_{k}^{(1,2)}\!(-\bigl(\bigl(-1\bigr)^{\frac{13}{15}}\bigr),\allowbreak 1)
\end{dmath*}
\vspace{-1.1em}
\begin{dmath*}
{}\quad -\allowbreak \mathcal{H}_{k}^{(1,2)}\!(\bigl(-1\bigr)^{\frac{14}{15}},\allowbreak 1) +\allowbreak \mathcal{H}_{k}^{(0,1,2)}\!(-\bigl(\bigl(-1\bigr)^{\frac{1}{15}}\bigr),\allowbreak 1,\allowbreak 1) +\allowbreak \mathcal{H}_{k}^{(0,1,2)}\!(\bigl(-1\bigr)^{\frac{2}{15}},\allowbreak 1,\allowbreak 1) +\allowbreak \mathcal{H}_{k}^{(0,1,2)}\!(\bigl(-1\bigr)^{\frac{4}{15}},\allowbreak 1,\allowbreak 1) -\allowbreak \mathcal{H}_{k}^{(0,1,2)}\!(-\bigl(\bigl(-1\bigr)^{\frac{7}{15}}\bigr),\allowbreak 1,\allowbreak 1) +\allowbreak \mathcal{H}_{k}^{(0,1,2)}\!(\bigl(-1\bigr)^{\frac{8}{15}},\allowbreak 1,\allowbreak 1) -\allowbreak \mathcal{H}_{k}^{(0,1,2)}\!(-\bigl(\bigl(-1\bigr)^{\frac{11}{15}}\bigr),\allowbreak 1,\allowbreak 1)
\end{dmath*}
\vspace{-1.1em}
\begin{dmath*}
{}\quad -\allowbreak \mathcal{H}_{k}^{(0,1,2)}\!(-\bigl(\bigl(-1\bigr)^{\frac{13}{15}}\bigr),\allowbreak 1,\allowbreak 1) -\allowbreak \mathcal{H}_{k}^{(0,1,2)}\!(\bigl(-1\bigr)^{\frac{14}{15}},\allowbreak 1,\allowbreak 1)\bigr)
\end{dmath*}
\subsection*{Identity 14}
\begin{dmath*}
\sum_{n=1}^{k} (\frac{n}{8})_K\,\allowbreak \mathcal{H}_{n}^{(1,2)} = \frac{-1}{2\,\allowbreak \sqrt{2}}\,\allowbreak \bigl(\mathcal{H}_{k}^{(1,2)}\!(-\bigl(\bigl(-1\bigr)^{\frac{1}{4}}\bigr),\allowbreak 1) -\allowbreak \mathcal{H}_{k}^{(1,2)}\!(\bigl(-1\bigr)^{\frac{1}{4}},\allowbreak 1) -\allowbreak \mathcal{H}_{k}^{(1,2)}\!(-\bigl(\bigl(-1\bigr)^{\frac{3}{4}}\bigr),\allowbreak 1) +\allowbreak \mathcal{H}_{k}^{(1,2)}\!(\bigl(-1\bigr)^{\frac{3}{4}},\allowbreak 1) +\allowbreak \mathcal{H}_{k}^{(0,1,2)}\!(-\bigl(\bigl(-1\bigr)^{\frac{1}{4}}\bigr),\allowbreak 1,\allowbreak 1) -\allowbreak \mathcal{H}_{k}^{(0,1,2)}\!(\bigl(-1\bigr)^{\frac{1}{4}},\allowbreak 1,\allowbreak 1) -\allowbreak \mathcal{H}_{k}^{(0,1,2)}\!(-\bigl(\bigl(-1\bigr)^{\frac{3}{4}}\bigr),\allowbreak 1,\allowbreak 1)
\end{dmath*}
\vspace{-1.1em}
\begin{dmath*}
{}\quad +\allowbreak \mathcal{H}_{k}^{(0,1,2)}\!(\bigl(-1\bigr)^{\frac{3}{4}},\allowbreak 1,\allowbreak 1)\bigr)
\end{dmath*}
\subsection*{Identity 15}
\begin{dmath*}
\sum_{n=1}^{k} \operatorname{Quotient}\!(2\,\allowbreak n +\allowbreak 1,5)\,\allowbreak \mathcal{H}_{n}^{(1,2)} = \frac{1}{25}\,\allowbreak \bigl(10\,\allowbreak \bigl(-H_{k} +\allowbreak k\,\allowbreak \mathcal{H}_{k}^{(2)}\bigr) -\allowbreak 5\,\allowbreak \mathcal{H}_{k}^{(1,2)} +\allowbreak 10\,\allowbreak \mathcal{H}_{k}^{(-1,1,2)} -\allowbreak 5\,\allowbreak \mathcal{H}_{k}^{(0,1,2)} -\allowbreak \mathcal{H}_{k}^{(1,2)}\!(-\bigl(\bigl(-1\bigr)^{\frac{1}{5}}\bigr),\allowbreak 1) +\allowbreak 4\,\allowbreak \bigl(-1\bigr)^{\frac{1}{5}}\,\allowbreak \mathcal{H}_{k}^{(1,2)}\!(-\bigl(\bigl(-1\bigr)^{\frac{1}{5}}\bigr),\allowbreak 1) -\allowbreak 2\,\allowbreak \bigl(-1\bigr)^{\frac{2}{5}}\,\allowbreak \mathcal{H}_{k}^{(1,2)}\!(-\bigl(\bigl(-1\bigr)^{\frac{1}{5}}\bigr),\allowbreak 1)
\end{dmath*}
\vspace{-1.1em}
\begin{dmath*}
{}\quad -\allowbreak 3\,\allowbreak \bigl(-1\bigr)^{\frac{4}{5}}\,\allowbreak \mathcal{H}_{k}^{(1,2)}\!(-\bigl(\bigl(-1\bigr)^{\frac{1}{5}}\bigr),\allowbreak 1) -\allowbreak \mathcal{H}_{k}^{(1,2)}\!(\bigl(-1\bigr)^{\frac{2}{5}},\allowbreak 1) -\allowbreak 4\,\allowbreak \bigl(-1\bigr)^{\frac{2}{5}}\,\allowbreak \mathcal{H}_{k}^{(1,2)}\!(\bigl(-1\bigr)^{\frac{2}{5}},\allowbreak 1) +\allowbreak 3\,\allowbreak \bigl(-1\bigr)^{\frac{3}{5}}\,\allowbreak \mathcal{H}_{k}^{(1,2)}\!(\bigl(-1\bigr)^{\frac{2}{5}},\allowbreak 1) -\allowbreak 2\,\allowbreak \bigl(-1\bigr)^{\frac{4}{5}}\,\allowbreak \mathcal{H}_{k}^{(1,2)}\!(\bigl(-1\bigr)^{\frac{2}{5}},\allowbreak 1)
\end{dmath*}
\vspace{-1.1em}
\begin{dmath*}
{}\quad -\allowbreak \mathcal{H}_{k}^{(1,2)}\!(-\bigl(\bigl(-1\bigr)^{\frac{3}{5}}\bigr),\allowbreak 1) +\allowbreak 2\,\allowbreak \bigl(-1\bigr)^{\frac{1}{5}}\,\allowbreak \mathcal{H}_{k}^{(1,2)}\!(-\bigl(\bigl(-1\bigr)^{\frac{3}{5}}\bigr),\allowbreak 1) -\allowbreak 3\,\allowbreak \bigl(-1\bigr)^{\frac{2}{5}}\,\allowbreak \mathcal{H}_{k}^{(1,2)}\!(-\bigl(\bigl(-1\bigr)^{\frac{3}{5}}\bigr),\allowbreak 1) +\allowbreak 4\,\allowbreak \bigl(-1\bigr)^{\frac{3}{5}}\,\allowbreak \mathcal{H}_{k}^{(1,2)}\!(-\bigl(\bigl(-1\bigr)^{\frac{3}{5}}\bigr),\allowbreak 1) -\allowbreak \mathcal{H}_{k}^{(1,2)}\!(\bigl(-1\bigr)^{\frac{4}{5}},\allowbreak 1) +\allowbreak 3\,\allowbreak \bigl(-1\bigr)^{\frac{1}{5}}\,\allowbreak \mathcal{H}_{k}^{(1,2)}\!(\bigl(-1\bigr)^{\frac{4}{5}},\allowbreak 1)
\end{dmath*}
\vspace{-1.1em}
\begin{dmath*}
{}\quad +\allowbreak 2\,\allowbreak \bigl(-1\bigr)^{\frac{3}{5}}\,\allowbreak \mathcal{H}_{k}^{(1,2)}\!(\bigl(-1\bigr)^{\frac{4}{5}},\allowbreak 1) -\allowbreak 4\,\allowbreak \bigl(-1\bigr)^{\frac{4}{5}}\,\allowbreak \mathcal{H}_{k}^{(1,2)}\!(\bigl(-1\bigr)^{\frac{4}{5}},\allowbreak 1) -\allowbreak \mathcal{H}_{k}^{(0,1,2)}\!(-\bigl(\bigl(-1\bigr)^{\frac{1}{5}}\bigr),\allowbreak 1,\allowbreak 1) +\allowbreak 4\,\allowbreak \bigl(-1\bigr)^{\frac{1}{5}}\,\allowbreak \mathcal{H}_{k}^{(0,1,2)}\!(-\bigl(\bigl(-1\bigr)^{\frac{1}{5}}\bigr),\allowbreak 1,\allowbreak 1) -\allowbreak 2\,\allowbreak \bigl(-1\bigr)^{\frac{2}{5}}\,\allowbreak \mathcal{H}_{k}^{(0,1,2)}\!(-\bigl(\bigl(-1\bigr)^{\frac{1}{5}}\bigr),\allowbreak 1,\allowbreak 1)
\end{dmath*}
\vspace{-1.1em}
\begin{dmath*}
{}\quad -\allowbreak 3\,\allowbreak \bigl(-1\bigr)^{\frac{4}{5}}\,\allowbreak \mathcal{H}_{k}^{(0,1,2)}\!(-\bigl(\bigl(-1\bigr)^{\frac{1}{5}}\bigr),\allowbreak 1,\allowbreak 1) -\allowbreak \mathcal{H}_{k}^{(0,1,2)}\!(\bigl(-1\bigr)^{\frac{2}{5}},\allowbreak 1,\allowbreak 1) -\allowbreak 4\,\allowbreak \bigl(-1\bigr)^{\frac{2}{5}}\,\allowbreak \mathcal{H}_{k}^{(0,1,2)}\!(\bigl(-1\bigr)^{\frac{2}{5}},\allowbreak 1,\allowbreak 1) +\allowbreak 3\,\allowbreak \bigl(-1\bigr)^{\frac{3}{5}}\,\allowbreak \mathcal{H}_{k}^{(0,1,2)}\!(\bigl(-1\bigr)^{\frac{2}{5}},\allowbreak 1,\allowbreak 1) -\allowbreak 2\,\allowbreak \bigl(-1\bigr)^{\frac{4}{5}}\,\allowbreak \mathcal{H}_{k}^{(0,1,2)}\!(\bigl(-1\bigr)^{\frac{2}{5}},\allowbreak 1,\allowbreak 1)
\end{dmath*}
\vspace{-1.1em}
\begin{dmath*}
{}\quad -\allowbreak \mathcal{H}_{k}^{(0,1,2)}\!(-\bigl(\bigl(-1\bigr)^{\frac{3}{5}}\bigr),\allowbreak 1,\allowbreak 1) +\allowbreak 2\,\allowbreak \bigl(-1\bigr)^{\frac{1}{5}}\,\allowbreak \mathcal{H}_{k}^{(0,1,2)}\!(-\bigl(\bigl(-1\bigr)^{\frac{3}{5}}\bigr),\allowbreak 1,\allowbreak 1) -\allowbreak 3\,\allowbreak \bigl(-1\bigr)^{\frac{2}{5}}\,\allowbreak \mathcal{H}_{k}^{(0,1,2)}\!(-\bigl(\bigl(-1\bigr)^{\frac{3}{5}}\bigr),\allowbreak 1,\allowbreak 1) +\allowbreak 4\,\allowbreak \bigl(-1\bigr)^{\frac{3}{5}}\,\allowbreak \mathcal{H}_{k}^{(0,1,2)}\!(-\bigl(\bigl(-1\bigr)^{\frac{3}{5}}\bigr),\allowbreak 1,\allowbreak 1) -\allowbreak \mathcal{H}_{k}^{(0,1,2)}\!(\bigl(-1\bigr)^{\frac{4}{5}},\allowbreak 1,\allowbreak 1)
\end{dmath*}
\vspace{-1.1em}
\begin{dmath*}
{}\quad +\allowbreak 3\,\allowbreak \bigl(-1\bigr)^{\frac{1}{5}}\,\allowbreak \mathcal{H}_{k}^{(0,1,2)}\!(\bigl(-1\bigr)^{\frac{4}{5}},\allowbreak 1,\allowbreak 1) +\allowbreak 2\,\allowbreak \bigl(-1\bigr)^{\frac{3}{5}}\,\allowbreak \mathcal{H}_{k}^{(0,1,2)}\!(\bigl(-1\bigr)^{\frac{4}{5}},\allowbreak 1,\allowbreak 1) -\allowbreak 4\,\allowbreak \bigl(-1\bigr)^{\frac{4}{5}}\,\allowbreak \mathcal{H}_{k}^{(0,1,2)}\!(\bigl(-1\bigr)^{\frac{4}{5}},\allowbreak 1,\allowbreak 1)\bigr)
\end{dmath*}
\subsection*{Identity 16}
\begin{dmath*}
\sum_{n=1}^{k} \operatorname{ord}_{8}\!(2\,\allowbreak n +\allowbreak 1)\,\allowbreak \mathcal{H}_{n}^{(1,2)} = \frac{1}{4}\,\allowbreak \bigl(7\,\allowbreak \mathcal{H}_{k}^{(1,2)} +\allowbreak 7\,\allowbreak \mathcal{H}_{k}^{(0,1,2)} -\allowbreak \mathcal{H}_{k}^{(1,2)}\!(-1,\allowbreak 1) -\allowbreak \mathcal{H}_{k}^{(1,2)}\!(-\mathrm{i},\allowbreak 1) -\allowbreak \mathcal{H}_{k}^{(1,2)}\!(\mathrm{i},\allowbreak 1) -\allowbreak \mathcal{H}_{k}^{(0,1,2)}\!(-1,\allowbreak 1,\allowbreak 1) -\allowbreak \mathcal{H}_{k}^{(0,1,2)}\!(-\mathrm{i},\allowbreak 1,\allowbreak 1) -\allowbreak \mathcal{H}_{k}^{(0,1,2)}\!(\mathrm{i},\allowbreak 1,\allowbreak 1)\bigr)
\end{dmath*}
\subsection*{Identity 17}
\begin{dmath*}
\sum_{n=1}^{k} \operatorname{PowerMod}\!(2,\allowbreak n,\allowbreak 5)\,\allowbreak \mathcal{H}_{n}^{(1,2)} = \bigl(-\frac{1}{4} +\allowbreak \frac{\mathrm{i}}{4}\bigr)\,\allowbreak \bigl(\bigl(-5 -\allowbreak 5\,\allowbreak \mathrm{i}\bigr)\,\allowbreak \mathcal{H}_{k}^{(1,2)} -\allowbreak \bigl(5 +\allowbreak 5\,\allowbreak \mathrm{i}\bigr)\,\allowbreak \mathcal{H}_{k}^{(0,1,2)} +\allowbreak \bigl(1 +\allowbreak 2\,\allowbreak \mathrm{i}\bigr)\,\allowbreak \mathcal{H}_{k}^{(1,2)}\!(-\mathrm{i},\allowbreak 1) +\allowbreak \bigl(2 +\allowbreak \mathrm{i}\bigr)\,\allowbreak \mathcal{H}_{k}^{(1,2)}\!(\mathrm{i},\allowbreak 1) +\allowbreak \bigl(1 +\allowbreak 2\,\allowbreak \mathrm{i}\bigr)\,\allowbreak \mathcal{H}_{k}^{(0,1,2)}\!(-\mathrm{i},\allowbreak 1,\allowbreak 1)
\end{dmath*}
\vspace{-1.1em}
\begin{dmath*}
{}\quad +\allowbreak \bigl(2 +\allowbreak \mathrm{i}\bigr)\,\allowbreak \mathcal{H}_{k}^{(0,1,2)}\!(\mathrm{i},\allowbreak 1,\allowbreak 1)\bigr)
\end{dmath*}
\subsection*{Identity 18}
\begin{dmath*}
\sum_{n=1}^{k} \nu_{4}\!(12\,\allowbreak 2^{n})\,\allowbreak \mathcal{H}_{n}^{(1,2)} = \frac{1}{4}\,\allowbreak \bigl(2\,\allowbreak \bigl(-H_{k} +\allowbreak k\,\allowbreak \mathcal{H}_{k}^{(2)}\bigr) +\allowbreak 3\,\allowbreak \mathcal{H}_{k}^{(1,2)} +\allowbreak 2\,\allowbreak \mathcal{H}_{k}^{(-1,1,2)} +\allowbreak 3\,\allowbreak \mathcal{H}_{k}^{(0,1,2)} +\allowbreak \mathcal{H}_{k}^{(1,2)}\!(-1,\allowbreak 1) +\allowbreak \mathcal{H}_{k}^{(0,1,2)}\!(-1,\allowbreak 1,\allowbreak 1)\bigr)
\end{dmath*}
\subsection*{Identity 19}
\begin{dmath*}
\sum_{n=1}^{k} \sigma_{2}\!(2^{n})\,\allowbreak \mathcal{H}_{n}^{(1,2)} = \frac{1}{3}\,\allowbreak \bigl(-\mathcal{H}_{k}^{(1,2)} -\allowbreak \mathcal{H}_{k}^{(0,1,2)} +\allowbreak 4\,\allowbreak \mathcal{H}_{k}^{(1,2)}\!(4,\allowbreak 1) +\allowbreak 4\,\allowbreak \mathcal{H}_{k}^{(0,1,2)}\!(4,\allowbreak 1,\allowbreak 1)\bigr)
\end{dmath*}
\subsection*{Identity 20}
\begin{dmath*}
\sum_{n_{1}=1}^{k}\sum_{n_{2}=1}^{n_{1}-1}\sum_{n_{3}=1}^{n_{2}-1}
\frac{\chi_{4,2}\!(n_{1})\,\chi_{4,2}\!(n_{2})\,\chi_{4,2}\!(n_{3})}{n_{1}^{2}\,n_{2}^{2}\,n_{3}^{2}}
= -\frac{\mathrm{i}}{48}\,\bigl(\bigl(6\,\mathcal{A}_{k}^{(4)}+6\,\mathcal{H}_{k}^{(4)}+\bigl(\mathcal{H}_{k}^{(2)}\!(-\mathrm{i})-\mathcal{H}_{k}^{(2)}\!(\mathrm{i})\bigr)^{2}\bigr)\,\bigl(\mathcal{H}_{k}^{(2)}\!(-\mathrm{i})-\mathcal{H}_{k}^{(2)}\!(\mathrm{i})\bigr)-8\,\mathcal{H}_{k}^{(6)}\!(-\mathrm{i})+8\,\mathcal{H}_{k}^{(6)}\!(\mathrm{i})\bigr)
\end{dmath*}
\section*{3.2 Aligned affine case}

\subsection*{Identity 21}
\begin{dmath*}
\sum_{n=1}^{k} \frac{\mathcal{H}_{2\,\allowbreak n +\allowbreak 3}^{(1,2)}\!(\frac{1}{4},\allowbreak \frac{1}{5})}{\bigl(2\,\allowbreak n +\allowbreak 3\bigr)^{2}} = \frac{1}{2}\,\allowbreak \bigl(-\frac{47}{28800} -\allowbreak \mathcal{H}_{3 +\allowbreak 2\,\allowbreak k}^{(3,2)}\!(-\bigl(\frac{1}{4}\bigr),\allowbreak \frac{1}{5}) +\allowbreak \mathcal{H}_{3 +\allowbreak 2\,\allowbreak k}^{(3,2)}\!(\frac{1}{4},\allowbreak \frac{1}{5}) -\allowbreak \mathcal{H}_{3 +\allowbreak 2\,\allowbreak k}^{(2,1,2)}\!(-1,\allowbreak \frac{1}{4},\allowbreak \frac{1}{5}) +\allowbreak \mathcal{H}_{3 +\allowbreak 2\,\allowbreak k}^{(2,1,2)}\!(1,\allowbreak \frac{1}{4},\allowbreak \frac{1}{5})\bigr)
\end{dmath*}
\subsection*{Identity 22}
\begin{dmath*}
\sum_{n=1}^{k} \bigl(2\,\allowbreak n +\allowbreak 5\bigr)^{2\,\allowbreak \mathrm{i}}\,\allowbreak \bigl(h_{2\,\allowbreak n +\allowbreak 5}^{[1]}\!(1;1)\bigr)^{2} = -1 -\allowbreak 169\,\allowbreak 3^{-2 +\allowbreak 2\,\allowbreak \mathrm{i}} -\allowbreak \frac{7569}{4}\,\allowbreak 5^{-2 +\allowbreak 2\,\allowbreak \mathrm{i}} +\allowbreak \frac{1}{2}\,\allowbreak \mathcal{A}_{5 +\allowbreak 2\,\allowbreak k}^{(2 -\allowbreak 2\,\allowbreak \mathrm{i})} -\allowbreak \frac{1}{2}\,\allowbreak \zeta\!(2 -\allowbreak 2\,\allowbreak \mathrm{i},6 +\allowbreak 2\,\allowbreak k) +\allowbreak 2\,\allowbreak \mathcal{H}_{5 +\allowbreak 2\,\allowbreak k}^{(-2\,\allowbreak \mathrm{i},2)} +\allowbreak 2\,\allowbreak \mathcal{H}_{5 +\allowbreak 2\,\allowbreak k}^{(1 -\allowbreak 2\,\allowbreak \mathrm{i},1)} +\allowbreak \frac{5}{2}\,\allowbreak \mathcal{H}_{5 +\allowbreak 2\,\allowbreak k}^{(-2\,\allowbreak \mathrm{i},0,2)}
\end{dmath*}
\vspace{-1.1em}
\begin{dmath*}
{}\quad +\allowbreak 6\,\allowbreak \mathcal{H}_{5 +\allowbreak 2\,\allowbreak k}^{(-2\,\allowbreak \mathrm{i},1,1)} +\allowbreak \mathcal{H}_{5 +\allowbreak 2\,\allowbreak k}^{(1 -\allowbreak 2\,\allowbreak \mathrm{i},0,1)} +\allowbreak \mathcal{H}_{5 +\allowbreak 2\,\allowbreak k}^{(-2\,\allowbreak \mathrm{i},0,0,2)} +\allowbreak 6\,\allowbreak \mathcal{H}_{5 +\allowbreak 2\,\allowbreak k}^{(-2\,\allowbreak \mathrm{i},0,1,1)} +\allowbreak 2\,\allowbreak \mathcal{H}_{5 +\allowbreak 2\,\allowbreak k}^{(-2\,\allowbreak \mathrm{i},1,0,1)} +\allowbreak 2\,\allowbreak \mathcal{H}_{5 +\allowbreak 2\,\allowbreak k}^{(-2\,\allowbreak \mathrm{i},0,0,1,1)} +\allowbreak \mathcal{H}_{5 +\allowbreak 2\,\allowbreak k}^{(-2\,\allowbreak \mathrm{i},0,1,0,1)}
\end{dmath*}
\vspace{-1.1em}
\begin{dmath*}
{}\quad -\allowbreak 2\,\allowbreak \mathcal{H}_{5 +\allowbreak 2\,\allowbreak k}^{(-2\,\allowbreak \mathrm{i},2)}\!(-1,\allowbreak 1) -\allowbreak 2\,\allowbreak \mathcal{H}_{5 +\allowbreak 2\,\allowbreak k}^{(1 -\allowbreak 2\,\allowbreak \mathrm{i},1)}\!(-1,\allowbreak 1) -\allowbreak \frac{5}{2}\,\allowbreak \mathcal{H}_{5 +\allowbreak 2\,\allowbreak k}^{(-2\,\allowbreak \mathrm{i},0,2)}\!(-1,\allowbreak 1,\allowbreak 1) -\allowbreak 6\,\allowbreak \mathcal{H}_{5 +\allowbreak 2\,\allowbreak k}^{(-2\,\allowbreak \mathrm{i},1,1)}\!(-1,\allowbreak 1,\allowbreak 1) -\allowbreak \mathcal{H}_{5 +\allowbreak 2\,\allowbreak k}^{(1 -\allowbreak 2\,\allowbreak \mathrm{i},0,1)}\!(-1,\allowbreak 1,\allowbreak 1)
\end{dmath*}
\vspace{-1.1em}
\begin{dmath*}
{}\quad -\allowbreak \mathcal{H}_{5 +\allowbreak 2\,\allowbreak k}^{(-2\,\allowbreak \mathrm{i},0,0,2)}\!(-1,\allowbreak 1,\allowbreak 1,\allowbreak 1) -\allowbreak 6\,\allowbreak \mathcal{H}_{5 +\allowbreak 2\,\allowbreak k}^{(-2\,\allowbreak \mathrm{i},0,1,1)}\!(-1,\allowbreak 1,\allowbreak 1,\allowbreak 1) -\allowbreak 2\,\allowbreak \mathcal{H}_{5 +\allowbreak 2\,\allowbreak k}^{(-2\,\allowbreak \mathrm{i},1,0,1)}\!(-1,\allowbreak 1,\allowbreak 1,\allowbreak 1) -\allowbreak 2\,\allowbreak \mathcal{H}_{5 +\allowbreak 2\,\allowbreak k}^{(-2\,\allowbreak \mathrm{i},0,0,1,1)}\!(-1,\allowbreak 1,\allowbreak 1,\allowbreak 1,\allowbreak 1) -\allowbreak \mathcal{H}_{5 +\allowbreak 2\,\allowbreak k}^{(-2\,\allowbreak \mathrm{i},0,1,0,1)}\!(-1,\allowbreak 1,\allowbreak 1,\allowbreak 1,\allowbreak 1)
\end{dmath*}
\vspace{-1.1em}
\begin{dmath*}
{}\quad +\allowbreak \frac{1}{2}\,\allowbreak \zeta\!(2 -\allowbreak 2\,\allowbreak \mathrm{i})
\end{dmath*}
\subsection*{Identity 23}
\begin{dmath*}
\sum_{n=1}^{k} \frac{\mathcal{H}_{2\,\allowbreak n +\allowbreak 5}^{(2)}\!(\frac{1}{3})\,\allowbreak \mathcal{A}_{2\,\allowbreak n +\allowbreak 5}}{\bigl(2\,\allowbreak n +\allowbreak 5\bigr)^{2}} = \frac{1}{2}\,\allowbreak \bigl(-\frac{496829993}{656100000} +\allowbreak \mathcal{A}_{5 +\allowbreak 2\,\allowbreak k}^{(5)}\!(\frac{1}{3}) +\allowbreak \mathcal{H}_{5 +\allowbreak 2\,\allowbreak k}^{(5)}\!(\frac{1}{3}) +\allowbreak \mathcal{H}_{5 +\allowbreak 2\,\allowbreak k}^{(2,3)}\!(-1,\allowbreak -\bigl(\frac{1}{3}\bigr)) -\allowbreak \mathcal{H}_{5 +\allowbreak 2\,\allowbreak k}^{(2,3)}\!(1,\allowbreak -\bigl(\frac{1}{3}\bigr)) -\allowbreak \mathcal{H}_{5 +\allowbreak 2\,\allowbreak k}^{(3,2)}\!(-1,\allowbreak \frac{1}{3}) +\allowbreak \mathcal{H}_{5 +\allowbreak 2\,\allowbreak k}^{(3,2)}\!(1,\allowbreak \frac{1}{3}) +\allowbreak \mathcal{H}_{5 +\allowbreak 2\,\allowbreak k}^{(4,1)}\!(-\bigl(\frac{1}{3}\bigr),\allowbreak -1)
\end{dmath*}
\vspace{-1.1em}
\begin{dmath*}
{}\quad -\allowbreak \mathcal{H}_{5 +\allowbreak 2\,\allowbreak k}^{(4,1)}\!(\frac{1}{3},\allowbreak -1) +\allowbreak \mathcal{H}_{5 +\allowbreak 2\,\allowbreak k}^{(2,1,2)}\!(-1,\allowbreak -1,\allowbreak \frac{1}{3}) -\allowbreak \mathcal{H}_{5 +\allowbreak 2\,\allowbreak k}^{(2,1,2)}\!(1,\allowbreak -1,\allowbreak \frac{1}{3}) +\allowbreak \mathcal{H}_{5 +\allowbreak 2\,\allowbreak k}^{(2,2,1)}\!(-1,\allowbreak \frac{1}{3},\allowbreak -1) -\allowbreak \mathcal{H}_{5 +\allowbreak 2\,\allowbreak k}^{(2,2,1)}\!(1,\allowbreak \frac{1}{3},\allowbreak -1)\bigr)
\end{dmath*}
\subsection*{Identity 24}
\begin{dmath*}
\sum_{n=1}^{k} \frac{\bigl(\frac{1}{2}\bigr)^{n}\,\allowbreak H_{2\,\allowbreak n +\allowbreak 3}\,\allowbreak \mathcal{H}_{2\,\allowbreak n +\allowbreak 3}^{(1,2)}}{\bigl(2\,\allowbreak n +\allowbreak 3\bigr)^{2}} = \sqrt{2}\,\allowbreak \bigl(-\frac{121}{648\,\allowbreak \sqrt{2}} -\allowbreak \mathcal{H}_{3 +\allowbreak 2\,\allowbreak k}^{(3,3)}\!(-\bigl(\frac{1}{\sqrt{2}}\bigr),\allowbreak 1) +\allowbreak \mathcal{H}_{3 +\allowbreak 2\,\allowbreak k}^{(3,3)}\!(\frac{1}{\sqrt{2}},\allowbreak 1) -\allowbreak \mathcal{H}_{3 +\allowbreak 2\,\allowbreak k}^{(4,2)}\!(-\bigl(\frac{1}{\sqrt{2}}\bigr),\allowbreak 1) +\allowbreak \mathcal{H}_{3 +\allowbreak 2\,\allowbreak k}^{(4,2)}\!(\frac{1}{\sqrt{2}},\allowbreak 1) -\allowbreak \mathcal{H}_{3 +\allowbreak 2\,\allowbreak k}^{(2,1,3)}\!(-\bigl(\frac{1}{\sqrt{2}}\bigr),\allowbreak 1,\allowbreak 1) +\allowbreak \mathcal{H}_{3 +\allowbreak 2\,\allowbreak k}^{(2,1,3)}\!(\frac{1}{\sqrt{2}},\allowbreak 1,\allowbreak 1)
\end{dmath*}
\vspace{-1.1em}
\begin{dmath*}
{}\quad -\allowbreak \mathcal{H}_{3 +\allowbreak 2\,\allowbreak k}^{(2,2,2)}\!(-\bigl(\frac{1}{\sqrt{2}}\bigr),\allowbreak 1,\allowbreak 1) +\allowbreak \mathcal{H}_{3 +\allowbreak 2\,\allowbreak k}^{(2,2,2)}\!(\frac{1}{\sqrt{2}},\allowbreak 1,\allowbreak 1) -\allowbreak 2\,\allowbreak \mathcal{H}_{3 +\allowbreak 2\,\allowbreak k}^{(3,1,2)}\!(-\bigl(\frac{1}{\sqrt{2}}\bigr),\allowbreak 1,\allowbreak 1) +\allowbreak 2\,\allowbreak \mathcal{H}_{3 +\allowbreak 2\,\allowbreak k}^{(3,1,2)}\!(\frac{1}{\sqrt{2}},\allowbreak 1,\allowbreak 1) -\allowbreak \mathcal{H}_{3 +\allowbreak 2\,\allowbreak k}^{(3,2,1)}\!(-\bigl(\frac{1}{\sqrt{2}}\bigr),\allowbreak 1,\allowbreak 1)
\end{dmath*}
\vspace{-1.1em}
\begin{dmath*}
{}\quad +\allowbreak \mathcal{H}_{3 +\allowbreak 2\,\allowbreak k}^{(3,2,1)}\!(\frac{1}{\sqrt{2}},\allowbreak 1,\allowbreak 1) -\allowbreak 2\,\allowbreak \mathcal{H}_{3 +\allowbreak 2\,\allowbreak k}^{(2,1,1,2)}\!(-\bigl(\frac{1}{\sqrt{2}}\bigr),\allowbreak 1,\allowbreak 1,\allowbreak 1) +\allowbreak 2\,\allowbreak \mathcal{H}_{3 +\allowbreak 2\,\allowbreak k}^{(2,1,1,2)}\!(\frac{1}{\sqrt{2}},\allowbreak 1,\allowbreak 1,\allowbreak 1) -\allowbreak \mathcal{H}_{3 +\allowbreak 2\,\allowbreak k}^{(2,1,2,1)}\!(-\bigl(\frac{1}{\sqrt{2}}\bigr),\allowbreak 1,\allowbreak 1,\allowbreak 1) +\allowbreak \mathcal{H}_{3 +\allowbreak 2\,\allowbreak k}^{(2,1,2,1)}\!(\frac{1}{\sqrt{2}},\allowbreak 1,\allowbreak 1,\allowbreak 1)\bigr)
\end{dmath*}
\section*{3.3 Arithmetic-progression shifted powers}

\subsection*{Identity 25}
\begin{dmath*}
\sum_{n=1}^{k} \frac{\mathcal{H}_{n}^{(r_{1},r_{2},r_{3})}\!(s_{1},\allowbreak s_{2},\allowbreak s_{3})}{2\,\allowbreak n +\allowbreak 1} = 2^{-4 +\allowbreak r_{1} +\allowbreak r_{2} +\allowbreak r_{3}}\,\allowbreak \bigl(-\mathcal{H}_{1 +\allowbreak 2\,\allowbreak k}^{(1,r_{1},r_{2},r_{3})}\!(-1,\allowbreak -\sqrt{s_{1}},\allowbreak -\sqrt{s_{2}},\allowbreak -\sqrt{s_{3}}) -\allowbreak \mathcal{H}_{1 +\allowbreak 2\,\allowbreak k}^{(1,r_{1},r_{2},r_{3})}\!(-1,\allowbreak -\sqrt{s_{1}},\allowbreak -\sqrt{s_{2}},\allowbreak \sqrt{s_{3}}) -\allowbreak \mathcal{H}_{1 +\allowbreak 2\,\allowbreak k}^{(1,r_{1},r_{2},r_{3})}\!(-1,\allowbreak -\sqrt{s_{1}},\allowbreak \sqrt{s_{2}},\allowbreak -\sqrt{s_{3}}) -\allowbreak \mathcal{H}_{1 +\allowbreak 2\,\allowbreak k}^{(1,r_{1},r_{2},r_{3})}\!(-1,\allowbreak -\sqrt{s_{1}},\allowbreak \sqrt{s_{2}},\allowbreak \sqrt{s_{3}})
\end{dmath*}
\vspace{-1.1em}
\begin{dmath*}
{}\quad -\allowbreak \mathcal{H}_{1 +\allowbreak 2\,\allowbreak k}^{(1,r_{1},r_{2},r_{3})}\!(-1,\allowbreak \sqrt{s_{1}},\allowbreak -\sqrt{s_{2}},\allowbreak -\sqrt{s_{3}}) -\allowbreak \mathcal{H}_{1 +\allowbreak 2\,\allowbreak k}^{(1,r_{1},r_{2},r_{3})}\!(-1,\allowbreak \sqrt{s_{1}},\allowbreak -\sqrt{s_{2}},\allowbreak \sqrt{s_{3}}) -\allowbreak \mathcal{H}_{1 +\allowbreak 2\,\allowbreak k}^{(1,r_{1},r_{2},r_{3})}\!(-1,\allowbreak \sqrt{s_{1}},\allowbreak \sqrt{s_{2}},\allowbreak -\sqrt{s_{3}}) -\allowbreak \mathcal{H}_{1 +\allowbreak 2\,\allowbreak k}^{(1,r_{1},r_{2},r_{3})}\!(-1,\allowbreak \sqrt{s_{1}},\allowbreak \sqrt{s_{2}},\allowbreak \sqrt{s_{3}})
\end{dmath*}
\vspace{-1.1em}
\begin{dmath*}
{}\quad +\allowbreak \mathcal{H}_{1 +\allowbreak 2\,\allowbreak k}^{(1,r_{1},r_{2},r_{3})}\!(1,\allowbreak -\sqrt{s_{1}},\allowbreak -\sqrt{s_{2}},\allowbreak -\sqrt{s_{3}}) +\allowbreak \mathcal{H}_{1 +\allowbreak 2\,\allowbreak k}^{(1,r_{1},r_{2},r_{3})}\!(1,\allowbreak -\sqrt{s_{1}},\allowbreak -\sqrt{s_{2}},\allowbreak \sqrt{s_{3}}) +\allowbreak \mathcal{H}_{1 +\allowbreak 2\,\allowbreak k}^{(1,r_{1},r_{2},r_{3})}\!(1,\allowbreak -\sqrt{s_{1}},\allowbreak \sqrt{s_{2}},\allowbreak -\sqrt{s_{3}}) +\allowbreak \mathcal{H}_{1 +\allowbreak 2\,\allowbreak k}^{(1,r_{1},r_{2},r_{3})}\!(1,\allowbreak -\sqrt{s_{1}},\allowbreak \sqrt{s_{2}},\allowbreak \sqrt{s_{3}})
\end{dmath*}
\vspace{-1.1em}
\begin{dmath*}
{}\quad +\allowbreak \mathcal{H}_{1 +\allowbreak 2\,\allowbreak k}^{(1,r_{1},r_{2},r_{3})}\!(1,\allowbreak \sqrt{s_{1}},\allowbreak -\sqrt{s_{2}},\allowbreak -\sqrt{s_{3}}) +\allowbreak \mathcal{H}_{1 +\allowbreak 2\,\allowbreak k}^{(1,r_{1},r_{2},r_{3})}\!(1,\allowbreak \sqrt{s_{1}},\allowbreak -\sqrt{s_{2}},\allowbreak \sqrt{s_{3}}) +\allowbreak \mathcal{H}_{1 +\allowbreak 2\,\allowbreak k}^{(1,r_{1},r_{2},r_{3})}\!(1,\allowbreak \sqrt{s_{1}},\allowbreak \sqrt{s_{2}},\allowbreak -\sqrt{s_{3}}) +\allowbreak \mathcal{H}_{1 +\allowbreak 2\,\allowbreak k}^{(1,r_{1},r_{2},r_{3})}\!(1,\allowbreak \sqrt{s_{1}},\allowbreak \sqrt{s_{2}},\allowbreak \sqrt{s_{3}})\bigr)
\end{dmath*}
\subsection*{Identity 26}
\begin{dmath*}
\sum_{n=1}^{k} \frac{\mathcal{H}_{n +\allowbreak 1}^{(2)}\!(2\,\allowbreak \mathrm{i})}{n^{2}} = -6\,\allowbreak \mathrm{i} -\allowbreak 4\,\allowbreak \mathrm{i}\,\allowbreak \mathcal{H}_{k}^{(1)}\!(2\,\allowbreak \mathrm{i}) +\allowbreak \bigl(2\,\allowbreak \mathrm{i} +\allowbreak \mathcal{H}_{k}^{(2)}\bigr)\,\allowbreak \mathcal{H}_{k}^{(2)}\!(2\,\allowbreak \mathrm{i}) +\allowbreak 2\,\allowbreak \mathcal{H}_{1 +\allowbreak k}^{(1)}\!(2\,\allowbreak \mathrm{i}) +\allowbreak \mathcal{H}_{1 +\allowbreak k}^{(2)}\!(2\,\allowbreak \mathrm{i}) -\allowbreak \mathcal{H}_{k}^{(2,2)}\!(2\,\allowbreak \mathrm{i},\allowbreak 1)
\end{dmath*}
\subsection*{Identity 27}
\begin{dmath*}
\sum_{n=1}^{k} \mathrm{i}^{n}\,\allowbreak \mathcal{A}_{n +\allowbreak 1}^{(\sqrt{6} +\allowbreak \mathrm{i})}\!(\frac{2}{3})\,\allowbreak H_{n +\allowbreak 1} = \bigl(\frac{1}{6} +\allowbreak \frac{\mathrm{i}}{6}\bigr)\,\allowbreak \bigl(-2 +\allowbreak 2\,\allowbreak \mathrm{i} +\allowbreak H_{k}\,\allowbreak \bigl(-3\,\allowbreak \mathrm{i}\,\allowbreak \mathrm{i}^{k}\,\allowbreak \mathcal{A}_{k}^{(\mathrm{i} +\allowbreak \sqrt{6})}\!(\frac{2}{3}) -\allowbreak 3\,\allowbreak \mathcal{H}_{k}^{(\mathrm{i} +\allowbreak \sqrt{6})}\!(-\bigl(\frac{2\,\allowbreak \mathrm{i}}{3}\bigr))\bigr) +\allowbreak \bigl(3 +\allowbreak 3\,\allowbreak \mathrm{i}\bigr)\,\allowbreak \mathcal{H}_{1 +\allowbreak k}^{(1 +\allowbreak \mathrm{i} +\allowbreak \sqrt{6})}\!(-\bigl(\frac{2\,\allowbreak \mathrm{i}}{3}\bigr)) -\allowbreak 3\,\allowbreak \mathcal{H}_{k}^{(1,\mathrm{i} +\allowbreak \sqrt{6})}\!(\mathrm{i},\allowbreak -\bigl(\frac{2}{3}\bigr))
\end{dmath*}
\vspace{-1.1em}
\begin{dmath*}
{}\quad +\allowbreak 3\,\allowbreak \mathcal{H}_{k}^{(1,\mathrm{i} +\allowbreak \sqrt{6})}\!(1,\allowbreak -\bigl(\frac{2\,\allowbreak \mathrm{i}}{3}\bigr)) +\allowbreak \bigl(3 +\allowbreak 3\,\allowbreak \mathrm{i}\bigr)\,\allowbreak \mathcal{H}_{1 +\allowbreak k}^{(1,\mathrm{i} +\allowbreak \sqrt{6})}\!(\mathrm{i},\allowbreak -\bigl(\frac{2}{3}\bigr)) +\allowbreak \bigl(3 +\allowbreak 3\,\allowbreak \mathrm{i}\bigr)\,\allowbreak \mathcal{H}_{1 +\allowbreak k}^{(\mathrm{i} +\allowbreak \sqrt{6},1)}\!(-\bigl(\frac{2\,\allowbreak \mathrm{i}}{3}\bigr),\allowbreak 1)\bigr)
\end{dmath*}
\subsection*{Identity 28}
\begin{dmath*}
\sum_{n=1}^{k} \frac{\mathcal{H}_{n}^{(1,2)}}{\bigl(3\,\allowbreak n -\allowbreak 1\bigr)^{2}} = -3\,\allowbreak \mathcal{H}_{-1 +\allowbreak 3\,\allowbreak k}^{(3)} +\allowbreak 3\,\allowbreak \bigl(-1\bigr)^{\frac{1}{3}}\,\allowbreak \mathcal{H}_{-1 +\allowbreak 3\,\allowbreak k}^{(3)} -\allowbreak 3\,\allowbreak \bigl(-1\bigr)^{\frac{2}{3}}\,\allowbreak \mathcal{H}_{-1 +\allowbreak 3\,\allowbreak k}^{(3)} +\allowbreak 3\,\allowbreak \mathcal{H}_{-1 +\allowbreak 3\,\allowbreak k}^{(4)} -\allowbreak 3\,\allowbreak \bigl(-1\bigr)^{\frac{1}{3}}\,\allowbreak \mathcal{H}_{-1 +\allowbreak 3\,\allowbreak k}^{(4)} +\allowbreak 3\,\allowbreak \bigl(-1\bigr)^{\frac{2}{3}}\,\allowbreak \mathcal{H}_{-1 +\allowbreak 3\,\allowbreak k}^{(4)} -\allowbreak 3\,\allowbreak \mathcal{H}_{-1 +\allowbreak 3\,\allowbreak k}^{(3)}\!(-\bigl(\bigl(-1\bigr)^{\frac{1}{3}}\bigr))
\end{dmath*}
\vspace{-1.1em}
\begin{dmath*}
{}\quad +\allowbreak 3\,\allowbreak \bigl(-1\bigr)^{\frac{1}{3}}\,\allowbreak \mathcal{H}_{-1 +\allowbreak 3\,\allowbreak k}^{(3)}\!(-\bigl(\bigl(-1\bigr)^{\frac{1}{3}}\bigr)) -\allowbreak 3\,\allowbreak \bigl(-1\bigr)^{\frac{2}{3}}\,\allowbreak \mathcal{H}_{-1 +\allowbreak 3\,\allowbreak k}^{(3)}\!(-\bigl(\bigl(-1\bigr)^{\frac{1}{3}}\bigr)) -\allowbreak 3\,\allowbreak \mathcal{H}_{-1 +\allowbreak 3\,\allowbreak k}^{(3)}\!(\bigl(-1\bigr)^{\frac{2}{3}}) +\allowbreak 3\,\allowbreak \bigl(-1\bigr)^{\frac{1}{3}}\,\allowbreak \mathcal{H}_{-1 +\allowbreak 3\,\allowbreak k}^{(3)}\!(\bigl(-1\bigr)^{\frac{2}{3}}) -\allowbreak 3\,\allowbreak \bigl(-1\bigr)^{\frac{2}{3}}\,\allowbreak \mathcal{H}_{-1 +\allowbreak 3\,\allowbreak k}^{(3)}\!(\bigl(-1\bigr)^{\frac{2}{3}})
\end{dmath*}
\vspace{-1.1em}
\begin{dmath*}
{}\quad +\allowbreak 3\,\allowbreak \mathcal{H}_{-1 +\allowbreak 3\,\allowbreak k}^{(4)}\!(-\bigl(\bigl(-1\bigr)^{\frac{1}{3}}\bigr)) -\allowbreak 3\,\allowbreak \bigl(-1\bigr)^{\frac{1}{3}}\,\allowbreak \mathcal{H}_{-1 +\allowbreak 3\,\allowbreak k}^{(4)}\!(-\bigl(\bigl(-1\bigr)^{\frac{1}{3}}\bigr)) +\allowbreak 3\,\allowbreak \bigl(-1\bigr)^{\frac{2}{3}}\,\allowbreak \mathcal{H}_{-1 +\allowbreak 3\,\allowbreak k}^{(4)}\!(-\bigl(\bigl(-1\bigr)^{\frac{1}{3}}\bigr)) +\allowbreak 3\,\allowbreak \mathcal{H}_{-1 +\allowbreak 3\,\allowbreak k}^{(4)}\!(\bigl(-1\bigr)^{\frac{2}{3}}) -\allowbreak 3\,\allowbreak \bigl(-1\bigr)^{\frac{1}{3}}\,\allowbreak \mathcal{H}_{-1 +\allowbreak 3\,\allowbreak k}^{(4)}\!(\bigl(-1\bigr)^{\frac{2}{3}})
\end{dmath*}
\vspace{-1.1em}
\begin{dmath*}
{}\quad +\allowbreak 3\,\allowbreak \bigl(-1\bigr)^{\frac{2}{3}}\,\allowbreak \mathcal{H}_{-1 +\allowbreak 3\,\allowbreak k}^{(4)}\!(\bigl(-1\bigr)^{\frac{2}{3}}) +\allowbreak 3\,\allowbreak \mathcal{H}_{3\,\allowbreak k}^{(1,2)} -\allowbreak 3\,\allowbreak \mathcal{H}_{-1 +\allowbreak 3\,\allowbreak k}^{(1,2)} +\allowbreak 3\,\allowbreak \mathcal{H}_{-1 +\allowbreak 3\,\allowbreak k}^{(2,2)} +\allowbreak \mathcal{H}_{-1 +\allowbreak 3\,\allowbreak k}^{(3,2)} -\allowbreak \bigl(-1\bigr)^{\frac{1}{3}}\,\allowbreak \mathcal{H}_{-1 +\allowbreak 3\,\allowbreak k}^{(3,2)} +\allowbreak \bigl(-1\bigr)^{\frac{2}{3}}\,\allowbreak \mathcal{H}_{-1 +\allowbreak 3\,\allowbreak k}^{(3,2)}
\end{dmath*}
\vspace{-1.1em}
\begin{dmath*}
{}\quad +\allowbreak \mathcal{H}_{-1 +\allowbreak 3\,\allowbreak k}^{(2,1,2)} +\allowbreak 3\,\allowbreak \mathcal{H}_{3\,\allowbreak k}^{(1,2)}\!(1,\allowbreak -\bigl(\bigl(-1\bigr)^{\frac{1}{3}}\bigr)) +\allowbreak 3\,\allowbreak \mathcal{H}_{3\,\allowbreak k}^{(1,2)}\!(1,\allowbreak \bigl(-1\bigr)^{\frac{2}{3}}) +\allowbreak 3\,\allowbreak \mathcal{H}_{3\,\allowbreak k}^{(1,2)}\!(-\bigl(\bigl(-1\bigr)^{\frac{1}{3}}\bigr),\allowbreak 1) +\allowbreak 3\,\allowbreak \mathcal{H}_{3\,\allowbreak k}^{(1,2)}\!(-\bigl(\bigl(-1\bigr)^{\frac{1}{3}}\bigr),\allowbreak -\bigl(\bigl(-1\bigr)^{\frac{1}{3}}\bigr)) +\allowbreak 3\,\allowbreak \mathcal{H}_{3\,\allowbreak k}^{(1,2)}\!(-\bigl(\bigl(-1\bigr)^{\frac{1}{3}}\bigr),\allowbreak \bigl(-1\bigr)^{\frac{2}{3}})
\end{dmath*}
\vspace{-1.1em}
\begin{dmath*}
{}\quad +\allowbreak 3\,\allowbreak \mathcal{H}_{3\,\allowbreak k}^{(1,2)}\!(\bigl(-1\bigr)^{\frac{2}{3}},\allowbreak 1) +\allowbreak 3\,\allowbreak \mathcal{H}_{3\,\allowbreak k}^{(1,2)}\!(\bigl(-1\bigr)^{\frac{2}{3}},\allowbreak -\bigl(\bigl(-1\bigr)^{\frac{1}{3}}\bigr)) +\allowbreak 3\,\allowbreak \mathcal{H}_{3\,\allowbreak k}^{(1,2)}\!(\bigl(-1\bigr)^{\frac{2}{3}},\allowbreak \bigl(-1\bigr)^{\frac{2}{3}}) -\allowbreak 3\,\allowbreak \mathcal{H}_{-1 +\allowbreak 3\,\allowbreak k}^{(1,2)}\!(1,\allowbreak -\bigl(\bigl(-1\bigr)^{\frac{1}{3}}\bigr)) -\allowbreak 3\,\allowbreak \mathcal{H}_{-1 +\allowbreak 3\,\allowbreak k}^{(1,2)}\!(1,\allowbreak \bigl(-1\bigr)^{\frac{2}{3}})
\end{dmath*}
\vspace{-1.1em}
\begin{dmath*}
{}\quad +\allowbreak 3\,\allowbreak \bigl(-1\bigr)^{\frac{1}{3}}\,\allowbreak \mathcal{H}_{-1 +\allowbreak 3\,\allowbreak k}^{(1,2)}\!(-\bigl(\bigl(-1\bigr)^{\frac{1}{3}}\bigr),\allowbreak 1) +\allowbreak 3\,\allowbreak \bigl(-1\bigr)^{\frac{1}{3}}\,\allowbreak \mathcal{H}_{-1 +\allowbreak 3\,\allowbreak k}^{(1,2)}\!(-\bigl(\bigl(-1\bigr)^{\frac{1}{3}}\bigr),\allowbreak -\bigl(\bigl(-1\bigr)^{\frac{1}{3}}\bigr)) +\allowbreak 3\,\allowbreak \bigl(-1\bigr)^{\frac{1}{3}}\,\allowbreak \mathcal{H}_{-1 +\allowbreak 3\,\allowbreak k}^{(1,2)}\!(-\bigl(\bigl(-1\bigr)^{\frac{1}{3}}\bigr),\allowbreak \bigl(-1\bigr)^{\frac{2}{3}}) -\allowbreak 3\,\allowbreak \bigl(-1\bigr)^{\frac{2}{3}}\,\allowbreak \mathcal{H}_{-1 +\allowbreak 3\,\allowbreak k}^{(1,2)}\!(\bigl(-1\bigr)^{\frac{2}{3}},\allowbreak 1)
\end{dmath*}
\vspace{-1.1em}
\begin{dmath*}
{}\quad -\allowbreak 3\,\allowbreak \bigl(-1\bigr)^{\frac{2}{3}}\,\allowbreak \mathcal{H}_{-1 +\allowbreak 3\,\allowbreak k}^{(1,2)}\!(\bigl(-1\bigr)^{\frac{2}{3}},\allowbreak -\bigl(\bigl(-1\bigr)^{\frac{1}{3}}\bigr)) -\allowbreak 3\,\allowbreak \bigl(-1\bigr)^{\frac{2}{3}}\,\allowbreak \mathcal{H}_{-1 +\allowbreak 3\,\allowbreak k}^{(1,2)}\!(\bigl(-1\bigr)^{\frac{2}{3}},\allowbreak \bigl(-1\bigr)^{\frac{2}{3}}) +\allowbreak 3\,\allowbreak \mathcal{H}_{-1 +\allowbreak 3\,\allowbreak k}^{(2,2)}\!(1,\allowbreak -\bigl(\bigl(-1\bigr)^{\frac{1}{3}}\bigr)) +\allowbreak 3\,\allowbreak \mathcal{H}_{-1 +\allowbreak 3\,\allowbreak k}^{(2,2)}\!(1,\allowbreak \bigl(-1\bigr)^{\frac{2}{3}})
\end{dmath*}
\vspace{-1.1em}
\begin{dmath*}
{}\quad -\allowbreak 3\,\allowbreak \bigl(-1\bigr)^{\frac{1}{3}}\,\allowbreak \mathcal{H}_{-1 +\allowbreak 3\,\allowbreak k}^{(2,2)}\!(-\bigl(\bigl(-1\bigr)^{\frac{1}{3}}\bigr),\allowbreak 1) -\allowbreak 3\,\allowbreak \bigl(-1\bigr)^{\frac{1}{3}}\,\allowbreak \mathcal{H}_{-1 +\allowbreak 3\,\allowbreak k}^{(2,2)}\!(-\bigl(\bigl(-1\bigr)^{\frac{1}{3}}\bigr),\allowbreak -\bigl(\bigl(-1\bigr)^{\frac{1}{3}}\bigr)) -\allowbreak 3\,\allowbreak \bigl(-1\bigr)^{\frac{1}{3}}\,\allowbreak \mathcal{H}_{-1 +\allowbreak 3\,\allowbreak k}^{(2,2)}\!(-\bigl(\bigl(-1\bigr)^{\frac{1}{3}}\bigr),\allowbreak \bigl(-1\bigr)^{\frac{2}{3}}) +\allowbreak 3\,\allowbreak \bigl(-1\bigr)^{\frac{2}{3}}\,\allowbreak \mathcal{H}_{-1 +\allowbreak 3\,\allowbreak k}^{(2,2)}\!(\bigl(-1\bigr)^{\frac{2}{3}},\allowbreak 1)
\end{dmath*}
\vspace{-1.1em}
\begin{dmath*}
{}\quad +\allowbreak 3\,\allowbreak \bigl(-1\bigr)^{\frac{2}{3}}\,\allowbreak \mathcal{H}_{-1 +\allowbreak 3\,\allowbreak k}^{(2,2)}\!(\bigl(-1\bigr)^{\frac{2}{3}},\allowbreak -\bigl(\bigl(-1\bigr)^{\frac{1}{3}}\bigr)) +\allowbreak 3\,\allowbreak \bigl(-1\bigr)^{\frac{2}{3}}\,\allowbreak \mathcal{H}_{-1 +\allowbreak 3\,\allowbreak k}^{(2,2)}\!(\bigl(-1\bigr)^{\frac{2}{3}},\allowbreak \bigl(-1\bigr)^{\frac{2}{3}}) +\allowbreak \mathcal{H}_{-1 +\allowbreak 3\,\allowbreak k}^{(3,2)}\!(1,\allowbreak -\bigl(\bigl(-1\bigr)^{\frac{1}{3}}\bigr)) -\allowbreak \bigl(-1\bigr)^{\frac{1}{3}}\,\allowbreak \mathcal{H}_{-1 +\allowbreak 3\,\allowbreak k}^{(3,2)}\!(1,\allowbreak -\bigl(\bigl(-1\bigr)^{\frac{1}{3}}\bigr))
\end{dmath*}
\vspace{-1.1em}
\begin{dmath*}
{}\quad +\allowbreak \bigl(-1\bigr)^{\frac{2}{3}}\,\allowbreak \mathcal{H}_{-1 +\allowbreak 3\,\allowbreak k}^{(3,2)}\!(1,\allowbreak -\bigl(\bigl(-1\bigr)^{\frac{1}{3}}\bigr)) +\allowbreak \mathcal{H}_{-1 +\allowbreak 3\,\allowbreak k}^{(3,2)}\!(1,\allowbreak \bigl(-1\bigr)^{\frac{2}{3}}) -\allowbreak \bigl(-1\bigr)^{\frac{1}{3}}\,\allowbreak \mathcal{H}_{-1 +\allowbreak 3\,\allowbreak k}^{(3,2)}\!(1,\allowbreak \bigl(-1\bigr)^{\frac{2}{3}}) +\allowbreak \bigl(-1\bigr)^{\frac{2}{3}}\,\allowbreak \mathcal{H}_{-1 +\allowbreak 3\,\allowbreak k}^{(3,2)}\!(1,\allowbreak \bigl(-1\bigr)^{\frac{2}{3}}) +\allowbreak \mathcal{H}_{-1 +\allowbreak 3\,\allowbreak k}^{(3,2)}\!(-\bigl(\bigl(-1\bigr)^{\frac{1}{3}}\bigr),\allowbreak 1)
\end{dmath*}
\vspace{-1.1em}
\begin{dmath*}
{}\quad -\allowbreak \bigl(-1\bigr)^{\frac{1}{3}}\,\allowbreak \mathcal{H}_{-1 +\allowbreak 3\,\allowbreak k}^{(3,2)}\!(-\bigl(\bigl(-1\bigr)^{\frac{1}{3}}\bigr),\allowbreak 1) +\allowbreak \bigl(-1\bigr)^{\frac{2}{3}}\,\allowbreak \mathcal{H}_{-1 +\allowbreak 3\,\allowbreak k}^{(3,2)}\!(-\bigl(\bigl(-1\bigr)^{\frac{1}{3}}\bigr),\allowbreak 1) +\allowbreak \mathcal{H}_{-1 +\allowbreak 3\,\allowbreak k}^{(3,2)}\!(-\bigl(\bigl(-1\bigr)^{\frac{1}{3}}\bigr),\allowbreak -\bigl(\bigl(-1\bigr)^{\frac{1}{3}}\bigr)) -\allowbreak \bigl(-1\bigr)^{\frac{1}{3}}\,\allowbreak \mathcal{H}_{-1 +\allowbreak 3\,\allowbreak k}^{(3,2)}\!(-\bigl(\bigl(-1\bigr)^{\frac{1}{3}}\bigr),\allowbreak -\bigl(\bigl(-1\bigr)^{\frac{1}{3}}\bigr))
\end{dmath*}
\vspace{-1.1em}
\begin{dmath*}
{}\quad +\allowbreak \bigl(-1\bigr)^{\frac{2}{3}}\,\allowbreak \mathcal{H}_{-1 +\allowbreak 3\,\allowbreak k}^{(3,2)}\!(-\bigl(\bigl(-1\bigr)^{\frac{1}{3}}\bigr),\allowbreak -\bigl(\bigl(-1\bigr)^{\frac{1}{3}}\bigr)) +\allowbreak \mathcal{H}_{-1 +\allowbreak 3\,\allowbreak k}^{(3,2)}\!(-\bigl(\bigl(-1\bigr)^{\frac{1}{3}}\bigr),\allowbreak \bigl(-1\bigr)^{\frac{2}{3}}) -\allowbreak \bigl(-1\bigr)^{\frac{1}{3}}\,\allowbreak \mathcal{H}_{-1 +\allowbreak 3\,\allowbreak k}^{(3,2)}\!(-\bigl(\bigl(-1\bigr)^{\frac{1}{3}}\bigr),\allowbreak \bigl(-1\bigr)^{\frac{2}{3}}) +\allowbreak \bigl(-1\bigr)^{\frac{2}{3}}\,\allowbreak \mathcal{H}_{-1 +\allowbreak 3\,\allowbreak k}^{(3,2)}\!(-\bigl(\bigl(-1\bigr)^{\frac{1}{3}}\bigr),\allowbreak \bigl(-1\bigr)^{\frac{2}{3}})
\end{dmath*}
\vspace{-1.1em}
\begin{dmath*}
{}\quad +\allowbreak \mathcal{H}_{-1 +\allowbreak 3\,\allowbreak k}^{(3,2)}\!(\bigl(-1\bigr)^{\frac{2}{3}},\allowbreak 1) -\allowbreak \bigl(-1\bigr)^{\frac{1}{3}}\,\allowbreak \mathcal{H}_{-1 +\allowbreak 3\,\allowbreak k}^{(3,2)}\!(\bigl(-1\bigr)^{\frac{2}{3}},\allowbreak 1) +\allowbreak \bigl(-1\bigr)^{\frac{2}{3}}\,\allowbreak \mathcal{H}_{-1 +\allowbreak 3\,\allowbreak k}^{(3,2)}\!(\bigl(-1\bigr)^{\frac{2}{3}},\allowbreak 1) +\allowbreak \mathcal{H}_{-1 +\allowbreak 3\,\allowbreak k}^{(3,2)}\!(\bigl(-1\bigr)^{\frac{2}{3}},\allowbreak -\bigl(\bigl(-1\bigr)^{\frac{1}{3}}\bigr)) -\allowbreak \bigl(-1\bigr)^{\frac{1}{3}}\,\allowbreak \mathcal{H}_{-1 +\allowbreak 3\,\allowbreak k}^{(3,2)}\!(\bigl(-1\bigr)^{\frac{2}{3}},\allowbreak -\bigl(\bigl(-1\bigr)^{\frac{1}{3}}\bigr))
\end{dmath*}
\vspace{-1.1em}
\begin{dmath*}
{}\quad +\allowbreak \bigl(-1\bigr)^{\frac{2}{3}}\,\allowbreak \mathcal{H}_{-1 +\allowbreak 3\,\allowbreak k}^{(3,2)}\!(\bigl(-1\bigr)^{\frac{2}{3}},\allowbreak -\bigl(\bigl(-1\bigr)^{\frac{1}{3}}\bigr)) +\allowbreak \mathcal{H}_{-1 +\allowbreak 3\,\allowbreak k}^{(3,2)}\!(\bigl(-1\bigr)^{\frac{2}{3}},\allowbreak \bigl(-1\bigr)^{\frac{2}{3}}) -\allowbreak \bigl(-1\bigr)^{\frac{1}{3}}\,\allowbreak \mathcal{H}_{-1 +\allowbreak 3\,\allowbreak k}^{(3,2)}\!(\bigl(-1\bigr)^{\frac{2}{3}},\allowbreak \bigl(-1\bigr)^{\frac{2}{3}}) +\allowbreak \bigl(-1\bigr)^{\frac{2}{3}}\,\allowbreak \mathcal{H}_{-1 +\allowbreak 3\,\allowbreak k}^{(3,2)}\!(\bigl(-1\bigr)^{\frac{2}{3}},\allowbreak \bigl(-1\bigr)^{\frac{2}{3}})
\end{dmath*}
\vspace{-1.1em}
\begin{dmath*}
{}\quad +\allowbreak \mathcal{H}_{-1 +\allowbreak 3\,\allowbreak k}^{(2,1,2)}\!(1,\allowbreak 1,\allowbreak -\bigl(\bigl(-1\bigr)^{\frac{1}{3}}\bigr)) +\allowbreak \mathcal{H}_{-1 +\allowbreak 3\,\allowbreak k}^{(2,1,2)}\!(1,\allowbreak 1,\allowbreak \bigl(-1\bigr)^{\frac{2}{3}}) +\allowbreak \mathcal{H}_{-1 +\allowbreak 3\,\allowbreak k}^{(2,1,2)}\!(1,\allowbreak -\bigl(\bigl(-1\bigr)^{\frac{1}{3}}\bigr),\allowbreak 1) +\allowbreak \mathcal{H}_{-1 +\allowbreak 3\,\allowbreak k}^{(2,1,2)}\!(1,\allowbreak -\bigl(\bigl(-1\bigr)^{\frac{1}{3}}\bigr),\allowbreak -\bigl(\bigl(-1\bigr)^{\frac{1}{3}}\bigr)) +\allowbreak \mathcal{H}_{-1 +\allowbreak 3\,\allowbreak k}^{(2,1,2)}\!(1,\allowbreak -\bigl(\bigl(-1\bigr)^{\frac{1}{3}}\bigr),\allowbreak \bigl(-1\bigr)^{\frac{2}{3}})
\end{dmath*}
\vspace{-1.1em}
\begin{dmath*}
{}\quad +\allowbreak \mathcal{H}_{-1 +\allowbreak 3\,\allowbreak k}^{(2,1,2)}\!(1,\allowbreak \bigl(-1\bigr)^{\frac{2}{3}},\allowbreak 1) +\allowbreak \mathcal{H}_{-1 +\allowbreak 3\,\allowbreak k}^{(2,1,2)}\!(1,\allowbreak \bigl(-1\bigr)^{\frac{2}{3}},\allowbreak -\bigl(\bigl(-1\bigr)^{\frac{1}{3}}\bigr)) +\allowbreak \mathcal{H}_{-1 +\allowbreak 3\,\allowbreak k}^{(2,1,2)}\!(1,\allowbreak \bigl(-1\bigr)^{\frac{2}{3}},\allowbreak \bigl(-1\bigr)^{\frac{2}{3}}) -\allowbreak \bigl(-1\bigr)^{\frac{1}{3}}\,\allowbreak \mathcal{H}_{-1 +\allowbreak 3\,\allowbreak k}^{(2,1,2)}\!(-\bigl(\bigl(-1\bigr)^{\frac{1}{3}}\bigr),\allowbreak 1,\allowbreak 1) -\allowbreak \bigl(-1\bigr)^{\frac{1}{3}}\,\allowbreak \mathcal{H}_{-1 +\allowbreak 3\,\allowbreak k}^{(2,1,2)}\!(-\bigl(\bigl(-1\bigr)^{\frac{1}{3}}\bigr),\allowbreak 1,\allowbreak -\bigl(\bigl(-1\bigr)^{\frac{1}{3}}\bigr))
\end{dmath*}
\vspace{-1.1em}
\begin{dmath*}
{}\quad -\allowbreak \bigl(-1\bigr)^{\frac{1}{3}}\,\allowbreak \mathcal{H}_{-1 +\allowbreak 3\,\allowbreak k}^{(2,1,2)}\!(-\bigl(\bigl(-1\bigr)^{\frac{1}{3}}\bigr),\allowbreak 1,\allowbreak \bigl(-1\bigr)^{\frac{2}{3}}) -\allowbreak \bigl(-1\bigr)^{\frac{1}{3}}\,\allowbreak \mathcal{H}_{-1 +\allowbreak 3\,\allowbreak k}^{(2,1,2)}\!(-\bigl(\bigl(-1\bigr)^{\frac{1}{3}}\bigr),\allowbreak -\bigl(\bigl(-1\bigr)^{\frac{1}{3}}\bigr),\allowbreak 1) -\allowbreak \bigl(-1\bigr)^{\frac{1}{3}}\,\allowbreak \mathcal{H}_{-1 +\allowbreak 3\,\allowbreak k}^{(2,1,2)}\!(-\bigl(\bigl(-1\bigr)^{\frac{1}{3}}\bigr),\allowbreak -\bigl(\bigl(-1\bigr)^{\frac{1}{3}}\bigr),\allowbreak -\bigl(\bigl(-1\bigr)^{\frac{1}{3}}\bigr))
\end{dmath*}
\vspace{-1.1em}
\begin{dmath*}
{}\quad -\allowbreak \bigl(-1\bigr)^{\frac{1}{3}}\,\allowbreak \mathcal{H}_{-1 +\allowbreak 3\,\allowbreak k}^{(2,1,2)}\!(-\bigl(\bigl(-1\bigr)^{\frac{1}{3}}\bigr),\allowbreak -\bigl(\bigl(-1\bigr)^{\frac{1}{3}}\bigr),\allowbreak \bigl(-1\bigr)^{\frac{2}{3}}) -\allowbreak \bigl(-1\bigr)^{\frac{1}{3}}\,\allowbreak \mathcal{H}_{-1 +\allowbreak 3\,\allowbreak k}^{(2,1,2)}\!(-\bigl(\bigl(-1\bigr)^{\frac{1}{3}}\bigr),\allowbreak \bigl(-1\bigr)^{\frac{2}{3}},\allowbreak 1) -\allowbreak \bigl(-1\bigr)^{\frac{1}{3}}\,\allowbreak \mathcal{H}_{-1 +\allowbreak 3\,\allowbreak k}^{(2,1,2)}\!(-\bigl(\bigl(-1\bigr)^{\frac{1}{3}}\bigr),\allowbreak \bigl(-1\bigr)^{\frac{2}{3}},\allowbreak -\bigl(\bigl(-1\bigr)^{\frac{1}{3}}\bigr))
\end{dmath*}
\vspace{-1.1em}
\begin{dmath*}
{}\quad -\allowbreak \bigl(-1\bigr)^{\frac{1}{3}}\,\allowbreak \mathcal{H}_{-1 +\allowbreak 3\,\allowbreak k}^{(2,1,2)}\!(-\bigl(\bigl(-1\bigr)^{\frac{1}{3}}\bigr),\allowbreak \bigl(-1\bigr)^{\frac{2}{3}},\allowbreak \bigl(-1\bigr)^{\frac{2}{3}}) +\allowbreak \bigl(-1\bigr)^{\frac{2}{3}}\,\allowbreak \mathcal{H}_{-1 +\allowbreak 3\,\allowbreak k}^{(2,1,2)}\!(\bigl(-1\bigr)^{\frac{2}{3}},\allowbreak 1,\allowbreak 1) +\allowbreak \bigl(-1\bigr)^{\frac{2}{3}}\,\allowbreak \mathcal{H}_{-1 +\allowbreak 3\,\allowbreak k}^{(2,1,2)}\!(\bigl(-1\bigr)^{\frac{2}{3}},\allowbreak 1,\allowbreak -\bigl(\bigl(-1\bigr)^{\frac{1}{3}}\bigr)) +\allowbreak \bigl(-1\bigr)^{\frac{2}{3}}\,\allowbreak \mathcal{H}_{-1 +\allowbreak 3\,\allowbreak k}^{(2,1,2)}\!(\bigl(-1\bigr)^{\frac{2}{3}},\allowbreak 1,\allowbreak \bigl(-1\bigr)^{\frac{2}{3}})
\end{dmath*}
\vspace{-1.1em}
\begin{dmath*}
{}\quad +\allowbreak \bigl(-1\bigr)^{\frac{2}{3}}\,\allowbreak \mathcal{H}_{-1 +\allowbreak 3\,\allowbreak k}^{(2,1,2)}\!(\bigl(-1\bigr)^{\frac{2}{3}},\allowbreak -\bigl(\bigl(-1\bigr)^{\frac{1}{3}}\bigr),\allowbreak 1) +\allowbreak \bigl(-1\bigr)^{\frac{2}{3}}\,\allowbreak \mathcal{H}_{-1 +\allowbreak 3\,\allowbreak k}^{(2,1,2)}\!(\bigl(-1\bigr)^{\frac{2}{3}},\allowbreak -\bigl(\bigl(-1\bigr)^{\frac{1}{3}}\bigr),\allowbreak -\bigl(\bigl(-1\bigr)^{\frac{1}{3}}\bigr)) +\allowbreak \bigl(-1\bigr)^{\frac{2}{3}}\,\allowbreak \mathcal{H}_{-1 +\allowbreak 3\,\allowbreak k}^{(2,1,2)}\!(\bigl(-1\bigr)^{\frac{2}{3}},\allowbreak -\bigl(\bigl(-1\bigr)^{\frac{1}{3}}\bigr),\allowbreak \bigl(-1\bigr)^{\frac{2}{3}})
\end{dmath*}
\vspace{-1.1em}
\begin{dmath*}
{}\quad +\allowbreak \bigl(-1\bigr)^{\frac{2}{3}}\,\allowbreak \mathcal{H}_{-1 +\allowbreak 3\,\allowbreak k}^{(2,1,2)}\!(\bigl(-1\bigr)^{\frac{2}{3}},\allowbreak \bigl(-1\bigr)^{\frac{2}{3}},\allowbreak 1) +\allowbreak \bigl(-1\bigr)^{\frac{2}{3}}\,\allowbreak \mathcal{H}_{-1 +\allowbreak 3\,\allowbreak k}^{(2,1,2)}\!(\bigl(-1\bigr)^{\frac{2}{3}},\allowbreak \bigl(-1\bigr)^{\frac{2}{3}},\allowbreak -\bigl(\bigl(-1\bigr)^{\frac{1}{3}}\bigr)) +\allowbreak \bigl(-1\bigr)^{\frac{2}{3}}\,\allowbreak \mathcal{H}_{-1 +\allowbreak 3\,\allowbreak k}^{(2,1,2)}\!(\bigl(-1\bigr)^{\frac{2}{3}},\allowbreak \bigl(-1\bigr)^{\frac{2}{3}},\allowbreak \bigl(-1\bigr)^{\frac{2}{3}})
\end{dmath*}
\subsection*{Identity 29}
\begin{dmath*}
\sum_{n=1}^{k} \frac{h_{2\,\allowbreak n +\allowbreak 1}^{[1]}\!(2;2\,\allowbreak \mathrm{i})}{n^{2}} = 2\,\allowbreak \bigl(-8\,\allowbreak \mathrm{i} +\allowbreak 4\,\allowbreak \mathrm{i}\,\allowbreak \mathcal{A}_{k}^{(1)}\!(4) -\allowbreak \mathrm{i}\,\allowbreak \mathcal{A}_{k}^{(2)}\!(4) +\allowbreak 4\,\allowbreak \mathrm{i}\,\allowbreak \arctan\!(2) +\allowbreak 2\,\allowbreak \mathcal{H}_{2\,\allowbreak k}^{(4)}\!(-2\,\allowbreak \mathrm{i}) +\allowbreak 2\,\allowbreak \mathcal{H}_{2\,\allowbreak k}^{(4)}\!(2\,\allowbreak \mathrm{i}) +\allowbreak \mathrm{i}\,\allowbreak \bigl(-1\bigr)^{k}\,\allowbreak 4^{2 +\allowbreak k}\,\allowbreak \Phi\!(-4,1,\frac{3}{2} +\allowbreak k) -\allowbreak 4\,\allowbreak \mathrm{i}\,\allowbreak \Phi\!(-4,2,\frac{3}{2})
\end{dmath*}
\vspace{-1.1em}
\begin{dmath*}
{}\quad +\allowbreak \mathrm{i}\,\allowbreak \bigl(-1\bigr)^{k}\,\allowbreak 4^{1 +\allowbreak k}\,\allowbreak \Phi\!(-4,2,\frac{3}{2} +\allowbreak k) +\allowbreak \mathcal{H}_{2\,\allowbreak k}^{(1,2)}\!(-1,\allowbreak 2\,\allowbreak \mathrm{i}) +\allowbreak \mathcal{H}_{2\,\allowbreak k}^{(1,2)}\!(1,\allowbreak 2\,\allowbreak \mathrm{i}) -\allowbreak \mathcal{H}_{2\,\allowbreak k}^{(2,1)}\!(-1,\allowbreak 2\,\allowbreak \mathrm{i}) -\allowbreak \mathcal{H}_{2\,\allowbreak k}^{(2,1)}\!(1,\allowbreak 2\,\allowbreak \mathrm{i}) +\allowbreak 2\,\allowbreak \mathcal{H}_{2\,\allowbreak k}^{(2,2)}\!(-1,\allowbreak 2\,\allowbreak \mathrm{i}) +\allowbreak 2\,\allowbreak \mathcal{H}_{2\,\allowbreak k}^{(2,2)}\!(1,\allowbreak 2\,\allowbreak \mathrm{i})\bigr)
\end{dmath*}
\section*{4 Affine-letter extensions}

\subsection*{Identity 30}
\begin{dmath*}
\sum_{n=1}^{k} \frac{\mathcal{H}_{n}^{(2)}\!(\frac{1}{3})}{\bigl(2\,\allowbreak n +\allowbreak 10\bigr)^{\frac{1}{2} +\allowbreak \mathrm{i}}} = \mathcal{G}_{n}\!(\{\{-\frac{1}{2} -\allowbreak \mathrm{i},\allowbreak 2\}\};\allowbreak \{\frac{1}{3}\};\allowbreak \{\{\{2,\allowbreak 10\},\allowbreak \{0,\allowbreak 1\}\}\}) +\allowbreak \mathcal{G}_{n}\!(\{\{-\frac{1}{2} -\allowbreak \mathrm{i}\},\allowbreak \{2\}\};\allowbreak \{1,\allowbreak \frac{1}{3}\};\allowbreak \{\{\{2,\allowbreak 10\}\},\allowbreak \{\{0,\allowbreak 1\}\}\})
\end{dmath*}
\subsection*{Identity 31}
\begin{dmath*}
\sum_{n=1}^{k} \frac{\operatorname{Mod}\!(n,2)\,\allowbreak \mathcal{A}_{n}^{(2\,\allowbreak \mathrm{i})}}{\bigl(n +\allowbreak \mathrm{i}\bigr)^{3 +\allowbreak 2\,\allowbreak \mathrm{i}}} = \frac{1}{2}\,\allowbreak \bigl(-\mathcal{G}_{k}\!(\{\{3 +\allowbreak 2\,\allowbreak \mathrm{i},\allowbreak 2\,\allowbreak \mathrm{i}\}\};\allowbreak \{1\};\allowbreak \{\{\{1,\allowbreak \mathrm{i}\},\allowbreak \{1,\allowbreak 0\}\}\}) -\allowbreak \mathcal{G}_{k}\!(\{\{3 +\allowbreak 2\,\allowbreak \mathrm{i}\},\allowbreak \{2\,\allowbreak \mathrm{i}\}\};\allowbreak \{-1,\allowbreak -1\};\allowbreak \{\{\{1,\allowbreak \mathrm{i}\}\},\allowbreak \{\{1,\allowbreak 0\}\}\})\bigr) +\allowbreak \frac{1}{2}\,\allowbreak \bigl(\mathcal{G}_{k}\!(\{\{3 +\allowbreak 2\,\allowbreak \mathrm{i},\allowbreak 2\,\allowbreak \mathrm{i}\}\};\allowbreak \{-1\};\allowbreak \{\{\{1,\allowbreak \mathrm{i}\},\allowbreak \{1,\allowbreak 0\}\}\})
\end{dmath*}
\vspace{-1.1em}
\begin{dmath*}
{}\quad +\allowbreak \mathcal{G}_{k}\!(\{\{3 +\allowbreak 2\,\allowbreak \mathrm{i}\},\allowbreak \{2\,\allowbreak \mathrm{i}\}\};\allowbreak \{1,\allowbreak -1\};\allowbreak \{\{\{1,\allowbreak \mathrm{i}\}\},\allowbreak \{\{1,\allowbreak 0\}\}\})\bigr)
\end{dmath*}
\subsection*{Identity 32}
\begin{dmath*}
\sum_{n=1}^{10} \bigl(-1\bigr)^{n}\,\allowbreak \bigl(7\,\allowbreak n +\allowbreak 3\bigr)^{2\,\allowbreak \mathrm{i}}\,\allowbreak \mathcal{H}_{n}^{(2,3)}\!(\frac{\mathrm{i}}{6},\allowbreak \frac{\mathrm{i}}{8}) = \mathcal{G}_{k}\!(\{\{-2\,\allowbreak \mathrm{i},\allowbreak 2\},\allowbreak \{3\}\};\allowbreak \{-\bigl(\frac{\mathrm{i}}{6}\bigr),\allowbreak \frac{\mathrm{i}}{8}\};\allowbreak \{\{\{7,\allowbreak 3\},\allowbreak \{1,\allowbreak 0\}\},\allowbreak \{\{1,\allowbreak 0\}\}\}) +\allowbreak \mathcal{G}_{k}\!(\{\{-2\,\allowbreak \mathrm{i}\},\allowbreak \{2\},\allowbreak \{3\}\};\allowbreak \{-1,\allowbreak \frac{\mathrm{i}}{6},\allowbreak \frac{\mathrm{i}}{8}\};\allowbreak \{\{\{7,\allowbreak 3\}\},\allowbreak \{\{1,\allowbreak 0\}\},\allowbreak \{\{1,\allowbreak 0\}\}\})
\end{dmath*}
\subsection*{Identity 33}
\begin{dmath*}
\sum_{n=1}^{k} \frac{\bigl(\frac{1}{2}\bigr)^{n}\,\allowbreak \mathcal{H}_{n}^{(2,3)}\!(\mathrm{i},\allowbreak \frac{7}{3})}{\bigl(6\,\allowbreak n +\allowbreak 4\bigr)^{\mathrm{i}}\,\allowbreak \bigl(8\,\allowbreak n -\allowbreak 2\bigr)^{2\,\allowbreak \mathrm{i}}\,\allowbreak \bigl(10\,\allowbreak n -\allowbreak 8\bigr)^{3\,\allowbreak \mathrm{i}}} = \mathcal{G}_{k}\!(\{\{\mathrm{i},\allowbreak 2\,\allowbreak \mathrm{i},\allowbreak 3\,\allowbreak \mathrm{i},\allowbreak 2\},\allowbreak \{3\}\};\allowbreak \{\frac{\mathrm{i}}{2},\allowbreak \frac{7}{3}\};\allowbreak \{\{\{6,\allowbreak 4\},\allowbreak \{8,\allowbreak -2\},\allowbreak \{10,\allowbreak -8\},\allowbreak \{1,\allowbreak 0\}\},\allowbreak \{\{1,\allowbreak 0\}\}\}) +\allowbreak \mathcal{G}_{k}\!(\{\{\mathrm{i},\allowbreak 2\,\allowbreak \mathrm{i},\allowbreak 3\,\allowbreak \mathrm{i}\},\allowbreak \{2\},\allowbreak \{3\}\};\allowbreak \{\frac{1}{2},\allowbreak \mathrm{i},\allowbreak \frac{7}{3}\};\allowbreak \{\{\{6,\allowbreak 4\},\allowbreak \{8,\allowbreak -2\},\allowbreak \{10,\allowbreak -8\}\},\allowbreak \{\{1,\allowbreak 0\}\},\allowbreak \{\{1,\allowbreak 0\}\}\})
\end{dmath*}
\subsection*{Identity 34}
\begin{dmath*}
\sum_{n=1}^{10} \frac{\bigl(-12\bigr)^{n}\,\allowbreak \bigl(3\,\allowbreak n +\allowbreak 2\bigr)^{\frac{1}{2} +\allowbreak \mathrm{i}}\,\allowbreak \mathcal{G}_{n}\!(\{\{1 +\allowbreak \mathrm{i},\allowbreak 2\},\allowbreak \{3 -\allowbreak \mathrm{i}\}\};\allowbreak \{\frac{2}{3},\allowbreak \frac{-\mathrm{i}}{5}\};\allowbreak \{\{\{1,\allowbreak 1\},\allowbreak \{3,\allowbreak -2\}\},\allowbreak \{\{2,\allowbreak 1 -\allowbreak \mathrm{i}\}\}\})}{\bigl(5\,\allowbreak n +\allowbreak 7\bigr)^{2\,\allowbreak \mathrm{i}}\,\allowbreak \bigl(9\,\allowbreak n -\allowbreak 1\bigr)^{1 +\allowbreak \mathrm{i}}} = \mathcal{G}_{k}\!(\{\{-\frac{1}{2} -\allowbreak \mathrm{i},\allowbreak 2\,\allowbreak \mathrm{i},\allowbreak 1 +\allowbreak \mathrm{i},\allowbreak 1 +\allowbreak \mathrm{i},\allowbreak 2\},\allowbreak \{3 -\allowbreak \mathrm{i}\}\};\allowbreak \{-8,\allowbreak -\bigl(\frac{\mathrm{i}}{5}\bigr)\};\allowbreak \{\{\{3,\allowbreak 2\},\allowbreak \{5,\allowbreak 7\},\allowbreak \{9,\allowbreak -1\},\allowbreak \{1,\allowbreak 1\},\allowbreak \{3,\allowbreak -2\}\},\allowbreak \{\{2,\allowbreak 1 -\allowbreak \mathrm{i}\}\}\}) +\allowbreak \mathcal{G}_{k}\!(\{\{-\frac{1}{2} -\allowbreak \mathrm{i},\allowbreak 2\,\allowbreak \mathrm{i},\allowbreak 1 +\allowbreak \mathrm{i}\},\allowbreak \{1 +\allowbreak \mathrm{i},\allowbreak 2\},\allowbreak \{3 -\allowbreak \mathrm{i}\}\};\allowbreak \{-12,\allowbreak \frac{2}{3},\allowbreak -\bigl(\frac{\mathrm{i}}{5}\bigr)\};\allowbreak \{\{\{3,\allowbreak 2\},\allowbreak \{5,\allowbreak 7\},\allowbreak \{9,\allowbreak -1\}\},\allowbreak \{\{1,\allowbreak 1\},\allowbreak \{3,\allowbreak -2\}\},\allowbreak \{\{2,\allowbreak 1 -\allowbreak \mathrm{i}\}\}\})
\end{dmath*}
\section*{5 Polynomial-letter extensions}

\subsection*{Identity 35}
\begin{dmath*}
\sum_{n=1}^{10} \frac{\bigl(\frac{1}{3}\bigr)^{n}\,\allowbreak \mathcal{H}_{n}^{(2)}\!(\frac{1}{5})}{\bigl(1 +\allowbreak 6\,\allowbreak n +\allowbreak 3\,\allowbreak n^{2}\bigr)^{\frac{1}{2} +\allowbreak \mathrm{i}}} = \mathcal{P}_{k}\!(\{\{\frac{1}{2} +\allowbreak \mathrm{i},\allowbreak 2\}\};\allowbreak \{\frac{1}{15}\};\allowbreak \{\{\{1,\allowbreak 6,\allowbreak 3\},\allowbreak \{0,\allowbreak 1\}\}\}) +\allowbreak \mathcal{P}_{k}\!(\{\{\frac{1}{2} +\allowbreak \mathrm{i}\},\allowbreak \{2\}\};\allowbreak \{\frac{1}{3},\allowbreak \frac{1}{5}\};\allowbreak \{\{\{1,\allowbreak 6,\allowbreak 3\}\},\allowbreak \{\{0,\allowbreak 1\}\}\})
\end{dmath*}
\subsection*{Identity 36}
\begin{dmath*}
\sum_{n=1}^{10} \bigl(-1\bigr)^{n}\,\allowbreak \bigl(2 +\allowbreak 3\,\allowbreak n +\allowbreak n^{2}\bigr)^{\frac{1}{2} +\allowbreak \mathrm{i}}\,\allowbreak \mathcal{H}_{n}^{(2,3)}\!(\frac{\mathrm{i}}{6},\allowbreak \frac{\mathrm{i}}{8}) = \mathcal{P}_{k}\!(\{\{-\frac{1}{2} -\allowbreak \mathrm{i},\allowbreak 2\},\allowbreak \{3\}\};\allowbreak \{-\bigl(\frac{\mathrm{i}}{6}\bigr),\allowbreak \frac{\mathrm{i}}{8}\};\allowbreak \{\{\{2,\allowbreak 3,\allowbreak 1\},\allowbreak \{0,\allowbreak 1\}\},\allowbreak \{\{0,\allowbreak 1\}\}\}) +\allowbreak \mathcal{P}_{k}\!(\{\{-\frac{1}{2} -\allowbreak \mathrm{i}\},\allowbreak \{2\},\allowbreak \{3\}\};\allowbreak \{-1,\allowbreak \frac{\mathrm{i}}{6},\allowbreak \frac{\mathrm{i}}{8}\};\allowbreak \{\{\{2,\allowbreak 3,\allowbreak 1\}\},\allowbreak \{\{0,\allowbreak 1\}\},\allowbreak \{\{0,\allowbreak 1\}\}\})
\end{dmath*}
\subsection*{Identity 37}
\begin{dmath*}
\sum_{n=1}^{k} \frac{\bigl(\frac{1}{2}\bigr)^{n}\,\allowbreak \mathcal{H}_{n}^{(2,3)}\!(\mathrm{i},\allowbreak \frac{7}{3})}{\bigl(1 +\allowbreak n +\allowbreak n^{2}\bigr)^{\mathrm{i}}\,\allowbreak \bigl(3 -\allowbreak 2\,\allowbreak n +\allowbreak 5\,\allowbreak n^{2}\bigr)^{2\,\allowbreak \mathrm{i}}\,\allowbreak \bigl(7 +\allowbreak n^{3}\bigr)^{3\,\allowbreak \mathrm{i}}} = \mathcal{P}_{k}\!(\{\{\mathrm{i},\allowbreak 2\,\allowbreak \mathrm{i},\allowbreak 3\,\allowbreak \mathrm{i},\allowbreak 2\},\allowbreak \{3\}\};\allowbreak \{\frac{\mathrm{i}}{2},\allowbreak \frac{7}{3}\};\allowbreak \{\{\{1,\allowbreak 1,\allowbreak 1\},\allowbreak \{3,\allowbreak -2,\allowbreak 5\},\allowbreak \{7,\allowbreak 0,\allowbreak 0,\allowbreak 1\},\allowbreak \{0,\allowbreak 1\}\},\allowbreak \{\{0,\allowbreak 1\}\}\}) +\allowbreak \mathcal{P}_{k}\!(\{\{\mathrm{i},\allowbreak 2\,\allowbreak \mathrm{i},\allowbreak 3\,\allowbreak \mathrm{i}\},\allowbreak \{2\},\allowbreak \{3\}\};\allowbreak \{\frac{1}{2},\allowbreak \mathrm{i},\allowbreak \frac{7}{3}\};\allowbreak \{\{\{1,\allowbreak 1,\allowbreak 1\},\allowbreak \{3,\allowbreak -2,\allowbreak 5\},\allowbreak \{7,\allowbreak 0,\allowbreak 0,\allowbreak 1\}\},\allowbreak \{\{0,\allowbreak 1\}\},\allowbreak \{\{0,\allowbreak 1\}\}\})
\end{dmath*}
\subsection*{Identity 38}
\begin{dmath*}
\sum_{n=1}^{k} \frac{\bigl(\frac{-2}{5}\bigr)^{n}\,\allowbreak \mathcal{P}_{n}\!(\{\{2,\allowbreak 1\},\allowbreak \{3\}\};\allowbreak \{\frac{1}{2},\allowbreak \frac{-\mathrm{i}}{3}\};\allowbreak \{\{\{1,\allowbreak 1\},\allowbreak \{2,\allowbreak 0,\allowbreak 3\}\},\allowbreak \{\{-1,\allowbreak 2,\allowbreak 1\}\}\})}{\bigl(1 +\allowbreak 4\,\allowbreak n +\allowbreak n^{2}\bigr)^{\frac{3}{2}}\,\allowbreak \bigl(2 -\allowbreak \mathrm{i} +\allowbreak \bigl(1 +\allowbreak \mathrm{i}\bigr)\,\allowbreak n +\allowbreak n^{2}\bigr)^{\mathrm{i}}} = \mathcal{P}_{k}\!(\{\{\frac{3}{2},\allowbreak \mathrm{i},\allowbreak 2,\allowbreak 1\},\allowbreak \{3\}\};\allowbreak \{-\bigl(\frac{1}{5}\bigr),\allowbreak -\bigl(\frac{\mathrm{i}}{3}\bigr)\};\allowbreak \{\{\{1,\allowbreak 4,\allowbreak 1\},\allowbreak \{2 -\allowbreak \mathrm{i},\allowbreak 1 +\allowbreak \mathrm{i},\allowbreak 1\},\allowbreak \{1,\allowbreak 1\},\allowbreak \{2,\allowbreak 0,\allowbreak 3\}\},\allowbreak \{\{-1,\allowbreak 2,\allowbreak 1\}\}\}) +\allowbreak \mathcal{P}_{k}\!(\{\{\frac{3}{2},\allowbreak \mathrm{i}\},\allowbreak \{2,\allowbreak 1\},\allowbreak \{3\}\};\allowbreak \{-\bigl(\frac{2}{5}\bigr),\allowbreak \frac{1}{2},\allowbreak -\bigl(\frac{\mathrm{i}}{3}\bigr)\};\allowbreak \{\{\{1,\allowbreak 4,\allowbreak 1\},\allowbreak \{2 -\allowbreak \mathrm{i},\allowbreak 1 +\allowbreak \mathrm{i},\allowbreak 1\}\},\allowbreak \{\{1,\allowbreak 1\},\allowbreak \{2,\allowbreak 0,\allowbreak 3\}\},\allowbreak \{\{-1,\allowbreak 2,\allowbreak 1\}\}\})
\end{dmath*}
\subsection*{Identity 39}
\begin{dmath*}
\sum_{n=1}^{k} \frac{\bigl(\frac{1}{3}\bigr)^{n}\,\allowbreak \bigl(1 +\allowbreak n +\allowbreak n^{2}\bigr)^{\frac{1}{2} +\allowbreak \mathrm{i}}\,\allowbreak \mathcal{P}_{n}\!(\{\{2,\allowbreak 1\},\allowbreak \{3 -\allowbreak \mathrm{i}\}\};\allowbreak \{\frac{1}{2},\allowbreak \frac{-\mathrm{i}}{3}\};\allowbreak \{\{\{1,\allowbreak 1\},\allowbreak \{2,\allowbreak 0,\allowbreak 3\}\},\allowbreak \{\{-1,\allowbreak 2,\allowbreak 1\}\}\})}{\bigl(5 -\allowbreak n +\allowbreak 2\,\allowbreak n^{2} +\allowbreak n^{4}\bigr)^{2 +\allowbreak \mathrm{i}}} = \mathcal{P}_{k}\!(\{\{-\frac{1}{2} -\allowbreak \mathrm{i},\allowbreak 2 +\allowbreak \mathrm{i},\allowbreak 2,\allowbreak 1\},\allowbreak \{3 -\allowbreak \mathrm{i}\}\};\allowbreak \{\frac{1}{6},\allowbreak -\bigl(\frac{\mathrm{i}}{3}\bigr)\};\allowbreak \{\{\{1,\allowbreak 1,\allowbreak 1\},\allowbreak \{5,\allowbreak -1,\allowbreak 2,\allowbreak 0,\allowbreak 1\},\allowbreak \{1,\allowbreak 1\},\allowbreak \{2,\allowbreak 0,\allowbreak 3\}\},\allowbreak \{\{-1,\allowbreak 2,\allowbreak 1\}\}\}) +\allowbreak \mathcal{P}_{k}\!(\{\{-\frac{1}{2} -\allowbreak \mathrm{i},\allowbreak 2 +\allowbreak \mathrm{i}\},\allowbreak \{2,\allowbreak 1\},\allowbreak \{3 -\allowbreak \mathrm{i}\}\};\allowbreak \{\frac{1}{3},\allowbreak \frac{1}{2},\allowbreak -\bigl(\frac{\mathrm{i}}{3}\bigr)\};\allowbreak \{\{\{1,\allowbreak 1,\allowbreak 1\},\allowbreak \{5,\allowbreak -1,\allowbreak 2,\allowbreak 0,\allowbreak 1\}\},\allowbreak \{\{1,\allowbreak 1\},\allowbreak \{2,\allowbreak 0,\allowbreak 3\}\},\allowbreak \{\{-1,\allowbreak 2,\allowbreak 1\}\}\})
\end{dmath*}
\section*{6 Scaled-index sums}

\subsection*{Identity 40}
\begin{dmath*}
\sum_{n=1}^{k} \frac{H_{3\,\allowbreak n}}{n^{4}} = 27\,\allowbreak \bigl(\mathcal{H}_{3\,\allowbreak k}^{(5)} +\allowbreak \mathcal{H}_{3\,\allowbreak k}^{(5)}\!(-\bigl(\bigl(-1\bigr)^{\frac{1}{3}}\bigr)) +\allowbreak \mathcal{H}_{3\,\allowbreak k}^{(5)}\!(\bigl(-1\bigr)^{\frac{2}{3}}) +\allowbreak \mathcal{H}_{3\,\allowbreak k}^{(4,1)} +\allowbreak \mathcal{H}_{3\,\allowbreak k}^{(4,1)}\!(-\bigl(\bigl(-1\bigr)^{\frac{1}{3}}\bigr),\allowbreak 1) +\allowbreak \mathcal{H}_{3\,\allowbreak k}^{(4,1)}\!(\bigl(-1\bigr)^{\frac{2}{3}},\allowbreak 1)\bigr)
\end{dmath*}
\subsection*{Identity 41}
\begin{dmath*}
\sum_{n=1}^{k} \frac{H_{\frac{n}{2}}\,\allowbreak H_{2\,\allowbreak n}}{n^{4}} = 8\,\allowbreak \bigl(-4\,\allowbreak \mathcal{A}_{2\,\allowbreak k}^{(6)} +\allowbreak 4\,\allowbreak \mathcal{H}_{2\,\allowbreak k}^{(6)} +\allowbreak \mathcal{H}_{2\,\allowbreak k}^{(4,2)} +\allowbreak 5\,\allowbreak \mathcal{H}_{2\,\allowbreak k}^{(5,1)} +\allowbreak 2\,\allowbreak \mathcal{H}_{2\,\allowbreak k}^{(4,1,1)} +\allowbreak \log\!(2)\,\allowbreak \bigl(\mathcal{A}_{2\,\allowbreak k}^{(5)} -\allowbreak \mathcal{H}_{2\,\allowbreak k}^{(5)} +\allowbreak \mathcal{H}_{2\,\allowbreak k}^{(5)}\!(-\mathrm{i}) +\allowbreak \mathcal{H}_{2\,\allowbreak k}^{(5)}\!(\mathrm{i}) -\allowbreak \mathcal{H}_{2\,\allowbreak k}^{(4,1)} -\allowbreak \mathcal{H}_{2\,\allowbreak k}^{(4,1)}\!(-1,\allowbreak 1)
\end{dmath*}
\vspace{-1.1em}
\begin{dmath*}
{}\quad +\allowbreak \mathcal{H}_{2\,\allowbreak k}^{(4,1)}\!(-\mathrm{i},\allowbreak 1) +\allowbreak \mathcal{H}_{2\,\allowbreak k}^{(4,1)}\!(\mathrm{i},\allowbreak 1)\bigr) +\allowbreak \mathcal{H}_{2\,\allowbreak k}^{(4,2)}\!(-1,\allowbreak -1) +\allowbreak \mathcal{H}_{2\,\allowbreak k}^{(4,2)}\!(-1,\allowbreak 1) +\allowbreak \mathcal{H}_{2\,\allowbreak k}^{(4,2)}\!(-\mathrm{i},\allowbreak -\mathrm{i}) +\allowbreak \mathcal{H}_{2\,\allowbreak k}^{(4,2)}\!(-\mathrm{i},\allowbreak \mathrm{i}) +\allowbreak \mathcal{H}_{2\,\allowbreak k}^{(4,2)}\!(\mathrm{i},\allowbreak -\mathrm{i}) +\allowbreak \mathcal{H}_{2\,\allowbreak k}^{(4,2)}\!(\mathrm{i},\allowbreak \mathrm{i})
\end{dmath*}
\vspace{-1.1em}
\begin{dmath*}
{}\quad +\allowbreak \mathcal{H}_{2\,\allowbreak k}^{(4,2)}\!(1,\allowbreak -1) +\allowbreak \mathcal{H}_{2\,\allowbreak k}^{(5,1)}\!(-1,\allowbreak -1) +\allowbreak 5\,\allowbreak \mathcal{H}_{2\,\allowbreak k}^{(5,1)}\!(-1,\allowbreak 1) +\allowbreak \mathcal{H}_{2\,\allowbreak k}^{(5,1)}\!(-\mathrm{i},\allowbreak -\mathrm{i}) +\allowbreak \mathcal{H}_{2\,\allowbreak k}^{(5,1)}\!(-\mathrm{i},\allowbreak \mathrm{i}) +\allowbreak \mathcal{H}_{2\,\allowbreak k}^{(5,1)}\!(\mathrm{i},\allowbreak -\mathrm{i}) +\allowbreak \mathcal{H}_{2\,\allowbreak k}^{(5,1)}\!(\mathrm{i},\allowbreak \mathrm{i}) +\allowbreak \mathcal{H}_{2\,\allowbreak k}^{(5,1)}\!(1,\allowbreak -1)
\end{dmath*}
\vspace{-1.1em}
\begin{dmath*}
{}\quad +\allowbreak \mathcal{H}_{2\,\allowbreak k}^{(4,1,1)}\!(-1,\allowbreak -1,\allowbreak 1) +\allowbreak \mathcal{H}_{2\,\allowbreak k}^{(4,1,1)}\!(-1,\allowbreak 1,\allowbreak -1) +\allowbreak 2\,\allowbreak \mathcal{H}_{2\,\allowbreak k}^{(4,1,1)}\!(-1,\allowbreak 1,\allowbreak 1) +\allowbreak \mathcal{H}_{2\,\allowbreak k}^{(4,1,1)}\!(-\mathrm{i},\allowbreak -\mathrm{i},\allowbreak 1) +\allowbreak \mathcal{H}_{2\,\allowbreak k}^{(4,1,1)}\!(-\mathrm{i},\allowbreak \mathrm{i},\allowbreak 1) +\allowbreak \mathcal{H}_{2\,\allowbreak k}^{(4,1,1)}\!(-\mathrm{i},\allowbreak 1,\allowbreak -\mathrm{i}) +\allowbreak \mathcal{H}_{2\,\allowbreak k}^{(4,1,1)}\!(-\mathrm{i},\allowbreak 1,\allowbreak \mathrm{i})
\end{dmath*}
\vspace{-1.1em}
\begin{dmath*}
{}\quad +\allowbreak \mathcal{H}_{2\,\allowbreak k}^{(4,1,1)}\!(\mathrm{i},\allowbreak -\mathrm{i},\allowbreak 1) +\allowbreak \mathcal{H}_{2\,\allowbreak k}^{(4,1,1)}\!(\mathrm{i},\allowbreak \mathrm{i},\allowbreak 1) +\allowbreak \mathcal{H}_{2\,\allowbreak k}^{(4,1,1)}\!(\mathrm{i},\allowbreak 1,\allowbreak -\mathrm{i}) +\allowbreak \mathcal{H}_{2\,\allowbreak k}^{(4,1,1)}\!(\mathrm{i},\allowbreak 1,\allowbreak \mathrm{i}) +\allowbreak \mathcal{H}_{2\,\allowbreak k}^{(4,1,1)}\!(1,\allowbreak -1,\allowbreak 1) +\allowbreak \mathcal{H}_{2\,\allowbreak k}^{(4,1,1)}\!(1,\allowbreak 1,\allowbreak -1)\bigr)
\end{dmath*}
\subsection*{Identity 42}
\begin{dmath*}
\sum_{n=1}^{k} n^{4 +\allowbreak 2\,\allowbreak \mathrm{i}}\,\allowbreak H_{n}\,\allowbreak \mathcal{A}_{2\,\allowbreak n}^{(\mathrm{i})} = -\bigl(2^{-5 -\allowbreak 2\,\allowbreak \mathrm{i}}\bigr)\,\allowbreak \bigl(-2\,\allowbreak \mathcal{A}_{2\,\allowbreak k}^{(-3 -\allowbreak \mathrm{i})} +\allowbreak 2\,\allowbreak \mathcal{H}_{2\,\allowbreak k}^{(-3 -\allowbreak \mathrm{i})} +\allowbreak \mathcal{H}_{2\,\allowbreak k}^{(-4 -\allowbreak \mathrm{i},1)} +\allowbreak \mathcal{H}_{2\,\allowbreak k}^{(-4 -\allowbreak 2\,\allowbreak \mathrm{i},1 +\allowbreak \mathrm{i})} +\allowbreak \mathcal{H}_{2\,\allowbreak k}^{(-4 -\allowbreak \mathrm{i},1)}\!(-1,\allowbreak -1) +\allowbreak \mathcal{H}_{2\,\allowbreak k}^{(-4 -\allowbreak \mathrm{i},1)}\!(-1,\allowbreak 1) +\allowbreak \mathcal{H}_{2\,\allowbreak k}^{(-4 -\allowbreak \mathrm{i},1)}\!(1,\allowbreak -1)
\end{dmath*}
\vspace{-1.1em}
\begin{dmath*}
{}\quad +\allowbreak \mathcal{H}_{2\,\allowbreak k}^{(-4 -\allowbreak 2\,\allowbreak \mathrm{i},1 +\allowbreak \mathrm{i})}\!(-1,\allowbreak -1) +\allowbreak \mathcal{H}_{2\,\allowbreak k}^{(-4 -\allowbreak 2\,\allowbreak \mathrm{i},1 +\allowbreak \mathrm{i})}\!(-1,\allowbreak 1) +\allowbreak \mathcal{H}_{2\,\allowbreak k}^{(-4 -\allowbreak 2\,\allowbreak \mathrm{i},1 +\allowbreak \mathrm{i})}\!(1,\allowbreak -1) +\allowbreak 2\,\allowbreak \mathcal{H}_{2\,\allowbreak k}^{(-3 -\allowbreak 2\,\allowbreak \mathrm{i},\mathrm{i})}\!(-1,\allowbreak -1) +\allowbreak 2\,\allowbreak \mathcal{H}_{2\,\allowbreak k}^{(-3 -\allowbreak 2\,\allowbreak \mathrm{i},\mathrm{i})}\!(1,\allowbreak -1)
\end{dmath*}
\vspace{-1.1em}
\begin{dmath*}
{}\quad +\allowbreak \mathcal{H}_{2\,\allowbreak k}^{(-4 -\allowbreak 2\,\allowbreak \mathrm{i},\mathrm{i},1)}\!(-1,\allowbreak -1,\allowbreak -1) +\allowbreak \mathcal{H}_{2\,\allowbreak k}^{(-4 -\allowbreak 2\,\allowbreak \mathrm{i},\mathrm{i},1)}\!(-1,\allowbreak -1,\allowbreak 1) +\allowbreak \mathcal{H}_{2\,\allowbreak k}^{(-4 -\allowbreak 2\,\allowbreak \mathrm{i},\mathrm{i},1)}\!(1,\allowbreak -1,\allowbreak -1) +\allowbreak \mathcal{H}_{2\,\allowbreak k}^{(-4 -\allowbreak 2\,\allowbreak \mathrm{i},\mathrm{i},1)}\!(1,\allowbreak -1,\allowbreak 1) +\allowbreak \mathcal{H}_{2\,\allowbreak k}^{(-4 -\allowbreak 2\,\allowbreak \mathrm{i},1,\mathrm{i})}\!(-1,\allowbreak -1,\allowbreak -1)
\end{dmath*}
\vspace{-1.1em}
\begin{dmath*}
{}\quad +\allowbreak \mathcal{H}_{2\,\allowbreak k}^{(-4 -\allowbreak 2\,\allowbreak \mathrm{i},1,\mathrm{i})}\!(-1,\allowbreak 1,\allowbreak -1) +\allowbreak \mathcal{H}_{2\,\allowbreak k}^{(-4 -\allowbreak 2\,\allowbreak \mathrm{i},1,\mathrm{i})}\!(1,\allowbreak -1,\allowbreak -1) +\allowbreak \mathcal{H}_{2\,\allowbreak k}^{(-4 -\allowbreak 2\,\allowbreak \mathrm{i},1,\mathrm{i})}\!(1,\allowbreak 1,\allowbreak -1)\bigr)
\end{dmath*}
\subsection*{Identity 43}
\begin{dmath*}
\sum_{n=1}^{k} \frac{\bigl(\frac{1}{2}\bigr)^{n}\,\allowbreak h_{2\,\allowbreak n}^{[2]}\!(1;1)}{n^{4}} = 8\,\allowbreak \bigl(-\mathcal{A}_{2\,\allowbreak k}^{(5)}\!(\frac{1}{\sqrt{2}}) +\allowbreak \mathcal{H}_{2\,\allowbreak k}^{(5)}\!(\frac{1}{\sqrt{2}}) +\allowbreak 3\,\allowbreak \mathcal{H}_{2\,\allowbreak k}^{(4,1)}\!(-\bigl(\frac{1}{\sqrt{2}}\bigr),\allowbreak 1) +\allowbreak 3\,\allowbreak \mathcal{H}_{2\,\allowbreak k}^{(4,1)}\!(\frac{1}{\sqrt{2}},\allowbreak 1) +\allowbreak 3\,\allowbreak \mathcal{H}_{2\,\allowbreak k}^{(4,0,1)}\!(-\bigl(\frac{1}{\sqrt{2}}\bigr),\allowbreak 1,\allowbreak 1) +\allowbreak 3\,\allowbreak \mathcal{H}_{2\,\allowbreak k}^{(4,0,1)}\!(\frac{1}{\sqrt{2}},\allowbreak 1,\allowbreak 1) +\allowbreak \mathcal{H}_{2\,\allowbreak k}^{(4,0,0,1)}\!(-\bigl(\frac{1}{\sqrt{2}}\bigr),\allowbreak 1,\allowbreak 1,\allowbreak 1)
\end{dmath*}
\vspace{-1.1em}
\begin{dmath*}
{}\quad +\allowbreak \mathcal{H}_{2\,\allowbreak k}^{(4,0,0,1)}\!(\frac{1}{\sqrt{2}},\allowbreak 1,\allowbreak 1,\allowbreak 1)\bigr)
\end{dmath*}
\subsection*{Identity 44}
\begin{dmath*}
\sum_{n=1}^{k} \frac{\bigl(\frac{1}{2}\bigr)^{n}\,\allowbreak \mathcal{H}_{2\,\allowbreak n}^{(\mathrm{i},2\,\allowbreak \mathrm{i})}}{n^{4}} = 8\,\allowbreak \bigl(\mathcal{H}_{2\,\allowbreak k}^{(4 +\allowbreak \mathrm{i},2\,\allowbreak \mathrm{i})}\!(-\bigl(\frac{1}{\sqrt{2}}\bigr),\allowbreak 1) +\allowbreak \mathcal{H}_{2\,\allowbreak k}^{(4 +\allowbreak \mathrm{i},2\,\allowbreak \mathrm{i})}\!(\frac{1}{\sqrt{2}},\allowbreak 1) +\allowbreak \mathcal{H}_{2\,\allowbreak k}^{(4,\mathrm{i},2\,\allowbreak \mathrm{i})}\!(-\bigl(\frac{1}{\sqrt{2}}\bigr),\allowbreak 1,\allowbreak 1) +\allowbreak \mathcal{H}_{2\,\allowbreak k}^{(4,\mathrm{i},2\,\allowbreak \mathrm{i})}\!(\frac{1}{\sqrt{2}},\allowbreak 1,\allowbreak 1)\bigr)
\end{dmath*}
\subsection*{Identity 45}
\begin{dmath*}
\sum_{n=1}^{k} H_{n}\,\allowbreak \mathcal{H}_{3\,\allowbreak n}^{(2)}\!(\frac{1}{3}) = \frac{1}{3}\,\allowbreak \bigl(-\mathcal{H}_{3\,\allowbreak k}^{(2)}\!(\frac{1}{3}) -\allowbreak \mathcal{H}_{3\,\allowbreak k}^{(2)}\!(-\bigl(\frac{1}{3}\bigr)\,\allowbreak \bigl(-1\bigr)^{\frac{1}{3}}) -\allowbreak \mathcal{H}_{3\,\allowbreak k}^{(2)}\!(\frac{1}{3}\,\allowbreak \bigl(-1\bigr)^{\frac{2}{3}}) +\allowbreak 3\,\allowbreak \mathcal{H}_{3\,\allowbreak k}^{(3)}\!(\frac{1}{3}) +\allowbreak 3\,\allowbreak k\,\allowbreak \mathcal{H}_{3\,\allowbreak k}^{(3)}\!(\frac{1}{3}) +\allowbreak 3\,\allowbreak \mathcal{H}_{3\,\allowbreak k}^{(3)}\!(-\bigl(\frac{1}{3}\bigr)\,\allowbreak \bigl(-1\bigr)^{\frac{1}{3}}) +\allowbreak 3\,\allowbreak k\,\allowbreak \mathcal{H}_{3\,\allowbreak k}^{(3)}\!(-\bigl(\frac{1}{3}\bigr)\,\allowbreak \bigl(-1\bigr)^{\frac{1}{3}})
\end{dmath*}
\vspace{-1.1em}
\begin{dmath*}
{}\quad +\allowbreak \frac{-\bigl(\bigl(-\bigl(\bigl(-1\bigr)^{\frac{1}{3}}\bigr)\bigr)^{1 +\allowbreak 3\,\allowbreak k}\bigr)\,\allowbreak \mathcal{H}_{3\,\allowbreak k}^{(3)}\!(\frac{1}{3}) -\allowbreak \bigl(-1\bigr)^{\frac{1}{3}}\,\allowbreak \mathcal{H}_{3\,\allowbreak k}^{(3)}\!(-\bigl(\frac{1}{3}\bigr)\,\allowbreak \bigl(-1\bigr)^{\frac{1}{3}})}{1 +\allowbreak \bigl(-1\bigr)^{\frac{1}{3}}} +\allowbreak \frac{\bigl(-1\bigr)^{\frac{2}{3}}\,\allowbreak \mathcal{H}_{3\,\allowbreak k}^{(3)}\!(\frac{1}{3}) -\allowbreak \bigl(-1\bigr)^{\frac{2}{3}\,\allowbreak \bigl(1 +\allowbreak 3\,\allowbreak k\bigr)}\,\allowbreak \mathcal{H}_{3\,\allowbreak k}^{(3)}\!(-\bigl(\frac{1}{3}\bigr)\,\allowbreak \bigl(-1\bigr)^{\frac{1}{3}})}{1 -\allowbreak \bigl(-1\bigr)^{\frac{2}{3}}}
\end{dmath*}
\vspace{-1.1em}
\begin{dmath*}
{}\quad +\allowbreak 3\,\allowbreak \mathcal{H}_{3\,\allowbreak k}^{(3)}\!(\frac{1}{3}\,\allowbreak \bigl(-1\bigr)^{\frac{2}{3}}) +\allowbreak 3\,\allowbreak k\,\allowbreak \mathcal{H}_{3\,\allowbreak k}^{(3)}\!(\frac{1}{3}\,\allowbreak \bigl(-1\bigr)^{\frac{2}{3}}) +\allowbreak \frac{-\bigl(\bigl(-\bigl(\bigl(-1\bigr)^{\frac{1}{3}}\bigr)\bigr)^{1 +\allowbreak 3\,\allowbreak k}\bigr)\,\allowbreak \mathcal{H}_{3\,\allowbreak k}^{(3)}\!(-\bigl(\frac{1}{3}\bigr)\,\allowbreak \bigl(-1\bigr)^{\frac{1}{3}}) -\allowbreak \bigl(-1\bigr)^{\frac{1}{3}}\,\allowbreak \mathcal{H}_{3\,\allowbreak k}^{(3)}\!(\frac{1}{3}\,\allowbreak \bigl(-1\bigr)^{\frac{2}{3}})}{1 +\allowbreak \bigl(-1\bigr)^{\frac{1}{3}}}
\end{dmath*}
\vspace{-1.1em}
\begin{dmath*}
{}\quad +\allowbreak \frac{-\bigl(\bigl(-1\bigr)^{\frac{2}{3}\,\allowbreak \bigl(1 +\allowbreak 3\,\allowbreak k\bigr)}\bigr)\,\allowbreak \mathcal{H}_{3\,\allowbreak k}^{(3)}\!(\frac{1}{3}) +\allowbreak \bigl(-1\bigr)^{\frac{2}{3}}\,\allowbreak \mathcal{H}_{3\,\allowbreak k}^{(3)}\!(\frac{1}{3}\,\allowbreak \bigl(-1\bigr)^{\frac{2}{3}})}{1 -\allowbreak \bigl(-1\bigr)^{\frac{2}{3}}} +\allowbreak \frac{\bigl(-1\bigr)^{\frac{2}{3}}\,\allowbreak \mathcal{H}_{3\,\allowbreak k}^{(3)}\!(-\bigl(\frac{1}{3}\bigr)\,\allowbreak \bigl(-1\bigr)^{\frac{1}{3}}) -\allowbreak \bigl(-1\bigr)^{\frac{2}{3}\,\allowbreak \bigl(1 +\allowbreak 3\,\allowbreak k\bigr)}\,\allowbreak \mathcal{H}_{3\,\allowbreak k}^{(3)}\!(\frac{1}{3}\,\allowbreak \bigl(-1\bigr)^{\frac{2}{3}})}{1 -\allowbreak \bigl(-1\bigr)^{\frac{2}{3}}}
\end{dmath*}
\vspace{-1.1em}
\begin{dmath*}
{}\quad +\allowbreak \frac{-\bigl(\bigl(-1\bigr)^{\frac{1}{3}}\bigr)\,\allowbreak \mathcal{H}_{3\,\allowbreak k}^{(3)}\!(\frac{1}{3}) -\allowbreak \bigl(-\bigl(\bigl(-1\bigr)^{\frac{1}{3}}\bigr)\bigr)^{1 +\allowbreak 3\,\allowbreak k}\,\allowbreak \mathcal{H}_{3\,\allowbreak k}^{(3)}\!(\frac{1}{3}\,\allowbreak \bigl(-1\bigr)^{\frac{2}{3}})}{1 +\allowbreak \bigl(-1\bigr)^{\frac{1}{3}}} -\allowbreak \bigl(-\mathcal{H}_{3\,\allowbreak k}^{(1)}\!(\frac{1}{3}) +\allowbreak 3\,\allowbreak k\,\allowbreak \mathcal{H}_{3\,\allowbreak k}^{(2)}\!(\frac{1}{3})\bigr)\,\allowbreak \log\!(3) -\allowbreak \frac{\bigl(-\bigl(\bigl(-\bigl(\bigl(-1\bigr)^{\frac{1}{3}}\bigr)\bigr)^{1 +\allowbreak 3\,\allowbreak k}\bigr)\,\allowbreak \mathcal{H}_{3\,\allowbreak k}^{(2)}\!(\frac{1}{3}) -\allowbreak \bigl(-1\bigr)^{\frac{1}{3}}\,\allowbreak \mathcal{H}_{3\,\allowbreak k}^{(2)}\!(-\bigl(\frac{1}{3}\bigr)\,\allowbreak \bigl(-1\bigr)^{\frac{1}{3}})\bigr)\,\allowbreak \log\!(3)}{1 +\allowbreak \bigl(-1\bigr)^{\frac{1}{3}}}
\end{dmath*}
\vspace{-1.1em}
\begin{dmath*}
{}\quad -\allowbreak \mathcal{H}_{3\,\allowbreak k}^{(2)}\!(\frac{1}{3}\,\allowbreak \bigl(-1\bigr)^{\frac{2}{3}})\,\allowbreak \log\!(3) -\allowbreak \frac{\bigl(-\bigl(\bigl(-1\bigr)^{\frac{2}{3}\,\allowbreak \bigl(1 +\allowbreak 3\,\allowbreak k\bigr)}\bigr)\,\allowbreak \mathcal{H}_{3\,\allowbreak k}^{(2)}\!(\frac{1}{3}) +\allowbreak \bigl(-1\bigr)^{\frac{2}{3}}\,\allowbreak \mathcal{H}_{3\,\allowbreak k}^{(2)}\!(\frac{1}{3}\,\allowbreak \bigl(-1\bigr)^{\frac{2}{3}})\bigr)\,\allowbreak \log\!(3)}{1 -\allowbreak \bigl(-1\bigr)^{\frac{2}{3}}} +\allowbreak \bigl(-\mathcal{H}_{3\,\allowbreak k}^{(1)}\!(\frac{1}{3}) +\allowbreak 3\,\allowbreak k\,\allowbreak \mathcal{H}_{3\,\allowbreak k}^{(2)}\!(\frac{1}{3})\bigr)\,\allowbreak \log\!(1
\end{dmath*}
\vspace{-1.1em}
\begin{dmath*}
{}\quad +\allowbreak \bigl(-1\bigr)^{\frac{1}{3}}) +\allowbreak \frac{\bigl(-\bigl(\bigl(-\bigl(\bigl(-1\bigr)^{\frac{1}{3}}\bigr)\bigr)^{1 +\allowbreak 3\,\allowbreak k}\bigr)\,\allowbreak \mathcal{H}_{3\,\allowbreak k}^{(2)}\!(\frac{1}{3}) -\allowbreak \bigl(-1\bigr)^{\frac{1}{3}}\,\allowbreak \mathcal{H}_{3\,\allowbreak k}^{(2)}\!(-\bigl(\frac{1}{3}\bigr)\,\allowbreak \bigl(-1\bigr)^{\frac{1}{3}})\bigr)\,\allowbreak \log\!(1 +\allowbreak \bigl(-1\bigr)^{\frac{1}{3}})}{1 +\allowbreak \bigl(-1\bigr)^{\frac{1}{3}}} +\allowbreak \mathcal{H}_{3\,\allowbreak k}^{(2)}\!(\frac{1}{3}\,\allowbreak \bigl(-1\bigr)^{\frac{2}{3}})\,\allowbreak \log\!(1 +\allowbreak \bigl(-1\bigr)^{\frac{1}{3}})
\end{dmath*}
\vspace{-1.1em}
\begin{dmath*}
{}\quad +\allowbreak \frac{\bigl(-\bigl(\bigl(-1\bigr)^{\frac{2}{3}\,\allowbreak \bigl(1 +\allowbreak 3\,\allowbreak k\bigr)}\bigr)\,\allowbreak \mathcal{H}_{3\,\allowbreak k}^{(2)}\!(\frac{1}{3}) +\allowbreak \bigl(-1\bigr)^{\frac{2}{3}}\,\allowbreak \mathcal{H}_{3\,\allowbreak k}^{(2)}\!(\frac{1}{3}\,\allowbreak \bigl(-1\bigr)^{\frac{2}{3}})\bigr)\,\allowbreak \log\!(1 +\allowbreak \bigl(-1\bigr)^{\frac{1}{3}})}{1 -\allowbreak \bigl(-1\bigr)^{\frac{2}{3}}} +\allowbreak \bigl(-\mathcal{H}_{3\,\allowbreak k}^{(1)}\!(\frac{1}{3}) +\allowbreak 3\,\allowbreak k\,\allowbreak \mathcal{H}_{3\,\allowbreak k}^{(2)}\!(\frac{1}{3})\bigr)\,\allowbreak \log\!(1 -\allowbreak \bigl(-1\bigr)^{\frac{2}{3}})
\end{dmath*}
\vspace{-1.1em}
\begin{dmath*}
{}\quad +\allowbreak \frac{\bigl(-\bigl(\bigl(-\bigl(\bigl(-1\bigr)^{\frac{1}{3}}\bigr)\bigr)^{1 +\allowbreak 3\,\allowbreak k}\bigr)\,\allowbreak \mathcal{H}_{3\,\allowbreak k}^{(2)}\!(\frac{1}{3}) -\allowbreak \bigl(-1\bigr)^{\frac{1}{3}}\,\allowbreak \mathcal{H}_{3\,\allowbreak k}^{(2)}\!(-\bigl(\frac{1}{3}\bigr)\,\allowbreak \bigl(-1\bigr)^{\frac{1}{3}})\bigr)\,\allowbreak \log\!(1 -\allowbreak \bigl(-1\bigr)^{\frac{2}{3}})}{1 +\allowbreak \bigl(-1\bigr)^{\frac{1}{3}}} +\allowbreak \mathcal{H}_{3\,\allowbreak k}^{(2)}\!(\frac{1}{3}\,\allowbreak \bigl(-1\bigr)^{\frac{2}{3}})\,\allowbreak \log\!(1 -\allowbreak \bigl(-1\bigr)^{\frac{2}{3}}) +\allowbreak \frac{\bigl(-\bigl(\bigl(-1\bigr)^{\frac{2}{3}\,\allowbreak \bigl(1 +\allowbreak 3\,\allowbreak k\bigr)}\bigr)\,\allowbreak \mathcal{H}_{3\,\allowbreak k}^{(2)}\!(\frac{1}{3}) +\allowbreak \bigl(-1\bigr)^{\frac{2}{3}}\,\allowbreak \mathcal{H}_{3\,\allowbreak k}^{(2)}\!(\frac{1}{3}\,\allowbreak \bigl(-1\bigr)^{\frac{2}{3}})\bigr)\,\allowbreak \log\!(1 -\allowbreak \bigl(-1\bigr)^{\frac{2}{3}})}{1 -\allowbreak \bigl(-1\bigr)^{\frac{2}{3}}}
\end{dmath*}
\vspace{-1.1em}
\begin{dmath*}
{}\quad +\allowbreak \mathcal{H}_{3\,\allowbreak k}^{(2)}\!(\frac{1}{3})\,\allowbreak \bigl(\log\!(\frac{1}{3}\,\allowbreak \bigl(1 +\allowbreak \bigl(-1\bigr)^{\frac{1}{3}}\bigr)) +\allowbreak \log\!(1 -\allowbreak \bigl(-1\bigr)^{\frac{2}{3}})\bigr) +\allowbreak \mathcal{H}_{3\,\allowbreak k}^{(2)}\!(-\bigl(\frac{1}{3}\bigr)\,\allowbreak \bigl(-1\bigr)^{\frac{1}{3}})\,\allowbreak \bigl(\log\!(\frac{1}{3}\,\allowbreak \bigl(1 +\allowbreak \bigl(-1\bigr)^{\frac{1}{3}}\bigr)) +\allowbreak \log\!(1 -\allowbreak \bigl(-1\bigr)^{\frac{2}{3}})\bigr) +\allowbreak 3\,\allowbreak \mathcal{H}_{3\,\allowbreak k}^{(1,2)}\!(1,\allowbreak \frac{1}{3}) +\allowbreak 3\,\allowbreak \mathcal{H}_{3\,\allowbreak k}^{(1,2)}\!(-\bigl(\bigl(-1\bigr)^{\frac{1}{3}}\bigr),\allowbreak \frac{1}{3})
\end{dmath*}
\vspace{-1.1em}
\begin{dmath*}
{}\quad +\allowbreak 3\,\allowbreak \mathcal{H}_{3\,\allowbreak k}^{(1,2)}\!(\bigl(-1\bigr)^{\frac{2}{3}},\allowbreak \frac{1}{3}) +\allowbreak \mathcal{H}_{3\,\allowbreak k}^{(2,1)}\!(\frac{1}{3},\allowbreak 1) +\allowbreak \mathcal{H}_{3\,\allowbreak k}^{(2,1)}\!(\frac{1}{3},\allowbreak -\bigl(\bigl(-1\bigr)^{\frac{1}{3}}\bigr)) +\allowbreak \mathcal{H}_{3\,\allowbreak k}^{(2,1)}\!(\frac{1}{3},\allowbreak \bigl(-1\bigr)^{\frac{2}{3}}) +\allowbreak \mathcal{H}_{3\,\allowbreak k}^{(2,1)}\!(-\bigl(\frac{1}{3}\bigr)\,\allowbreak \bigl(-1\bigr)^{\frac{1}{3}},\allowbreak 1) +\allowbreak \mathcal{H}_{3\,\allowbreak k}^{(2,1)}\!(-\bigl(\frac{1}{3}\bigr)\,\allowbreak \bigl(-1\bigr)^{\frac{1}{3}},\allowbreak -\bigl(\bigl(-1\bigr)^{\frac{1}{3}}\bigr))
\end{dmath*}
\vspace{-1.1em}
\begin{dmath*}
{}\quad +\allowbreak \mathcal{H}_{3\,\allowbreak k}^{(2,1)}\!(-\bigl(\frac{1}{3}\bigr)\,\allowbreak \bigl(-1\bigr)^{\frac{1}{3}},\allowbreak \bigl(-1\bigr)^{\frac{2}{3}}) +\allowbreak \mathcal{H}_{3\,\allowbreak k}^{(2,1)}\!(\frac{1}{3}\,\allowbreak \bigl(-1\bigr)^{\frac{2}{3}},\allowbreak 1) +\allowbreak \mathcal{H}_{3\,\allowbreak k}^{(2,1)}\!(\frac{1}{3}\,\allowbreak \bigl(-1\bigr)^{\frac{2}{3}},\allowbreak -\bigl(\bigl(-1\bigr)^{\frac{1}{3}}\bigr)) +\allowbreak \mathcal{H}_{3\,\allowbreak k}^{(2,1)}\!(\frac{1}{3}\,\allowbreak \bigl(-1\bigr)^{\frac{2}{3}},\allowbreak \bigl(-1\bigr)^{\frac{2}{3}}) +\allowbreak \mathcal{H}_{3\,\allowbreak k}^{(0,1,2)}\!(1,\allowbreak 1,\allowbreak \frac{1}{3})
\end{dmath*}
\vspace{-1.1em}
\begin{dmath*}
{}\quad +\allowbreak \mathcal{H}_{3\,\allowbreak k}^{(0,1,2)}\!(1,\allowbreak -\bigl(\bigl(-1\bigr)^{\frac{1}{3}}\bigr),\allowbreak \frac{1}{3}) +\allowbreak \mathcal{H}_{3\,\allowbreak k}^{(0,1,2)}\!(1,\allowbreak \bigl(-1\bigr)^{\frac{2}{3}},\allowbreak \frac{1}{3}) +\allowbreak \mathcal{H}_{3\,\allowbreak k}^{(0,1,2)}\!(-\bigl(\bigl(-1\bigr)^{\frac{1}{3}}\bigr),\allowbreak 1,\allowbreak \frac{1}{3}) +\allowbreak \mathcal{H}_{3\,\allowbreak k}^{(0,1,2)}\!(-\bigl(\bigl(-1\bigr)^{\frac{1}{3}}\bigr),\allowbreak -\bigl(\bigl(-1\bigr)^{\frac{1}{3}}\bigr),\allowbreak \frac{1}{3}) +\allowbreak \mathcal{H}_{3\,\allowbreak k}^{(0,1,2)}\!(-\bigl(\bigl(-1\bigr)^{\frac{1}{3}}\bigr),\allowbreak \bigl(-1\bigr)^{\frac{2}{3}},\allowbreak \frac{1}{3})
\end{dmath*}
\vspace{-1.1em}
\begin{dmath*}
{}\quad +\allowbreak \mathcal{H}_{3\,\allowbreak k}^{(0,1,2)}\!(\bigl(-1\bigr)^{\frac{2}{3}},\allowbreak 1,\allowbreak \frac{1}{3}) +\allowbreak \mathcal{H}_{3\,\allowbreak k}^{(0,1,2)}\!(\bigl(-1\bigr)^{\frac{2}{3}},\allowbreak -\bigl(\bigl(-1\bigr)^{\frac{1}{3}}\bigr),\allowbreak \frac{1}{3}) +\allowbreak \mathcal{H}_{3\,\allowbreak k}^{(0,1,2)}\!(\bigl(-1\bigr)^{\frac{2}{3}},\allowbreak \bigl(-1\bigr)^{\frac{2}{3}},\allowbreak \frac{1}{3}) +\allowbreak \mathcal{H}_{3\,\allowbreak k}^{(0,2,1)}\!(1,\allowbreak \frac{1}{3},\allowbreak 1) +\allowbreak \mathcal{H}_{3\,\allowbreak k}^{(0,2,1)}\!(1,\allowbreak \frac{1}{3},\allowbreak -\bigl(\bigl(-1\bigr)^{\frac{1}{3}}\bigr))
\end{dmath*}
\vspace{-1.1em}
\begin{dmath*}
{}\quad +\allowbreak \mathcal{H}_{3\,\allowbreak k}^{(0,2,1)}\!(1,\allowbreak \frac{1}{3},\allowbreak \bigl(-1\bigr)^{\frac{2}{3}}) +\allowbreak \mathcal{H}_{3\,\allowbreak k}^{(0,2,1)}\!(-\bigl(\bigl(-1\bigr)^{\frac{1}{3}}\bigr),\allowbreak \frac{1}{3},\allowbreak 1) +\allowbreak \mathcal{H}_{3\,\allowbreak k}^{(0,2,1)}\!(-\bigl(\bigl(-1\bigr)^{\frac{1}{3}}\bigr),\allowbreak \frac{1}{3},\allowbreak -\bigl(\bigl(-1\bigr)^{\frac{1}{3}}\bigr)) +\allowbreak \mathcal{H}_{3\,\allowbreak k}^{(0,2,1)}\!(-\bigl(\bigl(-1\bigr)^{\frac{1}{3}}\bigr),\allowbreak \frac{1}{3},\allowbreak \bigl(-1\bigr)^{\frac{2}{3}}) +\allowbreak \mathcal{H}_{3\,\allowbreak k}^{(0,2,1)}\!(\bigl(-1\bigr)^{\frac{2}{3}},\allowbreak \frac{1}{3},\allowbreak 1)
\end{dmath*}
\vspace{-1.1em}
\begin{dmath*}
{}\quad +\allowbreak \mathcal{H}_{3\,\allowbreak k}^{(0,2,1)}\!(\bigl(-1\bigr)^{\frac{2}{3}},\allowbreak \frac{1}{3},\allowbreak -\bigl(\bigl(-1\bigr)^{\frac{1}{3}}\bigr)) +\allowbreak \mathcal{H}_{3\,\allowbreak k}^{(0,2,1)}\!(\bigl(-1\bigr)^{\frac{2}{3}},\allowbreak \frac{1}{3},\allowbreak \bigl(-1\bigr)^{\frac{2}{3}})\bigr)
\end{dmath*}
\subsection*{Identity 46}
\begin{dmath*}
\sum_{n_{1}=1}^{k}\sum_{n_{2}=1}^{n_{1}-1}\frac{1}{(2\,n_{1}-1)^{2}\,(2\,n_{2}-1)^{3}}
= \frac{1}{32}\,\bigl(16\,\mathcal{A}_{-1+2\,k}^{(5)}+16\,\mathcal{H}_{-1+2\,k}^{(5)}+\zeta\!(5,\frac{1}{2}+k)+8\,\mathcal{H}_{-1+2\,k}^{(2,3)}+8\,\mathcal{H}_{-1+2\,k}^{(2,3)}\!(-1,-1)-8\,\mathcal{H}_{-1+2\,k}^{(2,3)}\!(-1,1)-8\,\mathcal{H}_{-1+2\,k}^{(2,3)}\!(1,-1)-31\,\zeta\!(5)\bigr)
\end{dmath*}
\section*{7 Nested sums}

\subsection*{Identity 47}
\begin{dmath*}
\sum_{n_{1}=1}^{k}\sum_{n_{2}=1}^{n_{1}} H_{2\,\allowbreak n_{2}} = \frac{1}{8}\,\allowbreak \bigl(1 -\allowbreak \bigl(-1\bigr)^{2\,\allowbreak k} -\allowbreak 3\,\allowbreak k\,\allowbreak \bigl(3 +\allowbreak 2\,\allowbreak k\bigr) -\allowbreak \bigl(1 +\allowbreak \bigl(-1\bigr)^{2\,\allowbreak k} +\allowbreak 4\,\allowbreak k\bigr)\,\allowbreak \mathcal{A}_{2\,\allowbreak k} +\allowbreak \bigl(3 +\allowbreak 3\,\allowbreak \bigl(-1\bigr)^{2\,\allowbreak k} +\allowbreak 10\,\allowbreak k +\allowbreak 4\,\allowbreak k^{2}\bigr)\,\allowbreak H_{2\,\allowbreak k} +\allowbreak 2\,\allowbreak \mathcal{H}_{2\,\allowbreak k}^{(0,0,1)}\!(-1,\allowbreak -1,\allowbreak 1) +\allowbreak 2\,\allowbreak \mathcal{H}_{2\,\allowbreak k}^{(0,0,1)}\!(-1,\allowbreak 1,\allowbreak 1)
\end{dmath*}
\vspace{-1.1em}
\begin{dmath*}
{}\quad +\allowbreak 2\,\allowbreak \mathcal{H}_{2\,\allowbreak k}^{(0,0,1)}\!(1,\allowbreak -1,\allowbreak 1)\bigr)
\end{dmath*}
\subsection*{Identity 48}
\begin{dmath*}
\sum_{n_{1}=1}^{k}\sum_{n_{2}=1}^{n_{1}}\sum_{n_{3}=1}^{n_{2}} \frac{n_{1}^{2}}{n_{2}^{3}}\,\allowbreak n_{3}^{4}\,\allowbreak h_{n_{3}}^{[1]}\!(1;1) = \frac{k^{2}}{4} +\allowbreak \frac{k^{3}}{2} +\allowbreak \frac{k^{4}}{4} +\allowbreak 2\,\allowbreak \mathcal{H}_{k}^{(-3,1)} +\allowbreak \mathcal{H}_{k}^{(1,-3)} +\allowbreak \mathcal{H}_{k}^{(-3,0,1)} +\allowbreak 2\,\allowbreak \mathcal{H}_{k}^{(-2,-1,1)} +\allowbreak \mathcal{H}_{k}^{(-2,3,-3)} +\allowbreak 2\,\allowbreak \mathcal{H}_{k}^{(1,-4,1)} +\allowbreak \mathcal{H}_{k}^{(-2,-1,0,1)} +\allowbreak 2\,\allowbreak \mathcal{H}_{k}^{(-2,3,-4,1)} +\allowbreak \mathcal{H}_{k}^{(1,-4,0,1)} +\allowbreak \mathcal{H}_{k}^{(-2,3,-4,0,1)}
\end{dmath*}
\subsection*{Identity 49}
\begin{dmath*}
\sum_{n_{1}=1}^{k}\sum_{n_{2}=1}^{n_{1}} H_{n_{1}}\,\allowbreak \mathcal{A}_{n_{2}}^{(\mathrm{i})}\!(2\,\allowbreak \mathrm{i}) = \bigl(-2 -\allowbreak k +\allowbreak \bigl(1 +\allowbreak k\bigr)\,\allowbreak H_{k}\bigr)\,\allowbreak \mathcal{H}_{k}^{(-1 +\allowbreak \mathrm{i})}\!(-2\,\allowbreak \mathrm{i}) -\allowbreak \bigl(1 +\allowbreak k\bigr)^{2}\,\allowbreak \bigl(-1 +\allowbreak H_{k}\bigr)\,\allowbreak \mathcal{H}_{k}^{(\mathrm{i})}\!(-2\,\allowbreak \mathrm{i}) -\allowbreak \mathcal{H}_{k}^{(-1,\mathrm{i})}\!(1,\allowbreak -2\,\allowbreak \mathrm{i}) +\allowbreak \mathcal{H}_{k}^{(-1,1 +\allowbreak \mathrm{i})}\!(1,\allowbreak -2\,\allowbreak \mathrm{i}) +\allowbreak \mathcal{H}_{k}^{(-1 +\allowbreak \mathrm{i},1)}\!(-2\,\allowbreak \mathrm{i},\allowbreak 1)
\end{dmath*}
\vspace{-1.1em}
\begin{dmath*}
{}\quad +\allowbreak \mathcal{H}_{k}^{(-1,\mathrm{i},1)}\!(1,\allowbreak -2\,\allowbreak \mathrm{i},\allowbreak 1) +\allowbreak \mathcal{H}_{k}^{(-1,1,\mathrm{i})}\!(1,\allowbreak 1,\allowbreak -2\,\allowbreak \mathrm{i})
\end{dmath*}
\subsection*{Identity 50}
\begin{dmath*}
\sum_{n_{1}=1}^{k}\sum_{n_{2}=1}^{n_{1}}\sum_{n_{3}=1}^{n_{2}} \frac{H_{n_{1}}\,\allowbreak H_{n_{2}}\,\allowbreak H_{n_{3}}}{n_{1}^{2}\,\allowbreak n_{2}^{3}\,\allowbreak n_{3}^{4}} = \mathcal{H}_{k}^{(12)} +\allowbreak \mathcal{H}_{k}^{(2,10)} +\allowbreak \mathcal{H}_{k}^{(3,9)} +\allowbreak \mathcal{H}_{k}^{(5,7)} +\allowbreak 2\,\allowbreak \mathcal{H}_{k}^{(6,6)} +\allowbreak \mathcal{H}_{k}^{(7,5)} +\allowbreak \mathcal{H}_{k}^{(9,3)} +\allowbreak 3\,\allowbreak \mathcal{H}_{k}^{(10,2)} +\allowbreak 3\,\allowbreak \mathcal{H}_{k}^{(11,1)} +\allowbreak \mathcal{H}_{k}^{(2,1,9)} +\allowbreak \mathcal{H}_{k}^{(2,3,7)} +\allowbreak 2\,\allowbreak \mathcal{H}_{k}^{(2,4,6)} +\allowbreak \mathcal{H}_{k}^{(2,5,5)} +\allowbreak \mathcal{H}_{k}^{(2,7,3)} +\allowbreak 3\,\allowbreak \mathcal{H}_{k}^{(2,8,2)} +\allowbreak 3\,\allowbreak \mathcal{H}_{k}^{(2,9,1)}
\end{dmath*}
\vspace{-1.1em}
\begin{dmath*}
{}\quad +\allowbreak \mathcal{H}_{k}^{(3,3,6)} +\allowbreak \mathcal{H}_{k}^{(3,4,5)} +\allowbreak \mathcal{H}_{k}^{(3,7,2)} +\allowbreak 2\,\allowbreak \mathcal{H}_{k}^{(3,8,1)} +\allowbreak 2\,\allowbreak \mathcal{H}_{k}^{(5,1,6)} +\allowbreak \mathcal{H}_{k}^{(5,2,5)} +\allowbreak \mathcal{H}_{k}^{(5,4,3)} +\allowbreak 3\,\allowbreak \mathcal{H}_{k}^{(5,5,2)} +\allowbreak 3\,\allowbreak \mathcal{H}_{k}^{(5,6,1)} +\allowbreak 2\,\allowbreak \mathcal{H}_{k}^{(6,1,5)} +\allowbreak 2\,\allowbreak \mathcal{H}_{k}^{(6,4,2)} +\allowbreak 4\,\allowbreak \mathcal{H}_{k}^{(6,5,1)} +\allowbreak \mathcal{H}_{k}^{(7,4,1)} +\allowbreak 3\,\allowbreak \mathcal{H}_{k}^{(9,1,2)}
\end{dmath*}
\vspace{-1.1em}
\begin{dmath*}
{}\quad +\allowbreak 3\,\allowbreak \mathcal{H}_{k}^{(9,2,1)} +\allowbreak 6\,\allowbreak \mathcal{H}_{k}^{(10,1,1)} +\allowbreak \mathcal{H}_{k}^{(2,1,3,6)} +\allowbreak \mathcal{H}_{k}^{(2,1,4,5)} +\allowbreak \mathcal{H}_{k}^{(2,1,7,2)} +\allowbreak 2\,\allowbreak \mathcal{H}_{k}^{(2,1,8,1)} +\allowbreak 2\,\allowbreak \mathcal{H}_{k}^{(2,3,1,6)} +\allowbreak \mathcal{H}_{k}^{(2,3,2,5)} +\allowbreak \mathcal{H}_{k}^{(2,3,4,3)} +\allowbreak 3\,\allowbreak \mathcal{H}_{k}^{(2,3,5,2)} +\allowbreak 3\,\allowbreak \mathcal{H}_{k}^{(2,3,6,1)} +\allowbreak 2\,\allowbreak \mathcal{H}_{k}^{(2,4,1,5)} +\allowbreak 2\,\allowbreak \mathcal{H}_{k}^{(2,4,4,2)} +\allowbreak 4\,\allowbreak \mathcal{H}_{k}^{(2,4,5,1)}
\end{dmath*}
\vspace{-1.1em}
\begin{dmath*}
{}\quad +\allowbreak \mathcal{H}_{k}^{(2,5,4,1)} +\allowbreak 3\,\allowbreak \mathcal{H}_{k}^{(2,7,1,2)} +\allowbreak 3\,\allowbreak \mathcal{H}_{k}^{(2,7,2,1)} +\allowbreak 6\,\allowbreak \mathcal{H}_{k}^{(2,8,1,1)} +\allowbreak \mathcal{H}_{k}^{(3,3,1,5)} +\allowbreak \mathcal{H}_{k}^{(3,3,4,2)} +\allowbreak 2\,\allowbreak \mathcal{H}_{k}^{(3,3,5,1)} +\allowbreak \mathcal{H}_{k}^{(3,4,4,1)} +\allowbreak 2\,\allowbreak \mathcal{H}_{k}^{(3,7,1,1)} +\allowbreak 2\,\allowbreak \mathcal{H}_{k}^{(5,1,1,5)} +\allowbreak 2\,\allowbreak \mathcal{H}_{k}^{(5,1,4,2)} +\allowbreak 4\,\allowbreak \mathcal{H}_{k}^{(5,1,5,1)} +\allowbreak \mathcal{H}_{k}^{(5,2,4,1)}
\end{dmath*}
\vspace{-1.1em}
\begin{dmath*}
{}\quad +\allowbreak 3\,\allowbreak \mathcal{H}_{k}^{(5,4,1,2)} +\allowbreak 3\,\allowbreak \mathcal{H}_{k}^{(5,4,2,1)} +\allowbreak 6\,\allowbreak \mathcal{H}_{k}^{(5,5,1,1)} +\allowbreak 2\,\allowbreak \mathcal{H}_{k}^{(6,1,4,1)} +\allowbreak 4\,\allowbreak \mathcal{H}_{k}^{(6,4,1,1)} +\allowbreak 6\,\allowbreak \mathcal{H}_{k}^{(9,1,1,1)} +\allowbreak \mathcal{H}_{k}^{(2,1,3,1,5)} +\allowbreak \mathcal{H}_{k}^{(2,1,3,4,2)} +\allowbreak 2\,\allowbreak \mathcal{H}_{k}^{(2,1,3,5,1)} +\allowbreak \mathcal{H}_{k}^{(2,1,4,4,1)} +\allowbreak 2\,\allowbreak \mathcal{H}_{k}^{(2,1,7,1,1)} +\allowbreak 2\,\allowbreak \mathcal{H}_{k}^{(2,3,1,1,5)} +\allowbreak 2\,\allowbreak \mathcal{H}_{k}^{(2,3,1,4,2)}
\end{dmath*}
\vspace{-1.1em}
\begin{dmath*}
{}\quad +\allowbreak 4\,\allowbreak \mathcal{H}_{k}^{(2,3,1,5,1)} +\allowbreak \mathcal{H}_{k}^{(2,3,2,4,1)} +\allowbreak 3\,\allowbreak \mathcal{H}_{k}^{(2,3,4,1,2)} +\allowbreak 3\,\allowbreak \mathcal{H}_{k}^{(2,3,4,2,1)} +\allowbreak 6\,\allowbreak \mathcal{H}_{k}^{(2,3,5,1,1)} +\allowbreak 2\,\allowbreak \mathcal{H}_{k}^{(2,4,1,4,1)} +\allowbreak 4\,\allowbreak \mathcal{H}_{k}^{(2,4,4,1,1)} +\allowbreak 6\,\allowbreak \mathcal{H}_{k}^{(2,7,1,1,1)} +\allowbreak \mathcal{H}_{k}^{(3,3,1,4,1)} +\allowbreak 2\,\allowbreak \mathcal{H}_{k}^{(3,3,4,1,1)} +\allowbreak 2\,\allowbreak \mathcal{H}_{k}^{(5,1,1,4,1)} +\allowbreak 4\,\allowbreak \mathcal{H}_{k}^{(5,1,4,1,1)}
\end{dmath*}
\vspace{-1.1em}
\begin{dmath*}
{}\quad +\allowbreak 6\,\allowbreak \mathcal{H}_{k}^{(5,4,1,1,1)} +\allowbreak \mathcal{H}_{k}^{(2,1,3,1,4,1)} +\allowbreak 2\,\allowbreak \mathcal{H}_{k}^{(2,1,3,4,1,1)} +\allowbreak 2\,\allowbreak \mathcal{H}_{k}^{(2,3,1,1,4,1)} +\allowbreak 4\,\allowbreak \mathcal{H}_{k}^{(2,3,1,4,1,1)} +\allowbreak 6\,\allowbreak \mathcal{H}_{k}^{(2,3,4,1,1,1)}
\end{dmath*}
\subsection*{Identity 51}
\begin{dmath*}
\sum_{n_{1}=1}^{k}\sum_{n_{2}=1}^{n_{1}} F_{n_{1}}\,\allowbreak H_{2\,\allowbreak n_{2}} = \frac{-1}{4\,\allowbreak \sqrt{5}}\,\allowbreak \bigl(-\frac{\mathrm{i}^{1 +\allowbreak 2\,\allowbreak k}\,\allowbreak 2^{-1 +\allowbreak \frac{1}{2}\,\allowbreak \bigl(1 +\allowbreak 2\,\allowbreak k\bigr)}\,\allowbreak \bigl(1 +\allowbreak \sqrt{5}\bigr)^{\frac{1}{2}\,\allowbreak \bigl(-1 -\allowbreak 2\,\allowbreak k\bigr)}\,\allowbreak \mathcal{A}_{2\,\allowbreak k}}{1 -\allowbreak \mathrm{i}\,\allowbreak \sqrt{\frac{2}{1 +\allowbreak \sqrt{5}}}} -\allowbreak \frac{\bigl(-\mathrm{i}\bigr)^{1 +\allowbreak 2\,\allowbreak k}\,\allowbreak 2^{-1 +\allowbreak \frac{1}{2}\,\allowbreak \bigl(1 +\allowbreak 2\,\allowbreak k\bigr)}\,\allowbreak \bigl(1 +\allowbreak \sqrt{5}\bigr)^{\frac{1}{2}\,\allowbreak \bigl(-1 -\allowbreak 2\,\allowbreak k\bigr)}\,\allowbreak \mathcal{A}_{2\,\allowbreak k}}{1 +\allowbreak \mathrm{i}\,\allowbreak \sqrt{\frac{2}{1 +\allowbreak \sqrt{5}}}}
\end{dmath*}
\vspace{-1.1em}
\begin{dmath*}
{}\quad +\allowbreak \frac{2^{-1 +\allowbreak \frac{1}{2}\,\allowbreak \bigl(-1 -\allowbreak 2\,\allowbreak k\bigr)}\,\allowbreak \bigl(1 +\allowbreak \sqrt{5}\bigr)^{\frac{1}{2}\,\allowbreak \bigl(1 +\allowbreak 2\,\allowbreak k\bigr)}\,\allowbreak \mathcal{A}_{2\,\allowbreak k}}{1 -\allowbreak \sqrt{\frac{1}{2}\,\allowbreak \bigl(1 +\allowbreak \sqrt{5}\bigr)}} +\allowbreak \frac{\bigl(-1\bigr)^{1 +\allowbreak 2\,\allowbreak k}\,\allowbreak 2^{-1 +\allowbreak \frac{1}{2}\,\allowbreak \bigl(-1 -\allowbreak 2\,\allowbreak k\bigr)}\,\allowbreak \bigl(1 +\allowbreak \sqrt{5}\bigr)^{\frac{1}{2}\,\allowbreak \bigl(1 +\allowbreak 2\,\allowbreak k\bigr)}\,\allowbreak \mathcal{A}_{2\,\allowbreak k}}{1 +\allowbreak \sqrt{\frac{1}{2}\,\allowbreak \bigl(1 +\allowbreak \sqrt{5}\bigr)}}
\end{dmath*}
\vspace{-1.1em}
\begin{dmath*}
{}\quad +\allowbreak 2\,\allowbreak \mathcal{A}_{2\,\allowbreak k}^{(1)}\!(\sqrt{\frac{1}{2}\,\allowbreak \bigl(1 +\allowbreak \sqrt{5}\bigr)}) -\allowbreak \frac{\bigl(1 +\allowbreak \sqrt{5}\bigr)\,\allowbreak \mathcal{A}_{2\,\allowbreak k}^{(1)}\!(\sqrt{\frac{1}{2}\,\allowbreak \bigl(1 +\allowbreak \sqrt{5}\bigr)})}{2\,\allowbreak \bigl(1 -\allowbreak \sqrt{\frac{1}{2}\,\allowbreak \bigl(1 +\allowbreak \sqrt{5}\bigr)}\bigr)\,\allowbreak \bigl(1 +\allowbreak \sqrt{\frac{1}{2}\,\allowbreak \bigl(1 +\allowbreak \sqrt{5}\bigr)}\bigr)} -\allowbreak \frac{1}{1 -\allowbreak \sqrt{\frac{1}{2}\,\allowbreak \bigl(1 +\allowbreak \sqrt{5}\bigr)}}\,\allowbreak \bigl(2^{\frac{1}{2}\,\allowbreak \bigl(-1 -\allowbreak 2\,\allowbreak k\bigr)}\,\allowbreak \bigl(1
\end{dmath*}
\vspace{-1.1em}
\begin{dmath*}
{}\quad +\allowbreak \sqrt{5}\bigr)^{\frac{1}{2}\,\allowbreak \bigl(1 +\allowbreak 2\,\allowbreak k\bigr)}\,\allowbreak \mathcal{A}_{2\,\allowbreak k} -\allowbreak \sqrt{\frac{1}{2}\,\allowbreak \bigl(1 +\allowbreak \sqrt{5}\bigr)}\,\allowbreak \mathcal{A}_{2\,\allowbreak k}^{(1)}\!(\sqrt{\frac{1}{2}\,\allowbreak \bigl(1 +\allowbreak \sqrt{5}\bigr)})\bigr) +\allowbreak \frac{\bigl(-1\bigr)^{1 +\allowbreak 2\,\allowbreak k}\,\allowbreak \bigl(-\mathrm{i}\bigr)^{1 +\allowbreak 2\,\allowbreak k}\,\allowbreak 2^{-1 +\allowbreak \frac{1}{2}\,\allowbreak \bigl(1 +\allowbreak 2\,\allowbreak k\bigr)}\,\allowbreak \bigl(1 +\allowbreak \sqrt{5}\bigr)^{\frac{1}{2}\,\allowbreak \bigl(-1 -\allowbreak 2\,\allowbreak k\bigr)}\,\allowbreak H_{2\,\allowbreak k}}{1 -\allowbreak \mathrm{i}\,\allowbreak \sqrt{\frac{2}{1 +\allowbreak \sqrt{5}}}}
\end{dmath*}
\vspace{-1.1em}
\begin{dmath*}
{}\quad +\allowbreak \frac{\bigl(-1\bigr)^{1 +\allowbreak 2\,\allowbreak k}\,\allowbreak \mathrm{i}^{1 +\allowbreak 2\,\allowbreak k}\,\allowbreak 2^{-1 +\allowbreak \frac{1}{2}\,\allowbreak \bigl(1 +\allowbreak 2\,\allowbreak k\bigr)}\,\allowbreak \bigl(1 +\allowbreak \sqrt{5}\bigr)^{\frac{1}{2}\,\allowbreak \bigl(-1 -\allowbreak 2\,\allowbreak k\bigr)}\,\allowbreak H_{2\,\allowbreak k}}{1 +\allowbreak \mathrm{i}\,\allowbreak \sqrt{\frac{2}{1 +\allowbreak \sqrt{5}}}} -\allowbreak \frac{\bigl(-1\bigr)^{2 +\allowbreak 4\,\allowbreak k}\,\allowbreak 2^{-1 +\allowbreak \frac{1}{2}\,\allowbreak \bigl(-1 -\allowbreak 2\,\allowbreak k\bigr)}\,\allowbreak \bigl(1 +\allowbreak \sqrt{5}\bigr)^{\frac{1}{2}\,\allowbreak \bigl(1 +\allowbreak 2\,\allowbreak k\bigr)}\,\allowbreak H_{2\,\allowbreak k}}{1 -\allowbreak \sqrt{\frac{1}{2}\,\allowbreak \bigl(1 +\allowbreak \sqrt{5}\bigr)}}
\end{dmath*}
\vspace{-1.1em}
\begin{dmath*}
{}\quad -\allowbreak \frac{\bigl(-1\bigr)^{1 +\allowbreak 2\,\allowbreak k}\,\allowbreak 2^{-1 +\allowbreak \frac{1}{2}\,\allowbreak \bigl(-1 -\allowbreak 2\,\allowbreak k\bigr)}\,\allowbreak \bigl(1 +\allowbreak \sqrt{5}\bigr)^{\frac{1}{2}\,\allowbreak \bigl(1 +\allowbreak 2\,\allowbreak k\bigr)}\,\allowbreak H_{2\,\allowbreak k}}{1 +\allowbreak \sqrt{\frac{1}{2}\,\allowbreak \bigl(1 +\allowbreak \sqrt{5}\bigr)}} -\allowbreak \frac{1}{1 +\allowbreak \sqrt{\frac{1}{2}\,\allowbreak \bigl(1 +\allowbreak \sqrt{5}\bigr)}}\,\allowbreak 3\,\allowbreak \bigl(\sqrt{\frac{1}{2}\,\allowbreak \bigl(1 +\allowbreak \sqrt{5}\bigr)}\,\allowbreak \mathcal{A}_{2\,\allowbreak k}^{(1)}\!(\sqrt{\frac{1}{2}\,\allowbreak \bigl(1 +\allowbreak \sqrt{5}\bigr)})
\end{dmath*}
\vspace{-1.1em}
\begin{dmath*}
{}\quad -\allowbreak \bigl(-1\bigr)^{1 +\allowbreak 2\,\allowbreak k}\,\allowbreak 2^{\frac{1}{2}\,\allowbreak \bigl(-1 -\allowbreak 2\,\allowbreak k\bigr)}\,\allowbreak \bigl(1 +\allowbreak \sqrt{5}\bigr)^{\frac{1}{2}\,\allowbreak \bigl(1 +\allowbreak 2\,\allowbreak k\bigr)}\,\allowbreak H_{2\,\allowbreak k}\bigr) +\allowbreak 2\,\allowbreak \mathcal{H}_{2\,\allowbreak k}^{(1)}\!(-\mathrm{i}\,\allowbreak \sqrt{\frac{2}{1 +\allowbreak \sqrt{5}}}) +\allowbreak \frac{2\,\allowbreak \mathcal{H}_{2\,\allowbreak k}^{(1)}\!(-\mathrm{i}\,\allowbreak \sqrt{\frac{2}{1 +\allowbreak \sqrt{5}}})}{\bigl(1 +\allowbreak \sqrt{5}\bigr)\,\allowbreak \bigl(1 -\allowbreak \mathrm{i}\,\allowbreak \sqrt{\frac{2}{1 +\allowbreak \sqrt{5}}}\bigr)\,\allowbreak \bigl(1 +\allowbreak \mathrm{i}\,\allowbreak \sqrt{\frac{2}{1 +\allowbreak \sqrt{5}}}\bigr)}
\end{dmath*}
\vspace{-1.1em}
\begin{dmath*}
{}\quad +\allowbreak \frac{3\,\allowbreak \bigl(-\bigl(\bigl(-\mathrm{i}\bigr)^{1 +\allowbreak 2\,\allowbreak k}\bigr)\,\allowbreak 2^{\frac{1}{2}\,\allowbreak \bigl(1 +\allowbreak 2\,\allowbreak k\bigr)}\,\allowbreak \bigl(1 +\allowbreak \sqrt{5}\bigr)^{\frac{1}{2}\,\allowbreak \bigl(-1 -\allowbreak 2\,\allowbreak k\bigr)}\,\allowbreak H_{2\,\allowbreak k} -\allowbreak \mathrm{i}\,\allowbreak \sqrt{\frac{2}{1 +\allowbreak \sqrt{5}}}\,\allowbreak \mathcal{H}_{2\,\allowbreak k}^{(1)}\!(-\mathrm{i}\,\allowbreak \sqrt{\frac{2}{1 +\allowbreak \sqrt{5}}})\bigr)}{1 +\allowbreak \mathrm{i}\,\allowbreak \sqrt{\frac{2}{1 +\allowbreak \sqrt{5}}}} +\allowbreak \frac{\mathrm{i}^{1 +\allowbreak 2\,\allowbreak k}\,\allowbreak 2^{\frac{1}{2}\,\allowbreak \bigl(1 +\allowbreak 2\,\allowbreak k\bigr)}\,\allowbreak \bigl(1 +\allowbreak \sqrt{5}\bigr)^{\frac{1}{2}\,\allowbreak \bigl(-1 -\allowbreak 2\,\allowbreak k\bigr)}\,\allowbreak \mathcal{A}_{2\,\allowbreak k} +\allowbreak \mathrm{i}\,\allowbreak \sqrt{\frac{2}{1 +\allowbreak \sqrt{5}}}\,\allowbreak \mathcal{H}_{2\,\allowbreak k}^{(1)}\!(-\mathrm{i}\,\allowbreak \sqrt{\frac{2}{1 +\allowbreak \sqrt{5}}})}{1 -\allowbreak \mathrm{i}\,\allowbreak \sqrt{\frac{2}{1 +\allowbreak \sqrt{5}}}}
\end{dmath*}
\vspace{-1.1em}
\begin{dmath*}
{}\quad +\allowbreak 2\,\allowbreak \mathcal{H}_{2\,\allowbreak k}^{(1)}\!(\mathrm{i}\,\allowbreak \sqrt{\frac{2}{1 +\allowbreak \sqrt{5}}}) +\allowbreak \frac{2\,\allowbreak \mathcal{H}_{2\,\allowbreak k}^{(1)}\!(\mathrm{i}\,\allowbreak \sqrt{\frac{2}{1 +\allowbreak \sqrt{5}}})}{\bigl(1 +\allowbreak \sqrt{5}\bigr)\,\allowbreak \bigl(1 -\allowbreak \mathrm{i}\,\allowbreak \sqrt{\frac{2}{1 +\allowbreak \sqrt{5}}}\bigr)\,\allowbreak \bigl(1 +\allowbreak \mathrm{i}\,\allowbreak \sqrt{\frac{2}{1 +\allowbreak \sqrt{5}}}\bigr)} +\allowbreak \frac{\bigl(-\mathrm{i}\bigr)^{1 +\allowbreak 2\,\allowbreak k}\,\allowbreak 2^{\frac{1}{2}\,\allowbreak \bigl(1 +\allowbreak 2\,\allowbreak k\bigr)}\,\allowbreak \bigl(1 +\allowbreak \sqrt{5}\bigr)^{\frac{1}{2}\,\allowbreak \bigl(-1 -\allowbreak 2\,\allowbreak k\bigr)}\,\allowbreak \mathcal{A}_{2\,\allowbreak k} -\allowbreak \mathrm{i}\,\allowbreak \sqrt{\frac{2}{1 +\allowbreak \sqrt{5}}}\,\allowbreak \mathcal{H}_{2\,\allowbreak k}^{(1)}\!(\mathrm{i}\,\allowbreak \sqrt{\frac{2}{1 +\allowbreak \sqrt{5}}})}{1 +\allowbreak \mathrm{i}\,\allowbreak \sqrt{\frac{2}{1 +\allowbreak \sqrt{5}}}}
\end{dmath*}
\vspace{-1.1em}
\begin{dmath*}
{}\quad +\allowbreak \frac{3\,\allowbreak \bigl(-\bigl(\mathrm{i}^{1 +\allowbreak 2\,\allowbreak k}\bigr)\,\allowbreak 2^{\frac{1}{2}\,\allowbreak \bigl(1 +\allowbreak 2\,\allowbreak k\bigr)}\,\allowbreak \bigl(1 +\allowbreak \sqrt{5}\bigr)^{\frac{1}{2}\,\allowbreak \bigl(-1 -\allowbreak 2\,\allowbreak k\bigr)}\,\allowbreak H_{2\,\allowbreak k} +\allowbreak \mathrm{i}\,\allowbreak \sqrt{\frac{2}{1 +\allowbreak \sqrt{5}}}\,\allowbreak \mathcal{H}_{2\,\allowbreak k}^{(1)}\!(\mathrm{i}\,\allowbreak \sqrt{\frac{2}{1 +\allowbreak \sqrt{5}}})\bigr)}{1 -\allowbreak \mathrm{i}\,\allowbreak \sqrt{\frac{2}{1 +\allowbreak \sqrt{5}}}} -\allowbreak 2\,\allowbreak \mathcal{H}_{2\,\allowbreak k}^{(1)}\!(\sqrt{\frac{1}{2}\,\allowbreak \bigl(1 +\allowbreak \sqrt{5}\bigr)})
\end{dmath*}
\vspace{-1.1em}
\begin{dmath*}
{}\quad +\allowbreak \frac{\bigl(1 +\allowbreak \sqrt{5}\bigr)\,\allowbreak \mathcal{H}_{2\,\allowbreak k}^{(1)}\!(\sqrt{\frac{1}{2}\,\allowbreak \bigl(1 +\allowbreak \sqrt{5}\bigr)})}{2\,\allowbreak \bigl(1 -\allowbreak \sqrt{\frac{1}{2}\,\allowbreak \bigl(1 +\allowbreak \sqrt{5}\bigr)}\bigr)\,\allowbreak \bigl(1 +\allowbreak \sqrt{\frac{1}{2}\,\allowbreak \bigl(1 +\allowbreak \sqrt{5}\bigr)}\bigr)} -\allowbreak \frac{\bigl(-1\bigr)^{1 +\allowbreak 2\,\allowbreak k}\,\allowbreak 2^{\frac{1}{2}\,\allowbreak \bigl(-1 -\allowbreak 2\,\allowbreak k\bigr)}\,\allowbreak \bigl(1 +\allowbreak \sqrt{5}\bigr)^{\frac{1}{2}\,\allowbreak \bigl(1 +\allowbreak 2\,\allowbreak k\bigr)}\,\allowbreak \mathcal{A}_{2\,\allowbreak k} -\allowbreak \sqrt{\frac{1}{2}\,\allowbreak \bigl(1 +\allowbreak \sqrt{5}\bigr)}\,\allowbreak \mathcal{H}_{2\,\allowbreak k}^{(1)}\!(\sqrt{\frac{1}{2}\,\allowbreak \bigl(1 +\allowbreak \sqrt{5}\bigr)})}{1 +\allowbreak \sqrt{\frac{1}{2}\,\allowbreak \bigl(1 +\allowbreak \sqrt{5}\bigr)}}
\end{dmath*}
\vspace{-1.1em}
\begin{dmath*}
{}\quad -\allowbreak \frac{3\,\allowbreak \bigl(-\bigl(2^{\frac{1}{2}\,\allowbreak \bigl(-1 -\allowbreak 2\,\allowbreak k\bigr)}\bigr)\,\allowbreak \bigl(1 +\allowbreak \sqrt{5}\bigr)^{\frac{1}{2}\,\allowbreak \bigl(1 +\allowbreak 2\,\allowbreak k\bigr)}\,\allowbreak H_{2\,\allowbreak k} +\allowbreak \sqrt{\frac{1}{2}\,\allowbreak \bigl(1 +\allowbreak \sqrt{5}\bigr)}\,\allowbreak \mathcal{H}_{2\,\allowbreak k}^{(1)}\!(\sqrt{\frac{1}{2}\,\allowbreak \bigl(1 +\allowbreak \sqrt{5}\bigr)})\bigr)}{1 -\allowbreak \sqrt{\frac{1}{2}\,\allowbreak \bigl(1 +\allowbreak \sqrt{5}\bigr)}} +\allowbreak \mathcal{H}_{2\,\allowbreak k}^{(0,0,1)}\!(-\mathrm{i}\,\allowbreak \sqrt{\frac{2}{1 +\allowbreak \sqrt{5}}},\allowbreak 1,\allowbreak 1)
\end{dmath*}
\vspace{-1.1em}
\begin{dmath*}
{}\quad +\allowbreak \mathcal{H}_{2\,\allowbreak k}^{(0,0,1)}\!(\mathrm{i}\,\allowbreak \sqrt{\frac{2}{1 +\allowbreak \sqrt{5}}},\allowbreak 1,\allowbreak 1) -\allowbreak \mathcal{H}_{2\,\allowbreak k}^{(0,0,1)}\!(-\sqrt{\frac{1}{2}\,\allowbreak \bigl(1 +\allowbreak \sqrt{5}\bigr)},\allowbreak 1,\allowbreak 1) -\allowbreak \mathcal{H}_{2\,\allowbreak k}^{(0,0,1)}\!(\sqrt{\frac{1}{2}\,\allowbreak \bigl(1 +\allowbreak \sqrt{5}\bigr)},\allowbreak 1,\allowbreak 1)\bigr)
\end{dmath*}
\subsection*{Identity 52}
\begin{dmath*}
\sum_{n_{1}=1}^{k}\sum_{n_{2}=1}^{n_{1}} \frac{\mathcal{P}_{n_{1}}\!(\{\{2,\allowbreak 1\},\allowbreak \{3\}\};\allowbreak \{\frac{1}{2},\allowbreak \frac{-\mathrm{i}}{3}\};\allowbreak \{\{\{1,\allowbreak 1\},\allowbreak \{2,\allowbreak 0,\allowbreak 3\}\},\allowbreak \{\{-1,\allowbreak 2,\allowbreak 1\}\}\})}{\bigl(2 +\allowbreak 3\,\allowbreak n_{1} +\allowbreak 4\,\allowbreak n_{1}^{2}\bigr)^{2}\,\allowbreak \bigl(6 +\allowbreak 9\,\allowbreak n_{2}^{2}\bigr)^{2}} = \mathcal{P}_{k}\!(\{\{2,\allowbreak 2,\allowbreak 1\},\allowbreak \{3,\allowbreak 2\}\};\allowbreak \{\frac{1}{2},\allowbreak -\bigl(\frac{\mathrm{i}}{3}\bigr)\};\allowbreak \{\{\{2,\allowbreak 3,\allowbreak 4\},\allowbreak \{1,\allowbreak 1\},\allowbreak \{2,\allowbreak 0,\allowbreak 3\}\},\allowbreak \{\{-1,\allowbreak 2,\allowbreak 1\},\allowbreak \{6,\allowbreak 0,\allowbreak 9\}\}\}) +\allowbreak \mathcal{P}_{k}\!(\{\{2,\allowbreak 2,\allowbreak 1,\allowbreak 2\},\allowbreak \{3\}\};\allowbreak \{\frac{1}{2},\allowbreak -\bigl(\frac{\mathrm{i}}{3}\bigr)\};\allowbreak \{\{\{2,\allowbreak 3,\allowbreak 4\},\allowbreak \{1,\allowbreak 1\},\allowbreak \{2,\allowbreak 0,\allowbreak 3\},\allowbreak \{6,\allowbreak 0,\allowbreak 9\}\},\allowbreak \{\{-1,\allowbreak 2,\allowbreak 1\}\}\})
\end{dmath*}
\vspace{-1.1em}
\begin{dmath*}
{}\quad +\allowbreak \mathcal{P}_{k}\!(\{\{2\},\allowbreak \{2,\allowbreak 1\},\allowbreak \{3,\allowbreak 2\}\};\allowbreak \{1,\allowbreak \frac{1}{2},\allowbreak -\bigl(\frac{\mathrm{i}}{3}\bigr)\};\allowbreak \{\{\{2,\allowbreak 3,\allowbreak 4\}\},\allowbreak \{\{1,\allowbreak 1\},\allowbreak \{2,\allowbreak 0,\allowbreak 3\}\},\allowbreak \{\{-1,\allowbreak 2,\allowbreak 1\},\allowbreak \{6,\allowbreak 0,\allowbreak 9\}\}\}) +\allowbreak \mathcal{P}_{k}\!(\{\{2\},\allowbreak \{2,\allowbreak 1,\allowbreak 2\},\allowbreak \{3\}\};\allowbreak \{1,\allowbreak \frac{1}{2},\allowbreak -\bigl(\frac{\mathrm{i}}{3}\bigr)\};\allowbreak \{\{\{2,\allowbreak 3,\allowbreak 4\}\},\allowbreak \{\{1,\allowbreak 1\},\allowbreak \{2,\allowbreak 0,\allowbreak 3\},\allowbreak \{6,\allowbreak 0,\allowbreak 9\}\},\allowbreak \{\{-1,\allowbreak 2,\allowbreak 1\}\}\})
\end{dmath*}
\vspace{-1.1em}
\begin{dmath*}
{}\quad +\allowbreak \mathcal{P}_{k}\!(\{\{2,\allowbreak 2\},\allowbreak \{2,\allowbreak 1\},\allowbreak \{3\}\};\allowbreak \{1,\allowbreak \frac{1}{2},\allowbreak -\bigl(\frac{\mathrm{i}}{3}\bigr)\};\allowbreak \{\{\{2,\allowbreak 3,\allowbreak 4\},\allowbreak \{6,\allowbreak 0,\allowbreak 9\}\},\allowbreak \{\{1,\allowbreak 1\},\allowbreak \{2,\allowbreak 0,\allowbreak 3\}\},\allowbreak \{\{-1,\allowbreak 2,\allowbreak 1\}\}\}) +\allowbreak \mathcal{P}_{k}\!(\{\{2,\allowbreak 2,\allowbreak 1\},\allowbreak \{2\},\allowbreak \{3\}\};\allowbreak \{\frac{1}{2},\allowbreak 1,\allowbreak -\bigl(\frac{\mathrm{i}}{3}\bigr)\};\allowbreak \{\{\{2,\allowbreak 3,\allowbreak 4\},\allowbreak \{1,\allowbreak 1\},\allowbreak \{2,\allowbreak 0,\allowbreak 3\}\},\allowbreak \{\{6,\allowbreak 0,\allowbreak 9\}\},\allowbreak \{\{-1,\allowbreak 2,\allowbreak 1\}\}\})
\end{dmath*}
\vspace{-1.1em}
\begin{dmath*}
{}\quad +\allowbreak \mathcal{P}_{k}\!(\{\{2,\allowbreak 2,\allowbreak 1\},\allowbreak \{3\},\allowbreak \{2\}\};\allowbreak \{\frac{1}{2},\allowbreak -\bigl(\frac{\mathrm{i}}{3}\bigr),\allowbreak 1\};\allowbreak \{\{\{2,\allowbreak 3,\allowbreak 4\},\allowbreak \{1,\allowbreak 1\},\allowbreak \{2,\allowbreak 0,\allowbreak 3\}\},\allowbreak \{\{-1,\allowbreak 2,\allowbreak 1\}\},\allowbreak \{\{6,\allowbreak 0,\allowbreak 9\}\}\}) +\allowbreak \mathcal{P}_{k}\!(\{\{2\},\allowbreak \{2\},\allowbreak \{2,\allowbreak 1\},\allowbreak \{3\}\};\allowbreak \{1,\allowbreak 1,\allowbreak \frac{1}{2},\allowbreak -\bigl(\frac{\mathrm{i}}{3}\bigr)\};\allowbreak \{\{\{2,\allowbreak 3,\allowbreak 4\}\},\allowbreak \{\{6,\allowbreak 0,\allowbreak 9\}\},\allowbreak \{\{1,\allowbreak 1\},\allowbreak \{2,\allowbreak 0,\allowbreak 3\}\},\allowbreak \{\{-1,\allowbreak 2,\allowbreak 1\}\}\})
\end{dmath*}
\vspace{-1.1em}
\begin{dmath*}
{}\quad +\allowbreak \mathcal{P}_{k}\!(\{\{2\},\allowbreak \{2,\allowbreak 1\},\allowbreak \{2\},\allowbreak \{3\}\};\allowbreak \{1,\allowbreak \frac{1}{2},\allowbreak 1,\allowbreak -\bigl(\frac{\mathrm{i}}{3}\bigr)\};\allowbreak \{\{\{2,\allowbreak 3,\allowbreak 4\}\},\allowbreak \{\{1,\allowbreak 1\},\allowbreak \{2,\allowbreak 0,\allowbreak 3\}\},\allowbreak \{\{6,\allowbreak 0,\allowbreak 9\}\},\allowbreak \{\{-1,\allowbreak 2,\allowbreak 1\}\}\}) +\allowbreak \mathcal{P}_{k}\!(\{\{2\},\allowbreak \{2,\allowbreak 1\},\allowbreak \{3\},\allowbreak \{2\}\};\allowbreak \{1,\allowbreak \frac{1}{2},\allowbreak -\bigl(\frac{\mathrm{i}}{3}\bigr),\allowbreak 1\};\allowbreak \{\{\{2,\allowbreak 3,\allowbreak 4\}\},\allowbreak \{\{1,\allowbreak 1\},\allowbreak \{2,\allowbreak 0,\allowbreak 3\}\},\allowbreak \{\{-1,\allowbreak 2,\allowbreak 1\}\},\allowbreak \{\{6,\allowbreak 0,\allowbreak 9\}\}\})
\end{dmath*}
\section*{8 Infinite sums}

\subsection*{Identity 53}
\begin{dmath*}
\sum_{n=1}^{\infty} \bigl(\frac{H_{n}}{n}\bigr)^{4} = \frac{40651\,\allowbreak \pi^{8} -\allowbreak 70761600\,\allowbreak \zeta\!(5,\allowbreak 3) -\allowbreak 2268000\,\allowbreak \pi^{2}\,\allowbreak \bigl(\zeta\!(3)\bigr)^{2} -\allowbreak 272160000\,\allowbreak \zeta\!(3)\,\allowbreak \zeta\!(5)}{6804000}
\end{dmath*}
\subsection*{Identity 54}
\begin{dmath*}
\sum_{n=1}^{\infty} \bigl(\frac{H_{n}}{n}\bigr)^{6} = \frac{1}{1225944720000}\,\allowbreak \bigl(6116077813\,\allowbreak \pi^{12} -\allowbreak 4012925716800\,\allowbreak \pi^{4}\,\allowbreak \zeta\!(5,\allowbreak 3) -\allowbreak 108507053655000\,\allowbreak \pi^{2}\,\allowbreak \zeta\!(7,\allowbreak 3) +\allowbreak 804219736320000\,\allowbreak \zeta\!(9,\allowbreak 3) +\allowbreak 208410602400000\,\allowbreak \zeta\!(6,\allowbreak 4,\allowbreak 1,\allowbreak 1) +\allowbreak 1289998710000\,\allowbreak \pi^{6}\,\allowbreak \bigl(\zeta\!(3)\bigr)^{2} -\allowbreak 8172964800000\,\allowbreak \bigl(\zeta\!(3)\bigr)^{4} -\allowbreak 4236320088000\,\allowbreak \pi^{4}\,\allowbreak \zeta\!(3)\,\allowbreak \zeta\!(5)
\end{dmath*}
\vspace{-1.1em}
\begin{dmath*}
{}\quad -\allowbreak 151947821025000\,\allowbreak \pi^{2}\,\allowbreak \bigl(\zeta\!(5)\bigr)^{2} -\allowbreak 387960422850000\,\allowbreak \pi^{2}\,\allowbreak \zeta\!(3)\,\allowbreak \zeta\!(7) -\allowbreak 2161493784450000\,\allowbreak \zeta\!(5)\,\allowbreak \zeta\!(7) +\allowbreak 1348164597780000\,\allowbreak \zeta\!(3)\,\allowbreak \zeta\!(9)\bigr)
\end{dmath*}
\subsection*{Identity 55}
\begin{dmath*}
\sum_{n=1}^{\infty} \frac{\mathcal{A}_{n}\,\allowbreak \mathcal{A}_{n}^{(2)}}{n^{4}} = \frac{1}{960}\,\allowbreak \bigl(-960\,\allowbreak \bigl(\operatorname{HPL}_{4,-2,1}\!(1) +\allowbreak \operatorname{HPL}_{4,-1,2}\!(1)\bigr) +\allowbreak \pi^{6}\,\allowbreak \log\!(4) -\allowbreak 8\,\allowbreak \pi^{4}\,\allowbreak \zeta\!(3) -\allowbreak 2200\,\allowbreak \pi^{2}\,\allowbreak \zeta\!(5) +\allowbreak 22935\,\allowbreak \zeta\!(7)\bigr)
\end{dmath*}
\subsection*{Identity 56}
\begin{dmath*}
\sum_{n=1}^{\infty} \frac{\bigl(\mathcal{H}_{n}^{(1,2)}\bigr)^{2}}{n^{2}} = \frac{319\,\allowbreak \pi^{8} +\allowbreak 1587600\,\allowbreak \zeta\!(5,\allowbreak 3) +\allowbreak 189000\,\allowbreak \pi^{2}\,\allowbreak \bigl(\zeta\!(3)\bigr)^{2} -\allowbreak 2268000\,\allowbreak \zeta\!(3)\,\allowbreak \zeta\!(5)}{1134000}
\end{dmath*}
\subsection*{Identity 57}
\begin{dmath*}
\sum_{n=1}^{\infty} \frac{H_{n}}{n^{10 +\allowbreak 8\,\allowbreak \mathrm{i}}} = \zeta\!(10 +\allowbreak 8\,\allowbreak \mathrm{i},\allowbreak 1) +\allowbreak \zeta\!(11 +\allowbreak 8\,\allowbreak \mathrm{i})
\end{dmath*}
\subsection*{Identity 58}
\begin{dmath*}
\sum_{n=1}^{\infty} \frac{\mathcal{A}_{n}^{(\frac{1}{2})}\,\allowbreak \mathcal{A}_{n}^{(2)}}{n^{4 +\allowbreak \frac{2}{3}\,\allowbreak \mathrm{i}}} = \operatorname{Li}_{\frac{9}{2} +\allowbreak \frac{2\,\allowbreak \mathrm{i}}{3},2}\!(-1,\allowbreak -1) +\allowbreak \operatorname{Li}_{6 +\allowbreak \frac{2\,\allowbreak \mathrm{i}}{3},\frac{1}{2}}\!(-1,\allowbreak -1) +\allowbreak \operatorname{Li}_{4 +\allowbreak \frac{2\,\allowbreak \mathrm{i}}{3},\frac{1}{2},2}\!(1,\allowbreak -1,\allowbreak -1) +\allowbreak \operatorname{Li}_{4 +\allowbreak \frac{2\,\allowbreak \mathrm{i}}{3},2,\frac{1}{2}}\!(1,\allowbreak -1,\allowbreak -1) +\allowbreak \zeta\!(4 +\allowbreak \frac{2\,\allowbreak \mathrm{i}}{3},\allowbreak \frac{5}{2}) +\allowbreak \zeta\!(\frac{13}{2} +\allowbreak \frac{2\,\allowbreak \mathrm{i}}{3})
\end{dmath*}
\subsection*{Identity 59}
\begin{dmath*}
\sum_{n=1}^{\infty} \frac{\bigl(\frac{1}{2}\bigr)^{n}\,\allowbreak h_{n}^{[1]}\!(1;\frac{1}{2})\,\allowbreak h_{n}^{[1]}\!(1;1)}{n^{2 -\allowbreak \mathrm{i}}} = 4\,\allowbreak \operatorname{Li}_{2 -\allowbreak \mathrm{i},2}\!(\frac{1}{2},\allowbreak \frac{1}{2}) +\allowbreak 2\,\allowbreak \operatorname{Li}_{3 -\allowbreak \mathrm{i},1}\!(\frac{1}{4},\allowbreak 1) +\allowbreak 2\,\allowbreak \operatorname{Li}_{3 -\allowbreak \mathrm{i},1}\!(\frac{1}{2},\allowbreak \frac{1}{2}) +\allowbreak 5\,\allowbreak \operatorname{Li}_{2 -\allowbreak \mathrm{i},0,2}\!(\frac{1}{2},\allowbreak 1,\allowbreak \frac{1}{2}) +\allowbreak 6\,\allowbreak \operatorname{Li}_{2 -\allowbreak \mathrm{i},1,1}\!(\frac{1}{2},\allowbreak \frac{1}{2},\allowbreak 1) +\allowbreak 6\,\allowbreak \operatorname{Li}_{2 -\allowbreak \mathrm{i},1,1}\!(\frac{1}{2},\allowbreak 1,\allowbreak \frac{1}{2})
\end{dmath*}
\vspace{-1.1em}
\begin{dmath*}
{}\quad +\allowbreak \operatorname{Li}_{3 -\allowbreak \mathrm{i},0,1}\!(\frac{1}{4},\allowbreak 1,\allowbreak 1) +\allowbreak \operatorname{Li}_{3 -\allowbreak \mathrm{i},0,1}\!(\frac{1}{2},\allowbreak 1,\allowbreak \frac{1}{2}) +\allowbreak 2\,\allowbreak \operatorname{Li}_{2 -\allowbreak \mathrm{i},0,0,2}\!(\frac{1}{2},\allowbreak 1,\allowbreak 1,\allowbreak \frac{1}{2}) +\allowbreak 6\,\allowbreak \operatorname{Li}_{2 -\allowbreak \mathrm{i},0,1,1}\!(\frac{1}{2},\allowbreak 1,\allowbreak \frac{1}{2},\allowbreak 1) +\allowbreak 6\,\allowbreak \operatorname{Li}_{2 -\allowbreak \mathrm{i},0,1,1}\!(\frac{1}{2},\allowbreak 1,\allowbreak 1,\allowbreak \frac{1}{2})
\end{dmath*}
\vspace{-1.1em}
\begin{dmath*}
{}\quad +\allowbreak 2\,\allowbreak \operatorname{Li}_{2 -\allowbreak \mathrm{i},1,0,1}\!(\frac{1}{2},\allowbreak \frac{1}{2},\allowbreak 1,\allowbreak 1) +\allowbreak 2\,\allowbreak \operatorname{Li}_{2 -\allowbreak \mathrm{i},1,0,1}\!(\frac{1}{2},\allowbreak 1,\allowbreak 1,\allowbreak \frac{1}{2}) +\allowbreak 2\,\allowbreak \operatorname{Li}_{2 -\allowbreak \mathrm{i},0,0,1,1}\!(\frac{1}{2},\allowbreak 1,\allowbreak 1,\allowbreak \frac{1}{2},\allowbreak 1) +\allowbreak 2\,\allowbreak \operatorname{Li}_{2 -\allowbreak \mathrm{i},0,0,1,1}\!(\frac{1}{2},\allowbreak 1,\allowbreak 1,\allowbreak 1,\allowbreak \frac{1}{2}) +\allowbreak \operatorname{Li}_{2 -\allowbreak \mathrm{i},0,1,0,1}\!(\frac{1}{2},\allowbreak 1,\allowbreak \frac{1}{2},\allowbreak 1,\allowbreak 1)
\end{dmath*}
\vspace{-1.1em}
\begin{dmath*}
{}\quad +\allowbreak \operatorname{Li}_{2 -\allowbreak \mathrm{i},0,1,0,1}\!(\frac{1}{2},\allowbreak 1,\allowbreak 1,\allowbreak 1,\allowbreak \frac{1}{2}) +\allowbreak \operatorname{Li}_{4 -\allowbreak \mathrm{i}}\!(\frac{1}{4})
\end{dmath*}
\subsection*{Identity 60}
\begin{dmath*}
\sum_{n=1}^{\infty} \frac{\bigl(\mathcal{H}_{n}^{(1,2)}\bigr)^{2}}{n^{3 -\allowbreak 6\,\allowbreak \mathrm{i}}} = \zeta\!(5 -\allowbreak 6\,\allowbreak \mathrm{i},\allowbreak 4) +\allowbreak \zeta\!(3 -\allowbreak 6\,\allowbreak \mathrm{i},\allowbreak 2,\allowbreak 4) +\allowbreak 2\,\allowbreak \bigl(\zeta\!(4 -\allowbreak 6\,\allowbreak \mathrm{i},\allowbreak 1,\allowbreak 4) +\allowbreak \zeta\!(4 -\allowbreak 6\,\allowbreak \mathrm{i},\allowbreak 3,\allowbreak 2) +\allowbreak \zeta\!(5 -\allowbreak 6\,\allowbreak \mathrm{i},\allowbreak 2,\allowbreak 2) +\allowbreak \zeta\!(3 -\allowbreak 6\,\allowbreak \mathrm{i},\allowbreak 1,\allowbreak 1,\allowbreak 4) +\allowbreak \zeta\!(3 -\allowbreak 6\,\allowbreak \mathrm{i},\allowbreak 1,\allowbreak 3,\allowbreak 2)
\end{dmath*}
\vspace{-1.1em}
\begin{dmath*}
{}\quad +\allowbreak \zeta\!(3 -\allowbreak 6\,\allowbreak \mathrm{i},\allowbreak 2,\allowbreak 2,\allowbreak 2) +\allowbreak 2\,\allowbreak \zeta\!(4 -\allowbreak 6\,\allowbreak \mathrm{i},\allowbreak 1,\allowbreak 2,\allowbreak 2) +\allowbreak \zeta\!(4 -\allowbreak 6\,\allowbreak \mathrm{i},\allowbreak 2,\allowbreak 1,\allowbreak 2) +\allowbreak 2\,\allowbreak \zeta\!(3 -\allowbreak 6\,\allowbreak \mathrm{i},\allowbreak 1,\allowbreak 1,\allowbreak 2,\allowbreak 2) +\allowbreak \zeta\!(3 -\allowbreak 6\,\allowbreak \mathrm{i},\allowbreak 1,\allowbreak 2,\allowbreak 1,\allowbreak 2)\bigr)
\end{dmath*}
\subsection*{Identity 61}
\begin{dmath*}
\sum_{n=1}^{\infty} \frac{\bigl(\frac{1}{2}\bigr)^{n}\,\allowbreak \mathcal{A}_{n}\,\allowbreak \mathcal{A}_{n}^{(2)}}{n^{4}} = \operatorname{HPL}_{4,3}\!(\frac{1}{2}) -\allowbreak \operatorname{HPL}_{5,-2}\!(-\bigl(\frac{1}{2}\bigr)) -\allowbreak \operatorname{HPL}_{6,-1}\!(-\bigl(\frac{1}{2}\bigr)) +\allowbreak \operatorname{HPL}_{-4,1,-2}\!(-\bigl(\frac{1}{2}\bigr)) +\allowbreak \operatorname{HPL}_{-4,2,-1}\!(-\bigl(\frac{1}{2}\bigr)) +\allowbreak \operatorname{Li}_{7}\!(\frac{1}{2})
\end{dmath*}
\subsection*{Identity 62}
\begin{dmath*}
\sum_{n=1}^{\infty} \frac{\mathcal{H}_{n}^{(2)}\!(-1)\,\allowbreak \mathcal{H}_{n}^{(3)}\!(-\bigl(\frac{1}{2}\bigr))}{n^{6 +\allowbreak \frac{\mathrm{i}}{2}}}\,\allowbreak \bigl(-1\bigr)^{n} = \operatorname{Li}_{6 +\allowbreak \frac{\mathrm{i}}{2},5}\!(-1,\allowbreak \frac{1}{2}) +\allowbreak \operatorname{Li}_{8 +\allowbreak \frac{\mathrm{i}}{2},3}\!(1,\allowbreak -\bigl(\frac{1}{2}\bigr)) +\allowbreak \operatorname{Li}_{9 +\allowbreak \frac{\mathrm{i}}{2},2}\!(\frac{1}{2},\allowbreak -1) +\allowbreak \operatorname{Li}_{6 +\allowbreak \frac{\mathrm{i}}{2},2,3}\!(-1,\allowbreak -1,\allowbreak -\bigl(\frac{1}{2}\bigr)) +\allowbreak \operatorname{Li}_{6 +\allowbreak \frac{\mathrm{i}}{2},3,2}\!(-1,\allowbreak -\bigl(\frac{1}{2}\bigr),\allowbreak -1) +\allowbreak \operatorname{Li}_{11 +\allowbreak \frac{\mathrm{i}}{2}}\!(-\bigl(\frac{1}{2}\bigr))
\end{dmath*}
\subsection*{Identity 63}
\begin{dmath*}
\sum_{n=1}^{\infty} \frac{\mathcal{H}_{2\,\allowbreak n}^{(3)}\!(\frac{1}{2})}{n^{4}} = 8\,\allowbreak \bigl(\operatorname{Li}_{4,3}\!(-1,\allowbreak \frac{1}{2}) +\allowbreak \operatorname{Li}_{4,3}\!(1,\allowbreak \frac{1}{2}) +\allowbreak \operatorname{Li}_{7}\!(-\bigl(\frac{1}{2}\bigr)) +\allowbreak \operatorname{Li}_{7}\!(\frac{1}{2})\bigr)
\end{dmath*}
\subsection*{Identity 64}
\begin{dmath*}
\sum_{n=1}^{\infty} \frac{\mathcal{A}_{4\,\allowbreak n}^{(2)}}{n^{4}} = -\frac{3107\,\allowbreak \pi^{6}}{45360} +\allowbreak 64\,\allowbreak \bigl(\operatorname{HPL}_{-4,2}\!(\mathrm{i}) +\allowbreak \operatorname{HPL}_{4,-2}\!(\mathrm{i}) +\allowbreak \operatorname{HPL}_{4,-2}\!(1)\bigr) +\allowbreak 48\,\allowbreak \bigl(\zeta\!(3)\bigr)^{2}
\end{dmath*}
\subsection*{Identity 65}
\begin{dmath*}
\sum_{n_{1}=1}^{\infty}\sum_{n_{2}=1}^{n_{1}} \frac{H_{n_{1}}\,\allowbreak H_{n_{2}}}{n_{1}^{2}\,\allowbreak n_{2}^{3}} = -\bigl(\frac{23}{180}\bigr)\,\allowbreak \pi^{4}\,\allowbreak \zeta\!(3) +\allowbreak \frac{3}{4}\,\allowbreak \pi^{2}\,\allowbreak \zeta\!(5) +\allowbreak 10\,\allowbreak \zeta\!(7)
\end{dmath*}
\subsection*{Identity 66}
\begin{dmath*}
\sum_{n_{1}=1}^{\infty}\sum_{n_{2}=1}^{n_{1}}\sum_{n_{3}=1}^{n_{2}} \frac{H_{n_{1}}\,\allowbreak H_{n_{2}}\,\allowbreak H_{n_{3}}}{n_{1}^{2}\,\allowbreak n_{2}^{3}\,\allowbreak n_{3}^{4}} = \frac{169364029\,\allowbreak \pi^{12}}{891596160000} +\allowbreak \frac{5567\,\allowbreak \pi^{6}\,\allowbreak \bigl(\zeta\!(3)\bigr)^{2}}{90720} -\allowbreak \frac{\pi^{4}\,\allowbreak \bigl(843\,\allowbreak \zeta\!(5,\allowbreak 3) +\allowbreak 1255\,\allowbreak \zeta\!(3)\,\allowbreak \zeta\!(5)\bigr)}{2700} -\allowbreak \frac{\pi^{2}\,\allowbreak \bigl(35277\,\allowbreak \zeta\!(7,\allowbreak 3) +\allowbreak 34291\,\allowbreak \bigl(\zeta\!(5)\bigr)^{2} +\allowbreak 112700\,\allowbreak \zeta\!(3)\,\allowbreak \zeta\!(7)\bigr)}{4032} +\allowbreak \frac{1}{216}\,\allowbreak \bigl(16686\,\allowbreak \zeta\!(9,\allowbreak 3) +\allowbreak 3852\,\allowbreak \zeta\!(6,\allowbreak 4,\allowbreak 1,\allowbreak 1)
\end{dmath*}
\vspace{-1.1em}
\begin{dmath*}
{}\quad -\allowbreak 429\,\allowbreak \bigl(\zeta\!(3)\bigr)^{4} +\allowbreak 2367\,\allowbreak \zeta\!(5)\,\allowbreak \zeta\!(7) +\allowbreak 39004\,\allowbreak \zeta\!(3)\,\allowbreak \zeta\!(9)\bigr)
\end{dmath*}
\subsection*{Identity 67}
\begin{dmath*}
\sum_{n_{1}=1}^{\infty}\sum_{n_{2}=1}^{n_{1}} \frac{\bigl(\frac{1}{2}\bigr)^{n_{1}}\,\allowbreak \bigl(\frac{1}{3}\bigr)^{n_{2}}\,\allowbreak H_{n_{1}}\,\allowbreak H_{n_{2}}}{n_{1}^{2}\,\allowbreak n_{2}^{3}} = \operatorname{HPL}_{5,2}\!(\frac{1}{6}) +\allowbreak \operatorname{Li}_{2,5}\!(\frac{1}{2},\allowbreak \frac{1}{3}) +\allowbreak \operatorname{Li}_{3,4}\!(\frac{1}{2},\allowbreak \frac{1}{3}) +\allowbreak \operatorname{Li}_{2,1,4}\!(\frac{1}{2},\allowbreak 1,\allowbreak \frac{1}{3}) +\allowbreak \operatorname{Li}_{2,3,2}\!(\frac{1}{2},\allowbreak \frac{1}{3},\allowbreak 1) +\allowbreak 2\,\allowbreak \operatorname{Li}_{2,4,1}\!(\frac{1}{2},\allowbreak \frac{1}{3},\allowbreak 1) +\allowbreak \operatorname{Li}_{3,3,1}\!(\frac{1}{2},\allowbreak \frac{1}{3},\allowbreak 1) +\allowbreak \operatorname{Li}_{2,1,3,1}\!(\frac{1}{2},\allowbreak 1,\allowbreak \frac{1}{3},\allowbreak 1)
\end{dmath*}
\vspace{-1.1em}
\begin{dmath*}
{}\quad +\allowbreak 2\,\allowbreak \operatorname{Li}_{2,3,1,1}\!(\frac{1}{2},\allowbreak \frac{1}{3},\allowbreak 1,\allowbreak 1) +\allowbreak \operatorname{Li}_{7}\!(\frac{1}{6}) +\allowbreak 2\,\allowbreak \operatorname{Li}_{4,3}\!(\frac{1}{6}) +\allowbreak 2\,\allowbreak \operatorname{Li}_{5,2}\!(\frac{1}{6})
\end{dmath*}